\def\({\left(}
\def\){\right)}
\def\N{{\mathbb{N}}}
\def\CC{{\mathbb{C}}}
\newcommand{\be}{\begin{equation}}
\newcommand{\ee}{\end{equation}}
\newcommand{\bea}{\begin{eqnarray}}
\newcommand{\eea}{\end{eqnarray}}
\newcommand{\beann}{\begin{eqnarray*}}
\newcommand{\eeann}{\end{eqnarray*}}
 \newtheorem{theorem}{Theorem}
\newtheorem{corollary}{Corollary}[section]
\newtheorem{remark}{Remark}[section]
\newtheorem{lemma}{Lemma}[section]
\newcommand{\bn}{\begin{eqnarray*}}       
\newcommand{\en}{\end{eqnarray*}}
\newcommand{\beq}{\begin{equation}}
\newcommand{\eeq}{\end{equation}}
\newcommand{\R}{\mathbb{R}}
\numberwithin{equation}{section}
\begin{document}

\title[Stability of black solitons under intensity-dependent dispersion]{\bf Stability of black solitons in optical systems with intensity-dependent dispersion}

\author{Dmitry E. Pelinovsky}
\address[D.E. Pelinovsky]{Department of Mathematics and Statistics, McMaster University,	Hamilton, Ontario, Canada, L8S 4K1}
\email{dmpeli@math.mcmaster.ca}

\author{Michael Plum}
\address[M. Plum]{Institut f\"{u}r Analysis, Karlsruher Institut f\"{u}r Technologie, Karlsruhe, Germany, 76131}
\email{michael.plum@kit.edu}

\begin{abstract}
Black solitons are identical in the nonlinear Schr\"{o}dinger (NLS) equation 
with intensity-dependent dispersion and the cubic defocusing NLS equation. We prove that the intensity-dependent dispersion introduces new properties in the stability analysis of the black soliton. First, the spectral stability problem possesses only isolated eigenvalues on the imaginary axis. Second, the energetic stability argument holds in Sobolev spaces with exponential weights. Third, the black soliton persists with respect to addition of a small decaying potential and remains spectrally stable when it is pinned to the minimum points of the effective potential. The same model exhibits a family of traveling dark solitons for every wave speed and we incorporate properties of these dark solitons for small wave speeds in the analysis of orbital stability of the black soliton. 
\end{abstract}

\date{\today}
\maketitle

\section{Introduction}

The canonical model of nonlinear optics for dark and black solitons
is the cubic defocusing NLS equation \cite{Kivshar,Kev-Dark-2015}, which can be written in the form 
\begin{equation}
\label{nls-cubic}
i u_t + u_{xx} + 2 (1-|u|^2) u = 0,
\end{equation}
where $u(t,x) : \mathbb{R} \times \mathbb{R}\mapsto \mathbb{C}$ is normalized by the boundary conditions
\begin{equation}
\label{bc-nonzero}
|u(t,x)| \to 1 \quad \mbox{\rm as} \;\; |x| \to \infty.
\end{equation}
Dark solitons are traveling wave solutions of the form 
\begin{equation}
\label{dark-cubic}
u(t,x) = \gamma \tanh(\gamma(x-2ct)) + i c, \quad \gamma := \sqrt{1-c^2},
\end{equation}
where $c \in (-1,1)$ is the parameter for the half wave speed. The limiting 
solution for $c = 0$, 
\begin{equation}
\label{black-cubic}
u(t,x) = \tanh(x), 
\end{equation} 
is referred to as the black soliton as the intensity $I := |u|^2$ reaches zero at a point of odd symmetry in $x$. The dark solitons (\ref{dark-cubic}) can be extended with two 
translational parameters due to the basic symmetries of the NLS equation (\ref{nls-cubic}): 
\begin{equation}
\label{symmetry}
\mbox{\rm If } u(t,x) \;\; \mbox{\rm is a solution, so is } 
e^{i \theta_0} u(t,x+\zeta_0), \quad \theta_0, \zeta_0 \in \mathbb{R}.
\end{equation}

The cubic defocusing NLS equation (\ref{nls-cubic}) is integrable with inverse scattering such that the dark solitons can be associated with eigenvalues of a self-adjoint linear (Dirac) operator \cite{ZakhShabat}. Orbital and asymptotic 
stabilities of the dark and black solitons were studied in many mathematical papers, e.g. with functional-analytic methods \cite{BGSS,BGSS14,GS}, the inverse scattering transform \cite{Cuccagna,PZ}, and higher-order conserved quantities \cite{GPII}. Lower-order conserved quantities were recently constructed in \cite{Koch,Koch-followup} to derive global solutions of the cubic NLS equation (\ref{nls-cubic}) in spaces of lower regularity.

Extended models for dark solitons were considered by using the generalized defocusing NLS equation, e.g. with the cubic--quintic and saturable nonlinearities \cite{PelKiv}. Many results were obtained in the mathematical analysis of the generalized NLS models such as the local and global existence of solutions to the initial-value problem in energy space \cite{Gallo,Gerard}, orbital stability of dark and black solitons \cite{Alejo,Chiron,Lin}, and persistence of the black soliton under a small exponentially decaying potential \cite{PelKev08}. 

\subsection{NLS models with intensity-dependent dispersion}

The objective of this work is to study orbital stability of the black soliton in a novel NLS model, which we take in the normalized form
\begin{equation}
\label{nls-idd}
i (1-|\psi|^2) \psi_t + \psi_{xx} = 0,
\end{equation}
where $\psi(t,x) : \mathbb{R} \times \mathbb{R}\mapsto \mathbb{C}$ is the wave function. We assume that $\psi(t,\cdot) \in \mathcal{F}$ for every $t \in \R$, where
\begin{equation}
\label{function-set}
\mathcal{F} := \left\{ f \in L^{\infty}(\R) : \quad |f(x)| < 1, \;\; x \in \mathbb{R} \quad \mbox{\rm and} \quad |f(x)| \to 1 \;\; {\rm as} \;\; |x| \to \infty \right\}.
\end{equation}
After the standing wave transformation 
$$
\psi(t,x) = e^{-2it} u(t,x), 
$$
the new model is equivalent 
to the modified cubic defocusing NLS equation in the form 
\begin{equation}
\label{nls}
i (1-|u|^2) u_t + u_{xx} + 2 (1-|u|^2) u = 0,
\end{equation}
which has the same solution (\ref{black-cubic}) for the black soliton. The black soliton can be extended with two translational parameters $\theta_0$ and $\zeta_0$ due to the same symmetries (\ref{symmetry}). In addition, it can be extended to the family of
traveling wave solutions with the wave speed $c$:
\begin{equation}
\label{soliton-parameters}
\psi(t,x) = e^{-2i t} U_c(x - 2c t),
\end{equation}
where $U_c(\xi)$ with $\xi := x - 2c t$ is a solution of the normalized equation 
\begin{equation}
\label{nls-trav}
U_c'' - 2i c (1-|U_c|^2) U_c' + 2 (1-|U_c|^2) U_c = 0.
\end{equation}
Dark solitons with the profile $U_c$ are solutions of the differential equation (\ref{nls-trav}) in the set $\mathcal{F}$ satisfying 
the boundary conditions 
\begin{equation}
\label{bc-dark}
U_c(\xi) \to e^{i \theta_{\pm}(c)} \quad \mbox{\rm as} \;\; \xi \to \pm \infty
\end{equation}
for some phases $\theta_{\pm}(c) \in [0,2\pi)$. If $c = 0$, then \begin{equation}
\label{bc-black}
\varphi(\xi) := U_{c=0}(\xi) = \tanh(\xi)
\end{equation} 
is the black soliton with $\theta_+(0) = 0$ and $\theta_-(0) = \pi$. In addition, equation (\ref{nls-idd}) is invariant under the scaling transformation:
\begin{equation}
\label{scaling-transform}
\mbox{\rm If } \psi(t,x) \;\; \mbox{\rm is a solution, so is } 
\psi(\omega^2 t,\omega  x), \quad \omega > 0.
\end{equation}

The starting model (\ref{nls-idd}) represents a class of NLS equations 
with intensity-dependent dispersion, which have been used for modeling of coherently prepared multistate atoms \cite{greentree}, quantum well waveguides \cite{koser}, and fiber-optics communication systems \cite{OL2020}. Bright solitons of the NLS model with normalized intensity-dependent dispersion, 
\begin{equation}
\label{nls-idd-bright}
i \psi_t +  (1-|\psi|^2)  \psi_{xx} = 0,
\end{equation}
were studied in \cite{PRK,RKP}. Bright solitons have cusped singularities at the unit intensity and the nature of singularities was incorporated in the study of existence and energetic stability of bright solitons in the energy space \cite{PRK,RKP}. Note that the energetic stability results were conditional to the existence of local solutions to the initial-value problem for the NLS model (\ref{nls-idd-bright}) in the energy space and the energy conservation, which was left as an open problem. 

\subsection{Motivations}

The NLS model (\ref{nls-idd}) with the inverse behavior of the normalized intensity-dependent dispersion compared to the NLS model (\ref{nls-idd-bright}) features both the dark and black solitons and is the subject of the present work. Similar to the scopes of \cite{PRK,RKP}, we will not address local well-posedness of the initial-value problem but will focus on the analysis of energetic stability of the black soliton and the novelty compared to the case of the cubic defocusing NLS equation (\ref{nls-cubic}).

{\em It is particularly interesting to see how the intensity-dependent dispersion introduces exponential weights in the construction of a Lyapunov functional for the black soliton. }

For the cubic defocusing NLS equation (\ref{nls-cubic}), it is well-known that the phase of complex perturbations to the black soliton cannot be controlled in the energy space due to nonzero boundary conditions. Related to this property, the second variation of the Lyapunov functional is not coercive in $H^1(\R)$ under the constraint of fixed ({\em renormalized}) momentum and the continuous spectrum of the linearized operator does not possess a spectral gap near the zero eigenvalue. To remedy the lack of coercivity, an exponentially weighted $H^1(\R)$ space was introduced in \cite{GS}, in which the orbital stability of the black soliton was obtained with the standard Lyapunov method. The phase of complex perturbations can be controlled in the weighted $H^1(\R)$ space. A generalization of the ideas of \cite{GS} was used in the weighted $H^2(\R)$ space in \cite{GPII} due to integrability of the cubic defocusing NLS equation (\ref{nls-cubic}).

The same method of weighted $H^1(\R)$ spaces was then used in \cite{CPP18} to study orbital stability of the domain walls which are minimizers of energy with nonzero boundary conditions at infinity \cite{ABCP}.  Recently, the same idea of the exponential weighted $H^1(\R)$ space was used in \cite{Alejo} to study orbital stability of the black soliton in the quintic defocusing NLS equation.

\subsection{Summary of results} 

As the main outcome of this work, we will show that the intensity-dependent dispersion gives a natural definition of the exponentially weighted $L^2(\R)$ space in which the second variation of the Lyapunov functional is coercive 
under the constraints of fixed momentum and mass. The exponentially weighted $H^1(\R)$ space appears as the form domain of the linearized operator. 

The orbital stability of the black soliton is established 
in the exponentially weighted $L^2(\R)$ space 
with the Lyapunov method that incorporates the additional scaling transformation (\ref{scaling-transform}). Furthermore, the spectrum of the linearized operator in the exponentially weighted $L^2(\R)$ space is shown to be purely discrete with a spectral gap near the zero eigenvalue. One important ingredient in the analysis 
of orbital stability of the black soliton is the asymptotic behavior of the dark 
solitons with the profile $U_c \in \mathcal{F}$ satisfying (\ref{nls-trav}) and (\ref{bc-dark}) as $c \to 0$. We establish the asymptotic behavior with arguments based on the implicit function theorem as the explicit expressions for $U_c$ are not available. 

As a consequence of the spectral gap near the zero eigenvalue, spectral stability of the black soliton can also be studied for the extended NLS model with a small decaying potential as in 
\begin{equation}
\label{nls-idd-potential}
i (1-|\psi|^2) \psi_t + \psi_{xx} = \varepsilon V(x) \psi,
\end{equation}
where $\varepsilon$ is a small parameter and $V$ is a fixed real-valued potential. By using Lyapunov--Schmidt reduction methods, 
we will show for $V \in W^{2,\infty}(\R) \cap L^2(\R)$ that the black soliton persists at the non-degenerate extremal points of the effective potential 
\begin{equation}
\label{potential-effective}
\mathcal{V}(s) := \int_{\R} V(x+s) {\rm sech}^2(x) dx.
\end{equation}
Moreover, we will show for $V \in W^{2,\infty}(\R) \cap L^1(\R)$ that the black soliton is linearly stable if the extremal point is a local minimum of $\mathcal{V}(s)$ and linearly unstable 
if it is a local maximum of $\mathcal{V}(s)$. This suggests robust pinning of the black soliton to the minima of the effective potential $\mathcal{V}(s)$. This outcome of the NLS model (\ref{nls-idd}) is a great practical improvement compared to the cubic NLS equation (\ref{nls-cubic}), 
where pinning of the black soliton is linearly unstable both at the maximum and minimum points of the effective potential $\mathcal{V}(s)$ \cite{PelKev08}.

\subsection{Organization of the manuscript} 

Conserved quantities of the NLS model (\ref{nls-idd}) are obtained in Section \ref{sec-formalism}. Main results are described in Section \ref{sec-results}. 
Linearized operators in the weighted $L^2(\R)$ space are studied in Section \ref{sec-spaces}. Existence of traveling dark solitons and their asymptotic behavior in the limit of $c \to 0$ are clarified in Section \ref{sec-existence}. 
Energetic stability of the black soliton is proven in Section \ref{sec-stability}. Persistence and spectral stability of the black soliton under the small decaying potential $V$ in the NLS model (\ref{nls-idd-potential}) are studied in Section \ref{sec-potential}. Section \ref{sec-conclusion} gives the summary and further directions of study. Appendices \ref{app-a} and \ref{app-b} contain proofs of technical results used in Sections \ref{sec-spaces} and \ref{sec-stability}, respectively.

\section{Conserved quantities}
\label{sec-formalism}

Assume that the NLS equation (\ref{nls-idd}) admits a smooth solution $\psi(t,\cdot) \in \mathcal{F} \cap C^{\infty}(\R)$ for some $t \in \R$ satisfying
\begin{equation}
\label{decay-psi}
\psi(t,x) = e^{i \theta_{\pm}} \left[ 1 + A_{\pm} e^{-\alpha_{\pm} |x|}
+ {\rm o}(e^{-\alpha_{\pm}|x|}) \right] \qquad 
\mbox{\rm as} \;\; x \to \pm \infty,
\end{equation}
for some real $\alpha_{\pm} > 0$, $A_{\pm} < 0$, and $\theta_{\pm} \in [0,2\pi)$ which may depend on time $t \in \R$. The asymptotic expansion (\ref{decay-psi}) is assumed to be differentiable term by term. Substituting (\ref{decay-psi}) into (\ref{nls-idd}) shows that $\theta_{\pm}$ changes in time as $\theta_{\pm}'(t) = -\frac{1}{2} \alpha_{\pm}(t)^2$.

The NLS model (\ref{nls-idd}) has formally conserved quantities
\begin{equation}
\label{cons}
M(\psi) := \int_{\R} (1-|\psi|^2)^2 dx \quad \mbox{\rm and} \quad  
E(\psi) := \int_{\R} |\psi_x|^2 dx,
\end{equation}
which have the meaning of mass and energy, respectively. This can be checked directly with 
\begin{align*}
\frac{d}{dt} M(\psi) &= -2 \int_{\R} (1-|\psi|^2) (\bar{\psi} \psi_t + \bar{\psi}_t \psi) dx  \\
&= -2i (\bar{\psi} \psi_{x} - \bar{\psi}_{x} \psi) |_{x \to -\infty}^{x \to +\infty} = 0,
\end{align*}
and
\begin{align*}
\frac{d}{dt} E(\psi) &= \int_{\R} (\bar{\psi}_x \psi_{xt} + \bar{\psi}_{xt} \psi_x) dx \\
&= i \frac{\bar{\psi}_x \psi_{xx} - \bar{\psi}_{xx} \psi_x}{1-|\psi|^2} 
|_{x \to -\infty}^{x \to +\infty} = 0,
\end{align*}
where in the last line we have used that $\psi(t,\cdot) \in \mathcal{F}  \cap C^{\infty}(\R)$ has 
the exponential decay (\ref{decay-psi}) which ensures that 
$\bar{\psi}_x \psi_{xx} - \bar{\psi}_{xx} \psi_x$ converges to zero  at infinity with the double exponential rate compared to $1-|\psi|^2$.

In addition, the NLS equation (\ref{nls-idd}) has formally conserved 
momentum 
\begin{equation}
\label{momentum}
P(\psi) = \frac{1}{2i} \int_{\R} \frac{(1-|\psi|^2)^2}{|\psi|^2} (\bar{\psi} \psi_x - \bar{\psi}_x \psi) dx,
\end{equation}
provided that $\psi(t,x) \neq 0$ everywhere. To confirm conservation of the momentum, we write $P = P_1 + P_2$, where 
\begin{equation}
\label{momentum-parts}
P_1(\psi) = \frac{i}{2} \int_{\R} (1-|\psi|^2) (\bar{\psi} \psi_x - \bar{\psi}_x \psi) dx, \quad P_2(\psi) = \frac{1}{2i} \int_{\R} (1-|\psi|^2) 
\left( \frac{\psi_x}{\psi} - \frac{\bar{\psi}_x}{\bar{\psi}} \right) dx,
\end{equation}
and obtain after straightforward computations,
\begin{align*}
\frac{d}{dt} P_1(\psi) &= -\frac{1}{2} \int_{\R} (\bar{\psi} \psi_{xxx} - \bar{\psi}_x \psi_{xx} - \bar{\psi}_{xx} \psi_x + \bar{\psi}_{xxx} \psi ) dx 
 -\int_{\R} \frac{|\psi|^2 (\bar{\psi}_x \psi_{xx} + \bar{\psi}_{xx} \psi_{x})}{1-|\psi|^2} dx\\
&= -\frac{1}{2} (\bar{\psi} \psi_{xx} - 2 |\psi_{x}|^2 + \bar{\psi}_{xx} \psi ) |_{x \to -\infty}^{x \to +\infty} 
-\int_{\R} \frac{|\psi|^2 (|\psi_{x}|^2)_x}{1-|\psi|^2} dx
\end{align*}
and 
\begin{align*}
\frac{d}{dt} P_2(\psi) &= \frac{1}{2} \int_{\R} \left( \frac{\psi_{xxx}}{\psi} - \frac{\psi_x \psi_{xx}}{\psi^2} - \frac{\bar{\psi}_x \bar{\psi}_{xx}}{\bar{\psi}^2} +\frac{\bar{\psi}_{xxx}}{\bar{\psi}} 
\right) dx 
+\int_{\R} \frac{(\bar{\psi}_x \psi_{xx} + \bar{\psi}_{xx} \psi_{x})}{1-|\psi|^2} dx\\
&= \frac{1}{2}  \left( \frac{\psi_{xx}}{\psi} +\frac{\bar{\psi}_{xx}}{\bar{\psi}} 
\right) |_{x \to -\infty}^{x \to +\infty} 
+\int_{\R} \frac{(|\psi_{x}|^2)_x}{1-|\psi|^2} dx,
\end{align*}
from which it follows that $P(\psi) = P_1(\psi) + P_2(\psi)$ conserves 
if $\psi(t,\cdot) \in \mathcal{F}  \cap C^{\infty}(\R)$ satisfies (\ref{decay-psi}) and $\psi(t,x) \neq 0$ everywhere.

\begin{remark}
	$P_1(\psi)$ and $P_2(\psi)$ in (\ref{momentum-parts}) generalize the conserved momentum and phase quantities of the cubic NLS equation (\ref{nls-cubic}) in the same way as $M(\psi)$ in (\ref{cons}) generalizes the conserved mass 
	of the cubic NLS equation. The renormalized momentum $P = P_1 + P_2$ is introduced in the cubic NLS equation (\ref{nls-cubic}) from the two conserved quantities as 
	the technical tool to prove orbital stability of dark solitons \cite{BGSS,BGSS14,Chiron,Lin}. In the context of the NLS model (\ref{nls-idd}), neither $P_1$ nor $P_2$ are conserved quantities but their sum is the conserved quantity bearing the same meaning as 
	the renormalized momentum of the cubic NLS equation (\ref{nls-cubic}). 
\end{remark}

\begin{remark}
Using the decomposition $P = P_1 + P_2$ given by (\ref{momentum-parts}), we can rewrite the momentum $P(\psi)$ in the form 
	\begin{align}
	\label{momentum-new}
	P(\psi) = \frac{i}{2} \int_{\R} (2 - |\psi|^2) (\bar{\psi} \psi_x - \bar{\psi}_x \psi) dx + \arg(\psi) |_{x \to -\infty}^{x \to +\infty}, 
	\end{align}
where the last term is $\theta_+ - \theta_-$ if $\psi(t,\cdot) \in \mathcal{F}\cap C^{\infty}(\R)$ satisfies (\ref{decay-psi}). The form (\ref{momentum-new}) is defined even if $\psi(t,x)$ vanishes at some $(t,x)$.
\end{remark}

The conservation of the momentum $P(\psi)$ in the form (\ref{momentum-new}) can be verified independently from the NLS model (\ref{nls-idd}) and the exponential decay (\ref{decay-psi}) which suggests $\theta_{\pm}'(t) = -\frac{1}{2} \alpha_{\pm}(t)^2$. Differentiating (\ref{momentum-new}) and using (\ref{nls-idd}) yield
\begin{align*}
\frac{d}{dt} P(\psi) &= -\frac{1}{2} \alpha_+^2 + \frac{1}{2} \alpha_-^2 + 
\frac{1}{2} \int_{\R} \frac{2 - |\psi|^2}{1 - |\psi|^2} \left( \bar{\psi}_x \psi_{xx} + \bar{\psi}_{xx} \psi_x - \bar{\psi} \psi_{xxx}- \bar{\psi}_{xxx} \psi \right) dx \\
& \quad - \frac{1}{2} \int_{\R} \frac{2 - |\psi|^2}{(1 - |\psi|^2)^2} ( \bar{\psi} \psi_{xx} + \bar{\psi}_{xx} \psi ) dx + \frac{1}{2} 
\int_{\R} \frac{1}{1 - |\psi|^2} ( \bar{\psi}  \psi_{xx} - \bar{\psi}_{xx} \psi  )  ( \bar{\psi}  \psi_{x} - \bar{\psi}_{x} \psi  ) dx \\
&= -\frac{1}{2} \alpha_+^2 + \frac{1}{2} \alpha_-^2 -
\frac{1}{2} \frac{2 - |\psi|^2}{1 - |\psi|^2} \left( \bar{\psi} \psi_{xx} + \bar{\psi}_{xx} \psi \right)  \biggr|_{x \to -\infty}^{x \to +\infty} 
+ 4 |\psi_x|^2 \biggr|_{x \to -\infty}^{x \to +\infty} = 0,
\end{align*}
where the exponential decay (\ref{decay-psi}) was used to get cancellation of terms at infinity. Hence, the momentum $P(\psi)$ in the form (\ref{momentum-new}) is also conserved in time $t$. Compared to (\ref{momentum}), the form (\ref{momentum-new}) does not require $\psi(t,x) \neq 0$ everywhere.

\section{Main results}
\label{sec-results}

We recall that $\psi(t,x) = e^{-2it} \tanh(x)$ is the standing wave 
of the NLS model (\ref{nls-idd}) corresponding to the black soliton. 
From the conserved quantities $M(\psi)$ and $E(\psi)$ in (\ref{cons}), 
we construct the Lyapunov functional for the black soliton in the form 
\begin{equation}
\label{Lyapunov}
\Lambda(\psi) := E(\psi) + M(\psi) = \int_{\mathbb{R}} \left[ |\psi_x|^2 + 
(1-|\psi|^2)^2 \right] dx. 
\end{equation}
Since it coincides with the Lyapunov functional for the cubic NLS equation 
(\ref{nls-cubic}), it is clear that the black soliton $\varphi(x) = \tanh(x)$ is a critical point of $\Lambda(\psi)$ in the energy space 
\begin{equation}
\label{energy-space}
\Sigma := \left\{ \psi \in H^1_{\rm loc}(\R) : \quad \psi_x \in L^2(\R), \;\; 1 - |\psi|^2 \in L^2(\R) \right\}, 
\end{equation}
equipped with the distance between two elements $\psi_1,\psi_2 \in \Sigma$:
\begin{equation*}
\mathcal{D}_{\Sigma}(\psi_1,\psi_2) := \sqrt{\| \psi_1' - \psi_2' \|^2_{L^2} + \| |\psi_1|^2 - |\psi_2|^2 \|^2_{L^2}}.
\end{equation*}

The critical point $\varphi \in \Sigma$ of $\Lambda(\psi)$ is degenerate only due to the translational and phase symmetries (\ref{symmetry}). We introduce the perturbation $u+iv \in H^1_{\rm loc}(\R)$ to $\varphi$ according to the expansion 
\begin{equation}
\label{perturbation}
\psi(t,x) = e^{-2it} \left[ \varphi(x) + u(t,x) + i v(t,x) \right].
\end{equation}
The expansion of the Lyapunov functional $\Lambda(\psi)$ is given by 
\begin{equation}
\label{expansion-Lambda}
\Lambda(\psi) - \Lambda(\varphi) = Q_+(u) + Q_-(v) + R(u,v), 
\end{equation}
where the quadratic forms are given by 
\begin{align}
\label{quad-plus}
Q_+(u) &= \int_{\mathbb{R}} \left[ (u_x)^2 + 2 (3 \varphi^2 - 1) u^2 \right] dx,\\
\label{quad-minus}
Q_-(v) &= \int_{\mathbb{R}} \left[ (v_x)^2 + 2 (\varphi^2 - 1) v^2 \right] dx,
\end{align}
and the remainder term is given by 
\begin{equation}
\label{remainder} 
R(u,v) = \int_{\R} \left[ (2\varphi u + u^2 + v^2)^2 - 4 \varphi^2 u^2 \right] dx.
\end{equation}
Expansion (\ref{expansion-Lambda}) is the same as in the cubic NLS equation (\ref{nls-cubic}), where the form of the remainder term (\ref{remainder}) 
uses the quantity
\begin{equation}
\label{eta-intro}
\eta := |\varphi + u + iv|^2 - \varphi^2 = 2 \varphi u + u^2 + v^2,
\end{equation}
which belongs to $L^2(\mathbb{R})$ if $\psi$ belongs to the energy space $\Sigma$. What appears to be new 
is the linearized equations which arise when the decomposition 
(\ref{perturbation}) is substituted to the NLS model (\ref{nls-idd}) and the quadratic and cubic terms with respect to perturbation $u + i v$ are crossed out. Separation of the real and imaginary parts gives the following linearized equations:
\begin{equation*}
(1-\varphi^2) u_t = L_- v, \qquad (1-\varphi^2) v_t = -L_+ u.
\end{equation*}
where the linear operators $L_{\pm}$ are defined by the differential expressions 
\begin{eqnarray}
\label{L-plus} 
L_+ &=& -\partial_x^2 + 6 \varphi^2 - 2, \\
\label{L-minus}
L_- &=& -\partial_x^2 + 2 \varphi^2 - 2.
\end{eqnarray}
Separation of variables gives the spectral stability problem in the form 
\begin{equation}
\label{lin-stab}
\left[ \begin{array}{cc} 0 & L_- \\ -L_+ & 0 \end{array} \right] 
\left[ \begin{array}{c} u \\ v \end{array} \right] = \lambda (1-\varphi^2) 
\left[ \begin{array}{c} u \\ v \end{array} \right] \qquad \Leftrightarrow \qquad 
\begin{array}{c} L_- v = \lambda (1-\varphi^2) u, \\ -L_+ u = \lambda 
(1-\varphi^2) v,\end{array}
\end{equation}
which naturally suggests to consider the linear operators $L_{\pm}$ 
in the Hilbert space 
\begin{equation}
\label{Hilbert-space}
\mathcal{H} := \left\{ f \in L^2_{\rm loc}(\R) : \quad 
\sqrt{1 - \varphi^2} f \in L^2(\R) \right\},
\end{equation}
with the associated inner product 
\begin{equation}
\label{inner-product}
(f,g)_{\mathcal{H}} := \int_{\R} (1-\varphi^2) \bar{f} g dx.
\end{equation}
and the induced norm $\| \cdot \|_{\mathcal{H}}$. The Hilbert space $\mathcal{H}$ is nothing but an exponentially weighted $L^2(\R)$ space since 
$$
\sqrt{1 - \varphi^2}(x) = {\rm sech}(x). 
$$
Related to $\mathcal{H}$, the form domain and the operator domain 
for the linear operators $L_{\pm}$ are defined as follows:
\begin{equation}
\label{form-domain}
\mathcal{H}^1_+ := H^1(\R), \quad 
\mathcal{H}^1_- := \left\{ f \in \mathcal{H} : \quad f' \in L^2(\R) \right\},
\end{equation}
and
\begin{equation}
\label{operator-domain}
\mathcal{H}^2_{\pm} := \left\{ f \in \mathcal{H}_{\pm}^1 : \quad 
\cosh^2(\cdot) \; L_{\pm} f \in \mathcal{H} \right\}.
\end{equation}
Writing $\mathcal{L}_{\pm} := \cosh^2(\cdot) L_{\pm}$ defines linear operators 
$\mathcal{L}_{\pm} : \mathcal{H}^2_{\pm} \subset \mathcal{H} \mapsto \mathcal{H}$. The linear stability  problem (\ref{lin-stab}) can be rewritten 
with the use of these operators in the form 
\begin{equation}
\label{lin-stab-H}
\mathcal{L}_- v = \lambda  u, \qquad -\mathcal{L}_+ u = \lambda v.
\end{equation}
The quadratic forms $Q_{\pm}$ in (\ref{quad-plus})--(\ref{quad-minus}) 
are defined by $\mathcal{L}_{\pm}$ in $\mathcal{H}^1_{\pm}$. 
We will next present four main results proven in this work.  \\

The first result (Section \ref{sec-spaces}) concerns with the spectrum of the linear operators $\mathcal{L}_{\pm}$ in $\mathcal{H}$ and the linear stability problem (\ref{lin-stab-H}) in $\mathcal{H}\times \mathcal{H}$.

\begin{theorem}
	\label{theorem-lin}
	The spectrum of $\mathcal{L}_{\pm}$ in $\mathcal{H}$ with the dense domain $\mathcal{H}^2_{\pm} \subset \mathcal{H}$ consists of simple isolated eigenvalues 
	as follows
\begin{equation}
	\label{spectrum-plus}
	\sigma_{\mathcal{H}}(L_+) = \{0, \mu_1, \mu_2, \dots \}, \qquad 
	0 < \mu_1 < \mu_2 < \dots
\end{equation}
and
\begin{equation}
\label{spectrum-minus}
\sigma_{\mathcal{H}}(L_-) = \{ -2, 0, \nu_1, \nu_2, \dots \}, \qquad 
	0 < \nu_1 < \nu_2 < \dots
\end{equation}
The spectrum of the linear stability problem (\ref{lin-stab-H}) in $\mathcal{H}\times \mathcal{H}$ consists of pairs of isolated eigenvalues 
\begin{equation}
\label{spectrum-stab}
\{ \pm i \omega_1, \pm i \omega_2,  \dots \}, \qquad \qquad  0 < \omega_1 \leq \omega_2 \leq \dots
\end{equation}
and a quadruple zero eigenvalue associated with the symmetries (\ref{symmetry}).
\end{theorem}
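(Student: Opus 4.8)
\medskip\noindent\emph{Proof strategy.}
The feature that distinguishes this problem from the cubic defocusing NLS equation — in which $L_{\pm}$ act on the unweighted $L^2(\R)$ and carry continuous spectrum — is that the weight $1-\varphi^2={\rm sech}^2$ built into the inner product (\ref{inner-product}) turns $\mathcal{L}_{\pm}=\cosh^2(\cdot)L_{\pm}$ into operators with compact resolvent. First I would note that the quadratic forms $Q_{\pm}$ in (\ref{quad-plus})--(\ref{quad-minus}) are symmetric and bounded below on $\mathcal{H}^1_{\pm}$ (indeed $Q_+(u)\ge0$ and $Q_-(v)=\|v'\|_{L^2}^2-2\|v\|_{\mathcal H}^2\ge-2\|v\|_{\mathcal H}^2$), so that the self-adjoint operators they represent are exactly $\mathcal{L}_{\pm}$ on the domains $\mathcal{H}^2_{\pm}$ of (\ref{operator-domain}); symmetry in $\mathcal H$ is automatic because $(1-\varphi^2)\cosh^2(\cdot)\equiv1$, giving $(\mathcal{L}_{\pm}f,g)_{\mathcal H}=\int_{\R}\overline{L_{\pm}f}\,g\,dx$.

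The key analytic input is that the embeddings $\mathcal{H}^1_{\pm}\hookrightarrow\mathcal H$ are compact: any $f\in\mathcal{H}^1_-$ obeys the pointwise bound $|f(x)|\lesssim\|f\|_{\mathcal{H}^1_-}(1+|x|^{1/2})$ — control $f$ near the origin through $\|{\rm sech}(\cdot)f\|_{L^2}$ and integrate $f'\in L^2(\R)$ — so that $\int_{|x|>R}{\rm sech}^2x\,|f|^2dx$ is uniformly small on bounded sets, which together with Rellich's theorem on bounded intervals yields compactness (the case $\mathcal{H}^1_+=H^1(\R)$ is similar, using $f\in L^\infty(\R)$). Hence $\mathcal{L}_{\pm}$ have compact resolvent, so each spectrum consists of real isolated eigenvalues of finite multiplicity accumulating only at $+\infty$, and simplicity holds since $\mathcal{L}_{\pm}f=\mu f$ is a second-order ODE whose solutions lying in $\mathcal{H}^2_{\pm}$ near each of $\pm\infty$ span a one-dimensional space (membership in $\mathcal{H}^2_{\pm}$ forces the required behavior at infinity). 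The location of the spectra is then pinned down by explicit eigenfunctions: $L_+\varphi'=0$ with the nodeless $\varphi'={\rm sech}^2>0$ shows $0=\min\sigma_{\mathcal H}(L_+)$, giving (\ref{spectrum-plus}); and $\mathcal{L}_-1=-2$ (nodeless ground state) together with $\mathcal{L}_-\varphi=0$ (here $\varphi=\tanh$ has exactly one zero, hence is the second eigenfunction) plus Sturm oscillation shows that no eigenvalue lies in $(-2,0)$ and the rest are positive, giving (\ref{spectrum-minus}).

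For the stability problem (\ref{lin-stab-H}), written as $\mathcal{A}(u,v)=\lambda(u,v)$ with the Hamiltonian operator $\mathcal{A}$, the resolvent of $\mathcal{A}$ is again compact, so $\sigma(\mathcal{A})$ is discrete and, by the Hamiltonian symmetry, invariant under $\lambda\mapsto-\lambda$ and $\lambda\mapsto\bar\lambda$; its kernel is $\mathrm{span}\{(\varphi',0),(0,\varphi)\}$, i.e.\ the translational and phase modes of (\ref{symmetry}). Using $(\varphi',\varphi)_{\mathcal H}=0$, the equations $\mathcal{L}_-w=\varphi'$ and $\mathcal{L}_+w=-\varphi$ are solvable, producing two generalized eigenvectors; the two Jordan chains have length exactly two — so the zero eigenvalue is a quadruple — precisely because $(\mathcal{L}_-^{-1}\varphi',\varphi')_{\mathcal H}$ and $(\mathcal{L}_+^{-1}\varphi,\varphi)_{\mathcal H}$ are nonzero, the latter thanks to the scaling symmetry (\ref{scaling-transform}), which yields $\mathcal{L}_+(x\varphi')=4\varphi$ and hence $(\mathcal{L}_+^{-1}\varphi,\varphi)_{\mathcal H}=\tfrac14\int_{\R}x\,{\rm sech}^4x\tanh x\,dx>0$, while $\mathcal{L}_-w=\varphi'$ solves to $w=-\tfrac12+\tfrac14{\rm sech}^2(\cdot)$ modulo $\varphi$, giving $(\mathcal{L}_-^{-1}\varphi',\varphi')_{\mathcal H}<0$. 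Finally, to exclude eigenvalues off the imaginary axis I would invoke the negative-index (Krein) count for Hamiltonian eigenvalue problems: with $n(\mathcal{L}_+)=0$, $n(\mathcal{L}_-)=1$, and the $2\times2$ matrix of the generalized kernel $D=\mathrm{diag}\big((\mathcal{L}_-^{-1}\varphi',\varphi')_{\mathcal H},(\mathcal{L}_+^{-1}\varphi,\varphi)_{\mathcal H}\big)$ having exactly one negative eigenvalue, the count gives $k_r+2k_c+k_i^-=n(\mathcal{L}_+)+n(\mathcal{L}_-)-n(D)=0$, so $k_r=k_c=0$; hence $\sigma(\mathcal{A})=\{0\}\cup\{\pm i\omega_j:j\ge1\}$, which is (\ref{spectrum-stab}).

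The heart of the argument, and the point where the intensity-dependent dispersion really matters, is the compact embedding $\mathcal{H}^1_{\pm}\hookrightarrow\mathcal H$ of the second step — this is what creates the spectral gap at the origin that the cubic NLS problem lacks. The other delicate point is, in the last step, to verify $n(D)=n(\mathcal{L}_-)=1$, i.e.\ that the single negative direction of $\mathcal{L}_-$ is exactly compensated; because $\mathcal{L}_-^{-1}\varphi'$ and $\mathcal{L}_+^{-1}\varphi$ are available in closed form, this comes down to the sign check $(\mathcal{L}_-^{-1}\varphi',\varphi')_{\mathcal H}<0$.
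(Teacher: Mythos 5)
Your treatment of the scalar operators $\mathcal{L}_{\pm}$ is sound and, in part, takes a cleaner route than the paper: you get discreteness from compactness of the \emph{form-domain} embeddings $\mathcal{H}^1_{\pm}\hookrightarrow\mathcal{H}$ (the paper instead proves compactness of the operator-domain embeddings $\mathcal{H}^2_{\pm}\hookrightarrow\mathcal{H}$ and inverts the shifted operators $\mathcal{M}_{\pm}$), and your pointwise bound $|f(x)|\lesssim \|f\|_{\mathcal{H}^1_-}(1+|x|^{1/2})$ is exactly the right estimate. Your computation of the generalized kernel matches the paper's Lemma \ref{lem-quadruple} (including the signs $(v_{\varphi},\varphi')_{\mathcal{H}}<0$ and $(\mathcal{L}_+^{-1}\varphi,\varphi)_{\mathcal{H}}>0$ and the scaling identity $\mathcal{L}_+(x\varphi')=4\varphi$), and your use of the Hamiltonian--Krein index count to confine nonzero eigenvalues to $i\R$ is a legitimate alternative to the paper's Rayleigh-quotient argument (\ref{Rayleigh}) with Weinstein's constrained positivity of $Q_-$; both rest on the same sign data $n(\mathcal{L}_+)=0$, $n(\mathcal{L}_-)=1$, $n(D)=1$.

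The genuine gap is the sentence ``the resolvent of $\mathcal{A}$ is again compact.'' For the self-adjoint operators $\mathcal{L}_{\pm}$, compactness of the form-domain embedding plus semiboundedness hands you a nonempty resolvent set for free; for the full linearized operator $\mathcal{L}$ in (\ref{operator-L}) it does not. That operator is neither self-adjoint nor skew-adjoint in $\mathcal{H}\times\mathcal{H}$, and it is not even a bounded perturbation of a skew-adjoint operator, since $\mathcal{L}_+-\mathcal{L}_-=4\cosh^2(\cdot)\varphi^2$ is unbounded in $\mathcal{H}$; hence no numerical-range or perturbation argument locates a single point $\lambda_0$ of the resolvent set, and without one you cannot speak of ``the'' compact resolvent or conclude that the spectrum is a discrete set of eigenvalues rather than, say, all of $\CC$. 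This is precisely the content of the paper's Lemmas \ref{lem-non-degeneracy}--\ref{lem-invertibility} and Theorem \ref{th-discrete}: the resolvent equation is rewritten through the compact operators $\mathcal{K}_{\pm}=\mathcal{M}_{\pm}^{-1}$, $\mathcal{H}$ is split by a (non-orthogonal) direct sum adapted to the generalized kernel, and the problem reduces to a pencil $I-\lambda\mathcal{C}$ with $\mathcal{C}$ compact and $\lambda$-independent on the complement, coupled to an explicitly invertible $4\times4$ system for the kernel coordinates; Riesz--Schauder theory then gives discreteness with no accumulation except at infinity. This step cannot be skipped, and it also underlies the final step of your own argument, since the Krein index count you invoke presupposes that the spectrum of the Hamiltonian problem is discrete (or at least that the essential spectrum is confined to $i\R$), which is exactly what is being established here.
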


\begin{remark}
	\label{remark-L-minus}
	Eigenvalues of $\mathcal{L}_+$ and $\mathcal{L}_-$ in $\mathcal{H}$ can be found explicitly as 
	\begin{equation*}
\mu_n := n(n+5), \quad	\nu_n := (n+1) (n+2) - 2, \quad n \in \mathbb{N}.
	\end{equation*}
\end{remark}

\begin{remark}
	The linearized operator in the stability problem (\ref{lin-stab-H}) admits a two-dimensional kernel spanned 
	by $\{ \vec{w}_1, \vec{w}_2 \}$, where 
	\begin{equation}
	\label{eigenvectors}
	\vec{w}_1 = \left[ \begin{array}{c} \varphi' \\ 0 \end{array} \right], \quad
	\vec{w}_2 = \left[ \begin{array}{c} 0 \\ \varphi \end{array} \right].	
	\end{equation}
	These eigenvectors are associated with the symmetries (\ref{symmetry}). 
\end{remark}

Since the spectrum of $\mathcal{L}_{\pm}$ in $\mathcal{H}$ is purely discrete but includes negative and zero eigenvalues, we set some constraints on the perturbation 
term in the decomposition (\ref{perturbation}) in order to get coercivity 
of the quadratic forms in the expansion (\ref{expansion-Lambda}) and the energetic stability of the black soliton. The constrained space in $\mathcal{H}$ is formed by the following two orthogonality conditions:
\begin{equation*}
\mathcal{H}_c := \left\{ f \in \mathcal{H} : \quad 
(1-\varphi^2, f)_{\mathcal{H}} = 0, \quad (\varphi, f)_{\mathcal{H}} = 0 \right\},
\end{equation*}
where we have used that $\varphi' = 1 - \varphi^2$.

In terms of the perturbation $u + iv$ to the black soliton $\varphi$, the constraints 
\begin{equation}
\label{constraints-1}
(\varphi, u)_{\mathcal{H}} = 0 \quad \mbox{\rm and} \quad 
(1-\varphi^2, v)_{\mathcal{H}} = 0 
\end{equation}
are due to fixed mass $M(\psi)$ 
and momentum $P(\psi)$ and can be satisfied in the time evolution 
of the NLS model (\ref{nls-idd})  by extending the black soliton to the  travelling dark soliton with the wave speed $c$ and by using the scaling transformation (\ref{scaling-transform}). On the other hand, the constraints 
\begin{equation}
\label{constraints-2}
(1-\varphi^2, u)_{\mathcal{H}} = 0 \quad \mbox{\rm and} \quad 
(\varphi, v)_{\mathcal{H}} = 0 
\end{equation}
are due to the two symmetries (\ref{symmetry}) and can be satisfied by using modulation parameters for the orbit $\{ e^{i \theta} \varphi(\cdot + \zeta) \}_{\theta,\zeta \in \R}$ of the black soliton with the profile $\varphi$.

In applications to the spectral stability problem (\ref{lin-stab-H}) and its two-dimensional kernel (\ref{eigenvectors}) with $\varphi' = 1 - \varphi^2$, constraints (\ref{constraints-1}) represent the symplectic orthogonality conditions for $(u,v)$ with respect to $\{ \vec{w}_1, \vec{w}_2 \}$, whereas constraints (\ref{constraints-2}) represent the standard orthogonality conditions for $(u,v)$ with respect to $\{ \vec{w}_1, \vec{w}_2 \}$. \\

The second result (Section \ref{sec-existence}) is about the existence of dark solitons with the profile $U_c \in \mathcal{F}$ satisfying 
the differential equation (\ref{nls-trav}) and the boundary conditions 
(\ref{bc-dark}). We show that the dark solitons exist for every speed $c \in \mathbb{R}$ and converge to the black soliton as $c \to 0$.

\begin{theorem}
	\label{theorem-dark}
	For every $c \in \R$, there exists a  dark soliton  with the profile $U_c \in \mathcal{F} \cap C^{\infty}(\mathbb{R})$ satisfying (\ref{nls-trav}) and (\ref{bc-dark}). The mapping $c \mapsto U_c$ is $C^{\infty}$ on $\mathbb{R} \backslash \{ 0 \}$ and
	\begin{equation}
	\label{U-dark-conv}
	\| U_c - \varphi \|_{\mathcal{H}^2_-} \to 0 \quad \mbox{\rm as} \;\; c \to 0.
	\end{equation}
\end{theorem}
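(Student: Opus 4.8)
The plan is to reduce the profile equation (\ref{nls-trav}) to a scalar second-order equation through the polar substitution $U_c=Re^{i\Phi}$, which is legitimate for $c\neq0$ because a dark soliton with $c\neq0$ stays away from zero. Separating real and imaginary parts of (\ref{nls-trav}) and integrating the imaginary part once gives $R^2\Phi'=-\tfrac{c}{2}(1-R^2)^2$, the integration constant being forced to vanish by $R\to1$, $\Phi'\to0$ at infinity. Substituting $\Phi'=-\tfrac{c(1-R^2)^2}{2R^2}$ into the real part and integrating once more yields the energy relation
\[
(R')^2=(1-R^2)^2\Big[1-\frac{c^2(1-R^2)^2}{4R^2}\Big].
\]
On $(0,1)$ its right-hand side vanishes exactly at $R=1$ and at $R_{\min}(c):=|c|^{-1}(\sqrt{1+c^2}-1)\in(0,1)$, which is checked to be a nondegenerate turning point. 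Hence there is a unique even bounce solution $R_c$ with $R_c(0)=R_{\min}(c)$, $R_c'(0)=0$ and $R_c(\xi)\nearrow1$ as $\xi\to+\infty$; the quadrature $\xi=\int_{R_{\min}(c)}^{R_c(\xi)}2r\big[(1-r^2)\sqrt{4r^2-c^2(1-r^2)^2}\big]^{-1}dr$ is integrable at the lower endpoint but not at $r=1$, so $R_c$ is globally defined with $R_c(\xi)\in[R_{\min}(c),1)$. Integrating $\Phi_c'=-\tfrac{c(1-R_c^2)^2}{2R_c^2}$ (smooth and exponentially decaying since $R_c\to1$) with $\Phi_c(+\infty)=0$ and setting $U_c:=R_ce^{i\Phi_c}$ produces $U_c\in\mathcal{F}\cap C^\infty(\R)$ solving (\ref{nls-trav})--(\ref{bc-dark}) with $\theta_+(c)=0$; for $c=0$ we take $U_0=\varphi$. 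This proves existence for every $c\in\R$.

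For the $C^\infty$ dependence on $c\in\R\setminus\{0\}$, I would observe that $R_{\min}$ is $C^\infty$ away from $c=0$, that the reduced autonomous equation $R''=\tfrac12\partial_R\big[(1-R^2)^2(1-\tfrac{c^2(1-R^2)^2}{4R^2})\big]$ has right-hand side smooth in $(R,c)$ on $\{R>0\}$, and that $R_c$ stays bounded away from $0$; smooth dependence on parameters (after desingularizing the square root at the turning point, using its nondegeneracy) then propagates to $c\mapsto R_c\mapsto\Phi_c\mapsto U_c$, with the target being $\varphi+\mathcal{H}^2_-$ thanks to the uniform decay bounds below. Non-smoothness at $c=0$ is genuine since $R_{\min}(c)$ involves $|c|$, which is why the claim excludes $c=0$. (Alternatively one may set up the implicit function theorem directly on the polar system after quotienting out translations.)

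The heart of the matter is $\|U_c-\varphi\|_{\mathcal{H}^2_-}\to0$. Two facts drive it. First, the pointwise identity $|U_c'(\xi)|=1-|U_c(\xi)|^2$, which follows from $(R_c')^2+R_c^2(\Phi_c')^2=(1-R_c^2)^2$ and mirrors $|\varphi'|=1-\varphi^2$. Second, the uniform exponential bounds $0\le1-R_c^2(\xi)\le Ce^{-2|\xi|}$ and $|\Phi_c'(\xi)|\le C|c|e^{-4|\xi|}$ for $|c|\le c_0$, obtained by linearizing the energy relation near $R=1$. A matched-asymptotics computation near the turning point (substitute $t=2R/c$ in the quadrature expressions for $\theta_-(c)$ and for $M(U_c)=E(U_c)=2\int_{R_{\min}(c)}^1 2R(1-R^2)\,[4R^2-c^2(1-R^2)^2]^{-1/2}dR$) gives $\theta_-(c)\to\pi$ and $M(U_c)\to M(\varphi)=\tfrac43$ as $c\to0$. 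These imply $U_c\to\varphi$, $U_c'\to\varphi'$ and $L_-U_c=-2ic(1-R_c^2)U_c'+2(\varphi^2-R_c^2)U_c\to0=L_-\varphi$ pointwise on $\R$. The three pieces of $\|U_c-\varphi\|_{\mathcal{H}^2_-}^2$ are then dispatched by dominated convergence: the $\mathcal{H}$-piece $\|{\rm sech}(\cdot)(U_c-\varphi)\|_{L^2}^2$ with dominating function $\propto{\rm sech}^2(\cdot)$; the derivative piece via $\|U_c'-\varphi'\|_{L^2}^2=E(U_c)+E(\varphi)-2\,\mathrm{Re}\!\int\overline{U_c'}\varphi'\to0$, using $E(U_c)=M(U_c)\to E(\varphi)$ and $\int\overline{U_c'}\varphi'\to\int|\varphi'|^2$; and $\|\cosh^2(\cdot)L_-(U_c-\varphi)\|_{\mathcal{H}}^2=\int\cosh^2(\cdot)|L_-U_c|^2$ with dominating function $\propto e^{-2|\cdot|}$, coming from $|L_-U_c|\le2|c|(1-R_c^2)^2+2|\varphi^2-R_c^2|\le Ce^{-2|\cdot|}$.

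The main obstacle is the singular character of the limit $c\to0$: the polar decomposition degenerates since $R_{\min}(c)\to0$ and $|\Phi_c'|\sim|c|/R_{\min}(c)^2\to\infty$ near $\xi=0$, and the soliton curve in $\CC$ passes through the origin when $c=0$ but winds around it by a phase close to $\pi$ when $c\neq0$. Making this rigorous needs the matched-asymptotic analysis near the turning point — to see that $\theta_-(c)\to\pi$ exactly and that $M(U_c)\to M(\varphi)$ in spite of the phase concentration — together with the uniform-in-$c$ exponential decay, which is exactly what is required to control the $\cosh$-weighted norm defining $\mathcal{H}^2_-$; the identity $|U_c'|=1-|U_c|^2$ is what keeps the thin region near $\xi=0$ from contributing to the $\|(U_c-\varphi)'\|_{L^2}$ part.
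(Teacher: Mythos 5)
Your existence argument is the same as the paper's (Lemma \ref{lem-dark}): polar form, integration of the phase equation to $\Phi_c'=-\tfrac{c}{2}R_c^{-2}(1-R_c^2)^2$, the first--order invariant with turning point $R_{\min}(c)^2=\rho_-(c)$, and the even bounce orbit; your smoothness-in-$c$ sketch matches the paper's equally brief treatment. Where you genuinely diverge is the limit $c\to 0$. The paper abandons the polar form there (precisely because $\Phi_c'$ blows up at the turning point) and runs a three--step implicit--function--theorem argument in Cartesian variables $U_c=u_c+iv_c$, producing the quantitative expansion $\|U_c-\varphi+2icv_{\varphi}\|_{\mathcal{H}^2_-}\le A_0c^2$ of Theorem \ref{lem-conv-to-0}. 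You instead stay in polar variables and prove only the qualitative convergence (\ref{U-dark-conv}) by dominated convergence, exploiting the identity $|U_c'|=1-|U_c|^2$ and the representation $L_-U_c=-2ic(1-R_c^2)U_c'+2(\varphi^2-R_c^2)U_c$, in which every potentially singular factor appears multiplied by $(1-R_c^2)$ so the turning--point degeneracy never enters the weighted norms. This is softer and arguably more transparent for the statement at hand, but it buys strictly less: the paper's $\mathcal{O}(c^2)$ expansion with the explicit first-order term $-2icv_\varphi$ is what feeds Lemma \ref{lem-momentum} (the expansions of $M(U_c)$ and $P(U_c)$) and the modulation analysis of Lemma \ref{lem-parameters}, none of which your argument would deliver without further work.

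Two points in your write-up are glossed rather than proved. First, the asserted bound $|\Phi_c'(\xi)|\le C|c|e^{-4|\xi|}$ uniformly in $|c|\le c_0$ is false near $\xi=0$, where $|\Phi_c'(0)|\sim 2/|c|$, as you yourself note later; it is harmless only because the estimates you actually use involve $c(1-R_c^2)^2$ and $R_c^2\Phi_c'$, which are uniformly controlled. Second, the uniform-in-$c$ constant in $1-R_c^2(\xi)\le Ce^{-2|\xi|}$ requires an argument: from the quadrature one must check that the ``time'' for $\rho_c$ to climb from $\rho_-(c)$ to $\tfrac12$ stays bounded as $c\to0$, and then bootstrap a Gronwall estimate on $\log(1-\rho_c)^{-1}$ to upgrade the crude decay rate to the sharp rate $2$. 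Both steps go through, and the limit $\theta_-(c)\to\pi$ does follow from the quadrature after the substitution you indicate (using $\rho_-\rho_+=1$), so I regard these as expository gaps rather than mathematical ones.
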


\begin{remark}
	For every $c \in \R$, there exists another solitary wave with the profile $U$ satisfying (\ref{nls-trav}) and (\ref{bc-dark}) with $1 < |U(\xi)| < \infty$ for every $\xi \in \mathbb{R}$. However, it is not relevant for the analysis of the black soliton with the profile $\varphi$.
\end{remark}

\begin{remark}
	It follows from the proof of Theorem \ref{theorem-dark} that the profile $U_c$ has the exponential rate of decay independently of the speed parameter $c \in \mathbb{R}$, in particular,
	$$
	|U_c(\xi)| = 1 + A_c e^{-2|\xi|} + {\rm o}(e^{-2|\xi|}) \quad \mbox{\rm as} \;\; |\xi| \to \infty,
	$$ 
	where $A_c < 0$. This is a novel feature compared to the dark solitons (\ref{dark-cubic}) of the cubic NLS equation (\ref{nls-cubic}), where 
	$|U_c(\xi)| = 1 + A_c e^{-2 \sqrt{1-c^2} |\xi|} + {\rm o}(e^{-2\sqrt{1-c^2}|\xi|})$ as $|\xi| \to \infty$ with $A_c < 0$.
\end{remark}

The third result (Section \ref{sec-stability}) establishes the energetic stability 
of the black soliton in $\Sigma \cap \mathcal{H}$, which gives 
the orbital stability under the assumption that the initial-value problem 
for the NLS model (\ref{nls-idd}) is locally well-posed in $\Sigma \cap \mathcal{H}$, where the distance in $\Sigma \cap \mathcal{H}$ is defined by 
\begin{equation*}
\mathcal{D}_{\Sigma \cap \mathcal{H}}(\psi_1,\psi_2) := \sqrt{\| \psi_1' - \psi_2' \|^2_{L^2} + \| |\psi_1|^2 - |\psi_2|^2 \|^2_{L^2} + \| \psi_1 - \psi_2 \|^2_{\mathcal{H}}}.
\end{equation*}
Since the energy method uses conserved quantities, we also assume that 
$E(\psi)$, $M(\psi)$, and $P(\psi)$ are preserved in the time evolution 
of the NLS model (\ref{nls-idd}) in $\Sigma \cap \mathcal{H}$. 
Note that we have shown conservation of these quantities in Section \ref{sec-formalism} under additional constraints (\ref{decay-psi}) and nonzero $\psi(t,x)$ but did not rigorously address their conservation 
in $\Sigma \cap \mathcal{H}$.

\begin{theorem}
	\label{theorem-nonlinear}
Assume that the initial-value problem for the NLS model (\ref{nls-idd}) 
is locally well-posed in $\Sigma \cap \mathcal{H}$ and the values of $E(\psi)$, $M(\psi)$ and $P(\psi)$ are independent of time $t$. Then the black 
soliton is orbitally stable in $\Sigma \cap \mathcal{H}$. 
\end{theorem}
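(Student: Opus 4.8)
The plan is to run the classical Lyapunov/energy argument, built on the functional $\Lambda=E+M$ of (\ref{Lyapunov}), on conservation of $\Lambda$, $M$ and $P$, and on the coercivity of the quadratic forms $Q_\pm$ furnished by Theorem \ref{theorem-lin} — with the comparison state, however, taken in the four-parameter manifold $\mathcal{M}$ obtained from $\varphi$ by the phase and translation symmetries (\ref{symmetry}), the scaling transformation (\ref{scaling-transform}), and the travelling dark solitons $U_c$ of Theorem \ref{theorem-dark}. The whole statement reduces to the coercivity estimate
\begin{equation*}
\Lambda(\psi)-\Lambda(\varphi)\;\geq\;c_0\,\mathcal{D}_{\Sigma\cap\mathcal{H}}\big(\psi,\{e^{i\theta}\varphi(\cdot+\zeta)\}_{\theta,\zeta\in\R}\big)^2\;-\;C_0\big(|M(\psi)-M(\varphi)|^2+|P(\psi)-P(\varphi)|^2\big),
\end{equation*}
valid for $\psi$ in a small $\mathcal{D}_{\Sigma\cap\mathcal{H}}$-tube around the black-soliton orbit. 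Granting this, orbital stability follows from the standard continuity argument: $\Lambda$, $M$ and $P$ are independent of $t$, so for data close enough to the orbit the right-hand side is frozen at a small value and the distance of $\psi(t)$ to the orbit can never reach a prescribed $\varepsilon$.

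The first step is the modulation. For $\psi$ close to the orbit one decomposes, via the implicit function theorem, $e^{2it}\psi(t,\cdot)=e^{i\theta(t)}U_{c(t)}\big(\omega(t)(\cdot+\zeta(t))\big)+\tilde u(t)+i\tilde v(t)$, choosing the four parameters $(\theta,\zeta,c,\omega)$ so that the residual $(\tilde u,\tilde v)$ satisfies all four orthogonality conditions (\ref{constraints-1})--(\ref{constraints-2}). The conditions (\ref{constraints-2}) are produced by $\theta$ and $\zeta$ (the tangent vectors $\vec{w}_1=(\varphi',0)$, $\vec{w}_2=(0,\varphi)$ not being $\mathcal{H}$-orthogonal to the relevant functionals), while (\ref{constraints-1}) come from pinning the mass and momentum: linearizing yields $(\varphi,\tilde u)_{\mathcal{H}}=-\tfrac14(M(\psi)-M(\varphi))+\mathcal{O}(\|(\tilde u,\tilde v)\|^2)$ and $(1-\varphi^2,\tilde v)_{\mathcal{H}}=\tfrac14(P(\psi)-P(\varphi))+\mathcal{O}(\|(\tilde u,\tilde v)\|^2)$, and since $M\mapsto M/\omega$ and $P\mapsto P$ under (\ref{scaling-transform}) while $c\mapsto P(U_c)$ is locally invertible near $c=0$, the parameters $\omega$ and $c$ move $(M,P)$ to the black-soliton values. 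Using $U_c$ rather than $\varphi$ as base profile is essential because the dark soliton carries the non-decaying phase of a perturbed black soliton, so that after this adjustment $\tilde u$ decays and lies in $H^1(\R)=\mathcal{H}^1_+$ while $\tilde v\in\mathcal{H}^1_-$; by (\ref{U-dark-conv}) the profile $U_c(\omega\cdot)$ is $\mathcal{H}^2_-$-close to $\varphi$, so $(c,\omega)$ stays near $(0,1)$ and $\mathcal{D}_{\Sigma\cap\mathcal{H}}$-closeness to the $\mathcal{M}$-element is equivalent, with controlled constants, to $\mathcal{D}_{\Sigma\cap\mathcal{H}}$-closeness to the black-soliton orbit.

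The analytic core is the coercivity of the quadratic part on the constrained subspace. Expanding about $\varphi$ (the $\mathcal{O}(c)$ error from replacing $\varphi$ by $U_c$ being absorbed into the discrepancy terms) gives $\Lambda(\psi)-\Lambda(\varphi)=Q_+(\tilde u)+Q_-(\tilde v)+R(\tilde u,\tilde v)$ with the purely cubic/quartic remainder $R(\tilde u,\tilde v)=\int_\R[\,4\varphi\tilde u(\tilde u^2+\tilde v^2)+(\tilde u^2+\tilde v^2)^2\,]\,dx$, and also the exact identity $\Lambda(\psi)-\Lambda(\varphi)=Q_-(\tilde u)+Q_-(\tilde v)+\||\psi|^2-\varphi^2\|_{L^2}^2$, which is where the $L^2$-control of $|\psi|^2-\varphi^2$ entering $\mathcal{D}_{\Sigma\cap\mathcal{H}}$ comes from. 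By Theorem \ref{theorem-lin}, $\mathcal{L}_+\geq 0$ on $\mathcal{H}$ with simple kernel ${\rm span}\{1-\varphi^2\}$ and gap $\mu_1=6$, so $Q_+(\tilde u)\gtrsim\|\tilde u\|_{H^1}^2$ once $(1-\varphi^2,\tilde u)_{\mathcal{H}}=0$; and $\mathcal{L}_-$ has exactly one negative eigenvalue $-2$ (eigenfunction the constant $1\in\mathcal{H}$), simple kernel ${\rm span}\{\varphi\}$, and gap $\nu_1=4$, so, writing $\tilde v=a\cdot 1+b\varphi+w$ with $w$ in the positive spectral subspace, $(\varphi,\tilde v)_{\mathcal{H}}=0$ forces $b=0$ and $(1-\varphi^2,\tilde v)_{\mathcal{H}}=0$ forces $a=-\tfrac34\big((1-\varphi^2)-\tfrac23,w\big)_{\mathcal{H}}$, whence $Q_-(\tilde v)\geq\big(\nu_1-\tfrac94\|(1-\varphi^2)-\tfrac23\|_{\mathcal{H}}^2\big)\|w\|_{\mathcal{H}}^2\gtrsim\|\tilde v\|_{\mathcal{H}^1_-}^2$; here the slope condition is $\tfrac94\|(1-\varphi^2)-\tfrac23\|_{\mathcal{H}}^2=\tfrac94\big(\int{\rm sech}^6-\tfrac43\int{\rm sech}^4+\tfrac49\int{\rm sech}^2\big)=\tfrac25<4=\nu_1$, which holds with room to spare. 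Combining this with the exact identity, estimating $R$ (using the Sobolev embedding $H^1\hookrightarrow L^\infty\cap L^4$ for $\tilde u$, the exponential decay of $1-\varphi^2$, and $\tilde v^2=(|\psi|^2-\varphi^2)-2\varphi\tilde u-\tilde u^2\in L^2$, so $\tilde v\in L^4$), and using Young's inequality to absorb the $\mathcal{O}(\|\cdot\|^2)$ defects in (\ref{constraints-1}) into the conserved discrepancies, produces the displayed estimate.

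The hardest part is expected to be the second step: arranging the modulation so that the non-decaying imaginary part of a perturbed black soliton is absorbed correctly by the dark-soliton speed and phase, while $c\mapsto U_c$ is, by Theorem \ref{theorem-dark}, only $C^\infty$ away from $c=0$ and merely $\mathcal{H}^2_-$-continuous at $c=0$, so $c$ cannot enter as a smooth modulation parameter in the standard implicit-function scheme. This forces a more delicate construction of the comparison state, exploiting the soft convergence (\ref{U-dark-conv}), the behaviour of $P(U_c)$ near $c=0$, and the speed-uniform exponential decay rate of $U_c$ from the proof of Theorem \ref{theorem-dark} — the last being exactly what keeps the weighted-space bounds uniform along the family. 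A secondary point worth isolating is that the coercivity of $Q_-$ must be obtained under the momentum constraint $(1-\varphi^2,\tilde v)_{\mathcal{H}}=0$ rather than under the spectral constraint $(1,\tilde v)_{\mathcal{H}}=0$, i.e.\ one must verify the slope inequality above (and its analogue for the linearization around $U_c$).
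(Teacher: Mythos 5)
Your architecture coincides with the paper's: the Lyapunov functional $\Lambda=E+M$, four modulation parameters (phase, translation, speed $c$, scaling $\omega$), four orthogonality conditions all posed in $\mathcal{H}$, and the conserved $M$ and $P$ used to control $(c,\omega)$. Your explicit spectral proof of the coercivity of $Q_-$ under the momentum constraint --- reducing it to $\tfrac94\|(1-\varphi^2)-\tfrac23\|^2_{\mathcal{H}}=\tfrac25<4=\nu_1$ --- is correct (the numbers check out) and is a clean self-contained alternative to the paper's appeal to Weinstein's criterion $(v_\varphi,\varphi')_{\mathcal{H}}<0$. But the two places you either flag and leave open, or compress into an $\mathcal{O}(\|\cdot\|^2)$, are exactly where the proof's substance lies.

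First, the modulation. You correctly note that Theorem \ref{theorem-dark} only gives $\mathcal{H}^2_-$-continuity of $c\mapsto U_c$ at $c=0$, so the inverse-function-theorem construction of $(\theta,\zeta,c,\omega)$, the invertibility of the Jacobian of the constraint map, and the asserted local invertibility of $c\mapsto P(U_c)$ all require a differentiable expansion at $c=0$ --- and then you stop, calling this ``the hardest part.'' The paper supplies precisely the missing ingredient: the expansion $U_c=\varphi-2ic\,v_\varphi+\mathcal{O}_{\mathcal{H}^2_-}(c^2)$ of Theorem \ref{lem-conv-to-0} (obtained by abandoning the polar form, which is singular at the zero of $\varphi$, and running implicit-function arguments on real and imaginary parts with exponentially weighted pointwise bounds), together with $M(U_c)=\tfrac43+\mathcal{O}(c^2)$ and $P(U_c)=-\pi+\tfrac{16}{5}c+\mathcal{O}(c^3)$ of Lemma \ref{lem-momentum}. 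Without these, neither Jacobian can be computed, and the nonzero determinant $\tfrac{64}{15}$ of $(c,\omega)\mapsto(M(U_{c,\omega}),P(U_{c,\omega}))$ --- which is what converts conservation of $M$ and $P$ into the bound $|c|+|\omega-1|\leq C(\delta+\|u\|^2_{\mathcal{H}^1_-}+\|v\|^2_{\mathcal{H}^1_-}+\|\eta\|^2_{L^2})$ --- cannot be established. Naming the obstruction is not removing it.

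Second, the momentum expansion and the function-space bookkeeping. Your ``$\mathcal{O}(\|(\tilde u,\tilde v)\|^2)$'' in the linearization of $P$ conceals the main technical effort: $P$ involves $|\psi|^{-2}$ (or $\arg\psi$), the perturbation lies only in $\mathcal{H}^1_-$ (hence is neither in $L^2(\R)$ nor bounded a priori), and one must prove that, once the linear term is removed by the orthogonality conditions (\ref{constraints-with-c}), the remainder obeys the bound (\ref{bound-on-R}). The paper does this in Appendix \ref{app-b} via an inner/outer splitting and the preliminary $L^\infty$ control $\|\eta\|_{L^\infty}\leq C\epsilon$, $\|u\|_{L^\infty}+\|v\|_{L^\infty}\leq C$ of Lemma \ref{lem-supremum}. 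Relatedly, your primary route through $Q_+(\tilde u)$ plus a cubic/quartic remainder presumes $\tilde u\in H^1(\R)$, which is not implied by $\psi\in\Sigma\cap\mathcal{H}$; the paper avoids this by working only with the exact identity $\Lambda-\Lambda(\varphi)=Q_-(u)+Q_-(v)+\|\eta\|^2_{L^2}$, and more precisely with the extended functional $\Lambda_{c,\omega}=E+\omega^2M+c\omega P$ of which $U_{c,\omega}$ is an exact critical point, so that the only remainder ever estimated is the one coming from $P$. You do mention the exact identity, so this last point is repairable, but as written the $Q_+$-plus-$R$ route would fail.
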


\begin{remark}
	We use the standard definition of orbital stability and instability. 
	We say that $\varphi$ is orbitally stable in $\Sigma \cap \mathcal{H}$ if 
for every $\epsilon > 0$ there exists $\delta > 0$ such that 
	for every $\psi_0 \in \Sigma \cap \mathcal{H}$ satisfying
	$\mathcal{D}_{\Sigma \cap \mathcal{H}}(\psi_0,\varphi)  < \delta$, 
	the unique solution $\psi \in C^0(\mathbb{R}_+,\Sigma \cap \mathcal{H})$ satisfies 
	$$
	\inf_{\theta,\zeta \in \mathbb{R}} \mathcal{D}_{\Sigma \cap \mathcal{H}} \left( \psi(t,\cdot),e^{i \theta} 
	\varphi(\cdot - \zeta) \right) < \epsilon,
	$$
	for every $t > 0$. Otherwise, we say that $\varphi$ is orbitally unstable in $\Sigma \cap \mathcal{H}$.
\end{remark}

\begin{remark}
	In the context of the cubic NLS equation (\ref{nls-cubic}), only three constraints were used in \cite{GS} for the proof of orbital stability of black solitons: one was defined in $\mathcal{H}$ and two were defined in $L^2(\R)$. A similar approach was used in \cite{Alejo} for the quintic NLS equation. This relies on two symmetries of the NLS equation and the momentum conservation. Compared to these works, we are using four orthogonality conditions, all defined in $\mathcal{H}$, because the new NLS model (\ref{nls-idd}) has the additional scaling transformation (\ref{scaling-transform}).
\end{remark}

\begin{remark}
	We do not write down explicitly the time evolution of the modulation parameters $\theta$ and $\zeta$ of the orbit $\{ e^{i \theta} \varphi(\cdot + \zeta) \}_{\theta,\zeta \in \R}$. This can be done with the standard projection algorithm in $\mathcal{H}$, see \cite{CPP18,GPII,GS}, but is not required for the orbital stability result.
\end{remark}

\begin{remark}
	It is expected that the values of $E(\psi)$, $M(\psi)$ and $P(\psi)$ are preserved in time $t$ only if the solution $\psi(t,\cdot)$ stays in $\mathcal{F}$ given by (\ref{function-set}). Therefore, we expect that the local well-posedness analysis will show that the set $\mathcal{F}$ is invariant under the time evolution of the NLS model (\ref{nls-idd}).
\end{remark}

The final result (Section \ref{sec-potential}) is about persistence and stability of the black soliton in the perturbed NLS model (\ref{nls-idd-potential}) with the external potential $V$. We prove that  the same properties as in Theorem \ref{theorem-lin} can be extended to the black soliton pinned to the minimal point of the effective potential $\mathcal{V}(s)$ given by (\ref{potential-effective}). 

\begin{theorem}
	\label{theorem-potential}
	Assume that $V \in W^{2,\infty}(\R) \cap L^1(\R)$ and that $s \in \mathbb{R}$ is a simple root of $\mathcal{V}'(s)$, where $\mathcal{V}(s)$ is given by (\ref{potential-effective}). There exists $\varepsilon_0 > 0$ such that for every $\varepsilon \in (0,\varepsilon_0)$, there exists a black soliton of the perturbed NLS model (\ref{nls-idd-potential}) in the form 
	\begin{equation}	
\psi(t,x) = e^{-2it} \phi_{\varepsilon}(x), \qquad 
\phi_{\varepsilon}(x) = \varphi(x-s) + \tilde{\varphi}_{\varepsilon}(x),
	\end{equation}
	where $\varphi(x) = \tanh(x)$ and $\tilde{\varphi}_{\varepsilon} \in H^2(\R)$ satisfies $\| \tilde{\varphi}_{\varepsilon} \|_{H^2} \leq C \varepsilon$ for some $\varepsilon$-independent positive constant $C$. Moreover, 
	\begin{itemize}
		\item If $\mathcal{V}''(s) > 0$, the stability spectrum consists of pairs of isolated eigenvalues (\ref{spectrum-stab}) and a double zero eigenvalue. 
		\item If $\mathcal{V}''(s) < 0$, the stability spectrum consists of pairs of isolated eigenvalues (\ref{spectrum-stab}), a double zero eigenvalue, and a pair of simple real eigenvalues $\{ \pm \lambda_0 \}$ with $\lambda_0 > 0$. 
	\end{itemize}
\end{theorem}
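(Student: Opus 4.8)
The plan is to set up a Lyapunov--Schmidt reduction for the stationary problem associated with (\ref{nls-idd-potential}). Writing $\psi(t,x) = e^{-2it} \phi(x)$, the profile $\phi \in \mathcal{F}$ must solve
\begin{equation*}
\phi'' + 2(1-|\phi|^2)\phi = \varepsilon V(x) \phi.
\end{equation*}
I would look for a real-valued solution of the form $\phi_\varepsilon(x) = \varphi(x-s) + \tilde\varphi_\varepsilon(x)$ with $\tilde\varphi_\varepsilon \in H^2(\R)$, so that the equation becomes $L_+(s)\tilde\varphi_\varepsilon = -\varepsilon V \varphi(\cdot-s) + N(\tilde\varphi_\varepsilon)$, where $L_+(s) = -\partial_x^2 + 6\varphi(\cdot-s)^2 - 2$ is the translated operator (\ref{L-plus}) and $N$ collects the quadratic/cubic remainder. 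By standard Sturm--Liouville theory on $H^2(\R)$, $L_+(s)$ has a one-dimensional kernel spanned by $\varphi'(\cdot-s)$, so I project onto the orthogonal complement in $L^2(\R)$: the range equation is solved by the implicit function theorem for $\tilde\varphi_\varepsilon = \tilde\varphi_\varepsilon(s)$ with $\|\tilde\varphi_\varepsilon\|_{H^2} \le C\varepsilon$, using that $V\varphi(\cdot-s) \in L^2(\R)$ because $V \in L^1(\R)$ (indeed $L^2(\R)$ suffices) and $W^{2,\infty}$ gives the needed smoothness for the Newton iteration in $H^2$. The bifurcation (scalar) equation is $\langle V \varphi(\cdot-s), \varphi'(\cdot-s)\rangle_{L^2} + \mathcal{O}(\varepsilon) = 0$; since $\int_\R V(x)\varphi(x-s)\varphi'(x-s)\,dx = \tfrac12 \int_\R V(x) \partial_x[\varphi(x-s)^2]\,dx = -\tfrac12 \int_\R V'(x)\,{\rm sech}^2(x-s)\,dx = \tfrac12 \mathcal{V}'(s)$ after an integration by parts, a simple root $s$ of $\mathcal{V}'$ with $\mathcal{V}''(s)\neq0$ persists to a solution $s_\varepsilon = s + \mathcal{O}(\varepsilon)$ by the implicit function theorem applied to the scalar equation. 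After relabeling, this gives the claimed black soliton.

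For the spectral part, I would linearize (\ref{nls-idd-potential}) at $\phi_\varepsilon$ and obtain the perturbed stability problem $\mathcal{L}_-^\varepsilon v = \lambda u$, $-\mathcal{L}_+^\varepsilon u = \lambda v$, where now $\mathcal{L}_\pm^\varepsilon = \cosh^2(\cdot - s) L_\pm^\varepsilon$ with $L_\pm^\varepsilon$ an $\mathcal{O}(\varepsilon)$-perturbation of the translated $L_\pm$ (the weight can be taken $\varepsilon$-independent since $\phi_\varepsilon \to \varphi(\cdot-s)$ in $H^2$, hence $|\phi_\varepsilon|^2 - \varphi(\cdot-s)^2 = \mathcal{O}(\varepsilon)$ uniformly). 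By Theorem \ref{theorem-lin} and its remarks, the unperturbed problem (at $\varepsilon=0$) has a quadruple zero eigenvalue, of which a double zero is generic (phase symmetry of (\ref{nls-idd-potential})) and persists, while the remaining double zero—coming from translation, which is \emph{broken} by $V$—splits. All other eigenvalues lie on $i\R$ and stay there by continuity (the isolated simple eigenvalues of Theorem \ref{theorem-lin} and a spectral-gap/resolvent argument confirm the essential spectrum stays away, since the problem is purely discrete in $\mathcal{H}$). The heart of the matter is to track the splitting of the translational double zero: I would use a Lyapunov--Schmidt / Puiseux expansion near $\lambda = 0$, reducing to a $2\times2$ matrix eigenvalue problem on the generalized kernel spanned (at leading order) by the translational mode $(\varphi'(\cdot-s_\varepsilon),0)$ and its Jordan partner. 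The standard computation yields $\lambda^2 = -\varepsilon\, d''(s)/m + \mathcal{O}(\varepsilon^2)$ for an appropriate positive normalization constant $m$ (the symplectic/energy norm of the Jordan chain) and $d''(s)$ proportional to $\mathcal{V}''(s)$—this is the analogue of the pinning computations in \cite{PelKev08}, but here with the \emph{opposite sign}, which is precisely the mechanism behind the stability reversal. Hence $\mathcal{V}''(s)>0$ gives $\lambda^2<0$, a purely imaginary pair, and $\mathcal{V}''(s)<0$ gives $\lambda^2>0$, a real pair $\{\pm\lambda_0\}$.

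The main obstacle I anticipate is the spectral splitting computation: one must justify rigorously that the eigenvalues of the full problem near zero are governed by the finite-dimensional reduction, which requires uniform resolvent bounds for $(\mathcal{L}_\pm^\varepsilon - z)^{-1}$ on the complement of the (near-)kernel, together with the correct identification of the Jordan block structure at $\varepsilon = 0$. In the $\mathcal{H}$-framework the generalized kernel of the unperturbed operator includes the chain from the translational and phase symmetries, and I would need the solvability conditions (Fredholm alternative in $\mathcal{H}$, using simplicity of the zero eigenvalues of $\mathcal{L}_\pm$ from Theorem \ref{theorem-lin}) to compute the reduced $2\times2$ matrix. The sign of the crucial coefficient depends on whether the generator of the Jordan chain $\mathcal{L}_+^{-1}$ (restricted appropriately) is sign-definite, which follows from the positivity structure in (\ref{spectrum-plus})–(\ref{spectrum-minus}) once the translational constraint is imposed. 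A secondary technical point is to confirm that no eigenvalue emerges from the branch point $\lambda=\pm 2i$ of $\mathcal{L}_-$ (the bottom eigenvalue $-2$) under the perturbation; since that eigenvalue is simple and isolated in $\mathcal{H}$, a direct perturbation argument rules this out. Throughout, the $W^{2,\infty}$ regularity of $V$ is what allows all the operator perturbations to be controlled in the operator norm on $\mathcal{H}^2_\pm$, and $L^1(\R)$ (rather than merely $L^2$) is what is needed for the scalar bifurcation function to be well-defined and for $\mathcal{V}$ to be $C^2$.
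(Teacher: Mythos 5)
Your strategy coincides with the paper's: a Lyapunov--Schmidt reduction in $L^2(\R)$ about the kernel ${\rm span}(\varphi'(\cdot-s))$ of the translated operator (\ref{L-plus}) for the persistence part (the paper fixes $s$ and solves for an auxiliary shift $a_\varepsilon=\mathcal O(\varepsilon)$, you solve for $s_\varepsilon$ directly --- same computation), and a Puiseux expansion $\lambda=\varepsilon^{1/2}\lambda_1+\cdots$ of the translational double zero for the stability part, whose solvability condition gives $\lambda_1^2=\mathcal V''(s)/\bigl(2(\varphi',v_{\varphi})_{\mathcal H}\bigr)$ with $(\varphi',v_{\varphi})_{\mathcal H}<0$; this is exactly your sign dichotomy and the announced reversal relative to \cite{PelKev08}. (Your intermediate integration-by-parts line carries a stray sign, but the final value $\tfrac12\mathcal V'(s)$ is correct and the sign is immaterial for the root-persistence argument.)

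The one genuine gap is your justification that the weighted-space spectral theory of Theorem \ref{theorem-lin} carries over to the linearization at $\phi_\varepsilon$. You argue that the weight can be taken $\varepsilon$-independent because $\| |\phi_\varepsilon|^2-\varphi(\cdot-s)^2\|_{L^\infty}=\mathcal O(\varepsilon)$. That is not sufficient: the weight in the perturbed Hilbert space is $1-\phi_\varepsilon^2$, and what is needed is that the ratio $(1-\phi_\varepsilon^2)/{\rm sech}^2(x-s)$ be bounded above and away from zero, i.e.\ that $1-\phi_\varepsilon^2$ decay with the same exponential rate $2$. The bound $\|\tilde\varphi_\varepsilon\|_{H^2}\le C\varepsilon$ controls only an $L^2$-type decay of $\tilde\varphi_\varepsilon$, and since $1-\phi_\varepsilon^2={\rm sech}^2(x-s)-2\varphi(x-s)\tilde\varphi_\varepsilon-\tilde\varphi_\varepsilon^2$, a slowly decaying correction would destroy the equivalence of $\mathcal H_\varepsilon$ with $\mathcal H$ and hence the extension of the purely-discrete-spectrum result (Theorem \ref{th-discrete}) on which the whole eigenvalue count rests. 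This is precisely where the hypothesis $V\in L^1(\R)$ enters in the paper: by Levinson's theorem applied to the profile equation, $\phi_\varepsilon\to\pm1$ exponentially with the unperturbed rate $2$, so the weighted norms are equivalent and the resolvent analysis extends verbatim. Your attribution of the $L^1$ assumption to the well-definedness of the bifurcation function (which only needs $V\in W^{2,\infty}\cap L^2$) misses this point. Two smaller remarks: the worry about an eigenvalue emerging from a ``branch point $\lambda=\pm2i$'' is moot, since in $\mathcal H$ all eigenvalues are isolated and the persistence of the nonzero imaginary pairs follows from positivity of the constrained quadratic forms (the paper cites \cite{CP10}); and you should also track, as the paper does, the small eigenvalue of $L_+(\varepsilon)$ of sign $-\varepsilon\mathcal V''(s)$ (up to a positive factor), which keeps the negative-index bookkeeping consistent with the conclusion.
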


\begin{remark}
	In the case $\mathcal{V}''(s) < 0$, the black soliton is orbitally unstable in $\Sigma \cap \mathcal{H}$ as the linear instability in the NLS models implies orbital instability \cite{GSS87} provided the initial-value problem is locally well-posed in $\Sigma \cap \mathcal{H}$. 
\end{remark}

\begin{remark}
	In the case $\mathcal{V}''(s) > 0$, the black soliton is not orbitally stable because the corresponding Lyapunov functional has two  eigendirections which correspond to two negative eigenvalues of the Hessian operator. Nevertheless, the spectrum of the linear stability problem is purely discrete in $\mathcal{H}$ and one can expect that the black soliton is orbitally stable in the NLS model (\ref{nls-idd-potential}) if $n \omega_1 \neq \omega_{k+1}$ for every $n \in \N$ and $k \in \N$, where $\omega_1$ is the smallest nonzero eigenvalue bifurcating from the zero eigenvalue if $\varepsilon \to 0$.
\end{remark}

The remainder of this paper is devoted to the proof of Theorems \ref{theorem-lin}, \ref{theorem-dark}, \ref{theorem-nonlinear}, and \ref{theorem-potential}, concluded with discussions of open problems.

\section{Linear operators $\mathcal{L}_{\pm}$ in the exponentially weighted $L^2(\R)$ space}
\label{sec-spaces}

Here we analyze the spectra of linear operators $\mathcal{L}_{\pm}$ and the linearized operator 
\begin{equation}
\label{operator-L}
\mathcal{L} = \left[ \begin{array}{cc} 0 & \mathcal{L}_- \\ - \mathcal{L}_+ & 0 \end{array} \right]
\end{equation}
in the Hilbert space $\mathcal{H}$ defined in (\ref{Hilbert-space}). We provide the proof of Theorem \ref{theorem-lin} which gives spectral stability of the black soliton and specifies that the spectra of $\mathcal{L}_-$ and  $\mathcal{L}_+$ in $\mathcal{H}$  and $\mathcal{L}$ in $\mathcal{H} \times \mathcal{H}$ are purely discrete. 

\subsection{Spectrum of $\mathcal{L}_-$ in $\mathcal{H}$}

It follows from (\ref{L-minus}) that $L_- = -\partial_x^2 - 2\,{\rm sech}^2(x)$. The spectral problem for $\mathcal{L}_-$ in $\mathcal{H}$ take the form 
\begin{equation}
\label{spectrum-L-minus}
- v''(x) - 2 \; {\rm sech}^2(x) \; v(x) = \nu \; {\rm sech}^2(x) \; v(x), \quad x \in \mathbb{R}.
\end{equation}
The first two eigenvalues of the spectral problem (\ref{spectrum-L-minus}) are available explicitly:
\begin{align*}
\nu = -2 : & \quad v(x) = 1, \\
\nu = 0 : & \quad v(x) = \tanh(x).
\end{align*}
The bounded solutions of the second-order differential equation (\ref{spectrum-L-minus}) belong to the form domain $\mathcal{H}^1_-$ given by (\ref{form-domain}), whereas the linearly growing solutions do not belong to $\mathcal{H}^1_-$. Hence, eigenvalues of the spectral problem 
(\ref{spectrum-L-minus}) coincide with the admissible values of $\nu$ 
for which the Schr\"{o}dinger operator 
$$
\mathcal{S}_{\nu} := -\partial_x^2 - (2+\nu) {\rm sech}^2(x) : H^2(\R) \subset L^2(\R) \to L^2(\R)
$$ 
admits the end-point resonance at the zero energy level with a bounded eigenfunction $v(x)$. This problem is well-known 
in mathematical physics (see \cite[Chapter 6, pp.768--769]{LL}), for which the differential equation (\ref{spectrum-L-minus}) is converted to the hypergeometric equation. It is also well-known that the power series for the hypergeometric function is truncated into a polynomial which admits a bounded eigenfunction if and only if 
$$
2+\nu = n(n+1), \quad \mbox{\rm where} \;\; n \in \mathbb{N}_0 := \{0,1,2,\dots\}.
$$ 
This gives the exact location of the simple 
eigenvalues at $\nu = n(n+1) - 2$ (Remark \ref{remark-L-minus}).

\begin{remark}
	\label{rem-Sturm}
	Each eigenvalue is simple because if one solution of the second-order differential equation (\ref{spectrum-minus}) is bounded, the second solution is linearly growing in $x$ at infinity. Moreover, Sturm's oscillation theorem states that the bounded eigenfunctions for the $n$-th eigenvalue has $(n-1)$ zeros on $\mathbb{R}$. Hence $0$ is the second eigenvalue of $\mathcal{L}_-$ in $\mathcal{H}$. 
\end{remark}

We are going to make the picture above precise and give the proof that 
the linear operator $\mathcal{L}_-$ in the Hilbert space $\mathcal{H}$ has a purely discrete spectrum consisting of simple isolated eigenvalues. In order to do so, we define a positive linear operator 
$$
M_- := -\partial_x^2 + {\rm sech}^2(x)
$$ 
or equivalently, 
\begin{equation}
\label{M-minus}
\mathcal{M}_- : \mathcal{H}^2_- \to \mathcal{H}, \quad 
\mathcal{M}_- := - \cosh^2(x) \partial_x^2 + 1,
\end{equation}
where the operator domain $\mathcal{H}_-^2$ is given by (\ref{operator-domain}). 
It follows from the triangle inequality that the operator domain $\mathcal{H}_-^2$ can be written in the equivalent form
\begin{equation}
\label{operator-domain-minus}
\mathcal{H}^2_- = \left\{ f \in \mathcal{H} : \quad f' \in L^2(\R), \;\; 
\cosh(\cdot) f'' \in L^2(\R) \right\}
\end{equation}
equipped with the norm
\begin{equation}
\label{operator-norm-minus}
\| f \|_{\mathcal{H}^2_-} := \sqrt{ \| \cosh(\cdot) f'' \|_{L^2}^2 + \| f' \|^2_{L^2} + \| f \|^2_{\mathcal{H}}}.
\end{equation}

The first result insures that $0$ does not belong to the spectrum of the operator $\mathcal{M}_-$ in $\mathcal{H}$.

\begin{lemma}
	\label{lem-M-minus}
	The linear operator $\mathcal{M}_-$ given by (\ref{M-minus}) is bijective 
	and symmetric, whereas $\mathcal{M}^{-1}_-$ is a bounded operator from $\mathcal{H}$ to $\mathcal{H}^2_-$. 
\end{lemma}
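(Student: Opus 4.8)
The plan is to establish the three claimed properties---symmetry, bijectivity, and boundedness of the inverse---in that logical order, using the explicit form $\mathcal{M}_- = -\cosh^2(x)\partial_x^2 + 1$ and the definition of $\mathcal{H}$ as the weighted space with weight $\mathrm{sech}^2(x)$. First I would verify symmetry: for $f,g \in \mathcal{H}^2_-$ one has $(\mathcal{M}_- f, g)_{\mathcal{H}} = \int_{\R}(1-\varphi^2)\,\overline{(-\cosh^2(x) f'' + f)}\, g\,dx = \int_{\R}\big(-\overline{f''} g + \mathrm{sech}^2(x)\overline{f} g\big)dx$, since $(1-\varphi^2)(x) = \mathrm{sech}^2(x)$ cancels the $\cosh^2(x)$ weight in the leading term. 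Integration by parts moves the derivative off $f$, and the boundary terms vanish because $f', g \in L^2(\R)$ together with $\cosh(\cdot)f'' \in L^2(\R)$ (the representation (\ref{operator-domain-minus})) forces decay of $f'$ at infinity; this yields $\int_{\R}\big(\overline{f'} g' + \mathrm{sech}^2(x)\overline{f} g\big)dx$, which is manifestly symmetric in $f$ and $g$. The same computation shows $(\mathcal{M}_- f, f)_{\mathcal{H}} = \|f'\|_{L^2}^2 + \|f\|_{\mathcal{H}}^2 > 0$ for $f \neq 0$, so $\mathcal{M}_-$ is positive and in particular injective.

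Next I would prove surjectivity. Given $h \in \mathcal{H}$, I want $f \in \mathcal{H}^2_-$ with $-\cosh^2(x) f'' + f = h$, i.e. $-f'' + \mathrm{sech}^2(x) f = \mathrm{sech}^2(x) h$. The natural route is the Lax--Milgram theorem applied on the form domain $\mathcal{H}^1_- = \{f \in \mathcal{H} : f' \in L^2\}$ equipped with the inner product $(f,g)_{\mathcal{H}^1_-} := \int_{\R}\big(\overline{f'}g' + \mathrm{sech}^2(x)\overline{f}g\big)dx$; one checks $\mathcal{H}^1_-$ is complete under this norm. The bilinear form $a(f,g) = (f,g)_{\mathcal{H}^1_-}$ is bounded and coercive on $\mathcal{H}^1_-$ by construction, and the functional $g \mapsto (h,g)_{\mathcal{H}}$ is bounded on $\mathcal{H}^1_-$ since $\|g\|_{\mathcal{H}} \leq \|g\|_{\mathcal{H}^1_-}$. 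Lax--Milgram then gives a unique weak solution $f \in \mathcal{H}^1_-$; elliptic regularity (bootstrapping from $f'' = \mathrm{sech}^2(x)(f - h)$, noting the right side lies in $\mathcal{H} \subset L^2_{\mathrm{loc}}$ and in fact $\cosh(\cdot)f'' = \mathrm{sech}(\cdot)(f-h) \in L^2$ because $\mathrm{sech}(\cdot)f, \mathrm{sech}(\cdot)h \in L^2$) shows $f \in \mathcal{H}^2_-$ and that $f$ solves the equation strongly. Thus $\mathcal{M}_-$ is bijective.

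Finally, boundedness of $\mathcal{M}_-^{-1} : \mathcal{H} \to \mathcal{H}^2_-$: from the weak formulation with test function $g = f$ we get $\|f'\|_{L^2}^2 + \|f\|_{\mathcal{H}}^2 = (h,f)_{\mathcal{H}} \leq \|h\|_{\mathcal{H}}\|f\|_{\mathcal{H}} \leq \|h\|_{\mathcal{H}}(\|f'\|_{L^2}^2 + \|f\|_{\mathcal{H}}^2)^{1/2}$, giving $\|f\|_{\mathcal{H}^1_-} \leq \|h\|_{\mathcal{H}}$; and then $\|\cosh(\cdot)f''\|_{L^2} = \|\mathrm{sech}(\cdot)(f-h)\|_{L^2} \leq \|f\|_{\mathcal{H}} + \|h\|_{\mathcal{H}} \leq 2\|h\|_{\mathcal{H}}$, so $\|f\|_{\mathcal{H}^2_-} \lesssim \|h\|_{\mathcal{H}}$. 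I expect the main obstacle to be purely technical rather than conceptual: carefully justifying the vanishing of boundary terms in the integration by parts and verifying completeness of $\mathcal{H}^1_-$ and the elliptic-regularity bootstrap, all of which hinge on keeping precise track of which weighted norms control decay at $\pm\infty$. The algebraic cancellation between the $\cosh^2$ in the operator and the $\mathrm{sech}^2$ in the weight is what makes the form coercive without any spectral-gap input, so nothing deep is needed beyond Lax--Milgram.
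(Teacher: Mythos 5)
Your proposal follows essentially the same route as the paper: a weak formulation on $\mathcal{H}^1_-$ solved by Riesz representation (equivalently Lax--Milgram for this symmetric coercive form), a bootstrap to $\mathcal{H}^2_-$ via $-f'' = {\rm sech}^2(\cdot)(h-f) \in L^2(\R)$, the same estimates for the inverse, and symmetry by integration by parts. The one point to tighten is your justification of the vanishing boundary terms: elements $g$ of $\mathcal{H}^1_-$ need not lie in $L^2(\R)$ (constants belong to $\mathcal{H}^1_-$), so mere decay of $f'$ does not suffice; one needs the quantitative weighted statement that $\cosh(\cdot)f' \in H^1(\R)$ (hence $\cosh(x)f'(x) \to 0$) together with ${\rm sech}(x)g(x) \to 0$, which is exactly what the paper's Appendix~\ref{app-a} (Lemmas \ref{lem-A} and \ref{lem-B}) supplies and what you correctly flagged as the main technical obstacle.
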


\begin{proof}
	For injectivity, we use the integration by parts formula (Lemma \ref{lem-B}) which ensures that if $u \in \mathcal{H}^2_-$, then 
	$\langle -u'',u \rangle_{L^2} = \| u' \|^2_{L^2}$. Hence, if there exists $u \in \mathcal{H}^2_-$ such that $\mathcal{M}_- u = 0$, then 
	$$
	\int_{\mathbb{R}} \left( |u'(x)|^2 + {\rm sech}^2(x) |u(x)|^2 \right) dx = 0
	$$
	which implies that $u = 0$ in $\mathcal{H}$. Hence, ${\rm Ker}(\mathcal{M}_-)$ is trivial in $\mathcal{H}$. 
	
	For surjectivity, let $f \in \mathcal{H}$ and consider the resolvent equation for $\mathcal{M}_-$ in the weak form:
	\begin{equation}
	\label{res-M-minus}
\left( u, \phi\right)_{\mathcal{H}^1_-} = \left( f, \phi\right)_{\mathcal{H}}, \quad  \forall \phi \in \mathcal{H}^1_-,
	\end{equation}
where the inner product in $\mathcal{H}^1_-$ is defined by 
	$$
	\left( u,\phi \right)_{\mathcal{H}^1_-} := 	\left( u,\phi \right)_{\mathcal{H}} + \langle u', \phi' \rangle_{L^2}, \quad \forall u, \phi \in \mathcal{H}^1_-, 
	$$
	together with the induced norm $\| \cdot \|_{\mathcal{H}^1_-}$.
Due to the Cauchy--Schwarz inequality 
	$$
	|\left( f, \phi\right)_{\mathcal{H}}| \leq \| f \|_{\mathcal{H}} \| \phi \|_{\mathcal{H}} \leq  \| f \|_{\mathcal{H}} \| \phi \|_{\mathcal{H}^1_-}, \quad \forall \phi \in \mathcal{H}^1_-
	$$
and the Riesz Representation Theorem, there exists a unique solution $u \in \mathcal{H}^1_-$ of (\ref{res-M-minus}) such that 
$\| u \|_{\mathcal{H}^1_-} \leq \| f \|_{\mathcal{H}}$. This yields 
$$
\langle u', \phi' \rangle_{L^2} = \int_{\R} {\rm sech}^2(\cdot) (\bar{f}-\bar{u}) \phi dx, \quad \forall \phi \in \mathcal{H}^1_-
$$
with ${\rm sech}^2(\cdot) (f-u) \in L^2(\R)$. Therefore, if we take $\phi \in C^{\infty}_c(\R) \subset \mathcal{H}^1_-$, then $u'$ is weakly differentiable and $-u'' = {\rm sech}^2(\cdot) (f-u) \in L^2(\R)$. 
Hence, $-\cosh(\cdot) u'' = {\rm sech}(\cdot) (f-u) \in L^2(\R)$ so that 
$u \in \mathcal{H}^2_-$ is a strong solution of 
$\mathcal{M}_- u = f$ for every $f \in \mathcal{H}$. 

The bound on the inverse operator $\mathcal{M}_-^{-1}$ from $\mathcal{H}$ to $\mathcal{H}^2_-$ follows from $\| u \|_{\mathcal{H}^1_-} \leq \| f \|_{\mathcal{H}}$ and
$$
\| \cosh(\cdot) u'' \|_{L^2} \leq \| f \|_{\mathcal{H}} + \| u \|_{\mathcal{H}} \leq 2 \| f \|_{\mathcal{H}}, 
$$
so that the definition (\ref{operator-norm-minus}) implies that $\| \mathcal{M}_-^{-1} f \|_{\mathcal{H}^2_-} \leq \sqrt{5} \| f \|_{\mathcal{H}}$ holds for every $f \in \mathcal{H}$.

Finally, the integration by parts 
formula (Lemma \ref{lem-B}) ensures for any $u,v \in \mathcal{H}^2_-$ that 
$$
\left( \mathcal{M}_- u, v \right)_{\mathcal{H}} = 
\int_{\R} (-\bar{u}'' + {\rm sech}^2(\cdot) \bar{u}) v dx = 
\int_{\R} \bar{u} (-v'' + {\rm sech}^2(\cdot) v)  dx = 
\left( u, \mathcal{M}_- v \right)_{\mathcal{H}},
$$
so that the linear operator (\ref{M-minus}) is symmetric.
\end{proof}

The next goal is to prove compactness of the embedding of $\mathcal{H}^2_-$ into $\mathcal{H}$. This will ensure that the spectrum of the operator $\mathcal{M}_-$ in $\mathcal{H}$ is purely discrete.

\begin{lemma}
	\label{lem-compact-minus}
	The embedding $\mathcal{H}^2_- \hookrightarrow \mathcal{H}$ is compact. 
\end{lemma}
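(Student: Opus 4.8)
The plan is to obtain the compact embedding from two ingredients: local compactness on bounded intervals, supplied by the Rellich--Kondrachov theorem, and a uniform tail estimate in the weighted norm $\|\cdot\|_{\mathcal{H}}$. The latter will follow from the observation that every $f \in \mathcal{H}^2_-$ tends to a constant at $+\infty$ and at $-\infty$, at an exponential rate controlled by $\|f\|_{\mathcal{H}^2_-}$; this exponential decay is precisely what the weight $\cosh^2(\cdot)$ in the operator domain buys us, and it is the mechanism producing equi-tightness.

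First I would record the key a priori estimate. Let $f \in \mathcal{H}^2_-$. By the characterization (\ref{operator-domain-minus}) we have $f' \in L^2(\R)$ and $\cosh(\cdot) f'' \in L^2(\R)$, hence also $f'' \in L^2(\R)$ because $\cosh \geq 1$. Since $\mathrm{sech} \in L^2((0,\infty))$, the Cauchy--Schwarz inequality gives $f'' \in L^1((0,\infty))$, so $f'$ has a finite limit at $+\infty$; as $f' \in L^2(\R)$ this limit is $0$, whence $f'(x) = -\int_x^\infty f''(y)\,dy$ and, using $\int_x^\infty \mathrm{sech}^2(y)\,dy \le 2e^{-2x}$,
\begin{equation*}
|f'(x)| \le \sqrt{2}\, e^{-x}\, \|\cosh(\cdot) f''\|_{L^2}, \qquad x \ge 0 .
\end{equation*}
Integrating once more shows that $f^{+\infty} := \lim_{x \to +\infty} f(x)$ exists with $|f(x) - f^{+\infty}| \le \sqrt{2}\, e^{-x}\, \|\cosh(\cdot) f''\|_{L^2}$ for $x \ge 0$, and symmetric bounds hold at $-\infty$. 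Combining the last estimate over $x \in [0,1]$ with $\|f\|_{\mathcal{H}}^2 \ge \int_0^1 \mathrm{sech}^2(x)|f(x)|^2\,dx$ then yields $|f^{+\infty}| + |f^{-\infty}| \le C \|f\|_{\mathcal{H}^2_-}$ for an absolute constant $C$.

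Now take $(f_n) \subset \mathcal{H}^2_-$ with $\|f_n\|_{\mathcal{H}^2_-} \le K$. On any interval $(-R,R)$ one has $\|f_n\|_{L^2(-R,R)} \le \cosh(R)\|f_n\|_{\mathcal{H}}$, $\|f_n'\|_{L^2(-R,R)} \le \|f_n'\|_{L^2(\R)}$, and $\|f_n''\|_{L^2(-R,R)} \le \|\cosh(\cdot) f_n''\|_{L^2(\R)}$, so $(f_n)$ is bounded in $H^2((-R,R))$. By Rellich--Kondrachov and a diagonal argument over $R \in \N$ we extract a subsequence, not relabeled, converging in $L^2_{\rm loc}(\R)$ to some $f$; passing to a further subsequence we may also assume $f_n^{+\infty} \to a_+$ and $f_n^{-\infty} \to a_-$ in $\mathbb{C}$, since these sequences are bounded by $CK$. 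Letting $n \to \infty$ in the estimate of the previous paragraph gives $|f(x) - a_+| \le \sqrt{2}\, K e^{-x}$ for $x \ge 0$ and the symmetric bound for $x \le 0$; in particular $f \in \mathcal{H}$.

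To conclude I would split, for $R>0$,
\begin{equation*}
\|f_n - f\|_{\mathcal{H}}^2 = \int_{-R}^{R} \mathrm{sech}^2(x)|f_n - f|^2\,dx + \int_{|x|>R} \mathrm{sech}^2(x)|f_n - f|^2\,dx .
\end{equation*}
For $|x| > R$ the triangle inequality and the exponential bounds give $|f_n(x) - f(x)| \le 2\sqrt{2}\, K e^{-|x|} + 2CK$, and using $\int_R^\infty \mathrm{sech}^2(x)\,dx \le 2e^{-2R}$ together with $\mathrm{sech}^2(x) \le 4e^{-2|x|}$ bounds the second integral by $C'K^2(e^{-2R} + e^{-4R})$, \emph{uniformly} in $n$. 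Given $\epsilon > 0$, first fix $R$ so that this uniform tail bound is below $\epsilon/2$, then use $L^2_{\rm loc}$-convergence (and $\mathrm{sech}^2 \le 1$ on $(-R,R)$) to make the first integral below $\epsilon/2$ for all large $n$. Hence $f_n \to f$ in $\mathcal{H}$, which proves the compact embedding. The only delicate point is the exponential-decay estimate of the second paragraph; everything else is routine Sobolev theory.
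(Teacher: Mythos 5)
Your proof is correct, and it takes a genuinely different route from the paper's. The paper argues entirely at the level of $L^2$ estimates: it notes that $\{u_n'\}$ is bounded in $H^1(\R)$, uses compactness of the multiplication map $H^1(\R) \ni w \mapsto {\rm sech}(\cdot)\,w \in L^2(\R)$ to extract a subsequence with $\{{\rm sech}(\cdot)\,u_{n_k}'\}$ convergent, and then transfers this convergence to $\{{\rm sech}(\cdot)\,u_{n_k}\}$ via the integral representation $u_n(x) = u_n(0) + \int_0^x \cosh(t)\,{\rm sech}(t)\,u_n'(t)\,dt$ together with the weighted Hardy-type inequality (\ref{tech-3}) proved in Lemma \ref{lem-A} of the appendix. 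You instead exploit the weight on $f''$ to derive a \emph{pointwise} exponential estimate $|f(x) - f^{\pm\infty}| \leq \sqrt{2}\,e^{-|x|}\|\cosh(\cdot) f''\|_{L^2}$, and then run the standard local-compactness-plus-equi-tightness scheme (Rellich--Kondrachov on $(-R,R)$, uniform control of the weighted tails). Your argument is self-contained — it does not need the appendix inequality — and as a byproduct it makes explicit that elements of $\mathcal{H}^2_-$ converge to constants at $\pm\infty$ at a rate $e^{-|x|}$, which is a useful structural fact. The paper's argument is slightly more economical in its hypotheses (it only ever uses $f'' \in L^2(\R)$ rather than the full weighted bound on $f''$, once the inequality (\ref{tech-3}) is in hand), but both are complete proofs of the compactness.
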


\begin{proof}
	Let $\{ u_n \}$ denotes a bounded sequence in $\mathcal{H}^2_-$ so that $\{ \cosh(\cdot) u_n''\}$, $\{ u_n'\}$, and 
	$\{ {\rm sech}(\cdot) u_n \}$ are bounded in $L^2(\R)$. In particular, 
	since $\cosh(\cdot) \geq 1$, $\{ u_n'\}$ is bounded in $H^1(\R)$. 
	Since the mapping $H^1(\R) \ni v \mapsto {\rm sech}(\cdot) v \in L^2(\R)$ is compact, there exists a subsequence $\{ u_{n_k}\}$ such that $\{ {\rm sech}(\cdot) u_{n_k}' \}$ is convergent in $L^2(\R)$. 
	
	Furthermore, $\left( {\rm sech}(\cdot) u_n \right)' = {\rm sech}(\cdot) u_n' - \tanh(\cdot) {\rm sech}(\cdot) u_n$ is bounded in $L^2(\R)$, hence 
	$\{ {\rm sech}(\cdot) u_n\}$ is bounded in $H^1(\R)$. By Sobolev's embedding 
	of $H^1(\R)$ into $C^0(\R) \cap L^{\infty}(\R)$, $\{ u_n(0) \}$ is bounded in $\mathbb{C}$ so that without loss of generality, 
	$\{ u_{n_k}(0) \}$ converges in $\mathbb{C}$. Using 
	$$
	u_n(x) = u_n(0) + \int_0^x \cosh(t) {\rm sech}(t) u_n'(t) dt,
	$$
	we obtain 
\begin{align*}
	{\rm sech}(x) |u_{n_k}(x) - u_{n_j}(x)| & \leq 
	{\rm sech}(x) |u_{n_k}(0) - u_{n_j}(0)| \\
	& + 
	{\rm sech}(x) \int_0^x \cosh(t) {\rm sech}(t) |u'_{n_k}(t) - u'_{n_j}(t)| dt.
\end{align*}
The first term in the right-hand side is bounded in $L^2(\R)$ by 
$$
\sqrt{2} |u_{n_k}(0) - u_{n_j}(0)|, 
$$
which vanishes in $L^2(\R)$ as $k,j \to \infty$ due to convergence of $\{ u_{n_k}(0)\}$. Using the bound (\ref{tech-3}) (Lemma \ref{lem-A}), the second term in the right-hand side is bounded in $L^2(\R)$ by 
$$
2 \| {\rm sech}(\cdot) (u_{n_k}' - u_{n_j}') \|_{L^2}, 
$$
which vanishes as $k,j \to \infty$ due to convergence of $\{ {\rm sech}(\cdot) u_{n_k}' \}$. Hence, $\{ {\rm sech}(\cdot) u_{n_k}\}$ converges in $L^2(\R)$, and thus 
$\{ u_{n_k}\}$ converges in $\mathcal{H}$ which verifies that 
the embedding $\mathcal{H}^2_- \hookrightarrow \mathcal{H}$ is compact.
\end{proof}

Combining the inverse operator $\mathcal{M}_-^{-1} : \mathcal{H} \to \mathcal{H}^2_-$  in Lemma \ref{lem-M-minus} and 
the compact embedding $\mathcal{H}^2_- \hookrightarrow  \mathcal{H}$ 
in Lemma \ref{lem-compact-minus}, we can define a compact 
operator 
\begin{equation}
\label{K-minus}
\mathcal{K}_- = \mathcal{M}_-^{-1} : \mathcal{H} \to \mathcal{H}^2_- \hookrightarrow  \mathcal{H}. 
\end{equation}
The following corollary 
gives the desired result for the proof of Theorem \ref{theorem-lin}.

\begin{corollary}
	\label{cor-minus}
The spectrum of $\mathcal{L}_-$ in $\mathcal{H}$ consists of isolated eigenvalues. 
\end{corollary}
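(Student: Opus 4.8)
The plan is to observe that $\mathcal{L}_-$ differs from the operator $\mathcal{M}_-$ of Lemma \ref{lem-M-minus} by a bounded term, and then to exploit that $\mathcal{M}_-$ has compact resolvent. First I would record the algebraic identity on the common domain $\mathcal{H}^2_-$: since $L_- = -\partial_x^2 - 2\,{\rm sech}^2(\cdot)$ and $\mathcal{M}_- = -\cosh^2(\cdot)\partial_x^2 + 1$, one has
\[
\mathcal{L}_- = \cosh^2(\cdot) L_- = -\cosh^2(\cdot)\partial_x^2 - 2 = \mathcal{M}_- - 3,
\]
so that $\mathcal{L}_-$ and $\mathcal{M}_-$ have the same domain $\mathcal{H}^2_-$ and $\mathcal{L}_- = \mathcal{M}_- - 3\,\mathrm{Id}$.

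Next I would promote Lemma \ref{lem-M-minus} to the statement that $\mathcal{M}_-$ is self-adjoint: a symmetric operator which is bijective onto $\mathcal{H}$ is automatically self-adjoint (given $v\in D(\mathcal{M}_-^\ast)$, pick $u\in D(\mathcal{M}_-)$ with $\mathcal{M}_- u = \mathcal{M}_-^\ast v$ by surjectivity; then symmetry gives $\langle \mathcal{M}_- w, v - u\rangle_{\mathcal H} = 0$ for all $w\in D(\mathcal{M}_-)$, and surjectivity of $\mathcal{M}_-$ forces $v=u$). Together with Lemma \ref{lem-compact-minus}, the operator $\mathcal{K}_- = \mathcal{M}_-^{-1}$ in (\ref{K-minus}) is compact, so $\mathcal{M}_-$ is a self-adjoint operator with compact resolvent; hence $\sigma_{\mathcal H}(\mathcal{M}_-)$ consists of real isolated eigenvalues of finite multiplicity accumulating only at $+\infty$.

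Finally, I would transfer this to $\mathcal{L}_-$. Since $\mathcal{M}_-$ is boundedly invertible, the resolvent identity
\[
(\mathcal{M}_- - \mu)^{-1} = \mathcal{M}_-^{-1} + \mu\,(\mathcal{M}_- - \mu)^{-1}\mathcal{M}_-^{-1},
\]
valid for $\mu$ in the resolvent set, exhibits $(\mathcal{M}_- - \mu)^{-1}$ as compact plus (bounded)$\circ$(compact), so $\mathcal{M}_-$, and hence the shifted operator $\mathcal{L}_- = \mathcal{M}_- - 3$, has compact resolvent. Therefore $\sigma_{\mathcal H}(\mathcal{L}_-)$ is a set of isolated eigenvalues of finite multiplicity accumulating only at $+\infty$; combined with Remark \ref{rem-Sturm} and the explicit hypergeometric computation already sketched, this yields (\ref{spectrum-minus}).

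I do not anticipate a genuine obstacle here — the content is essentially the reduction $\mathcal{L}_- = \mathcal{M}_- - 3$ plus the compact-resolvent machinery. The only points deserving care are the upgrade from ``symmetric and bijective'' to ``self-adjoint'' (so that the spectrum is real and there is no hidden continuous part) and checking that the bounded shift by $3$ does not disturb compactness of the resolvent, both of which are routine.
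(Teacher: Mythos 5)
Your proposal is correct and follows essentially the same route as the paper: the identity $\mathcal{L}_- = \mathcal{M}_- - 3$ combined with the compactness of $\mathcal{K}_- = \mathcal{M}_-^{-1}$ from Lemmas \ref{lem-M-minus} and \ref{lem-compact-minus}. You merely spell out the standard functional-analytic details (symmetric plus bijective implies self-adjoint, and the resolvent identity giving compact resolvent) that the paper's two-line proof leaves implicit.
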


\begin{proof}
The compact operator $\mathcal{K}_-$ has a purely discrete spectrum of 
eigenvalues accumulating at $0$. Hence the linear operator 
$\mathcal{M}_- : \mathcal{H}^2_- \subset \mathcal{H} \to \mathcal{H}$ has a  purely discrete spectrum of eigenvalues. Since $\mathcal{L}_- = \mathcal{M}_- - 3$, the spectrum of $\mathcal{L}_-$ in $\mathcal{H}$ consists of isolated eigenvalues. 
\end{proof}

\begin{remark}
	\label{rem-minus}
	Isolated eigenvalues of $\mathcal{L}_-$ in $\mathcal{H}$ given by Corollary \ref{cor-minus} satisfy the ordering (\ref{spectrum-minus}) because each eigenvalue is simple and $\nu = 0$ is the second eigenvalue of the spectral problem (\ref{spectrum-L-minus}) (Remark \ref{rem-Sturm}).
\end{remark}

\subsection{Spectrum of $\mathcal{L}_+$ in $\mathcal{H}$}

It follows from (\ref{L-plus}) that $L_+ = -\partial_x^2 + 4 - 6\, {\rm sech}^2(x)$. The spectral problem for $\mathcal{L}_+$ in $\mathcal{H}$ take the form 
\begin{equation}
\label{spectrum-L-plus}
- u''(x) + 4u(x) - 6 \; {\rm sech}^2(x) \;u(x) = \mu \; {\rm sech}^2(x) \; u(x), \quad x \in \mathbb{R}.
\end{equation}
The first eigenvalue of the spectral problem (\ref{spectrum-L-plus}) is available explicitly:
\begin{align*}
\mu = 0 : & \quad u(x) = {\rm sech}^2(x).
\end{align*}
The differential equation (\ref{spectrum-L-plus}) has exponentially growing and exponentially decaying solutions at infinity. By Levinston's theorem (Proposition 8.1 in \cite{CL}), the growth and decay rates are $\pm 2$, hence 
the exponentially growing solutions do not belong to $\mathcal{H}$. 
Thus, we conclude that eigenvalues of the spectral problem 
(\ref{spectrum-L-plus}) coincide with the admissible values of $\mu$ 
for which the Schr\"{o}dinger operator 
$$
\mathcal{R}_{\mu} := -\partial_x^2 - (6+\mu) {\rm sech}^2(x) : H^2(\R) \subset L^2(\R) \to L^2(\R)
$$ 
admits an eigenvalue at the energy level $-4$ with a bounded and exponentially decaying eigenfunction $u(x)$. By using the exact solution from the hypergeometric equation (see \cite[Chapter 6, pp.768--769]{LL}), the bounded and exponentially decaying eigenfunction exists at the energy level $-4$ if and only if 
$$
\mu = n(n+5), \quad \mbox{\rm where} \;\; n \in \mathbb{N}_0 := \{0,1,2,\dots\}.
$$ 
This gives the exact location of the simple eigenvalues (Remark \ref{remark-L-minus}).

\begin{remark}
	\label{rem-Sturm-pos}
Each eigenvalue is simple because if one solution of the second-order differential equation 
(\ref{spectrum-L-plus}) is exponentially decaying, then the second solution is exponentially growing in $x$ at infinity. Moreover, Sturm's oscillation theorem states that the bounded eigenfunctions for the $n$-th eigenvalue has $(n-1)$ zeros on $\mathbb{R}$. Hence $0$ is the first eigenvalue of $\mathcal{L}_-$. 
\end{remark}

We will complete the picture above with the proof that 
the linear operator $\mathcal{L}_+$ in the Hilbert space $\mathcal{H}$ has a purely discrete spectrum consisting of simple isolated eigenvalues. In order to do so, we define a positive linear operator 
$$
M_+ := -\partial_x^2 + 4
$$ 
or equivalently, 
\begin{equation}
\label{M-plus}
\mathcal{M}_+ : \mathcal{H}^2_+ \to \mathcal{H}, \quad 
\mathcal{M}_+ := \cosh^2(x) (-\partial_x^2 + 4),
\end{equation}
where the operator domain $\mathcal{H}_+^2$ given by (\ref{operator-domain}) can be rewritten  in the equivalent form
\begin{equation}
\label{operator-domain-plus}
\mathcal{H}^2_+ = \left\{ f \in H^1(\R) \;\; 
\cosh(\cdot) (-\partial_x^2 + 4) f \in L^2(\R) \right\},
\end{equation}
equipped with the norm
\begin{equation}
\label{operator-norm-plus}
\| f \|_{\mathcal{H}^2_+} := \sqrt{ \| \cosh(\cdot) (-f'' + 4f) \|_{L^2}^2 + \| f' \|^2_{L^2} + 4 \| f \|^2_{L^2}}.
\end{equation}
We also recall that $\mathcal{H}^1_+ \equiv H^1(\R)$.

The next two lemmas give equivalent results to those in Lemma \ref{lem-M-minus} and \ref{lem-compact-minus}. 

\begin{lemma}
	\label{lem-M-plus}
	The linear operator $\mathcal{M}_+$ given by (\ref{M-plus}) is bijective 
	and symmetric, whereas $\mathcal{M}^{-1}_+$ is a bounded operator from $\mathcal{H}$ to $\mathcal{H}^2_+$.
\end{lemma}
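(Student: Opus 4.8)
The plan is to follow the proof of Lemma~\ref{lem-M-minus} almost verbatim, exploiting the pointwise identity $(1-\varphi^2)\,\mathcal{M}_+ = -\partial_x^2 + 4$ (since $1-\varphi^2 = {\rm sech}^2$ and $\mathcal{M}_+ = \cosh^2(x)(-\partial_x^2+4)$). This identity converts the weak resolvent equation for $\mathcal{M}_+$ into a constant-coefficient, manifestly coercive problem on $\mathcal{H}^1_+ = H^1(\R)$, which makes the argument even more direct than in the $\mathcal{M}_-$ case.

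For injectivity I would note that if $u\in\mathcal{H}^2_+$ solves $\mathcal{M}_+ u = 0$, then $-u''+4u = 0$; since $u\in H^1(\R)$ and the equation forces $u'' = 4u\in L^2(\R)$, we have $u\in H^2(\R)$, so the integration-by-parts formula (Lemma~\ref{lem-B}) gives $\int_{\R}\bigl(|u'|^2+4|u|^2\bigr)\,dx = 0$ and hence $u=0$; thus ${\rm Ker}(\mathcal{M}_+)$ is trivial in $\mathcal{H}$. For surjectivity, fix $f\in\mathcal{H}$ and consider the weak equation
\[
\langle u',\phi'\rangle_{L^2} + 4\,\langle u,\phi\rangle_{L^2} = (f,\phi)_{\mathcal{H}} = \int_{\R}{\rm sech}^2(x)\,\bar f\,\phi\,dx, \qquad \forall\,\phi\in H^1(\R).
\]
The left-hand side is an inner product on $H^1(\R)$ equivalent to the standard one, and the right-hand side is a bounded linear functional on $H^1(\R)$ because $|(f,\phi)_{\mathcal{H}}|\le\|f\|_{\mathcal{H}}\,\|\phi\|_{\mathcal{H}}\le\|f\|_{\mathcal{H}}\,\|\phi\|_{H^1}$ by Cauchy--Schwarz and ${\rm sech}\le 1$. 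The Riesz Representation Theorem then yields a unique solution $u\in H^1(\R)$, and testing with $\phi=u$ gives $\|u\|_{H^1}\le\|f\|_{\mathcal{H}}$. Taking $\phi\in C_c^\infty(\R)$ shows that $u$ is twice weakly differentiable with $-u''+4u = {\rm sech}^2(\cdot)f\in L^2(\R)$ (note ${\rm sech}^2(\cdot)f\in L^2(\R)$ since ${\rm sech}\le1$ and ${\rm sech}(\cdot)f\in L^2(\R)$), hence $u\in H^2(\R)$ and $\cosh(\cdot)(-u''+4u) = {\rm sech}(\cdot)f\in L^2(\R)$; by the form~(\ref{operator-domain-plus}) this means $u\in\mathcal{H}^2_+$, and $\mathcal{M}_+ u = \cosh^2(\cdot)(-u''+4u) = f$.

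The bound on $\mathcal{M}_+^{-1}:\mathcal{H}\to\mathcal{H}^2_+$ follows from $\|u\|_{H^1}\le\|f\|_{\mathcal{H}}$ and $\|\cosh(\cdot)(-u''+4u)\|_{L^2} = \|{\rm sech}(\cdot)f\|_{L^2} = \|f\|_{\mathcal{H}}$, which with the norm~(\ref{operator-norm-plus}) give $\|\mathcal{M}_+^{-1}f\|_{\mathcal{H}^2_+}\le C\|f\|_{\mathcal{H}}$ for an absolute constant $C$. Symmetry is then checked as for $\mathcal{M}_-$: for $u,v\in\mathcal{H}^2_+\subset H^2(\R)$, two integrations by parts (Lemma~\ref{lem-B}) give
\[
(\mathcal{M}_+ u,v)_{\mathcal{H}} = \int_{\R}\overline{(-u''+4u)}\,v\,dx = \int_{\R}\bigl(\overline{u'}\,v' + 4\,\bar u\,v\bigr)\,dx = \int_{\R}\bar u\,(-v''+4v)\,dx = (u,\mathcal{M}_+ v)_{\mathcal{H}}.
\]
I do not expect a real obstacle here: unlike the $\mathcal{M}_-$ case, coercivity of the bilinear form is immediate from the constant-coefficient operator $-\partial_x^2+4$, so Lax--Milgram/Riesz applies directly on the unweighted space $H^1(\R)$, and every element of $\mathcal{H}^2_+$ automatically lies in $H^2(\R)$. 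The only points requiring a little care are the cancellation of boundary terms in the integrations by parts (supplied by Lemma~\ref{lem-B}) and the elementary observation that ${\rm sech}^2(\cdot)f\in L^2(\R)$ whenever $f\in\mathcal{H}$.
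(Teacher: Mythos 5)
Your proof is correct and follows essentially the same route as the paper: injectivity via integration by parts, surjectivity via the Riesz Representation Theorem on $H^1(\R)$ with the equivalent inner product $4\langle u,\phi\rangle_{L^2}+\langle u',\phi'\rangle_{L^2}$, the explicit bound from $\|\cosh(\cdot)(-u''+4u)\|_{L^2}=\|f\|_{\mathcal H}$, and symmetry by two integrations by parts. The only cosmetic difference is that the paper invokes Lemma \ref{lem-C} (the integration-by-parts formula for $\mathcal{H}^2_+$, whose proof is precisely your observation that $\mathcal{H}^2_+\subset H^2(\R)$) where you cite Lemma \ref{lem-B}.
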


\begin{proof}
		For injectivity, we again use the integration by parts formula (Lemma \ref{lem-C}) which yields for every $u \in \mathcal{H}^2_+$:
	$$
\left( \mathcal{M}_+ u,u \right)_{\mathcal{H}} = \int_{\mathbb{R}} \left( |u'(x)|^2 + 4 |u(x)|^2 \right) dx.
	$$
If $u \in \mathcal{H}^2_+$ is a solution of $\mathcal{M}_+ u = 0$, then	 $u = 0$ in $\mathcal{H}$ and ${\rm Ker}(\mathcal{M}_+)$ is trivial in $\mathcal{H}$. 
	
For surjectivity, let $f \in \mathcal{H}$ and consider the resolvent equation for $\mathcal{M}_+$ in the weak form:
	\begin{equation}
	\label{res-M-plus}
	\left( u, \phi\right)_{\mathcal{H}^1_+} = \left( f, \phi\right)_{\mathcal{H}}, \quad  \forall \phi \in \mathcal{H}^1_+,
	\end{equation}
	where we recall that $\mathcal{H}^1_+ \equiv H^1(\R)$ and use the  inner product
$$
\left( u, \phi \right)_{\mathcal{H}^1_+} := 	4 \langle u, v \rangle_{L^2} + \langle u', \phi' \rangle_{L^2}, \quad \forall u, \phi \in \mathcal{H}^1_+, 
$$
together with the induced norm $\| \cdot \|_{\mathcal{H}^1_+}$. By the Riesz Representation Theorem and the Cauchy--Schwarz inequality, there exists a unique solution $u \in \mathcal{H}^1_+$ of (\ref{res-M-plus}) such that $\| u \|_{\mathcal{H}^1_+} \leq \| f \|_{\mathcal{H}}$. This yields 
	$$
	\langle u', \phi' \rangle = \int_{\R} ({\rm sech}^2(\cdot) \bar{f} - 4\bar{u}) \phi dx, \quad  \forall \phi \in \mathcal{H}^1_+.
	$$
In particular, this holds for $\phi \in C^{\infty}_c(\R)$ 
implying $-u'' = {\rm sech}^2(\cdot) f - 4u \in L^2(\R)$ 
and  $\cosh(\cdot) (-u'' + 4u) = {\rm sech}(\cdot) f \in L^2(\R)$ so that 
$u \in \mathcal{H}^2_+$ is a strong solution of the resolvent equation 
$\mathcal{M}_+ u = f$ and this holds for every $f \in \mathcal{H}$.
	
The bound on the inverse operator $\mathcal{M}_+^{-1}$ from $\mathcal{H}$ to $\mathcal{H}^2_+$ follows from 
$$
	\| \cosh(\cdot) (-u'' + 4 u) \|_{L^2} = \| f \|_{\mathcal{H}}
$$
so that the definition (\ref{operator-norm-plus}) implies that 
$\| \mathcal{M}_+^{-1} f \|_{\mathcal{H}^2_-} \leq \sqrt{2} \| f \|_{\mathcal{H}}$ holds for every $f \in \mathcal{H}$.
	
The symmetry of $\mathcal{M}_+ : \mathcal{H}^2_+ \subset \mathcal{H}\to \mathcal{H}$ follows from the integration by parts 
	formula (Lemma \ref{lem-C}).
\end{proof}

\begin{lemma}
	\label{lem-compact-plus}
	The embedding $\mathcal{H}^2_+ \hookrightarrow \mathcal{H}$ is compact. 
\end{lemma}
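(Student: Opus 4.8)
The plan is to observe that, in contrast with the operator domain $\mathcal{H}^2_-$, the norm $\| \cdot \|_{\mathcal{H}^2_+}$ in (\ref{operator-norm-plus}) already controls the full $L^2(\R)$ norm of $f$ (through the term $4 \| f \|_{L^2}^2$) in addition to that of $f'$. Consequently, the embedding $\mathcal{H}^2_+ \hookrightarrow \mathcal{H}$ factors as $\mathcal{H}^2_+ \hookrightarrow H^1(\R) \hookrightarrow \mathcal{H}$, where the first arrow is bounded (from (\ref{operator-norm-plus}), $\| f \|_{H^1} \leq \| f \|_{\mathcal{H}^2_+}$) and $H^1(\R) \subset \mathcal{H}$ since ${\rm sech}(\cdot)$ is bounded. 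It therefore suffices to establish compactness of the second embedding.

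First I would take a bounded sequence $\{ u_n \}$ in $\mathcal{H}^2_+$ and note from (\ref{operator-norm-plus}) that $\{ u_n' \}$ and $\{ u_n \}$ are bounded in $L^2(\R)$, hence $\{ u_n \}$ is bounded in $H^1(\R)$. Then I would invoke the compactness of the mapping $H^1(\R) \ni v \mapsto {\rm sech}(\cdot) v \in L^2(\R)$ --- exactly the fact already used in the proof of Lemma \ref{lem-compact-minus} --- to extract a subsequence $\{ u_{n_k} \}$ such that $\{ {\rm sech}(\cdot) u_{n_k} \}$ converges in $L^2(\R)$, i.e.\ $\{ u_{n_k} \}$ converges in $\mathcal{H}$. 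This is precisely the compactness of $\mathcal{H}^2_+ \hookrightarrow \mathcal{H}$.

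If one wishes to keep the argument self-contained, the compactness of $v \mapsto {\rm sech}(\cdot) v$ can be justified directly: on each bounded interval one uses the Rellich--Kondrachov theorem, while the tail is controlled uniformly over the bounded sequence by $\int_{|x| > R} {\rm sech}^2(x) |v(x)|^2 dx \leq {\rm sech}^2(R) \| v \|_{L^2}^2 \to 0$ as $R \to \infty$; a diagonal argument then yields the convergent subsequence. I do not anticipate a genuine obstacle here: unlike the case of $\mathcal{H}^2_-$ in Lemma \ref{lem-compact-minus}, no delicate pointwise bookkeeping with $u_n(0)$ and an integral representation is needed, because the $\mathcal{H}^2_+$ norm supplies the $L^2(\R)$ bound on $u_n$ for free. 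The only point requiring a little care is simply recording why the exponentially decaying weight ${\rm sech}$ upgrades boundedness in $H^1(\R)$ to norm convergence of a subsequence in $\mathcal{H}$.
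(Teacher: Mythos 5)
Your proposal is correct and follows essentially the same route as the paper: both factor the embedding through $\mathcal{H}^1_+ \equiv H^1(\R)$ using the continuity of $\mathcal{H}^2_+ \hookrightarrow H^1(\R)$ from the norm (\ref{operator-norm-plus}), and then invoke the compactness of $H^1(\R) \ni v \mapsto {\rm sech}(\cdot) v \in L^2(\R)$, which is the same fact used in Lemma \ref{lem-compact-minus}. Your additional sketch justifying that compactness via Rellich--Kondrachov and the uniform tail estimate is a harmless elaboration of a step the paper takes for granted.
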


\begin{proof}
		Since the mapping $H^1(\R) \ni v \mapsto {\rm sech}(\cdot) v \in L^2(\R)$ is compact and $\mathcal{H}^1_+ \equiv H^1(\R)$, 
		the embedding $\mathcal{H}^1_+ \hookrightarrow \mathcal{H}$ is compact and so is the embedding $\mathcal{H}^2_+ \hookrightarrow \mathcal{H}$  
		since $\mathcal{H}^2_+ \subset \mathcal{H}^1_+$ with continuous embedding.
\end{proof}

Combining the inverse operator $\mathcal{M}_+^{-1} : \mathcal{H} \to \mathcal{H}^2_+$  in Lemma \ref{lem-M-plus} and 
the compact embedding $\mathcal{H}^2_+ \hookrightarrow  \mathcal{H}$ 
in Lemma \ref{lem-compact-plus}, we can define a compact 
operator 
\begin{equation}
\label{K-plus}
\mathcal{K}_+ = \mathcal{M}_+^{-1} : \mathcal{H} \to \mathcal{H}^2_+ \hookrightarrow  \mathcal{H}. 
\end{equation}
The following corollary 
gives the desired result for the proof of Theorem \ref{theorem-lin}.

\begin{corollary}
	\label{cor-plus}
	The spectrum of $\mathcal{L}_+$ in $\mathcal{H}$ consists of isolated eigenvalues. 
\end{corollary}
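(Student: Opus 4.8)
The plan is to repeat verbatim the structure of the proof of Corollary~\ref{cor-minus}. By Lemma~\ref{lem-M-plus} the operator $\mathcal{M}_+$ is bijective and symmetric, and $\mathcal{M}_+^{-1} : \mathcal{H} \to \mathcal{H}^2_+$ is bounded; composing it with the compact embedding $\mathcal{H}^2_+ \hookrightarrow \mathcal{H}$ of Lemma~\ref{lem-compact-plus} shows that the operator $\mathcal{K}_+ = \mathcal{M}_+^{-1}$ in (\ref{K-plus}) is a compact operator on $\mathcal{H}$ (and in fact self-adjoint, since it is bounded, everywhere defined and symmetric). A compact operator has a purely discrete spectrum consisting of eigenvalues that accumulate only at $0$, and $0 \notin \sigma_{\mathcal{H}}(\mathcal{K}_+)$ since $\mathcal{K}_+$ is injective; hence $\mathcal{M}_+ = \mathcal{K}_+^{-1} : \mathcal{H}^2_+ \subset \mathcal{H} \to \mathcal{H}$ has a purely discrete spectrum of isolated eigenvalues.

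It then remains to transfer this conclusion to $\mathcal{L}_+$. Using $\varphi^2 = 1 - {\rm sech}^2(\cdot)$ in (\ref{L-plus}) we have $L_+ = (-\partial_x^2 + 4) - 6\,{\rm sech}^2(\cdot)$, so that
\[
\mathcal{L}_+ = \cosh^2(\cdot)\,L_+ = \cosh^2(\cdot)\,(-\partial_x^2 + 4) - 6 = \mathcal{M}_+ - 6,
\]
because the multiplier $6\cosh^2(\cdot)\,{\rm sech}^2(\cdot)$ is the genuine constant $6$ rather than an unbounded weight. In particular $\mathcal{L}_+$ and $\mathcal{M}_+$ act on the common operator domain $\mathcal{H}^2_+$ — the two descriptions (\ref{operator-domain}) and (\ref{operator-domain-plus}) agree — and $\sigma_{\mathcal{H}}(\mathcal{L}_+) = \sigma_{\mathcal{H}}(\mathcal{M}_+) - 6$, which therefore also consists of isolated eigenvalues. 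This establishes the corollary.

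The refined description (\ref{spectrum-plus}) then follows from the ODE analysis recorded in Remark~\ref{rem-Sturm-pos}: each eigenvalue of (\ref{spectrum-L-plus}) is simple, since a decaying solution is accompanied by a linearly independent exponentially growing one which is excluded from $\mathcal{H}$; by Sturm's oscillation theorem the eigenfunction of the $n$-th eigenvalue has exactly $n-1$ zeros, so the nodeless eigenvalue $\mu = 0$ with eigenfunction ${\rm sech}^2(\cdot)$ is the lowest one, giving the ordering $0 < \mu_1 < \mu_2 < \cdots$, and the hypergeometric reduction already invoked produces the explicit values $\mu_n = n(n+5)$. I do not expect a genuine obstacle here; the only step deserving a line of care is the identification $\mathcal{L}_+ = \mathcal{M}_+ - 6$ on one and the same operator domain, and — in contrast to the generic situation — this is effortless precisely because the intensity weight $\cosh^2(\cdot)$ cancels the ${\rm sech}^2(\cdot)$ potential down to a constant.
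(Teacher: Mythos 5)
Your proposal is correct and follows essentially the same route as the paper: compactness of $\mathcal{K}_+ = \mathcal{M}_+^{-1}$ from Lemmas \ref{lem-M-plus} and \ref{lem-compact-plus} gives the purely discrete spectrum of $\mathcal{M}_+$, and the shift $\mathcal{L}_+ = \mathcal{M}_+ - 6$ (valid because $6\cosh^2(\cdot)\,{\rm sech}^2(\cdot) = 6$) transfers this to $\mathcal{L}_+$. The extra care you take with the common domain and the ordering of eigenvalues matches what the paper records in Remarks \ref{rem-Sturm-pos} and \ref{rem-plus}.
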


\begin{proof}
The compact operator $\mathcal{K}_+$ has a purely discrete spectrum of 
eigenvalues accumulating to $0$. Hence the linear operator 
$\mathcal{M}_+ : \mathcal{H}^2_+ \subset \mathcal{H} \to \mathcal{H}$ has a purely discrete spectrum of eigenvalues. Since $\mathcal{L}_+ = \mathcal{M}_+ - 6$, the spectrum of $\mathcal{L}_+$ in $\mathcal{H}$ consists of isolated eigenvalues.
\end{proof}

\begin{remark}
	\label{rem-plus}
Isolated eigenvalues of $\mathcal{L}_+$ in $\mathcal{H}$ given by Corollary \ref{cor-plus} satisfy the ordering (\ref{spectrum-plus}) because each eigenvalue is simple and $\mu = 0$ is the first eigenvalue of the spectral problem (\ref{spectrum-L-plus}) (Remark \ref{rem-Sturm-pos}).
\end{remark}

\subsection{Spectrum of $\mathcal{L}$ in $\mathcal{H} \times \mathcal{H}$}

The following lemma characterizes the quadruple zero eigenvalue of the stability problem (\ref{lin-stab-H}).

\begin{lemma}
	\label{lem-quadruple}
	The zero eigenvalue $\lambda = 0$ of the stability problem (\ref{lin-stab-H}) has double geometric multiplicity and quadruple algebraic multiplicity.
\end{lemma}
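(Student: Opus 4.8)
\emph{Proof proposal.} The plan is to determine the geometric kernel of $\mathcal{L}$ (defined in (\ref{operator-L})) acting on $\mathcal{H}\times\mathcal{H}$, then produce two explicit generalized eigenvectors, and finally show that the two resulting Jordan chains terminate at length two. Consider the stability problem (\ref{lin-stab-H}), equivalently $\mathcal{L}\vec{w} = \lambda\vec{w}$. Since $\mathcal{L}(u,v) = (\mathcal{L}_- v,-\mathcal{L}_+ u)$, a pair $(u,v)$ is in $\ker\mathcal{L}$ exactly when $\mathcal{L}_+ u = 0$ and $\mathcal{L}_- v = 0$. By Theorem \ref{theorem-lin}, $0$ is a simple eigenvalue of $\mathcal{L}_+$ in $\mathcal{H}$, with eigenfunction ${\rm sech}^2(\cdot) = \varphi'$, and a simple eigenvalue of $\mathcal{L}_-$ in $\mathcal{H}$, with eigenfunction $\tanh(\cdot) = \varphi$. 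Hence $\ker\mathcal{L} = \mathrm{span}\{\vec{w}_1,\vec{w}_2\}$ with $\vec{w}_1,\vec{w}_2$ as in (\ref{eigenvectors}), so the geometric multiplicity equals $2$.

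Next I would solve $\mathcal{L}\vec{w}_3 = \vec{w}_1$ and $\mathcal{L}\vec{w}_4 = \vec{w}_2$. Writing $\vec{w}_3 = (0,\beta)$ and $\vec{w}_4 = (\alpha,0)$ reduces these to $\mathcal{L}_- \beta = \varphi'$ and $\mathcal{L}_+ \alpha = -\varphi$ respectively. Since $\mathcal{L}_{\pm}$ are self-adjoint with compact resolvent (Lemmas \ref{lem-M-minus}, \ref{lem-M-plus} and Corollaries \ref{cor-minus}, \ref{cor-plus}), each equation is solvable once the right-hand side is $\mathcal{H}$-orthogonal to the relevant kernel, i.e. once $(\varphi,\varphi')_{\mathcal{H}} = \int_{\R}{\rm sech}^4(x)\tanh(x)\,dx = 0$, which holds by oddness. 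In fact I would record the explicit particular solutions
\begin{equation*}
\beta(x) = \tfrac14\,{\rm sech}^2(x) - \tfrac12, \qquad \alpha(x) = -\tfrac14\,x\,{\rm sech}^2(x),
\end{equation*}
and verify the elementary identities $L_-\beta = {\rm sech}^4(\cdot)$ and $L_+\bigl(x\,{\rm sech}^2(\cdot)\bigr) = 4\,{\rm sech}^2(\cdot)\tanh(\cdot)$, which yield $\mathcal{L}_-\beta = \varphi'$ and $\mathcal{L}_+\alpha = -\varphi$, as well as $\beta\in\mathcal{H}^2_-$ and $\alpha\in\mathcal{H}^2_+$. Because $\alpha\notin\ker\mathcal{L}_+$ and $\beta\notin\ker\mathcal{L}_-$, the vectors $\vec{w}_1,\dots,\vec{w}_4$ are linearly independent, so the algebraic multiplicity is at least $4$; moreover any $\vec{w}\in\ker\mathcal{L}^2$ has $\mathcal{L}\vec{w}\in\ker\mathcal{L}$, so subtracting the matching combination of $\vec{w}_3,\vec{w}_4$ lands $\vec{w}$ in $\ker\mathcal{L}$, and therefore $\ker\mathcal{L}^2 = \mathrm{span}\{\vec{w}_1,\dots,\vec{w}_4\}$ is exactly $4$-dimensional.

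It then remains to show $\ker\mathcal{L}^3 = \ker\mathcal{L}^2$. If $\vec{w}\in\ker\mathcal{L}^3$, then $\mathcal{L}\vec{w} = a\vec{w}_1 + b\vec{w}_2 + c\vec{w}_3 + d\vec{w}_4 = (a\varphi'+d\alpha,\ b\varphi+c\beta)$ for some constants $a,b,c,d$, and Fredholm solvability of $\mathcal{L}(u,v) = (f,g)$ — namely $(\varphi,f)_{\mathcal{H}} = 0$ and $(\varphi',g)_{\mathcal{H}} = 0$ — combined with $(\varphi,\varphi')_{\mathcal{H}} = 0$ forces $d\,(\varphi,\alpha)_{\mathcal{H}} = 0$ and $c\,(\varphi',\beta)_{\mathcal{H}} = 0$. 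A direct computation gives
\begin{equation*}
(\varphi',\beta)_{\mathcal{H}} = \int_{\R}{\rm sech}^4(x)\Bigl(\tfrac14\,{\rm sech}^2(x) - \tfrac12\Bigr)dx = \tfrac14\cdot\tfrac{16}{15} - \tfrac12\cdot\tfrac43 = -\tfrac25 \neq 0,
\end{equation*}
while $(\varphi,\alpha)_{\mathcal{H}} = -\tfrac14\int_{\R} x\tanh(x)\,{\rm sech}^4(x)\,dx < 0$ since its integrand is even and nonnegative. Hence $c = d = 0$, so $\mathcal{L}\vec{w}\in\mathcal{L}\bigl(\mathrm{span}\{\vec{w}_3,\vec{w}_4\}\bigr)$ and thus $\vec{w}\in\ker\mathcal{L}^2$. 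Consequently $\ker\mathcal{L}^n = \ker\mathcal{L}^2$ for all $n\geq 2$, and since $0$ is an isolated eigenvalue of finite algebraic multiplicity (which follows from the discreteness of $\sigma_{\mathcal{H}}(\mathcal{L}_\pm)$ in Corollaries \ref{cor-minus}, \ref{cor-plus}), the algebraic multiplicity equals $\dim\ker\mathcal{L}^2 = 4$.

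The hard part is not conceptual but computational: exhibiting the particular solutions $\alpha,\beta$, confirming they lie in the operator domains $\mathcal{H}^2_\pm$, and checking that the two pairings $(\varphi',\beta)_{\mathcal{H}}$ and $(\varphi,\alpha)_{\mathcal{H}}$ are nonzero. This non-vanishing is the structural heart of the statement — it expresses the non-degeneracy of the $2\times 2$ matrix recording how the conserved momentum $P$ and mass $M$ vary along the traveling dark-soliton family $U_c$ of Theorem \ref{theorem-dark} and along the scaling family $\varphi(\omega\,\cdot)$, so that each of $\vec{w}_1,\vec{w}_2$ carries a Jordan chain of length exactly two; in particular it is the extra scaling symmetry (\ref{scaling-transform}) that is responsible for the second chain. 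I would verify those two integrals and the domain memberships with particular care.
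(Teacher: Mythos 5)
Your proposal is correct and follows essentially the same route as the paper: the same identification of the two-dimensional kernel from the anti-diagonal structure of $\mathcal{L}$, the same explicit generalized eigenvectors $v_{\varphi}(x)=\tfrac14{\rm sech}^2(x)-\tfrac12$ and $u_{\varphi}(x)=-\tfrac14 x\,{\rm sech}^2(x)$, and the same truncation of the Jordan chains via the non-vanishing pairings $(v_{\varphi},\varphi')_{\mathcal{H}}=-\tfrac25$ and $(u_{\varphi},\varphi)_{\mathcal{H}}<0$ (the paper records the latter as $-\tfrac{1}{12}$). Your extra bookkeeping showing $\ker\mathcal{L}^3=\ker\mathcal{L}^2$ explicitly is a slightly more formal rendering of the paper's statement that the Jordan chain truncates, but it is the identical argument.
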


\begin{proof}
Recall the two eigenvectors (\ref{eigenvectors}) of the stability problem (\ref{lin-stab-H}) for $\lambda = 0$. 
Since the linearized operator $\mathcal{L}$ in (\ref{operator-L}) is anti-diagonal, eigenvectors for $\lambda = 0$ 
are given by solutions of $\mathcal{L}_+ u = 0$ and $\mathcal{L}_- v = 0$ in $\mathcal{H}$.  Hence the two eigenvectors (\ref{eigenvectors}) are the only solutions 
of these equations in $\mathcal{H}$ (Remarks \ref{rem-minus} and \ref{rem-plus}) so that $\lambda = 0$ is an eigenvalue of geometric multiplicity {\em two}. 

Generalized eigenvectors of the stability problem (\ref{lin-stab-H}) 
for $\lambda = 0$ can be written in the form 
\begin{equation}
\label{eigenvectors-generalized}
\vec{w}_{g1} = \left[ \begin{array}{c} 0 \\ v_{\varphi} \end{array} \right], \quad
\vec{w}_{g2} = \left[ \begin{array}{c} u_{\varphi} \\ 0 \end{array} \right], \qquad 
\begin{array}{c} \mathcal{L}_- v_{\varphi} = \varphi', \\ -\mathcal{L}_+ u_{\varphi} = \varphi. \end{array}
\end{equation}
Since $\varphi(x) = \tanh(x)$, the inhomogeneous equations 
(\ref{eigenvectors-generalized}) can be solved in $\mathcal{H}$ exactly as 
\begin{equation}
\label{eigen-gener-exact}
v_{\varphi}(x) = \frac{1}{4} {\rm sech}^2(x) - \frac{1}{2}, \quad 
u_{\varphi}(x) = -\frac{1}{4} x \; {\rm sech}^2(x), 
\end{equation}
where the homogeneous solutions are set to zero without loss of generality. In order to prove that $\lambda = 0$ is an eigenvalue of algebraic multiplicity {\em four}, we need to show that the inhomogeneous linear equations 
\begin{equation}
\label{second-level}
\begin{array}{c} 
	-\mathcal{L}_+ \tilde{u} = v_{\varphi}, \\
	\mathcal{L}_- \tilde{v} =  u_{\varphi},
\end{array}
\end{equation} 
do not admit solutions in $\mathcal{H}$. Since $(\mathcal{L}_+ \tilde{u}, \varphi' )_{\mathcal{H}} = 0$ for every $\tilde{u} \in \mathcal{H}^2_+$ due to integration by parts (Lemma \ref{lem-C}) and $\mathcal{L}_+ \varphi' = 0$, it follows that 
$\tilde{u} \in \mathcal{H}^2_+ \subset \mathcal{H}$ in (\ref{second-level}) exists if and only if $(v_{\varphi},\varphi')_{\mathcal{H}} = 0$.
However, this is a contradiction since $(v_{\varphi},\varphi')_{\mathcal{H}} < 0$ 
because $v_{\varphi}(x) < 0$ and $\varphi'(x) > 0$ for every $x \in \mathbb{R}$. One can compute exactly that $(v_{\varphi},\varphi')_{\mathcal{H}} = -\frac{2}{5}$.

Since $(\mathcal{L}_- \tilde{v}, \varphi )_{\mathcal{H}} = 0$ for every $\tilde{v} \in \mathcal{H}^2_-$ due to integration by parts (Lemma \ref{lem-B}) and $\mathcal{L}_- \varphi = 0$, it follows that $\tilde{v} \in \mathcal{H}^2_- \subset \mathcal{H}$ in (\ref{second-level}) exists if and only if 
$(u_{\varphi},\varphi)_{\mathcal{H}} = 0$. 
However, this is again a contradiction since $(u_{\varphi},\varphi)_{\mathcal{H}} < 0$ 
because $u_{\varphi}(x) < 0$ and $\varphi(x) > 0$ for every $x > 0$ and $u_{\varphi}$, $\varphi$ are odd functions. One can compute exactly that $(u_{\varphi},\varphi)_{\mathcal{H}} = -\frac{1}{12}$. 

Thus, the Jordan chain of generalized eigenvectors of the stability problem (\ref{lin-stab-H}) is truncated at the two generalized eigenvectors (\ref{eigenvectors-generalized}) so that $\lambda = 0$ is eigenvalue of algebraic multiplicity {\em four}.
\end{proof}

We will further prove that the spectrum of the stability problem (\ref{lin-stab}) is purely discrete and consists of eigenvalues 
with the eigenfunctions $(u,v) \in \mathcal{H}^2_+ \times \mathcal{H}^2_-$. To do so, we rewrite the resolvent equation in the form 
\begin{equation}
\label{resolvent-eq}
\begin{array}{c} \mathcal{L}_- v = \lambda u + f, \\ -\mathcal{L}_+ u = \lambda 
v + g,\end{array}
\end{equation}
for a given $(f,g) \in \mathcal{H}\times \mathcal{H}$. Since the quadruple eigenvalue $\lambda = 0$ is prescribed by Lemma \ref{lem-quadruple}, the resolvent equation (\ref{resolvent-eq}) is considered for $\lambda\neq 0$.

By using the compact operators  $\mathcal{K}_{\pm}$ given by (\ref{K-minus}) and (\ref{K-plus}), we rewrite the resolvent equation (\ref{resolvent-eq}) in the equivalent form 
\begin{equation}
\label{resolvent-equiv}
\begin{array}{c} (I - 3 \mathcal{K}_-) v = \lambda \mathcal{K}_- u + \mathcal{K}_- f, \\ -(I - 6 \mathcal{K}_+) u = \lambda 
\mathcal{K}_+ v + \mathcal{K}_+ g,\end{array},
\end{equation}
where $I$ is the identity in $\mathcal{H}$. Before progressing further, we recall the generalized kernel of the stability problem in Lemma \ref{lem-quadruple} from solutions of the linear equations
\begin{align*}
\mathcal{L}_+ \varphi' = 0, \quad \mathcal{L}_- \varphi = 0, \quad 
\mathcal{L}_+ u_{\varphi} = - \varphi, \quad \mathcal{L}_- v_{\varphi} = \varphi',
\end{align*}
which we can reproduce in the equivalent form 
\begin{align*}
(I - 6 \mathcal{K}_+) \varphi' = 0, \quad  (I - 3 \mathcal{K}_-) \varphi = 0, \quad 
(I - 6 \mathcal{K}_+) u_{\varphi} = - \mathcal{K}_+ \varphi, \quad  (I - 3 \mathcal{K}_-) v_{\varphi} = \mathcal{K}_- \varphi'.
\end{align*}
The following lemma will ensure that projections to the eigenfunctions of the generalized kernel are non-degenerate.

\begin{lemma}
	\label{lem-non-degeneracy}
	The following two matrices are invertible:
	\begin{equation}
	\label{A-plus}
	A_+ := \left[\begin{array}{cc} (\varphi', \mathcal{K}_+ \varphi)_{\mathcal{H}} & (\varphi', \mathcal{K}_+ v_{\varphi})_{\mathcal{H}} \\
	(u_{\varphi}, \mathcal{K}_+ \varphi)_{\mathcal{H}} & (u_{\varphi}, \mathcal{K}_+ v_{\varphi})_{\mathcal{H}} \end{array} \right]
	\end{equation}
	and
\begin{equation}
	\label{A-minus}
	A_- := \left[\begin{array}{cc} (\varphi, \mathcal{K}_- \varphi')_{\mathcal{H}} & (\varphi, \mathcal{K}_-  u_{\varphi})_{\mathcal{H}} \\
	(v_{\varphi}, \mathcal{K}_- \varphi')_{\mathcal{H}} & (v_{\varphi}, \mathcal{K}_- u_{\varphi})_{\mathcal{H}} \end{array} \right]
	\end{equation}
\end{lemma}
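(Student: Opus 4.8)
The plan is to combine the parity structure of the soliton data with a few explicit identities. All functions entering $A_{\pm}$ have definite parity: $\varphi(x)=\tanh x$ and $u_{\varphi}(x)=-\tfrac14 x\,{\rm sech}^2 x$ are odd, while $\varphi'$ and $v_{\varphi}(x)=\tfrac14{\rm sech}^2 x-\tfrac12$ are even; the weight $1-\varphi^2={\rm sech}^2(\cdot)$ in the inner product $(\cdot,\cdot)_{\mathcal{H}}$ is even, and $\mathcal{K}_{\pm}=\mathcal{M}_{\pm}^{-1}$ commute with the parity reflection since $\mathcal{M}_{\pm}$ do. Hence $(f,\mathcal{K}_{\pm}g)_{\mathcal{H}}=0$ whenever $f$ and $g$ have opposite parity. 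In $A_+$ the diagonal entries $(\varphi',\mathcal{K}_+\varphi)_{\mathcal{H}}$ and $(u_{\varphi},\mathcal{K}_+v_{\varphi})_{\mathcal{H}}$ pair an even function with an odd one and therefore vanish, and the same happens on the diagonal of $A_-$. Thus both matrices are anti-diagonal, and it suffices to show that the four surviving off-diagonal entries are nonzero, after which $\det A_+=-(\varphi',\mathcal{K}_+v_{\varphi})_{\mathcal{H}}\,(u_{\varphi},\mathcal{K}_+\varphi)_{\mathcal{H}}\ne0$ and $\det A_-=-(\varphi,\mathcal{K}_-u_{\varphi})_{\mathcal{H}}\,(v_{\varphi},\mathcal{K}_-\varphi')_{\mathcal{H}}\ne0$.

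For the two ``easy'' entries I would use that $\mathcal{K}_{\pm}$ are self-adjoint in $\mathcal{H}$ (being bounded and symmetric by Lemmas \ref{lem-M-minus} and \ref{lem-M-plus}) together with $\mathcal{L}_+\varphi'=0$ and $\mathcal{L}_-\varphi=0$. Since $\mathcal{L}_+=\mathcal{M}_+-6$ and $\mathcal{L}_-=\mathcal{M}_--3$, these give $\mathcal{M}_+\varphi'=6\varphi'$ and $\mathcal{M}_-\varphi=3\varphi$, hence $\mathcal{K}_+\varphi'=\tfrac16\varphi'$, $\mathcal{K}_-\varphi=\tfrac13\varphi$, so that
\[
(\varphi',\mathcal{K}_+v_{\varphi})_{\mathcal{H}}=\tfrac16\,(v_{\varphi},\varphi')_{\mathcal{H}},\qquad
(\varphi,\mathcal{K}_-u_{\varphi})_{\mathcal{H}}=\tfrac13\,(u_{\varphi},\varphi)_{\mathcal{H}},
\]
which are nonzero because $(v_{\varphi},\varphi')_{\mathcal{H}}=-\tfrac25$ and $(u_{\varphi},\varphi)_{\mathcal{H}}=-\tfrac1{12}$ were already computed in the proof of Lemma \ref{lem-quadruple}. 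For the entry $(u_{\varphi},\mathcal{K}_+\varphi)_{\mathcal{H}}$ of $A_+$ I would use $\mathcal{L}_+u_{\varphi}=-\varphi$, i.e. $\mathcal{M}_+u_{\varphi}=6u_{\varphi}-\varphi$, which yields $\mathcal{K}_+\varphi=6\mathcal{K}_+u_{\varphi}-u_{\varphi}$ and hence $(u_{\varphi},\mathcal{K}_+\varphi)_{\mathcal{H}}=6(u_{\varphi},\mathcal{K}_+u_{\varphi})_{\mathcal{H}}-\|u_{\varphi}\|_{\mathcal{H}}^2$. By (\ref{spectrum-plus}) the spectrum of $\mathcal{M}_+=\mathcal{L}_++6$ in $\mathcal{H}$ lies in $[6,\infty)$, so $0<\mathcal{K}_+\le\tfrac16 I$ and $(u_{\varphi},\mathcal{K}_+\varphi)_{\mathcal{H}}\le0$, with equality only if $u_{\varphi}$ is an eigenfunction of $\mathcal{M}_+$ for the eigenvalue $6$, i.e. $u_{\varphi}\in{\rm Ker}(\mathcal{L}_+)={\rm span}\{\varphi'\}$; this is impossible since $u_{\varphi}$ is odd and nonzero, so the entry is strictly negative. (Equivalently, expanding $\varphi$ in an $\mathcal{H}$-orthonormal eigenbasis of $\mathcal{L}_+$ gives $(u_{\varphi},\mathcal{K}_+\varphi)_{\mathcal{H}}=-\sum_{n\ge1}\tfrac{a_n^2}{\mu_n(\mu_n+6)}<0$.)

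The delicate entry, and the place where I expect the real work, is $(v_{\varphi},\mathcal{K}_-\varphi')_{\mathcal{H}}$ in $A_-$: the soft argument just used for $A_+$ breaks down because $\mathcal{L}_-$ has the negative eigenvalue $-2$ (see (\ref{spectrum-minus})), so the quadratic form $w\mapsto(w,\mathcal{K}_-\mathcal{L}_-w)_{\mathcal{H}}$ is indefinite and $v_{\varphi}$ has a nonzero component along the constant eigenfunction. I would instead compute $\mathcal{K}_-\varphi'$ in closed form: a direct substitution shows $\mathcal{M}_-\big(\tfrac17{\rm sech}^2(\cdot)+\tfrac47\big)={\rm sech}^2(\cdot)=\varphi'$, hence $\mathcal{K}_-\varphi'=\tfrac17\varphi'+\tfrac47$. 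Then $(v_{\varphi},\mathcal{K}_-\varphi')_{\mathcal{H}}$ reduces to an elementary combination of $\int_{\R}{\rm sech}^2$, $\int_{\R}{\rm sech}^4$, $\int_{\R}{\rm sech}^6$, which evaluates to $-\tfrac{46}{105}\ne0$. Combining the four evaluations then gives $\det A_+\ne0$ and $\det A_-\ne0$, proving the lemma. The only genuinely non-routine step is the closed-form solution of $\mathcal{M}_-w=\varphi'$ (and, to a lesser extent, the realization that the $\mathcal{L}_-$-entry cannot be settled by a sign argument alone); everything else is bookkeeping with parity and the spectral properties of $\mathcal{L}_{\pm}$ and $\mathcal{K}_{\pm}$ already established above.
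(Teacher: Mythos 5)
Your proof is correct, and it reaches the same structural conclusion as the paper: both diagonal entries considered are zero (or irrelevant), and invertibility follows once the four off-diagonal entries are shown to be nonzero. The treatment of three of the four off-diagonal entries matches the paper's: the entries $(\varphi',\mathcal{K}_+v_{\varphi})_{\mathcal{H}}$ and $(\varphi,\mathcal{K}_-u_{\varphi})_{\mathcal{H}}$ are reduced via $\mathcal{K}_+\varphi'=\tfrac16\varphi'$ and $\mathcal{K}_-\varphi=\tfrac13\varphi$ to the nonzero quantities from Lemma \ref{lem-quadruple}, and for $(v_{\varphi},\mathcal{K}_-\varphi')_{\mathcal{H}}$ both you and the paper solve $\mathcal{M}_-w=\varphi'$ in closed form to get $w=\tfrac47+\tfrac17\,{\rm sech}^2$; the paper then concludes by a pointwise sign argument ($v_{\varphi}<0$, $w>0$) where you evaluate the integral to $-\tfrac{46}{105}$, which checks out. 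Your parity observation that \emph{both} diagonal entries vanish is slightly stronger than what the paper records (it only notes the first diagonal entries are zero and remarks that the last ones are irrelevant), but this changes nothing.

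The genuine divergence is the entry $(u_{\varphi},\mathcal{K}_+\varphi)_{\mathcal{H}}$. The paper computes $\mathcal{K}_+\varphi$ explicitly as $x\cosh(2x)-\sinh(2x)\log(2\cosh x)+\tfrac12\tanh x$ and runs a delicate pointwise positivity/monotonicity analysis to get the sign; this is by far the most computational step of its proof. You replace it with the operator identity $\mathcal{K}_+\varphi=6\mathcal{K}_+u_{\varphi}-u_{\varphi}$ and the form bound $0<\mathcal{K}_+\le\tfrac16 I$ coming from $\sigma(\mathcal{M}_+)\subset[6,\infty)$, with the equality case excluded because $u_{\varphi}$ is odd while ${\rm Ker}(\mathcal{L}_+)={\rm span}(\varphi')$ is even. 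This is a cleaner and more robust argument: it uses only the spectral facts already established (discreteness of the spectrum and that $0$ is the lowest eigenvalue of $\mathcal{L}_+$, from Corollary \ref{cor-plus} and Remark \ref{rem-Sturm-pos}), avoids the explicit Green's-function computation entirely, and your parenthetical eigenbasis formula $-\sum_{n\ge1}a_n^2/(\mu_n(\mu_n+6))$ makes the strict negativity transparent. Your remark that the analogous soft argument fails for $(v_{\varphi},\mathcal{K}_-\varphi')_{\mathcal{H}}$ because $\mathcal{L}_-$ has the negative eigenvalue $-2$ correctly identifies why that entry must be handled by explicit computation, exactly as the paper does.
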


\begin{proof}
	The first diagonal entries of $A_{\pm}$ are zero because 
	$$
	(\varphi', \mathcal{K}_+ \varphi)_{\mathcal{H}} = 
	(\mathcal{K}_+ \varphi',  \varphi )_{\mathcal{H}} = 
\frac{1}{6} 	(\varphi',  \varphi )_{\mathcal{H}} = 0
	$$
	and 
$$
(\varphi, \mathcal{K}_- \varphi')_{\mathcal{H}} = 
(\mathcal{K}_-  \varphi, \varphi')_{\mathcal{H}} = 
\frac{1}{3}	 (\varphi, \varphi')_{\mathcal{H}} = 0
$$
We will show that the off-diagonal entries of $A_{\pm}$ are all non-zero. 
This will ensure invertibility of $A_{\pm}$ irrespectively 
of the last diagonal entries of $A_{\pm}$. For two off-diagonal entries, 
we obtain 
$$
(\varphi', \mathcal{K}_+ v_{\varphi})_{\mathcal{H}} = 
(\mathcal{K}_+ \varphi',  v_{\varphi} )_{\mathcal{H}} = 
\frac{1}{6} 	(\varphi',  v_{\varphi})_{\mathcal{H}} < 0
$$
and 
$$
(\varphi, \mathcal{K}_- u_{\varphi})_{\mathcal{H}} = 
(\mathcal{K}_-  \varphi, u_{\varphi})_{\mathcal{H}} = 
\frac{1}{3}	 (\varphi, u_{\varphi})_{\mathcal{H}} < 0,
$$
where the signs of the last terms are computed in the proof 
of Lemma \ref{lem-quadruple}. For the other two off-diagonal entries, 
we claim the following explicit expressions
	\begin{align}
(\mathcal{K}_- \varphi')(x) &= \frac{4}{7} + \frac{1}{7} {\rm sech}^2(x), 
\label{K-minus-exp} \\
(\mathcal{K}_+ \varphi)(x) &= x \cosh(2x) - \sinh(2x) \log(2 \cosh(x)) 
+ \frac{1}{2} \tanh(x).
\label{K-plus-exp}
\end{align}
The expression (\ref{K-minus-exp}) is obtained by solving the inhomogeneous equation 
		$$
	(-\partial_x^2 + {\rm sech}^2(x)) (\mathcal{K}_- \varphi') = 
	(-\partial_x^2 - 2 {\rm sech}^2(x)) v_{\varphi} = {\rm sech}^4(x).
	$$
from which we obtain $(v_{\varphi}, \mathcal{K}_- \varphi')_{\mathcal{H}} < 0$ since $v_{\varphi}(x) < 0$ and $(\mathcal{K}_- \varphi')(x) > 0$ for every $x \in \R$.

The expression (\ref{K-plus-exp}) is obtained by solving the inhomogeneous equation 
$$
(-\partial_x^2 + 4) (\mathcal{K}_+ \varphi) = 
-(-\partial_x^2 + 4 - 6 {\rm sech}^2(x)) u_{\varphi} = \tanh(x) {\rm sech}^2(x)
$$
with the homogeneous solution chosen to satisfy the requirement
that $\mathcal{K}_+ \varphi \in \mathcal{H}$. To show this, we can rewrite the right-hand-side of (\ref{K-plus-exp}) for $x > 0$ as 
\begin{equation}
\label{expl-expr}
x e^{-2x} 
	- \frac{1}{2} g(x) + \frac{1}{2} (g(x) + 1) e^{-4x} - \frac{e^{-2x}}{1+e^{-2x}},
\end{equation}
	where 
	$$
	g(x) := e^{2x} \left[ \log(2\cosh(x)) - x\right] - 1 = 
	e^{2x} \log(1 + e^{-2x}) - 1.
	$$
Taylor expansion shows that $g(x) = -\frac{1}{2} e^{-2x} + \mathcal{O}(e^{-4x})$ as $x \to +\infty$ so that the explicit expression (\ref{expl-expr}) decays to $0$ as $x \to +\infty$ exponentially fast. Since it is odd in $x$,
it belongs to $H^1(\R) \subset\mathcal{H}$ 
so that it coincides with $(\mathcal{K}_+ \phi)$. Moreover, since $(\mathcal{K}_+ \varphi)(x) \to 0$ as $x \to +\infty$ 
and 
$$
\frac{d}{dx} \frac{(\mathcal{K}_+ \varphi)(x)}{\sinh(2x)} = \frac{\tanh(x) - x}{2 \sinh^2(x) \cosh^2(x)} < 0, \quad x > 0,
$$
we have $(\mathcal{K}_+ \varphi)(x) > 0$ for $x > 0$, from which we obtain 
$(u_{\varphi}, \mathcal{K}_+ \varphi)_{\mathcal{H}} < 0$ 
since $u_{\varphi}(x) < 0$ for $x > 0$ and both functions are odd. Thus, all four off-diagonal terms in $A_{\pm}$ are nonzero so that $A_{\pm}$ are invertible.
\end{proof}

Let us now define the following two subspaces of $\mathcal{H}$:
$$
\mathcal{U} := {\rm span}(\varphi',u_{\varphi})^{\perp}, \quad 
\mathcal{V} := {\rm span}(\varphi,v_{\varphi})^{\perp},
$$
and denote the orthogonal projection operators $\Pi_{\mathcal{U}} : \mathcal{H}\mapsto \mathcal{U}$ and $\Pi_{\mathcal{V}} : \mathcal{H}\mapsto \mathcal{V}$.

The following lemma describes decompositions of $\mathcal{H}$ into two direct sums, which are generally non-orthogonal.

\begin{lemma}
	\label{lem-projection}
	$\mathcal{H}$ can be decomposed into the following direct sums 
	$$
	\mathcal{H} = \mathcal{U} \oplus {\rm span}(\mathcal{K}_+ \varphi, \mathcal{K}_+ v_{\varphi}) = \mathcal{V} \oplus {\rm span}(\mathcal{K}_- \varphi', \mathcal{K}_- u_{\varphi}).
	$$
\end{lemma}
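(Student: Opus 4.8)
The plan is to derive Lemma~\ref{lem-projection} as an essentially immediate consequence of the invertibility of the matrices $A_\pm$ from Lemma~\ref{lem-non-degeneracy}, combined with a short linear-algebra argument for a transverse pair consisting of a closed codimension-two subspace and a two-dimensional subspace. I will carry out the $\mathcal{U}$-decomposition in detail; the $\mathcal{V}$-decomposition is verbatim with $A_-$ in place of $A_+$.

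First I would record the relevant finite-dimensional data. The functions $\varphi', u_\varphi, v_\varphi$ and $\varphi$ all belong to $\mathcal{H}$: indeed $\varphi' = 1-\varphi^2 = {\rm sech}^2(\cdot)$ and $u_\varphi = -\tfrac14 x\,{\rm sech}^2(\cdot)$ lie in $L^2(\R) \subset \mathcal{H}$, while $\varphi$ and $v_\varphi$ are bounded and hence lie in the weighted space $\mathcal{H}$. Since $\varphi'$ is even and $u_\varphi$ is odd and nonzero, $\{\varphi', u_\varphi\}$ is linearly independent, so $\mathcal{U} = {\rm span}(\varphi', u_\varphi)^{\perp}$ is a closed subspace of $\mathcal{H}$ with $\mathcal{U}^{\perp} = {\rm span}(\varphi', u_\varphi)$ of dimension two; likewise $\mathcal{V}$ has two-dimensional orthogonal complement ${\rm span}(\varphi, v_\varphi)$. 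Next, writing $w = a\,\mathcal{K}_+\varphi + b\,\mathcal{K}_+ v_\varphi$ and pairing with $\varphi'$ and with $u_\varphi$ in $\mathcal{H}$ gives exactly $A_+ (a,b)^{\top} = \big((\varphi', w)_{\mathcal{H}}, (u_\varphi, w)_{\mathcal{H}}\big)^{\top}$, by the definition~(\ref{A-plus}). Hence if either $w = 0$ or $w \in \mathcal{U}$, then $A_+ (a,b)^{\top} = 0$, and invertibility of $A_+$ forces $a = b = 0$. This shows simultaneously that $\{\mathcal{K}_+\varphi, \mathcal{K}_+ v_\varphi\}$ is linearly independent, so $W_+ := {\rm span}(\mathcal{K}_+\varphi, \mathcal{K}_+ v_\varphi)$ has dimension two, and that $W_+ \cap \mathcal{U} = \{0\}$.

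Finally I would close with the transversality step: let $P_+ : \mathcal{H} \to \mathcal{U}^{\perp}$ be the orthogonal projection, which is well defined since $\mathcal{U}$ is closed, and note $\ker P_+ = \mathcal{U}$. Its restriction to $W_+$ has kernel $W_+ \cap \mathcal{U} = \{0\}$, hence is an injective linear map between two spaces of equal dimension two, hence an isomorphism onto $\mathcal{U}^{\perp}$. Given $h \in \mathcal{H}$, pick $w \in W_+$ with $P_+ w = P_+ h$; then $h - w \in \ker P_+ = \mathcal{U}$, so $h = (h-w) + w \in \mathcal{U} + W_+$, and together with $W_+ \cap \mathcal{U} = \{0\}$ this yields $\mathcal{H} = \mathcal{U} \oplus W_+$. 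Replacing $(A_+, \mathcal{U}, \mathcal{K}_+\varphi, \mathcal{K}_+ v_\varphi)$ by $(A_-, \mathcal{V}, \mathcal{K}_-\varphi', \mathcal{K}_- u_\varphi)$ gives the second decomposition. There is no serious obstacle here: all the analytic content was already isolated in Lemma~\ref{lem-non-degeneracy}, and the only points requiring care are that the two candidate vectors in each summand are genuinely independent (which is the $W \cap \mathcal{U} = \{0\}$ computation above) and that $A_\pm$ encode the pairings in the orientation just used, which is immediate from~(\ref{A-plus})--(\ref{A-minus}).
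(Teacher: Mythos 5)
Your proposal is correct and follows essentially the same route as the paper: the key input in both is the invertibility of $A_{\pm}$ from Lemma \ref{lem-non-degeneracy}, used to show that the components of $\mathcal{K}_+\varphi, \mathcal{K}_+ v_{\varphi}$ in $\mathcal{U}^{\perp}$ (equivalently, the restriction of the orthogonal projection onto $\mathcal{U}^{\perp}$ to their span) furnish a basis of the two-dimensional space $\mathcal{U}^{\perp}={\rm span}(\varphi',u_{\varphi})$, after which the direct-sum decomposition follows from the orthogonal splitting $\mathcal{H}=\mathcal{U}\oplus\mathcal{U}^{\perp}$. The only difference is cosmetic (you phrase the final step via an isomorphism $P_+|_{W_+}:W_+\to\mathcal{U}^{\perp}$, while the paper rearranges the expansion $f=u+\alpha\xi_1+\beta\xi_2$), and you additionally spell out the linear independence of $\{\varphi',u_{\varphi}\}$ and of $\{\mathcal{K}_+\varphi,\mathcal{K}_+v_{\varphi}\}$, which the paper leaves implicit.
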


\begin{proof}
	First we show that
	$$
	\xi_1 := \mathcal{K}_+ \varphi - \Pi_{\mathcal{U}}(\mathcal{K}_+ \varphi), \qquad 
	\xi_2 := \mathcal{K}_+ v_{\varphi} - \Pi_{\mathcal{U}}(\mathcal{K}_+ v_{\varphi})
	$$
are linearly independent. Letting $(c_1,c_2) \in \CC^2$ such that $c_1 \xi_1 + c_2 \xi_2 = 0$ implies 
$$
c_1 \mathcal{K}_+ \varphi  + c_2 \mathcal{K}_+  v_{\varphi} \in \mathcal{U}.
$$
Taking inner products in $\mathcal{H}$ with $\varphi'$ and $u_{\varphi}$ yields 
the linear homogeneous system 
$$
A_+ \left[ \begin{array}{c} c_1 \\ c_2 \end{array} \right] = \left[ \begin{array}{c} 0 \\ 0 \end{array} \right],
$$
which has no nontrivial solutions by Lemma \ref{lem-non-degeneracy}. Hence, $c_1 = c_2 = 0$. Thus, $\{ \xi_1,\xi_2 \}$ is a basis of 
$\mathcal{U}^{\perp} := {\rm span}(\varphi',u_{\varphi})$ so that every $f \in \mathcal{H}$ can be written as 
$$
f = u + \alpha \xi_1 + \beta \xi_2 = \tilde{u} + \alpha \mathcal{K}_+ \varphi + \beta \mathcal{K}_+ v_{\varphi},
$$
with uniquely determined $u \in \mathcal{U}$ and $(\alpha,\beta) \in \CC^2$, 
and with $\tilde{u}:= u - \Pi_{\mathcal{U}}(\alpha \mathcal{K}_+ \varphi + \beta \mathcal{K}_+ v_{\varphi})$. 

The decomposition of $\mathcal{H}$ 
into $\mathcal{V} \oplus {\rm span}(\mathcal{K}_- \varphi', \mathcal{K}_- u_{\varphi})$ follows analogously but with using invertibility of $A_-$ in Lemma \ref{lem-non-degeneracy}.
\end{proof}

With the help of Lemma \ref{lem-projection}, we define the projection operator $P_{\mathcal{U}}$ from $\mathcal{H}$ to $\mathcal{U}$ along ${\rm span}(\mathcal{K}_+ \varphi, \mathcal{K}_+ v_{\varphi})$ as
$$
P_{\mathcal{U}} ( u + \alpha \mathcal{K}_+ \varphi + \beta \mathcal{K}_+ v_{\varphi}) = u, \quad \forall u \in \mathcal{U}, \quad \forall (\alpha,\beta) \in \R^2.
$$
Similarly, the projection operator $P_{\mathcal{V}}$ from $\mathcal{H}$ to $\mathcal{V}$ along ${\rm span}(\mathcal{K}_- \varphi', \mathcal{K}_- u_{\varphi})$ is defined as
$$
P_{\mathcal{V}} ( v + \alpha \mathcal{K}_- \varphi' + \beta \mathcal{K}_- u_{\varphi}) = v, \quad \forall v \in \mathcal{V}, \quad \forall (\alpha,\beta) \in \R^2.
$$
The following lemma gives invertibility 
of the linear operators in the resolvent equation (\ref{resolvent-equiv}) 
after appropriate projection operators. 

\begin{lemma}
	\label{lem-invertibility}
	The following two operators 
	$$
	\mathcal{S}_+ := P_{\mathcal{U}} ( I - 6 \mathcal{K}_+) |_{\mathcal{U}} : \mathcal{U} \mapsto \mathcal{U}
	$$
	and
		$$
	\mathcal{S}_- := P_{\mathcal{V}} ( I - 3 \mathcal{K}_-) |_{\mathcal{V}} : \mathcal{V} \mapsto \mathcal{V}
	$$
	are invertible with bounded inverse.
\end{lemma}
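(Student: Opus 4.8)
The plan is to show that, restricted to $\mathcal{U}$, the operator $\mathcal{S}_+$ is a compact perturbation of the identity; then by the Fredholm alternative its bounded invertibility reduces to injectivity, which I would establish from the generalized-kernel relations recorded before the lemma together with the non-vanishing pairings computed in Lemma~\ref{lem-quadruple}. The operator $\mathcal{S}_-$ is handled by the same scheme.

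I would first collect the structural ingredients. By Lemmas~\ref{lem-M-plus}, \ref{lem-compact-plus}, \ref{lem-M-minus}, \ref{lem-compact-minus}, the operators $\mathcal{K}_\pm = \mathcal{M}_\pm^{-1}$ are bounded, compact and self-adjoint on $\mathcal{H}$ (a symmetric, surjective operator is self-adjoint), with ${\rm Ker}(I - 6\mathcal{K}_+) = {\rm Ker}(\mathcal{L}_+) = {\rm span}(\varphi')$ and ${\rm Ker}(I - 3\mathcal{K}_-) = {\rm Ker}(\mathcal{L}_-) = {\rm span}(\varphi)$, each one-dimensional because $0$ is a simple eigenvalue of $\mathcal{L}_\pm$ in $\mathcal{H}$ (Remarks~\ref{rem-plus}, \ref{rem-minus}). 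By Lemma~\ref{lem-projection}, $\mathcal{H} = \mathcal{U} \oplus {\rm span}(\mathcal{K}_+\varphi, \mathcal{K}_+ v_{\varphi}) = \mathcal{V} \oplus {\rm span}(\mathcal{K}_-\varphi', \mathcal{K}_- u_{\varphi})$ are direct sums of closed subspaces, so the associated oblique projections $P_{\mathcal{U}}, P_{\mathcal{V}}$ are bounded by the closed graph theorem. Since $P_{\mathcal{U}} u = u$ for $u \in \mathcal{U}$, we get $\mathcal{S}_+ u = P_{\mathcal{U}}(u - 6\mathcal{K}_+ u) = u - 6\,(P_{\mathcal{U}}\mathcal{K}_+)|_{\mathcal{U}} u$, and $(P_{\mathcal{U}}\mathcal{K}_+)|_{\mathcal{U}} : \mathcal{U}\to\mathcal{U}$ is compact; hence $\mathcal{S}_+$ (and, likewise, $\mathcal{S}_-$ on $\mathcal{V}$) is Fredholm of index zero, and is invertible with bounded inverse once injectivity is shown, by the bounded inverse theorem.

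For injectivity, let $u \in \mathcal{U}$ satisfy $\mathcal{S}_+ u = 0$. By the definition of $P_{\mathcal{U}}$ there are $\alpha,\beta \in \mathbb{C}$ with $(I - 6\mathcal{K}_+)u = \alpha\mathcal{K}_+\varphi + \beta\mathcal{K}_+ v_{\varphi}$, and using $(I - 6\mathcal{K}_+)u_{\varphi} = -\mathcal{K}_+\varphi$ this rewrites as $(I - 6\mathcal{K}_+)(u + \alpha u_{\varphi}) = \beta\mathcal{K}_+ v_{\varphi}$. Pairing with $\varphi'$ in $\mathcal{H}$ and using self-adjointness of $\mathcal{K}_+$ together with $(I - 6\mathcal{K}_+)\varphi' = 0$, the left-hand side vanishes, so $\beta(\mathcal{K}_+ v_{\varphi},\varphi')_{\mathcal{H}} = \tfrac{\beta}{6}(v_{\varphi},\varphi')_{\mathcal{H}} = 0$; since $(v_{\varphi},\varphi')_{\mathcal{H}} = -\tfrac{2}{5}\neq 0$ (Lemma~\ref{lem-quadruple}), $\beta = 0$. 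Then $u + \alpha u_{\varphi} \in {\rm Ker}(I - 6\mathcal{K}_+) = {\rm span}(\varphi')$, so $u = -\alpha u_{\varphi} + \gamma\varphi'$ for some $\gamma \in \mathbb{C}$; imposing the orthogonality conditions $u \perp u_{\varphi}$ and $u \perp \varphi'$ in $\mathcal{H}$ that define $\mathcal{U}$, together with the parity identity $(\varphi',u_{\varphi})_{\mathcal{H}} = 0$, forces $\alpha = \gamma = 0$, i.e. $u = 0$. The argument for $\mathcal{S}_-$ is identical with $\mathcal{K}_+$, $6$, $u_{\varphi}$, $(I-6\mathcal{K}_+)u_{\varphi} = -\mathcal{K}_+\varphi$, $\varphi'$ replaced by $\mathcal{K}_-$, $3$, $v_{\varphi}$, $(I-3\mathcal{K}_-)v_{\varphi} = \mathcal{K}_-\varphi'$, $\varphi$, using $(u_{\varphi},\varphi)_{\mathcal{H}} = -\tfrac{1}{12}\neq 0$ (Lemma~\ref{lem-quadruple}) and $(\varphi,v_{\varphi})_{\mathcal{H}} = 0$.

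The only non-routine point is this injectivity step, which rests entirely on the two generalized-kernel identities and on the non-degeneracy of the pairings $(v_{\varphi},\varphi')_{\mathcal{H}}$ and $(u_{\varphi},\varphi)_{\mathcal{H}}$ — exactly the quantities shown to be negative in the proof of Lemma~\ref{lem-quadruple} when establishing that the Jordan chain truncates at length two. Everything else (self-adjointness and compactness of $\mathcal{K}_\pm$, boundedness of $P_{\mathcal{U}}$ and $P_{\mathcal{V}}$, and the reduction of invertibility to injectivity) is a standard consequence of Fredholm theory applied to the results established above.
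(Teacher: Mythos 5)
Your proposal is correct and follows essentially the same route as the paper: both reduce bounded invertibility to injectivity via the Fredholm alternative for a compact perturbation of the identity, and both establish injectivity from the generalized-kernel identities together with the non-vanishing of $(v_{\varphi},\varphi')_{\mathcal{H}}$ and $(u_{\varphi},\varphi)_{\mathcal{H}}$. The only cosmetic difference is that you absorb the $\alpha$-term via $u_{\varphi}$ before pairing with $\varphi'$, whereas the paper pairs first and uses $(\varphi',\mathcal{K}_+\varphi)_{\mathcal{H}}=0$ from Lemma~\ref{lem-non-degeneracy}; the two are equivalent.
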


\begin{proof}
	Since $\mathcal{K}_+$ and $\mathcal{K}_-$ are compact, 
	the operators $\mathcal{S}_+$ and $\mathcal{S}_-$ are compact perturbations of the identity operators in $\mathcal{U}$ and $\mathcal{V}$. Therefore, the assertion follows from Fredholm's alternative theorem, once we have proven injectivity of $\mathcal{S}_+$ and $\mathcal{S}_-$. 
	
	To prove injectivity of $\mathcal{S}_+$, we let $u \in \mathcal{U}$ satisfy $\mathcal{S}_+ u = 0$ which implies 
	$$
	(I - 6 \mathcal{K}_+) u = \alpha \mathcal{K}_+ \varphi + \beta \mathcal{K}_+ v_{\varphi},
	$$
	for some $(\alpha,\beta) \in \CC^2$. Taking the inner product in $\mathcal{H}$ with $\varphi'$ yields 
	$$
	0 = \alpha (\varphi',\mathcal{K}_+ \varphi)_{\mathcal{H}} + \beta (\varphi',\mathcal{K}_+ v_{\varphi})_{\mathcal{H}}.
	$$ 
	Since $(\varphi',\mathcal{K}_+ \varphi)_{\mathcal{H}} = 0$ 
	and $(\varphi',\mathcal{K}_+ v_{\varphi})_{\mathcal{H}} \neq 0$ by Lemma \ref{lem-non-degeneracy}, we have $\beta = 0$. With the help of $(I - 6 \mathcal{K}_+) u_{\varphi} = - \mathcal{K}_+ \varphi$, we obtain 
	$$
	u = -\alpha u_{\varphi} + \gamma \varphi',
	$$
	for some $\gamma \in \R$, but since $u \in \mathcal{U}$, we obtain $\alpha = \gamma = 0$ so that $u = 0$ and $\mathcal{S}_+$ is injective.
	
	Injectivity of $\mathcal{S}_-$ follows analogously.
\end{proof}

With the preliminary results in Lemmas \ref{lem-non-degeneracy}, \ref{lem-projection}, and \ref{lem-invertibility}, we now obtain 
the desired result in the following theorem. 

\begin{theorem}
	\label{th-discrete}
	The spectrum of the linear stability problem (\ref{lin-stab}) consists only of isolated eigenvalues of finite multiplicity 
	and admits no finite accumulation points. 
\end{theorem}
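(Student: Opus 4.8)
The overall strategy is to prove that $\mathcal{L}-\lambda I : \mathcal{H}^2_+\times\mathcal{H}^2_- \to \mathcal{H}\times\mathcal{H}$ is boundedly invertible for every $\lambda$ outside a set with no finite accumulation point; since the domain embeds compactly into $\mathcal{H}\times\mathcal{H}$ by Lemmas \ref{lem-compact-minus} and \ref{lem-compact-plus}, the resolvent is then automatically compact wherever it exists, and Theorem \ref{th-discrete} follows from the standard spectral theory of operators with compact resolvent (the eigenvalues being of finite algebraic multiplicity because the resolvent is meromorphic with finite-rank principal parts). Thus the entire content is the solvability, for such $\lambda$, of the resolvent equation (\ref{resolvent-eq}).

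Concretely, I would fix $\lambda\neq 0$ (the case $\lambda=0$ being settled by Lemma \ref{lem-quadruple}), pass to the equivalent system (\ref{resolvent-equiv}) via the compact operators $\mathcal{K}_\pm$, and insert the decompositions $u=\tilde u+a_1u_{\varphi}+a_2\varphi'$ with $\tilde u\in\mathcal{U}$ and $v=\tilde v+b_1v_{\varphi}+b_2\varphi$ with $\tilde v\in\mathcal{V}$, coming from the orthogonal splittings $\mathcal{H}=\mathcal{U}\oplus{\rm span}(\varphi',u_{\varphi})$ and $\mathcal{H}=\mathcal{V}\oplus{\rm span}(\varphi,v_{\varphi})$. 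Applying the oblique projections $P_{\mathcal{U}}$, $P_{\mathcal{V}}$ to the two equations of (\ref{resolvent-equiv}) and using $(I-6\mathcal{K}_+)u_{\varphi}=-\mathcal{K}_+\varphi$, $(I-6\mathcal{K}_+)\varphi'=0$, the $\mathcal{L}_-$-analogues, and the relations $P_{\mathcal{U}}\mathcal{K}_+\varphi=P_{\mathcal{U}}\mathcal{K}_+v_{\varphi}=0$, $P_{\mathcal{V}}\mathcal{K}_-\varphi'=P_{\mathcal{V}}\mathcal{K}_-u_{\varphi}=0$, Lemma \ref{lem-invertibility} lets me solve the $\mathcal{U}$- and $\mathcal{V}$-components for $\tilde u=-\mathcal{S}_+^{-1}(\lambda P_{\mathcal{U}}\mathcal{K}_+v+P_{\mathcal{U}}\mathcal{K}_+g)$ and $\tilde v=\mathcal{S}_-^{-1}(\lambda P_{\mathcal{V}}\mathcal{K}_-u+P_{\mathcal{V}}\mathcal{K}_-f)$. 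Substituting these into one another eliminates $\tilde u,\tilde v$ up to an operator of the form $I+\lambda^2(\text{compact})$, and substituting the result into the complementary $(I-P_{\mathcal{U}})$- and $(I-P_{\mathcal{V}})$-components — which by Lemma \ref{lem-non-degeneracy} form a non-degenerate system for the scalars $(a_1,a_2,b_1,b_2)$ once $\lambda\neq 0$ — collapses the whole resolvent equation into a single equation $(\tilde{\mathcal{S}}+\mathcal{C}(\lambda))\vec{X}=(\text{data})$ on a Hilbert space, where $\tilde{\mathcal{S}}$ is boundedly invertible and $\mathcal{C}(\lambda)$ is compact and depends polynomially on $\lambda$.

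By the analytic Fredholm theorem, $\tilde{\mathcal{S}}+\mathcal{C}(\lambda)$ is then boundedly invertible for all $\lambda$ outside a set without finite accumulation point, \emph{provided} it is invertible for at least one value of $\lambda$; excluding this degenerate alternative is the main obstacle. I would dispose of it by a Lyapunov--Schmidt reduction near $\lambda=0$: the kernel of the reduced operator at $\lambda=0$ is two-dimensional (it corresponds to ${\rm Ker}\,\mathcal{L}={\rm span}\{(\varphi',0),(0,\varphi)\}$, the obstructions being that $\mathcal{L}_+u=-g$ requires $g\perp_{\mathcal{H}}\varphi'$ and $\mathcal{L}_-v=f$ requires $f\perp_{\mathcal{H}}\varphi$), and the associated $2\times2$ bifurcation function $M(\lambda)$ has $\det M(\lambda)$ a non-trivial analytic germ at $0$, its non-vanishing being precisely the two inequalities $(v_{\varphi},\varphi')_{\mathcal{H}}\neq 0$ and $(u_{\varphi},\varphi)_{\mathcal{H}}\neq 0$ already verified in the proof of Lemma \ref{lem-quadruple} — equivalently, $\lambda=0$ is an isolated eigenvalue of finite algebraic multiplicity, consistent with Lemma \ref{lem-quadruple}. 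Hence $\tilde{\mathcal{S}}+\mathcal{C}(\lambda)$ is invertible for all sufficiently small $\lambda\neq0$, the degenerate case is ruled out, and tracing the construction backwards one recovers, for every $\lambda$ outside the exceptional discrete set, a unique $(u,v)\in\mathcal{H}^2_+\times\mathcal{H}^2_-$ solving (\ref{resolvent-eq}) with norm controlled by $\|(f,g)\|_{\mathcal{H}\times\mathcal{H}}$. Compactness of $(\mathcal{L}-\lambda I)^{-1}$ (from the embeddings of Lemmas \ref{lem-compact-minus}, \ref{lem-compact-plus}) and finiteness of the algebraic multiplicities (from meromorphy of the resolvent) then complete the proof; the only labour beyond the exclusion step is the careful bookkeeping of the oblique projections $P_{\mathcal{U}}$ and $P_{\mathcal{V}}$.
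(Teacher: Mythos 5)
Your proposal is correct and follows essentially the same route as the paper: the same splittings $\mathcal{H}=\mathcal{U}\oplus{\rm span}(\mathcal{K}_+\varphi,\mathcal{K}_+v_{\varphi})$ and $\mathcal{H}=\mathcal{V}\oplus{\rm span}(\mathcal{K}_-\varphi',\mathcal{K}_-u_{\varphi})$, the same oblique projections, Lemma \ref{lem-invertibility} for the $\mathcal{U}\times\mathcal{V}$ block, and Lemma \ref{lem-non-degeneracy} for the $4\times4$ scalar block. The only divergence is in the concluding step: the paper applies the Fredholm alternative directly to $I-\lambda\,\mathcal{C}$ with $\mathcal{C}$ a fixed, $\lambda$-independent compact operator, whose non-invertibility set is automatically the discrete set of characteristic values of $\mathcal{C}$ -- so the ``main obstacle'' you identify (producing one $\lambda$ of invertibility for the analytic Fredholm theorem) dissolves, since the Neumann series already gives invertibility of the reduced $\mathcal{U}\times\mathcal{V}$ system for all small $\lambda$ and the scalar block is invertible for all $\lambda\neq0$, making your Lyapunov--Schmidt exclusion step at $\lambda=0$ unnecessary (though not incorrect).
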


\begin{proof}
	By Lemma \ref{lem-non-degeneracy},  
	$\{ \mathcal{K}_+ \varphi, \mathcal{K}_+ v_{\varphi}\}$ is linearly independent of $\{ \varphi', u_{\varphi} \}$ so that 
	$$
{\rm Null}(P_{\mathcal{U}}) \cap {\rm Null}(I - \Pi_{\mathcal{U}}) = {\rm Null}(P_{\mathcal{V}}) \cap {\rm Null}(I - \Pi_{\mathcal{V}}) = \{ 0 \}.
	$$
As a result, the resolvent problem (\ref{resolvent-equiv}) is identical to the following two systems:
	\begin{equation}
	\label{resolvent-1}
	\begin{array}{l}
	P_{\mathcal{V}}( I - 3 \mathcal{K}_-) v = P_{\mathcal{V}}( \lambda \mathcal{K}_- u + \mathcal{K}_- f), \\
	-P_{\mathcal{U}} ( I - 6 \mathcal{K}_+) u = P_{\mathcal{U}}( \lambda \mathcal{K}_+ v + \mathcal{K}_+ g) \end{array}
	\end{equation}
	and
		\begin{equation}
	\label{resolvent-2}
	\begin{array}{l}
(I - \Pi_{\mathcal{V}}) ( I - 3 \mathcal{K}_-) v = (I - \Pi_{\mathcal{V}})( \lambda \mathcal{K}_- u + \mathcal{K}_- f), \\
	-(I - \Pi_{\mathcal{U}}) ( I - 6 \mathcal{K}_+) u = (I - \Pi_{\mathcal{U}}) ( \lambda \mathcal{K}_+ v + \mathcal{K}_+ g) \end{array}
	\end{equation}
Since $\mathcal{H}$ is given by the orthogonal sums $\mathcal{U}\oplus \mathcal{U}^{\perp}$ and $\mathcal{V}\oplus \mathcal{V}^{\perp}$, we decompose 
		\begin{equation}
\label{decomposition-res}
u = \tilde{u} + \alpha \varphi' + \beta u_{\varphi}, \quad 
v = \tilde{v} + \gamma \varphi + \delta v_{\varphi},
	\end{equation}
where $\tilde{u} \in \mathcal{U}$, $\tilde{v}\in \mathcal{V}$, and 
$\alpha,\beta,\gamma,\delta \in \CC$. Since 
$$
\begin{array}{l}
P_{\mathcal{U}} ( I - 6 \mathcal{K}_+) \varphi' = 0, \quad P_{\mathcal{U}} (I - 6 \mathcal{K}_+) u_{\varphi} = 0, \quad P_{\mathcal{U}} ( \mathcal{K}_+ \varphi)  = 0, \quad P_{\mathcal{U}} (\mathcal{K}_+ v_{\varphi}) = 0, \\
P_{\mathcal{V}} ( I - 3 \mathcal{K}_-) \varphi = 0, \quad P_{\mathcal{V}} (I - 3 \mathcal{K}_-) v_{\varphi} = 0, \quad P_{\mathcal{V}} ( \mathcal{K}_- \varphi')  = 0, \quad P_{\mathcal{V}} (\mathcal{K}_- u_{\varphi}) = 0, 
\end{array}
$$
the first system (\ref{resolvent-1}) with the decomposition (\ref{decomposition-res})  is equivalent to 
	\begin{equation*}
\begin{array}{l}
P_{\mathcal{V}}( I - 3 \mathcal{K}_-) \tilde{v} = P_{\mathcal{V}}( \lambda \mathcal{K}_- \tilde{u} + \mathcal{K}_- f), \\
-P_{\mathcal{U}} ( I - 6 \mathcal{K}_+) \tilde{u} = P_{\mathcal{U}}( \lambda \mathcal{K}_+ \tilde{v} + \mathcal{K}_+ g). \end{array}
\end{equation*}
By Lemma \ref{lem-invertibility}, the linear operators in the left-hand-side can be inverted so that the system can be rewritten in the equivalent form:
	\begin{equation*}
\left[ \begin{array}{l} \tilde{u} \\ \tilde{v} \end{array} \right] 
= \lambda \left[ \begin{array}{cc} 0 & -\mathcal{S}_+^{-1} P_{\mathcal{U}}  \mathcal{K}_+ |_{\mathcal{V}} \\ \mathcal{S}_-^{-1} P_{\mathcal{V}} \mathcal{K}_- |_{\mathcal{U}} & 0 \end{array} \right] \left[ \begin{array}{l} \tilde{u} \\ \tilde{v} \end{array} \right] + \left[ \begin{array}{c} -\mathcal{S}_+^{-1} P_{\mathcal{U}} \mathcal{K}_+ g \\ \mathcal{S}_-^{-1} P_{\mathcal{V}} \mathcal{K}_- f \end{array} \right].
\end{equation*}
Since the matrix operator in the right-hand side is a compact operator 
from $\mathcal{U} \times \mathcal{V}$ to itself, Fredholm's alternative theorem 
tells us that either the inhomogeneous system has a unique solution $(\tilde{u},\tilde{v}) \in \mathcal{U} \times \mathcal{V}$ for every $(f,g) \in \mathcal{H}\times \mathcal{H}$ or $\lambda$ is an isolated eigenvalue of finite multiplicity of the homogeneous system with $f = g = 0$. Moreover, isolated 
eigenvalues have no finite accumulation points. 

Let us show that the Fredholm's alternative carries over to the original system (\ref{resolvent-equiv}). To do so, we analyze the second system (\ref{resolvent-2}) with the decomposition (\ref{decomposition-res}). Since 
$$
(\varphi',(I - 6 \mathcal{K}_+) u)_{\mathcal{H}} = 0, \quad 
(\varphi,(I - 3 \mathcal{K}_-) v)_{\mathcal{H}} = 0,
$$
and
$$
\begin{array}{l}
(u_{\varphi},(I - 6 \mathcal{K}_+) u)_{\mathcal{H}} = ((I - 6 \mathcal{K}_+) u_{\varphi},u)_{\mathcal{H}} = -(\mathcal{K}_+\varphi,u)_{\mathcal{H}}, \\
(v_{\varphi},(I - 3 \mathcal{K}_-) v)_{\mathcal{H}} = ((I - 3 \mathcal{K}_-)  v_{\varphi}, v)_{\mathcal{H}} = (\mathcal{K}_- \varphi', v)_{\mathcal{H}}, \end{array}
$$
we obtain the following system of four equations 
\begin{align*}
0 &= \lambda \left[ (\varphi, \mathcal{K}_- \tilde{u})_{\mathcal{H}} + 
\alpha (\varphi, \mathcal{K}_- \varphi')_{\mathcal{H}} + \beta 
(\varphi, \mathcal{K}_- u_{\varphi})_{\mathcal{H}} \right] + (\varphi, \mathcal{K}_- f)_{\mathcal{H}}, \\
(\mathcal{K}_- \varphi', \tilde{v})_{\mathcal{H}} 
+ \delta (\mathcal{K}_- \varphi', v_{\varphi})_{\mathcal{H}} &= \lambda \left[ (v_{\varphi}, \mathcal{K}_- \tilde{u})_{\mathcal{H}} + 
\alpha (v_{\varphi}, \mathcal{K}_- \varphi')_{\mathcal{H}} + \beta 
(v_{\varphi}, \mathcal{K}_- u_{\varphi})_{\mathcal{H}} \right] + (v_{\varphi}, \mathcal{K}_- f)_{\mathcal{H}}, \\
0 &= \lambda \left[ (\varphi', \mathcal{K}_+ \tilde{v})_{\mathcal{H}} + 
\gamma (\varphi', \mathcal{K}_+ \varphi)_{\mathcal{H}} + \delta 
(\varphi', \mathcal{K}_+ v_{\varphi})_{\mathcal{H}} \right] + (\varphi', \mathcal{K}_+ g)_{\mathcal{H}}, \\
-(\mathcal{K}_+ \varphi, \tilde{u})_{\mathcal{H}} 
- \beta (\mathcal{K}_+  \varphi, u_{\varphi})_{\mathcal{H}} &= \lambda \left[ (u_{\varphi}, \mathcal{K}_+ \tilde{v})_{\mathcal{H}} + 
\gamma (u_{\varphi}, \mathcal{K}_+ \varphi)_{\mathcal{H}} + \delta 
(u_{\varphi}, \mathcal{K}_+ v_{\varphi})_{\mathcal{H}} \right] + (u_{\varphi}, \mathcal{K}_+ g)_{\mathcal{H}},
\end{align*}
where we have used 
$$
(\mathcal{K}_- \varphi',\varphi)_{\mathcal{H}} = (\mathcal{K}_+ \varphi, \varphi')_{\mathcal{H}} = 0
$$
and our definition (\ref{inner-product}) with the complex conjugation applied to the first element of the inner product in $\mathcal{H}$.

The system can be written in the matrix-vector form by using matrices $A_+$ and $A_-$ in (\ref{A-plus}) and (\ref{A-minus}):
\begin{align*}
& \lambda A_- \left[ \begin{array}{c} \alpha \\ \beta \end{array} \right] 
- A_- \left[ \begin{array}{c} \delta \\ 0 \end{array} \right]  = 
\left[ \begin{array}{c} 0 \\ (\mathcal{K}_- \varphi', \tilde{v})_{\mathcal{H}}  \end{array} \right] - \lambda \left[ \begin{array}{c} (\varphi, \mathcal{K}_- \tilde{u})_{\mathcal{H}} \\ (v_{\varphi}, \mathcal{K}_- \tilde{u})_{\mathcal{H}} \end{array} \right] - \left[ \begin{array}{c} (\varphi, \mathcal{K}_- f)_{\mathcal{H}} \\ (v_{\varphi}, \mathcal{K}_- f)_{\mathcal{H}} \end{array} \right], \\
& \lambda A_+ \left[ \begin{array}{c} \gamma \\ \delta \end{array} \right] 
+ A_+ \left[ \begin{array}{c} \beta \\ 0 \end{array} \right] = 
-\left[ \begin{array}{c} 0 \\ (\mathcal{K}_+ \varphi, \tilde{u})_{\mathcal{H}}  \end{array} \right] - \lambda \left[ \begin{array}{c} (\varphi', \mathcal{K}_+ \tilde{v})_{\mathcal{H}} \\ (u_{\varphi}, \mathcal{K}_+ \tilde{v})_{\mathcal{H}} \end{array} \right] - \left[ \begin{array}{c} (\varphi', \mathcal{K}_+ g)_{\mathcal{H}} \\ (u_{\varphi}, \mathcal{K}_+ g)_{\mathcal{H}} \end{array} \right], 
\end{align*}
where we have used 
$$
A_-\left[ \begin{array}{c} 1 \\ 0 \end{array} \right]  = \left[ \begin{array}{c} 0 \\ (v_{\varphi},\mathcal{K}_- \varphi')_{\mathcal{H}} \end{array} \right], \quad A_+ \left[ \begin{array}{c} 1 \\ 0 \end{array} \right]  = \left[ \begin{array}{c} 0 \\ (u_{\varphi}, \mathcal{K}_+ \varphi)_{\mathcal{H}} \end{array} \right]
$$
and all entries of $A_+$ and $A_-$ are real-valued. By Lemma \ref{lem-non-degeneracy}, 
matrices $A_+$ and $A_-$ are invertible and so is the matrix 
\begin{equation*}
\left[ \begin{array}{cc} A_- & 0 \\ 0 & A_+ \end{array} \right] \left[ \begin{array}{cccc} \lambda & 0 & 0 & -1 \\
0 & \lambda & 0 & 0 \\ 0 & 1 & \lambda & 0 \\ 0 & 0 & 0 & \lambda \end{array} \right], \quad \lambda \neq 0.
\end{equation*}
Therefore, for every $\lambda \neq 0$, there exists a unique solution $(\alpha,\beta,\gamma,\delta) \in \CC^4$ for every $(\tilde{u},\tilde{v}) \in \mathcal{U}\times \mathcal{V}$ and $(f,g) \in \mathcal{H}\times \mathcal{H}$.

We recall that the resolvent equation (\ref{resolvent-equiv}) is considered 
for $\lambda \neq 0$ since the quadruple eigenvalue $\lambda = 0$ is prescribed by Lemma \ref{lem-quadruple}. Thus, by Fredholm's alternative theorem, if the inhomogeneous system has a unique solution $(\tilde{u},\tilde{v}) \in \mathcal{U}\times \mathcal{V}$, 
then there exists a unique solution $(\alpha,\beta,\gamma,\delta) \in \CC^4$ 
and so the unique solution $(u,v) \in \mathcal{H}\times \mathcal{H}$ to the resolvent equation (\ref{resolvent-equiv}) for every 
$(f,g) \in \mathcal{H}\times \mathcal{H}$. Hence, this $\lambda$ belongs to the resolvent set of the linear stability problem (\ref{lin-stab}). Alternatively, 
there are finitely many linearly independent solutions for  $(\tilde{u},\tilde{v}) \in \mathcal{U}\times \mathcal{V}$ to the homogeneous system with $f = g = 0$ 
and together with the unique solutions for $(\alpha,\beta,\gamma,\delta) \in \CC^4$, this gives the same number of linearly independent solutions for 
$(u,v) \in \mathcal{H}\times \mathcal{H}$ to the homogeneous system (\ref{resolvent-equiv}) with $f = g = 0$. Hence, this $\lambda$ is an isolated eigenvalue of the linear stability problem (\ref{lin-stab}).
\end{proof}

\subsection{Proof of Theorem \ref{theorem-lin}}

By Corollaries \ref{cor-minus} and \ref{cor-plus}, 
the spectrum of $\mathcal{L}_-$ and $\mathcal{L}_+$ in $\mathcal{H}$ consists of simple isolated eigenvalues. The orderings (\ref{spectrum-plus}) and (\ref{spectrum-minus})  
follow from the ordering of the zero eigenvalue of $\mathcal{L}_-$ and $\mathcal{L}_+$ in Remarks 
\ref{rem-minus} and \ref{rem-plus} respectively. 

By Theorem \ref{th-discrete}, the spectrum of the stability problem (\ref{lin-stab-H}) consists of isolated eigenvalues of finite multiplicity and admits no finite accumulation points. By Lemma \ref{lem-quadruple}, it includes a quadruple zero eigenvalue associated to the symmetries (\ref{symmetry}). The nonzero isolated eigenvalues occur in pairs 
$\pm \lambda$ since if $\lambda$ is an eigenvalue of the linear stability 
problem (\ref{lin-stab-H}) with the eigenvector $(u,v)^T$, 
then $-\lambda$ is an  eigenvalue with the eigenvector $(u,-v)^T$. 
It remains to prove that all eigenvalues are purely imaginary and hence 
satisfy the ordering (\ref{spectrum-stab}). This follows from the characterization of nonzero eigenvalues $\lambda$ of the stability problem (\ref{lin-stab-H}) for $(u,v) \in \mathcal{V}\times \mathcal{U}$.

To be precise, let us define the following constrained operators 
\begin{align*}
\mathcal{T}_+ &:= \mathcal{L}_+ |_{\{\varphi' \}^{\perp}} \; : \; \mathcal{H}^2_+ |_{\{\varphi' \}^{\perp}} \mapsto \mathcal{H}|_{\{\varphi'\}^{\perp}}, \\
\mathcal{T}_- &:= \mathcal{L}_- |_{\{\varphi \}^{\perp}} \; : \; \mathcal{H}^2_- |_{\{\varphi \}^{\perp}} \mapsto \mathcal{H}|_{\{\varphi\}^{\perp}}
\end{align*}
It is clear that $\mathcal{T}_+$ and $\mathcal{T}$ are invertible operators with bounded inverses $\mathcal{T}_+^{-1}$ and $\mathcal{T}_-^{-1}$.

If $(\varphi',v)_{\mathcal{H}} = 0$ and $(u_{\varphi},v)_{\mathcal{H}} = 0$, then the second equation of system (\ref{lin-stab-H}) is solved by 
$$
u = -\lambda \left( \mathcal{T}_+^{-1} v - \frac{(\mathcal{T}_+^{-1} v, v_{\varphi})_{\mathcal{H}}}{(\varphi', v_{\varphi})_{\mathcal{H}}} \varphi' \right),
$$ 
where $(\varphi',v_{\varphi})_{\mathcal{H}} < 0$. By construction, 
$(v_{\varphi},u)_{\mathcal{H}} = 0$ is satisfied. In addition, 
$(\varphi, u)_{\mathcal{H}} = 0$ since 
$$
(\varphi,u)_{\mathcal{H}} = -(\mathcal{L}_+ u_{\varphi},u)_{\mathcal{H}} 
= -(u_{\varphi},\mathcal{L}_+ u)_{\mathcal{H}} = \lambda(u_{\varphi},v)_{\mathcal{H}} = 0.
$$
Thus, if $v \in \mathcal{U}$ and $\lambda \neq 0$, a unique solution 
of the second equation of system (\ref{lin-stab-H}) exists for $u \in \mathcal{V}$. The first equation of system (\ref{lin-stab-H}) can then be written as the generalized eigenvalue problem 
\begin{equation}
\label{gen-eig}
\mathcal{L}_- v = -\lambda^2 \left( \mathcal{T}_+^{-1} v -  \frac{(\mathcal{T}_+^{-1} v, v_{\varphi})_{\mathcal{H}}}{(\varphi', v_{\varphi})_{\mathcal{H}}} \varphi' \right),
\end{equation}
for which the smallest eigenvalue $-\lambda_0^2$ can be obtained from the Rayleigh quotient 
\begin{equation}
\label{Rayleigh}
-\lambda_0^2 = \inf_{{\tiny \begin{array}{l} v \in \mathcal{H} \backslash \{0\} \\ (\varphi',v)_{\mathcal{H}} = 0 \\
(u_{\varphi},v)_{\mathcal{H}} = 0		\end{array}}} 
	\frac{Q_-(v)}{(\mathcal{T}_+^{-1} v, v)_{\mathcal{H}}},
\end{equation}
where $Q_-(v) := (\mathcal{L}_- v, v)_{\mathcal{H}}$ is given by (\ref{quad-minus}). Since $(\varphi',v)_{\mathcal{H}} = 0$ and the spectrum of $\mathcal{L}_+$ satisfies (\ref{spectrum-plus}), we have 
$(\mathcal{T}_+^{-1} v, v)_{\mathcal{H}} > 0$. On the other hand, it follows from Appendix A in \cite{W85} that 
\begin{itemize}
	\item $Q_-(v) \geq 0$, for $v$ satisfying $(\varphi',v)_{\mathcal{H}} = 0$, 
	\item $Q_-(v) = 0$ is attained only at $v \in {\rm span}(\varphi) \subset \mathcal{H}$,
\end{itemize}
provided that 
$$	
(\mathcal{T}_-^{-1} \varphi', \varphi')_{\mathcal{H}}  = (v_{\varphi},\varphi')_{\mathcal{H}} < 0,
$$
which is satisfied. However, $(u_{\varphi},\varphi)_{\mathcal{H}} \neq 0$ 
so that if $v \in \mathcal{U}$, then $Q_-(v) > 0$. Hence $-\lambda_0^2 > 0$ follows from the Rayleight quotient (\ref{Rayleigh}) so that all nonzero eigenvalues of the stability problem (\ref{lin-stab-H}) satisfy $-\lambda^2 > 0$ and are thus located on $i \R$ away from the quadruple zero eigenvalue.

The proof of Theorem \ref{theorem-lin} is complete.

\section{Existence of traveling dark solitons}
	\label{sec-existence}

Here we study the construction of traveling dark solitons for arbitrary wave speed $c$ and in the limit $c \to 0$ and thus provide the proof of Theorem \ref{theorem-dark}. 

If $U_c$ is a solution 
of the differential equation (\ref{nls-trav}), then $\bar{U}_c$ is 
the solution of the same equation for $-c$. Therefore, it is sufficient to 
prove the existence result for $c > 0$. The following 
lemma yields the existence of traveling dark solitons. 

\begin{lemma}
	\label{lem-dark}
	For every $c \in (0,\infty)$, there exists the dark soliton with the profile $U_c \in \mathcal{F} \cap C^{\infty}(\mathbb{R})$ satisfying (\ref{nls-trav}) and (\ref{bc-dark}) such that the mapping $c \mapsto U_c$ is $C^{\infty}$ on $(0,\infty)$.
\end{lemma}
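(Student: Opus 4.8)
The plan is to reduce the complex second--order equation (\ref{nls-trav}) to a planar, reversible Hamiltonian system via the hydrodynamic (Madelung) substitution $U_c=\rho\,e^{i\phi}$ with $\rho=|U_c|$, integrate it by explicit quadrature, and read off the profile together with its smooth dependence on $c$. Throughout we take $c>0$, the case $c<0$ being covered by $U_{-c}=\overline{U_c}$ as noted before the lemma.

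First I would insert $U_c=\rho e^{i\phi}$ into (\ref{nls-trav}) and split into real and imaginary parts,
\begin{align*}
\rho'' - \rho(\phi')^2 + 2c(1-\rho^2)\rho\phi' + 2(1-\rho^2)\rho &= 0,\\
2\rho'\phi' + \rho\phi'' - 2c(1-\rho^2)\rho' &= 0.
\end{align*}
The second identity is $\tfrac{d}{d\xi}(\rho^2\phi')=c\,\tfrac{d}{d\xi}(\rho^2-\tfrac12\rho^4)$; integrating once and imposing $\rho\to1$, $\rho'\to0$, $\phi'\to0$ at $\pm\infty$ (forced by (\ref{bc-dark})) gives the algebraic relation $\phi'(\xi)=-\tfrac{c}{2}(1-\rho(\xi)^2)^2/\rho(\xi)^2$, which in particular shows $\rho$ cannot vanish when $c\ne0$. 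Substituting this back into the real part yields a closed autonomous second--order ODE for $s:=\rho^2$; multiplying by a suitable integrating factor and integrating once (the logarithmic terms cancel identically) produces a first integral of the form $(s')^2=P(s;c)$ with $P(\cdot;c)$ an explicit quartic. Requiring $s\to1$, $s'\to0$ at $\pm\infty$ fixes the integration constant and forces $s=1$ to be a double root; $P$ is then palindromic, so
$$
P(s;c)=-c^2(s-1)^2(s-a_-)(s-a_+),\qquad a_\pm=1+\frac{2\big(1\pm\sqrt{1+c^2}\big)}{c^2},
$$
and $a_-a_+=1$ gives $0<a_-<1<a_+$ for every $c>0$.

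Next I would study the curve $(s')^2=P(s;c)$ in the $(s,s')$ plane. Since $P(\cdot;c)\ge0$ exactly on $[a_-,a_+]$ and vanishes doubly at $s=1$, this curve is a figure--eight through $(1,0)$: one loop over $[a_-,1]$ (the dark soliton, with $|U_c|<1$) and one over $[1,a_+]$ (the companion wave of the Remark, with $|U_c|>1$). Selecting the first loop, the orbit through $(a_-,0)$ is globally defined (it is bounded, hence no finite--time blow--up), even in $\xi$ by reversibility $\xi\mapsto-\xi$, strictly increasing on $[0,\infty)$ with $a_-\le s(\xi)<1$, and $s(\xi)\to1$ as $|\xi|\to\infty$; from $(1-a_-)(1-a_+)=-4/c^2$ one gets $P(s;c)\sim 4(s-1)^2$ near $s=1$, so the convergence is exponential with rate $2$, uniformly for $c$ in compact subsets of $(0,\infty)$. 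Then $\rho=\sqrt{s}\in C^\infty$ because $s>0$, and $\phi(\xi):=\int_0^\xi(-\tfrac c2)(1-s)^2/s\,dt$ has finite limits $\theta_\pm(c)$ at $\pm\infty$ since its integrand decays like $e^{-4|\xi|}$. Setting $U_c:=\rho e^{i\phi}$ (normalizing translation by $s'(0)=0$ and phase by $\phi(0)=0$) and verifying by direct substitution that it solves (\ref{nls-trav}) — so that it is $C^\infty$ by elliptic bootstrapping — yields $U_c\in\mathcal{F}\cap C^\infty(\R)$ with the boundary behaviour (\ref{bc-dark}).

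Finally, for smoothness of $c\mapsto U_c$ on $(0,\infty)$: the coefficients of $P(\cdot;c)$ and the roots $a_\pm(c)$ are analytic in $c>0$, the equilibrium $s=1$ is hyperbolic (the linearization of $s''=\tfrac12 P'(s;c)$ at $s=1$ has eigenvalues $\pm2$), and by smooth dependence of the flow on parameters together with persistence of the one--dimensional reversible homoclinic connection, $c\mapsto s(\cdot;c)$ and hence $c\mapsto U_c$ is $C^\infty$; alternatively this follows by differentiating the quadrature $\xi=\int_{s(\xi)}^{1}P(\sigma;c)^{-1/2}\,d\sigma$. I expect the main technical point to be the qualitative analysis of $(s')^2=P(s;c)$ — global existence of the orbit, selection of the $[a_-,1]$ branch over the companion $[1,a_+]$ branch, and the exponential (rate $2$) approach to $s=1$ that yields (\ref{bc-dark}) and the decay stated in the Remark — whereas the Madelung reduction and the parameter dependence are comparatively routine.
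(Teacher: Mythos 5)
Your proposal is correct and follows essentially the same route as the paper: the polar (Madelung) reduction, the integration of the phase equation to $\phi'=-\tfrac{c}{2}(1-|U_c|^2)^2/|U_c|^2$, the first-order invariant $(\rho_c')^2=(1-\rho_c)^2[4\rho_c-c^2(1-\rho_c)^2]$ with the same turning points $\rho_\pm(c)=(c^2+2\pm2\sqrt{1+c^2})/c^2$, selection of the homoclinic loop in $[\rho_-(c),1]$, and smooth dependence on $c$ away from $c=0$. The only cosmetic difference is that the paper passes through the intermediate variable $q_c=\sqrt{\rho_c}$ to carry out the quadrature, while you integrate the equation for $s=\rho^2$ directly; the resulting first integral and phase-plane analysis are identical.
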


\begin{proof}
	By using the polar form $U_c = \sqrt{\rho_c} e^{i \phi_c}$, we rewrite the differential equation (\ref{nls-trav}) as the following system:
	\begin{align}
	\label{system-ode}
	\left\{ \begin{array}{l}
	(\rho_c \phi_c')' - c (1-\rho_c) \rho_c' = 0, \\
	2 \rho_c \rho_c'' - (\rho_c')^2 + 8 (1-\rho_c) \rho_c^2 + 8 c (1-\rho_c) \rho_c^2 \phi_c' - 4 \rho_c^2 (\phi_c')^2 = 0. \end{array} \right.
	\end{align}
	
	The first equation in system (\ref{system-ode}) can be integrated under the boundary conditions (\ref{bc-dark}) 
	to the form 
\begin{equation}
\label{phi}
	\phi_c' = -\frac{c}{2\rho_c} (1-\rho_c)^2,
\end{equation}
	so that $\lim\limits_{|\xi| \to \infty} \phi_c'(\xi) = 0$ if $\lim\limits_{|\xi| \to \infty} \rho_c(\xi) = 1$. 
	
	Substituting (\ref{phi}) to the second equation in system (\ref{system-ode}) 
	and using the transformation $\rho_c = q_c^2$, we obtain the second-order differential equation
	$$
	q_c'' + 2(1-q_c^2) q_c - \frac{c^2}{q_c} (1-q_c^2)^3 - \frac{c^2}{4 q_c^3} (1-q_c^2)^4 = 0,
	$$
	which can be integrated under the boundary conditions (\ref{bc-dark}) to the form
	$$
	(q_c')^2 - (1- q_c^2)^2 + \frac{c^2}{4 q_c^2} (1-q_c^2)^4 = 0.
	$$
Unfolding the transformation $\rho_c = q_c^2$ yields 	
\begin{equation}
\label{rho-scalar}
	(\rho_c')^2 = (1-\rho_c)^2 \left[ 4 \rho_c - c^2 (1-\rho_c)^2 \right],
\end{equation}
which is the first-order invariant of the second-order equation 
\begin{equation}
\label{rho-second-order}
\rho_c'' + 2 (1-\rho_c)(3 \rho_c - 1 - c^2(1-\rho_c)^2) = 0,
\end{equation}
with the unique choice of the integration constant due to the boundary conditions (\ref{bc-dark}).

The critical point $(\rho,\rho') = (1,0)$ is a saddle point 
of the second-order equation (\ref{rho-second-order}) 
with two homoclinic orbits located in $[\rho_-(c),1] \times \mathbb{R}$ and $[1,\rho_+(c)] \times \mathbb{R}$, where $\rho_{\pm}(c)$ are turning points of the homoclinic orbits. The turning points correspond to the values of $\xi_0$ for which $\rho'(\xi_0) = 0$ and hence are obtained from the roots of the quadratic equation $4 \rho = c^2 (1-\rho)^2$ that follows from (\ref{rho-scalar}). The turning points are given explicitly by 
	$$
	\rho_{\pm}(c) = \frac{c^2 + 2 \pm 2 \sqrt{1+c^2}}{c^2}
	$$
	and they exist for every $c > 0$. Moreover, 
	$$
	\rho_-(c) = \frac{1}{4} c^2 + \mathcal{O}(c^4), \quad \rho_+(c) = \frac{4}{c^2} + \mathcal{O}(1), \quad \mbox{\rm as} \;\; c \to 0.
	$$
	There exists exactly one (up to translation) solution $\rho_c \in [\rho_-(c),1]$ of the first-order invariant (\ref{rho-scalar}) in the set $\mathcal{F}$. For every $c \in (0,\infty)$, the solution satisfies
	$$
	\rho_c(\xi) \geq \rho_-(c) > 0, \quad \forall \xi \in \R
	$$ 
	so that $U_c \in C^{\infty}(\mathbb{R})$ in the polar form. 
	Moreover, $\rho_c$, $\phi_c$, and thus $U_c$ are $C^{\infty}$ 
	with respect to $c$ on $(0,\infty)$.
\end{proof}

\begin{remark}
	Another solution of the first-order invariant (\ref{rho-scalar}) exists for $\rho \in [1,\rho_+(c)]$. This solution is not used for the scopes of this work.
\end{remark}

\begin{remark}
	\label{remark-rate}
	Linearization of the second-order equation (\ref{rho-second-order}) at $(\rho,\rho') = (1,0)$ shows that the saddle point is associated with the same exponential rate $\pm 2$ independently of the speed parameter $c \in \mathbb{R}$. Consequently, up to translation in $\xi$, $\rho_c$ satisfies
	$\rho_c(\xi) = 1 + A_c e^{-2|\xi|} + {\rm o}(e^{-2|\xi|})$ as $|\xi| \to \infty$ with a unique choice for $A_c < 0$.
\end{remark}

\begin{remark}
	The $C^{\infty}$ smoothness of $U_c$ at $c = 0$ cannot be concluded from the polar form. Indeed, we have $\rho_c = \rho_0 + \mathcal{O}_{H^2}(c^2)$, see the proof of Theorem \ref{lem-conv-to-0}, where 
	$\rho_0(\xi) = \tanh^2(\xi)$ is the suitable solution of $(\rho_0')^2 = 4 \rho_0 (1-\rho_0)^2$. However, since $\rho_0(0) = 0$, the
	phase factor $\phi_c(\xi)$ is singular at $\xi = 0$. If we use the formal expansion 
	$\phi_c' = c \phi_1' + \mathcal{O}(c^3)$, then 
	$$
	\phi_1' = -\frac{(1-\rho_0)^2}{2 \rho_0}
	$$
	so that $\phi_1(\xi) = \coth(2\xi)$ is singular at $\xi = 0$. The polar form 
	$U_c = \sqrt{\rho_c} e^{i\phi_c}$ gives formally 
	$$
	U_c(\xi) = \tanh(\xi) + \frac{i c \cosh(2\xi)}{2 \cosh^2(\xi)} + \mathcal{O}(c^2),
	$$
	where the second term coincides with the explicit expression 
	for $-2 i c v_{\varphi}$, see (\ref{eigen-gener-exact}). 
This formal expansion cannot be justified from the polar form because of the singularity at $\xi = 0$.
\end{remark}

The following theorem justifies the expansion of $U_c$ as $c \to 0$
by using the real and imaginary coordinates for $U_c = u_c + i v_c$ in addition 
to the variable $\rho_c = |U_c|^2$.

\begin{theorem}
	\label{lem-conv-to-0} 
	There exists $c_0 > 0$ and $A_0 > 0$ such that for every $c \in (-c_0,c_0)$, we have 
	\begin{equation}
	\label{U-dark-expansion}
	\| U_c - \varphi + 2 i c v_{\varphi} \|_{\mathcal{H}^2_-} \leq A_0 c^2,
	\end{equation}
	where $v_{\varphi}$ is defined by  (\ref{eigen-gener-exact}).
\end{theorem}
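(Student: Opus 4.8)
The plan is to work with the solution $U_c$ constructed in Lemma \ref{lem-dark} for $c>0$, to normalize it by the translation and phase symmetries (\ref{symmetry}) so that $(\varphi',\mathrm{Re}\,R_c)_{\mathcal{H}}=(\varphi,\mathrm{Im}\,R_c)_{\mathcal{H}}=0$ for $R_c:=U_c-\varphi+2ic v_\varphi$, and to prove $\|R_c\|_{\mathcal{H}^2_-}\le A_0 c^2$ by a fixed-point argument built on the invertibility of $\mathcal{L}_\pm$ from Theorem \ref{theorem-lin}. The a priori bound $\|U_c-\varphi\|_{\mathcal{H}^2_-}\to 0$ of Theorem \ref{theorem-dark} guarantees (after the $\mathcal{O}(c)$ translation/phase adjustment) that $R_c$ lies in a small ball, so the fixed point is unique and can be identified with $R_c$. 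The case $c<0$ will then follow from $U_c=\overline{U_{-c}}$ and the fact that $\varphi$ and $v_\varphi$ are real.

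\emph{Step 1 (modulus estimate).} First I would control $\rho_c:=|U_c|^2$. By Lemma \ref{lem-dark}, $\rho_c$ solves the first-order invariant (\ref{rho-scalar}), whose right-hand side is a polynomial in $\rho_c$ and in $c^2$, and whose saddle point $(\rho,\rho')=(1,0)$ carries the $\xi$-rates $\pm 2$ for all $c$ (Remark \ref{remark-rate}). Standard perturbation theory for the homoclinic orbit of (\ref{rho-scalar})--(\ref{rho-second-order}) in the parameter $c^2\ge 0$ should give $\rho_c=\rho_0+\mathcal{O}(c^2)$ with $\rho_0(\xi)=\tanh^2(\xi)$, the estimate holding in an exponentially weighted $C^2$ (hence $H^2$) sense. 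The point needing care is that the turning point $\rho_-(c)=\tfrac14 c^2+\mathcal{O}(c^4)$ of the homoclinic loop collapses to $0$ as $c\to 0$, so $\rho_c$ is bounded away from $0$ only for each fixed $c\ne 0$; this is resolved by reading off from (\ref{rho-scalar}) that $\rho_c(\xi)=\tfrac14 c^2+\xi^2+\mathcal{O}(\cdot)$ near $\xi=0$, so $\rho_c-\rho_0=\mathcal{O}(c^2)$ uniformly there as well.

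\emph{Steps 2--3 (remainder equation, inversion, closing).} Substituting $U_c=\varphi-2ic v_\varphi+R_c$ into (\ref{nls-trav}) and separating real and imaginary parts will produce, schematically,
\[
\mathcal{L}_+\,\mathrm{Re}\,R_c=\cosh^2(\cdot)\,G_r,\qquad \mathcal{L}_-\,\mathrm{Im}\,R_c=\cosh^2(\cdot)\,G_i .
\]
The terms of order $c$ cancel: the $\mathcal{O}(c)$ contribution to $1-|U_c|^2$ vanishes because $v_\varphi$ is real, and the ansatz correction $-2ic v_\varphi$ was chosen precisely so that $L_- v_\varphi=(1-\varphi^2)\varphi'$ (equivalently $\mathcal{L}_- v_\varphi=\varphi'$) kills the remaining $\mathcal{O}(c)$ forcing $-2ic(1-\varphi^2)\varphi'$, while $L_+\varphi'=0$ and $L_-\varphi=0$ carry the kernel directions. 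Using $\rho_c-\varphi^2=\rho_c-\rho_0=\mathcal{O}(c^2)$ from Step 1, one checks $\|G_r\|_{\mathcal{H}}+\|G_i\|_{\mathcal{H}}\lesssim c^2+c\,\|R_c\|_{\mathcal{H}^2_-}+\|R_c\|_{\mathcal{H}^2_-}^2$. Because $U_c\to e^{i\theta_\pm(c)}$ with $\theta_\pm(c)=\mathcal{O}(c)$, the real and imaginary parts of $R_c$ tend to $\mathcal{O}(c^2)$ constants at $\pm\infty$ rather than to zero, so I would peel off these asymptotic values — two scalar unknowns tied to $\theta_\pm(c)$ through $|U_c(\pm\infty)|=1$ — leaving decaying components in $\mathcal{H}^2_+$ and $\mathcal{H}^2_-$. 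On those components one inverts $\mathcal{L}_+$ and $\mathcal{L}_-$ modulo $\mathrm{span}(\varphi')$ and $\mathrm{span}(\varphi)$ via the bounded restricted inverses $\mathcal{T}_\pm^{-1}$ of Section \ref{sec-spaces}; the solvability conditions $(\varphi',\cosh^2 G_r)_{\mathcal{H}}=0$ and $(\varphi,\cosh^2 G_i)_{\mathcal{H}}=0$ hold automatically from $\varphi'\in\ker\mathcal{L}_+$, $\varphi\in\ker\mathcal{L}_-$ and the covariance of (\ref{nls-trav}) under (\ref{symmetry}), while the chosen normalization fixes the translation and phase. This recasts the problem as $R_c=c^2\,\Phi_c(R_c)$ on a ball of $c$-independent radius in $\mathcal{H}^2_-\times\mathcal{H}^2_-$ (augmented by the two scalars), with $\Phi_c$ a contraction by the Step-2 estimates; the unique fixed point satisfies $\|R_c\|_{\mathcal{H}^2_-}\le A_0 c^2$.

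\emph{Main obstacle.} I expect the genuinely delicate part to be Step 1 — getting the $\mathcal{O}(c^2)$ bound on $\rho_c-\rho_0$ in a norm strong enough (weighted, two derivatives) and uniform in $c$ despite the turning point of the homoclinic orbit colliding with $\rho=0$ — together with the bookkeeping of the non-decaying asymptotic parts of $R_c$ in Step 3 and their tie to $\theta_\pm(c)=\mathcal{O}(c)$. It is essential throughout to work in the weighted space $\mathcal{H}$ rather than $L^2(\R)$, since $\mathcal{L}_-$ is Fredholm at the origin only in the weighted setting.
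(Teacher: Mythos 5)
Your outline contains the same essential ingredients as the paper's proof --- a preliminary weighted $\mathcal{O}(c^2)$ estimate on $\rho_c-\varphi^2$, inversion of the linearized operators on the complement of their kernels, and explicit bookkeeping of the non-decaying $\mathcal{O}(c^2)$ constants at infinity tied to $\theta_\pm(c)$ --- but organizes them differently. The paper never introduces modulation parameters: the solution of Lemma \ref{lem-dark} has $\rho_c$ even, hence $\mathrm{Re}\,U_c$ odd and $\mathrm{Im}\,U_c$ even, and since $\ker L_+=\mathrm{span}(\varphi')$ is even while $\ker L_-=\mathrm{span}(\varphi)$ is odd, parity alone renders the relevant operators invertible on the appropriate subspaces, with no orthogonality normalization and no Fredholm solvability check. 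The paper also solves sequentially (first $\eta_c=\rho_c-\varphi^2$ via the operator $L_0=-\partial_\xi^2+4-12\,\mathrm{sech}^2$ on even $H^2$ functions, then $\tilde v_c$ for given $\tilde u_c$, then $\hat u_c$ after peeling off $\alpha_c\varphi$ with $2\alpha_c+\alpha_c^2+\beta_c^2=0$) rather than by one coupled contraction; this order is what allows the pointwise weighted bounds (\ref{eta-c-infinity}) and (\ref{bound-v-c-infinity}) to be fed into the later equations. That input is indispensable, because membership of $R_c$ in $\mathcal{H}^2_-$ forces no decay whatsoever (constants lie in $\mathcal{H}^2_-$), so without the pointwise bounds the products $\cosh^2(\cdot)\times(\text{nonlinear terms})$ need not lie in $\mathcal{H}$ and your contraction estimate for $\Phi_c$ would not close in the norm you propose.

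Two points need repair. First, you invoke the convergence (\ref{U-dark-conv}) of Theorem \ref{theorem-dark} to place $R_c$ in a small ball, but in the paper that convergence is itself deduced from the estimate (\ref{U-dark-expansion}) you are proving; the a priori closeness must instead be extracted from the phase-plane construction of Lemma \ref{lem-dark}, or one identifies $U_c$ with the unique solution of prescribed parity produced by the implicit function theorem, as the paper does. Second, the claim that the solvability conditions ``hold automatically from covariance'' is too quick: what actually kills the projections onto $\varphi'$ and $\varphi$ is either a correct choice of translation and phase (itself an implicit-function-theorem statement requiring the same a priori closeness) or the parity argument; as stated it hides a step. Neither issue is fatal, but both must be addressed before the argument closes.
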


\begin{proof}
	Let us substitute $U_c = u_c + i v_c$ in the differential equation (\ref{nls-trav}) and obtain the following system 
	\begin{align}
	\label{system-u-v}
	\left\{ \begin{array}{l}
	u_c'' + 2(1-u_c^2-v_c^2) u_c + 2c (1-u_c^2-v_c^2) v_c' = 0, \\
	v_c'' + 2(1-u_c^2-v_c^2) v_c - 2c (1-u_c^2-v_c^2) u_c' = 0. \end{array} \right.
	\end{align}
	The relevant analysis is obtained in three steps.\\
	
	{\em Step 1.} By Lemma \ref{lem-dark}, $\rho_c = |U_c|^2$ is a solution of the second-order differential equation (\ref{rho-second-order}). We decompose $\rho_c = \rho_0 + \eta_c$, where $\rho_0 := \varphi^2$ is a solution of the limiting equation
\begin{equation}
\label{rho-limiting}
	\rho_0'' + 2(1-\rho_0) (3 \rho_0 - 1) = 0
\end{equation}
	and $\eta_c$ is a suitable solution of the residual equation
\begin{equation}
\label{eta-c-problem}
	L_0 \eta_c + 2 c^2 (1-\varphi^2 - \eta_c)^2 + 6 \eta_c^2 = 0,
\end{equation}
	associated with the Schr\"{o}dinger operator $L_0 : H^2(\R) \subset L^2(\R) \mapsto L^2(\R)$ given by
	$$
	L_0 := -\partial_{\xi}^2 + 4  - 12 {\rm sech}^2(\cdot).
	$$
Since ${\rm Ker}(L_0) = {\rm span} (\varphi \varphi')$ and $\varphi \varphi'$ is odd, the linear operator $L_0$ is injective on the space of even functions in $H^2(\R)$ denoted as $H^2_{\rm even}(\R)$. Since $-12 {\rm sech}^2(\cdot)$ is a relatively compact perturbation to the unbounded positive operator $-\partial_{\xi}^2 + 4$, it is also surjective in $L^2(\R)$. Thus, $L_0 : H^2_{\rm even}(\R) \subset L^2(\R) \mapsto L^2(\R)$ is invertible with a bounded inverse. By the implicit function 
theorem, there exist $c_1 > 0$, $A_1 > 0$, and a unique solution $\eta_c \in H^2_{\rm even}(\R)$ to (\ref{eta-c-problem}) for every $c \in (-c_1,c_1)$ such that 
\begin{equation}
\label{eta-c-bound}
\| \eta_c \|_{H^2} \leq A_1 c^2, \quad c \in (-c_1,c_1).
\end{equation}
We also claim that there exists $B_1 > 0$ such that
\begin{equation}
\label{eta-c-infinity}
| \eta_c(\xi)|  \leq B_1 c^2 (1-\varphi(\xi)^2), \qquad \forall \xi \in \R,
\end{equation}	
which agrees with the $c$-independent rate $\eta_c(\xi) = \mathcal{O}(e^{-2|\xi|})$ as $|\xi| \to \infty$ (Remark \ref{remark-rate}). In order to justify the bound (\ref{eta-c-infinity}), we rewrite the residual equation (\ref{eta-c-problem}) in the integral form, 
\begin{align}
\label{eta-integral}
\eta_c(\xi) = f_1(\xi) \int_0^{\xi} f_2(\xi') F(\xi') d\xi' - f_2(\xi) \int_{-\infty}^{\xi} f_1(\xi') F(\xi') d\xi', 
\end{align}
where $f_{1,2}$ are homogeneous solutions of $L_0 f = 0$ normalized by their Wronskian 
$$
f_1(\xi) f_2'(\xi) - f_1'(\xi) f_2(\xi) = 1, \quad \forall \xi \in \R,
$$ 
and $F := -2 c^2 (1 - \varphi^2 - \eta_c)^2 - 6 \eta_c^2$. We choose 
$$
f_1(\xi) = \varphi(\xi) \varphi'(\xi) = {\rm sech}^2(\xi) \tanh(\xi),
$$
so that $f_2$ is obtained computationally as 
$$
f_2(\xi) = \frac{1}{8} \left[ 2 \cosh^2(\xi)  + 5 
- 15 {\rm sech}^2(\xi) + 15 \xi {\rm sech}^2(\xi) \tanh(\xi) \right] =: \cosh^2(\xi) f(\xi).
$$
Note that the constant of integrations in (\ref{eta-integral}) have been choosen 
to ensure that $\eta_c$ is even if $F$ is even in $\xi$ and 
that $\eta_c(\xi) \to 0$ as $|\xi| \to \infty$. The integral equation (\ref{eta-integral}) can be rewritten in the exponentially weighted form 
for $\gamma_c := \cosh^2(\cdot) \eta_c$ as the fixed-point equation 
$\gamma_c = \mathcal{A}(\gamma_c)$, where
\begin{align*}
\mathcal{A}(\gamma_c) & := f \cosh^4(\cdot)  \int_{-\infty}^{\xi} {\rm sech}^6(\cdot) \varphi  \left[ 2 c^2 (1 - \gamma_c)^2 - 6 \gamma_c^2 \right] d\xi' \\
& \quad - 
\varphi \int_0^{\xi} {\rm sech}^2(\cdot) f \left[ 2 c^2 (1 - \gamma_c)^2 - 6 \gamma_c^2 \right] d\xi'.
\end{align*}
Since the integral operator is closed in the space of even and bounded 
functions and satisfies the estimates 
\begin{align*}
& \| A(\gamma_c) \|_{L^{\infty}} \leq C_1 (c^2 + \| \gamma_c \|^2_{L^{\infty}}), \\
& \| A(\gamma_c) - A(\gamma_c')\|_{L^{\infty}} \leq C_2 (c^2 + \| \gamma_c \|_{L^{\infty}}) \| \gamma_c - \gamma_c' \|_{L^{\infty}}
\end{align*}
for some $c$-independent constants $C_1,C_2 > 0$, the bound (\ref{eta-c-infinity}) follows by the implicit function theorem 
for $\gamma_c \in L^{\infty}(\R)$ satisfying $\| \gamma_c \|_{L^{\infty}} \leq B_1 c^2$.\\

{\em Step 2.} We are now ready to analyze system (\ref{system-u-v}) for $u_c$ and $v_c$. Using the decomposition $u_c = \varphi + \tilde{u}_c$, $v_c = \tilde{v}_c$, we obtain the following system of equations:
	\begin{equation}
	\label{system-u-v-c}
	\left\{ \begin{array}{l}
	L_+ \tilde{u}_c = 2c (1-\varphi^2 - \eta_c) \tilde{v}_c' - 2 \eta_c \tilde{u}_c - 
	2 \varphi (\tilde{u}_c^2+\tilde{v}_c^2), \\
	L_- \tilde{v}_c =  - 2c (1-\varphi^2 - \eta_c) (\varphi' + \tilde{u}_c') -2 \eta_c \tilde{v}_c, \end{array} \right.
	\end{equation}
where $L_+$ and $L_-$ are given by (\ref{L-plus}) and (\ref{L-minus}) 
and $\eta_c := \rho_c - \rho_0 = 2 \varphi \tilde{u}_c + \tilde{u}_c^2+\tilde{v}_c^2$. 

We are looking for solutions $\tilde{u}_c \in \mathcal{H}^2_-$ being odd in $\xi$ and $\tilde{v}_c \in \mathcal{H}^2_-$ being  even in $\xi$. For such functions, the right-hand side of the first equation in system (\ref{system-u-v-c}) is odd in $\xi$ and the right-hand side of the second equation in system (\ref{system-u-v-c}) is even in $\xi$.

Since ${\rm Ker}(\mathcal{L}_-) = {\rm span}(\varphi) \subset \mathcal{H}$ is odd in $\xi$, the operator $\mathcal{L}_-$ is invertible on  even functions in $\mathcal{H}$. Due to the exponential decay (\ref{eta-c-infinity}) and 
smallness of $c$, $2 \eta_c$ is a small perturbation to $L_-$ 
so that $\mathcal{L}_- + 2 {\rm cosh}^2(\cdot) \eta_c$ is also 
invertible on even functions in $\mathcal{H}$. By the implicit function theorem, 
there exists $c_2 > 0$, $A_2 > 0$ and a unique even solution 
$\tilde{v}_c \in \mathcal{H}^2_-$ to the second equation in system (\ref{system-u-v-c}) 
for every $c \in (-c_2,c_2)$ and every odd $\tilde{u}_c \in \mathcal{H}^2_-$
such that 
\begin{equation}
\label{bound-v-c}
\| \tilde{v}_c \|_{\mathcal{H}^2_-} \leq A_2 |c|, \quad c \in (-c_2,c_2).
\end{equation}
We claim that there exists the limit
\begin{equation}
\label{beta-c-limit}
\beta_c := \lim_{|\xi| \to \infty} \tilde{v}_c(\xi)
\end{equation}
and there exists $B_2 > 0$ such that
\begin{equation}
\label{bound-v-c-infinity}
| \tilde{v}_c(\xi) - \beta_c |  \leq B_2 |c| |\xi|  (1-\varphi(\xi)^2), \qquad \forall \xi \in \R.
\end{equation}	
In order to justify (\ref{beta-c-limit}) and (\ref{bound-v-c-infinity}), we again write the second equation in system (\ref{system-u-v-c}) as the integral equation
\begin{align}
\label{v-integral}
\tilde{v}_c(\xi) = \tanh(\xi) \int_0^{\xi} \left[ \xi' \tanh(\xi') - 1 \right] G(\xi') d\xi' - \left[ \xi \tanh(\xi) - 1 \right] \int_{-\infty}^{\xi} \tanh(\xi') G(\xi') d\xi', 
\end{align}
where we have used two solutions of $L_- g = 0$ given by 
$g_1 = \varphi$ and $g_2 = \xi \varphi - 1$ normalized by the Wronskian identity 
$$
g_1(\xi) g_2'(\xi) - g_1'(\xi) g_2(\xi) = 1, \quad \forall \xi \in \R
$$
and $G := - 2c (1-\varphi^2 - \eta_c) (\varphi' + \tilde{u}_c') -2 \eta_c \tilde{v}_c$. The limit of integration in the second term in (\ref{v-integral}) ensures that the second term converges to $0$ exponentially fast as $|\xi| \to \infty$ since $G$ is even and has the exponential weight if $\tilde{u}_c,\tilde{v}_c \in \mathcal{H}^2_-$ due to the exponential decay (\ref{eta-c-infinity}). The first term in (\ref{v-integral}) is even and converges to a finite limit 
as $|\xi| \to \infty$, which defines $\beta_c$ uniquely since 
$\tilde{v}_c \in \mathcal{H}^2_-$ is already uniquely defined for a given $\tilde{u}_c \in \mathcal{H}^2_-$. Thus, 
the limit (\ref{beta-c-limit}) has been confirmed. 
The bound (\ref{bound-v-c-infinity}) is obtained due to the linearly growing terms in the integral equation (\ref{v-integral}) and the exponentially decaying weights $1-\varphi^2$ under the integration signs.

A simple computation shows that 
$$
\tilde{v}_c = -2c v_{\varphi} + \mathcal{O}_{\mathcal{H}^2_-}(c^3),
$$
where $v_{\varphi}$ is given by (\ref{eigen-gener-exact}) and where the $\mathcal{O}_{\mathcal{H}^2_-}(c^3)$ order is justified if 
$\tilde{u}_c = \mathcal{O}_{\mathcal{H}^2_-}(c^2)$ as $c \to 0$. Using (\ref{beta-c-limit}), we also obtain $\beta_c = c + \mathcal{O}(c^3)$ 
so that $\beta_c = \mathcal{O}(c)$. \\

{\em Step 3.}  Finally, we use the decomposition $\tilde{u}_c = \alpha_c \varphi + \hat{u}_c$  with 
\begin{equation*}
\alpha_c := \sqrt{1 - \beta_c^2} - 1 = \mathcal{O}(c^2)
\end{equation*}
obtained from the appropriate root of the quadratic equation $2 \alpha + \alpha^2 + \beta^2 = 0$. With this decomposition, we  rewrite the first equation in system (\ref{system-u-v-c}) in the form
\begin{align*}
L_+ \hat{u}_c = \; 2c (1-\varphi^2 - \eta_c) \tilde{v}_c' - 2 \eta_c 
(\alpha_c \varphi + \hat{u}_c) - 2 \varphi (\alpha_c^2 \varphi^2 + 2 \alpha_c \varphi \hat{u}_c + \hat{u}_c^2 +\tilde{v}_c^2) - 4 \alpha_c \varphi^3,
\end{align*}
where we have used $L_+ \varphi = 4 \varphi^3$. Since $2 \alpha_c + \alpha_c^2 + \beta_c^2 = 0$, the right-hand side is coverges to $0$ exponentially fast due to the exponential bounds (\ref{eta-c-infinity}) and (\ref{bound-v-c-infinity}). 
Therefore, we can consider the linear operator $L_+ : H^2(\R) \subset L^2(\R) \mapsto L^2(\R)$ without the exponential weights. Since ${\rm Ker}(L_+) = {\rm span}(\varphi') \subset L^2(\R)$ is even in $\xi$, the operator $L_+$ is invertible on odd functions in $L^2(\R)$. By the implicit function theorem, 
there exists $c_3 > 0$, $A_3 > 0$ and a unique odd solution 
$\hat{u}_c \in H^2(\R)$ 
for every $c \in (-c_3,c_3)$ and every even $\tilde{v}_c \in \mathcal{H}^2_-$
satisfying (\ref{bound-v-c}), (\ref{beta-c-limit}), and (\ref{bound-v-c-infinity}) such that 
\begin{equation}
\label{bound-u-c}
\| \hat{u}_c \|_{H^2} \leq A_3 c^2, \quad c \in (-c_3,c_3).
\end{equation}
The decompositions $u_c = \varphi + \alpha_c \varphi + \mathcal{O}_{H^2}(c^2)$ and $v_c = -2c v_{\varphi} + \mathcal{O}_{\mathcal{H}^2_-}(c^3)$ 
with $\alpha_c = \mathcal{O}(c^2)$ justify the bound (\ref{U-dark-expansion}) for some $A_0$ and $c_0 := {\rm min}(c_1,c_2,c_3)$.
\end{proof}

\begin{remark}
	\label{remark-phases}
	The decomposition of $U_c$ in the proof of Theorem \ref{lem-conv-to-0} can be written in the form $U_c = (1+\alpha_c) \varphi + \hat{u}_c + i \tilde{v}_c$. Since $\varphi(\xi) \to \pm 1$, $\hat{u}_c(\xi) \to 0$, and $\tilde{v}_c(\xi) \to \beta_c$ as $|\xi| \to \infty$, the phases $\theta_{\pm}(c)$ in the boundary conditions (\ref{bc-dark}) are computed explicitly as 
	$$
	\theta_+(c) = \arctan\frac{\beta_c}{1 + \alpha_c}, \quad 
	\theta_-(c) = \pi - \arctan \frac{\beta_c}{1 + \alpha_c}.
	$$
	Substituting $1 + \alpha_c = \sqrt{1 - \beta_c^2}$ yields 
	$$
	\theta_+(c) = \arcsin \beta_c, \quad 
	\theta_-(c) = \pi - \arcsin \beta_c,
	$$
	where $\beta_c = \mathcal{O}(c)$ as $c \to 0$ 
\end{remark}

\begin{remark}
	\label{remark-proximity}
It follows from the bound (\ref{eta-c-infinity}) that there is $A > 0$ such that  for every $c \in (-c_0,c_0)$ and every $f \in \mathcal{H}$, it holds that 
	\begin{equation}
	\label{proximity}
	\left| \int_{\R} |U_c|^2 f^2 dx -  \int_{\R} \varphi^2 f^2 dx \right| 
	\leq A c^2 \| f \|^2_{\mathcal{H}}.
	\end{equation}	
\end{remark}

In order to obtain the energetic stability of the black soliton, we 
need to compute the asymptotic expansion of the mass $M(u)$ and the momentum $P(u)$  at $u = U_c$ as $c \to 0$. This asymptotic expansion is given by the following lemma.

\begin{lemma}
	\label{lem-momentum}
	Let $U_c$ be the dark soliton of Lemma \ref{lem-dark} and Theorem \ref{lem-conv-to-0}. Then, 
\begin{equation}
\label{expansion-M-c}
M(U_c) = \frac{4}{3} + \mathcal{O}(c^2) \quad \mbox{\rm as} \;\; c \to 0
\end{equation}
and	
\begin{equation}
\label{expansion-P-c}
	P(U_c) = -\pi + \frac{16}{5} c + \mathcal{O}(c^3) \quad \mbox{\rm as} \;\; c \to 0,
\end{equation}
	where $M(u)$ and $P(u)$ are given by (\ref{cons}) and (\ref{momentum}).
\end{lemma}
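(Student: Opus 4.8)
The plan is to read off both expansions from the decompositions of $U_c$ obtained in Lemma~\ref{lem-dark} and Theorem~\ref{lem-conv-to-0}. Concretely I will use $|U_c|^2 = \varphi^2 + \eta_c$ with $\|\eta_c\|_{H^2} = \mathcal{O}(c^2)$ together with the pointwise weighted bound~(\ref{eta-c-infinity}), and $u_c = \varphi + \mathcal{O}_{\mathcal{H}^2_-}(c^2)$, $v_c = -2c\, v_{\varphi} + \mathcal{O}_{\mathcal{H}^2_-}(c^3)$ from Steps~2--3 in the proof of Theorem~\ref{lem-conv-to-0}, where $v_{\varphi}$ is given by~(\ref{eigen-gener-exact}) and $\beta_c = c + \mathcal{O}(c^3)$.

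For the mass I would write $1-|U_c|^2 = (1-\varphi^2) - \eta_c$ and expand the square:
\begin{equation*}
M(U_c) = \int_{\R}(1-\varphi^2)^2\,dx \;-\; 2\int_{\R}(1-\varphi^2)\,\eta_c\,dx \;+\; \int_{\R}\eta_c^2\,dx .
\end{equation*}
The first integral equals $\int_{\R}{\rm sech}^4(x)\,dx = \tfrac43$, the second is bounded by $\|1-\varphi^2\|_{L^2}\|\eta_c\|_{L^2} = \mathcal{O}(c^2)$, and the third by $\|\eta_c\|_{L^2}^2 = \mathcal{O}(c^4)$, which gives~(\ref{expansion-M-c}).

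For the momentum I would use the renormalized form~(\ref{momentum-new}), which coincides with~(\ref{momentum}) for $U_c$ with $c\neq 0$ since then $U_c$ does not vanish. Writing $U_c = u_c + i v_c$ one computes $\bar{U}_c (U_c)_x - (\bar{U}_c)_x U_c = 2i(u_c v_c' - u_c' v_c)$, and Remark~\ref{remark-phases} gives $\theta_+(c)-\theta_-(c) = 2\arcsin\beta_c - \pi$, so
\begin{equation*}
P(U_c) = -\int_{\R}(2-|U_c|^2)(u_c v_c' - u_c' v_c)\,dx \;+\; \bigl(2\arcsin\beta_c - \pi\bigr).
\end{equation*}
Substituting the expansions, with $2-|U_c|^2 = (2-\varphi^2) + \mathcal{O}(c^2)$ and $u_c v_c' - u_c' v_c = -2c\,(\varphi v_{\varphi}' - \varphi' v_{\varphi}) + \mathcal{O}(c^3)$, the integral reduces to $2c\int_{\R}(2-\varphi^2)(\varphi v_{\varphi}' - \varphi' v_{\varphi})\,dx + \mathcal{O}(c^3)$. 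A direct computation with $\varphi = \tanh$ and $v_{\varphi} = \tfrac14{\rm sech}^2 - \tfrac12$ yields the identity $\varphi v_{\varphi}' - \varphi' v_{\varphi} = \tfrac14{\rm sech}^4 = \tfrac14(1-\varphi^2)^2$, and since $2-\varphi^2 = 1 + (1-\varphi^2)$, $\int_{\R}{\rm sech}^4 = \tfrac43$ and $\int_{\R}{\rm sech}^6 = \tfrac{16}{15}$, that integral equals $\tfrac14\bigl(\tfrac43 + \tfrac{16}{15}\bigr) = \tfrac35$. Hence the integral term is $\tfrac65 c + \mathcal{O}(c^3)$, while $2\arcsin\beta_c - \pi = -\pi + 2c + \mathcal{O}(c^3)$ from $\beta_c = c + \mathcal{O}(c^3)$; adding the two gives $P(U_c) = -\pi + \bigl(\tfrac65 + 2\bigr)c + \mathcal{O}(c^3) = -\pi + \tfrac{16}{5}c + \mathcal{O}(c^3)$, which is~(\ref{expansion-P-c}).

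The main obstacle is keeping the momentum remainder at order $c^3$, not merely $c^2$. This has two aspects. First, it forces the use of the sharp estimate $\|v_c + 2c v_{\varphi}\|_{\mathcal{H}^2_-} = \mathcal{O}(c^3)$: the cruder bound~(\ref{U-dark-expansion}) alone would only yield $P(U_c) = -\pi + \tfrac{16}{5}c + \mathcal{O}(c^2)$, and the improvement relies on the parity $u_{-c} = u_c$, $v_{-c} = -v_c$ (which follows from $\bar{U}_c$ solving~(\ref{nls-trav}) with speed $-c$ and uniqueness of the dark soliton) combined with the refined fixed-point analysis in the proof of Theorem~\ref{lem-conv-to-0}. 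Second, because the integrand $(2-|U_c|^2)(u_c v_c' - u_c' v_c)$ carries the bounded but non-decaying factor $2-|U_c|^2$, one must verify that every bilinear term assembled from $\varphi$, $v_{\varphi}$ and the $\mathcal{H}^2_-$-remainders is integrable with the expected order in $c$; this uses that any $f\in\mathcal{H}^2_-$ has $f, f'\in L^{\infty}$ with $f'\in L^1$ (indeed $f''\in L^1$ and $|f'(\xi)|\to0$ exponentially as $|\xi|\to\infty$) and $f$ with finite limits at $\pm\infty$. For $M(U_c)$ no such care is needed since $1-|U_c|^2$ decays exponentially on its own.
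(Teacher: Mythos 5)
Your proof is correct, and the mass expansion is handled exactly as in the paper. For the momentum the paper takes a slightly different and cleaner route: it stays in the polar form $U_c=\sqrt{\rho_c}\,e^{i\phi_c}$ and uses the exact relation $\phi_c'=-\tfrac{c}{2\rho_c}(1-\rho_c)^2$ from the proof of Lemma \ref{lem-dark}, so that the integral term in (\ref{momentum-new}) becomes \emph{exactly} $\tfrac{c}{2}\int_{\R}(2-\rho_c)(1-\rho_c)^2\,d\xi$. Since the prefactor $c$ is explicit and the integrand depends only on $\rho_c=\varphi^2+\mathcal{O}_{H^2}(c^2)$ (which also decays exponentially by (\ref{eta-c-infinity})), the $\mathcal{O}(c^3)$ remainder of the integral term follows from the crude quadratic accuracy of $\rho_c$ alone; the sharp estimate $\tilde v_c=-2cv_{\varphi}+\mathcal{O}_{\mathcal{H}^2_-}(c^3)$ is needed only for the boundary term $2\arcsin\beta_c$, via $\beta_c=c+\mathcal{O}(c^3)$. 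Your Cartesian computation instead pushes the $\mathcal{O}(c^3)$ requirement into the integral term as well, which is why you must invoke the refined bound on $v_c+2cv_{\varphi}$ (stated in Step 2 of the proof of Theorem \ref{lem-conv-to-0} and justified there once $\tilde u_c=\mathcal{O}_{\mathcal{H}^2_-}(c^2)$ is established in Step 3) and separately check integrability of each bilinear term against the non-decaying weight $2-|U_c|^2$ — both of which you correctly identify and address. The identity $\varphi v_{\varphi}'-\varphi'v_{\varphi}=\tfrac14\,{\rm sech}^4$ and the numerical values $\tfrac65 c+2c=\tfrac{16}{5}c$ agree with the paper's $P_1=2+\tfrac12\int_{\R}(2-\varphi^2)(1-\varphi^2)^2\,d\xi=\tfrac{16}{5}$, so both arguments land on the same coefficient; the polar route simply buys you the order-$c^3$ accuracy of the integral with less machinery.
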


\begin{proof}
	Since $M(U_c) = \int_{\R} (1-\rho_c)^2 d\xi$, the expansion (\ref{expansion-M-c}) is justified due to the bound (\ref{eta-c-bound}) 
	on $\rho_c = \varphi^2 + \eta_c$ and the explicit computation 
	$$
	\int_{\R} (1-\varphi^2)^2 d\xi = \frac{4}{3}.
	$$
	
	For the expansion (\ref{expansion-P-c}), we use the representation (\ref{momentum-new}), 
	the polar form $U_c = \sqrt{\rho}_c e^{i \phi_c}$ with $\phi_c'$ given by (\ref{phi}), and the boundary conditions (\ref{bc-dark}). This yields
	\begin{align*}
P(U_c) &= \theta_+(c) - \theta_-(c) + \frac{c}{2} \int_{\R} (2 - \rho_c) (1-\rho_c)^2 d\xi \\
&= -\pi + 2 \arcsin \beta_c + \frac{c}{2} \int_{\R} (2 - \rho_c) (1-\rho_c)^2 d\xi,
\end{align*}
where we have used Remark \ref{remark-phases} in the second equality.
We recall that $\rho_c = \varphi^2 + \mathcal{O}_{H^2}(c^2)$ and 
$U_c = \varphi - 2 ic v_{\varphi} + \mathcal{O}_{\mathcal{H}^2_-}(c^2)$ with $v_{\varphi}$ given by (\ref{eigen-gener-exact}) so that $\beta_c = c + \mathcal{O}(c^3)$. Hence, $P(U_c) = -\pi + c P_1 + \mathcal{O}(c^3)$ with 
\begin{align*}
P_1 = 2 + \frac{1}{2} \int_{\R} (2 - \varphi^2) (1-\varphi^2)^2 d\xi = \frac{16}{5},
\end{align*}
which justifies the assertion.
\end{proof}

Theorem \ref{theorem-dark} is proven with Lemma \ref{lem-dark} and Theorem \ref{lem-conv-to-0}. Lemma \ref{lem-momentum} is used for the stability 
analysis of the black solitons in the proof of Theorem \ref{theorem-nonlinear}.

\section{Energetic stability of the black soliton}
\label{sec-stability}

Here we study the energetic stability of the black soliton in $\Sigma \cap \mathcal{H}$ from the Lyapunov 
functional constructed from conserved quantities of the NLS model 
(\ref{nls-idd}). This yields the proof of Theorem \ref{theorem-nonlinear}. The energetic stability is equivalent 
to orbital stability provided that the initial-value problem is locally well-posed in $\Sigma \cap \mathcal{H}$ and the energy, mass, and momentum are conserved in the time evolution of the NLS model (\ref{nls-idd}).

We start with coercivitity of the Lyapunov functional $\Lambda$ defined by 
(\ref{Lyapunov}) at the black soliton with the profile $\varphi$. This follows from Theorem \ref{theorem-lin} and Lemma \ref{lem-quadruple}. 

\begin{lemma}
	\label{lem-coercivity}
	There exists $C > 0$ such that for every $\psi = \varphi + u + i v \in \Sigma \cap \mathcal{H}$ satisfying 
	\begin{equation}
	\label{constraints}
	(\varphi, u)_{\mathcal{H}} = 0, \quad 
	(\varphi',u)_{\mathcal{H}} = 0, \quad 
	(\varphi, v)_{\mathcal{H}} = 0, \quad 
	(\varphi',v)_{\mathcal{H}} = 0,
	\end{equation}
we have 
\begin{equation}
\label{coercivity}
\Lambda(\varphi + u + iv) - \Lambda(\varphi) \geq C (\| u \|_{\mathcal{H}^1_-}^2 + 
\| v \|_{\mathcal{H}^1_-}^2 + \| \eta \|_{L^2}^2),
\end{equation}	
where $\eta$ is given by (\ref{eta-intro}).
\end{lemma}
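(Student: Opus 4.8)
The plan is to collapse the whole estimate to a single coercivity statement for the quadratic form $Q_-$ on a codimension‑two subspace, and then to cash it in using the discreteness of the spectrum of $\mathcal{L}_-$ from Theorem \ref{theorem-lin} together with the sign $(v_\varphi,\varphi')_{\mathcal{H}} = -\tfrac{2}{5}<0$ recorded in the proof of Lemma \ref{lem-quadruple}. First I would note that for $\psi=\varphi+u+iv\in\Sigma\cap\mathcal{H}$ one has $u,v\in\mathcal{H}^1_-$ and $\eta=(1-\varphi^2)-(1-|\psi|^2)\in L^2(\R)$, so all terms below are meaningful, and then recombine the expansion (\ref{expansion-Lambda}). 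Using (\ref{remainder}) and (\ref{eta-intro}) one has $R(u,v)=\|\eta\|_{L^2}^2-4\int_{\R}\varphi^2 u^2\,dx$, and comparing (\ref{quad-plus}) with (\ref{quad-minus}) gives the algebraic identity $Q_+(u)-4\int_{\R}\varphi^2 u^2\,dx=Q_-(u)$, because $2(3\varphi^2-1)-4\varphi^2=2(\varphi^2-1)$. Hence
\[
\Lambda(\varphi+u+iv)-\Lambda(\varphi)=Q_-(u)+Q_-(v)+\|\eta\|_{L^2}^2 .
\]
So it suffices to prove: there is $c>0$ with $Q_-(w)\ge c\,\|w\|_{\mathcal{H}^1_-}^2$ for every $w\in\mathcal{H}^1_-$ satisfying $(\varphi,w)_{\mathcal{H}}=(\varphi',w)_{\mathcal{H}}=0$; applying this to $w=u$ and $w=v$ and setting $C:=\min\{c,1\}$ then yields (\ref{coercivity}).

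For the $Q_-$‑coercivity I would argue directly. Recall $Q_-(w)=(\mathcal{L}_-w,w)_{\mathcal{H}}$, and that by Corollary \ref{cor-minus} and Remark \ref{rem-minus} the operator $\mathcal{L}_-$ has purely discrete, simple spectrum $\{-2,0,\nu_1,\dots\}$ in $\mathcal{H}$ with $\nu_1=4$ (Remark \ref{remark-L-minus}), eigenfunction $1$ for $\nu=-2$, eigenfunction $\varphi$ for $\nu=0$, and $(1,\varphi)_{\mathcal{H}}=0$. The constraint $(\varphi,w)_{\mathcal{H}}=0$ kills the kernel direction. To control the single negative direction I would set $a:=(1,w)_{\mathcal{H}}/\|1\|_{\mathcal{H}}^2$ and $w_\perp:=w-a\cdot 1$, so that $w_\perp\perp_{\mathcal{H}}{\rm span}\{1,\varphi\}$ (the second orthogonality because $(\varphi,w)_{\mathcal{H}}=(\varphi,1)_{\mathcal{H}}=0$): the cross term in the form vanishes since $(\mathcal{L}_-1,w_\perp)_{\mathcal{H}}=-2(1,w_\perp)_{\mathcal{H}}=0$, hence $Q_-(w)=-4a^2+Q_-(w_\perp)$ with $Q_-(w_\perp)\ge\nu_1\|w_\perp\|_{\mathcal{H}}^2$. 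The remaining constraint $(\varphi',w)_{\mathcal{H}}=0$ gives $a\,(\varphi',1)_{\mathcal{H}}=-(\varphi',w_\perp)_{\mathcal{H}}$, and since $(\varphi',1)_{\mathcal{H}}=\tfrac43$ and $\|\varphi'\|_{\mathcal{H}}^2=\tfrac{16}{15}$, Cauchy--Schwarz yields $a^2\le\tfrac35\|w_\perp\|_{\mathcal{H}}^2$. Because $\nu_1=4>\tfrac{12}{5}$ this forces $Q_-(w)\ge(\nu_1-\tfrac{12}{5})\|w_\perp\|_{\mathcal{H}}^2$, and together with $\|w\|_{\mathcal{H}}^2=2a^2+\|w_\perp\|_{\mathcal{H}}^2\le\tfrac{11}{5}\|w_\perp\|_{\mathcal{H}}^2$ one obtains $Q_-(w)\ge c_0\|w\|_{\mathcal{H}}^2$ for some $c_0>0$. (Alternatively one may invoke the positivity result of Appendix A in \cite{W85} quoted in the proof of Theorem \ref{theorem-lin} — $Q_-(w)\ge0$ under $(\varphi',w)_{\mathcal{H}}=0$, with equality only on ${\rm span}(\varphi)$, valid because $(v_\varphi,\varphi')_{\mathcal{H}}<0$ — then add the constraint $(\varphi,w)_{\mathcal{H}}=0$ for strict positivity and upgrade to coercivity using discreteness of the spectrum.) Finally, the identity $Q_-(w)=\|w'\|_{L^2}^2-2\|w\|_{\mathcal{H}}^2$ gives $\|w'\|_{L^2}^2\le(1+2/c_0)Q_-(w)$, hence $\|w\|_{\mathcal{H}^1_-}^2\le(1+3/c_0)Q_-(w)$, which is the required bound with $c=c_0/(c_0+3)$.

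The main obstacle is precisely this $Q_-$‑coercivity step: $\mathcal{L}_-$ genuinely possesses a negative eigenvalue, and $(\varphi',w)_{\mathcal{H}}=0$ is not orthogonality to an eigenfunction of $\mathcal{L}_-$, so one must verify quantitatively that removing the $\varphi'$‑direction dominates the $\nu=-2$ eigendirection — this is where the values $\nu_1=4$ (Remark \ref{remark-L-minus}) and $(v_\varphi,\varphi')_{\mathcal{H}}=-\tfrac25$ (computed in the proof of Lemma \ref{lem-quadruple}) enter decisively. By contrast, the algebraic recombination of (\ref{expansion-Lambda}) and the passage from an $\mathcal{H}$‑bound to an $\mathcal{H}^1_-$‑bound are routine.
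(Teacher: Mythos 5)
Your proposal is correct, and its overall architecture matches the paper's: both reduce the claim to the identity $\Lambda(\varphi+u+iv)-\Lambda(\varphi)=Q_-(u)+Q_-(v)+\|\eta\|_{L^2}^2$ (your verification that $Q_+(u)-4\int\varphi^2u^2\,dx=Q_-(u)$ absorbs the remainder correctly), then to coercivity of $Q_-$ on the codimension-two subspace $\{(\varphi,w)_{\mathcal{H}}=(\varphi',w)_{\mathcal{H}}=0\}$, and finally upgrade from $\mathcal{H}$ to $\mathcal{H}^1_-$ via $Q_-(w)=\|w'\|_{L^2}^2-2\|w\|_{\mathcal{H}}^2$ (the paper phrases this as G{\aa}rding's inequality). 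Where you genuinely diverge is the key coercivity step. The paper invokes the Weinstein-type positivity result (Appendix A of \cite{W85}, as quoted at the end of the proof of Theorem \ref{theorem-lin}): $Q_-(v)\ge 0$ under $(\varphi',v)_{\mathcal{H}}=0$ with equality only on ${\rm span}(\varphi)$, conditional on the sign $(v_\varphi,\varphi')_{\mathcal{H}}<0$, and then uses discreteness of the spectrum to turn strict positivity into coercivity. You instead give a self-contained two-mode computation: split off the $\nu=-2$ eigendirection ($w=a\cdot 1+w_\perp$ with $w_\perp\perp_{\mathcal{H}}\{1,\varphi\}$), use the constraint $(\varphi',w)_{\mathcal{H}}=0$ with the explicit values $(\varphi',1)_{\mathcal{H}}=\tfrac43$, $\|\varphi'\|_{\mathcal{H}}^2=\tfrac{16}{15}$ to get $a^2\le\tfrac35\|w_\perp\|_{\mathcal{H}}^2$, and conclude from $\nu_1=4>\tfrac{12}{5}$. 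All the constants check out ($Q_-(1)=-4$, $\|1\|_{\mathcal{H}}^2=2$, yielding $Q_-(w)\ge\tfrac{8}{11}\|w\|_{\mathcal{H}}^2$). Your route is more elementary and produces an explicit constant, at the price of relying on the exact eigenvalues and eigenfunctions of $\mathcal{L}_-$ from Remark \ref{remark-L-minus}; the paper's route through \cite{W85} is less computational and does not need $\nu_1$ explicitly, only the sign of $(v_\varphi,\varphi')_{\mathcal{H}}$. Both are valid; note only that your bound $Q_-(w_\perp)\ge\nu_1\|w_\perp\|_{\mathcal{H}}^2$ implicitly uses the min--max principle for the closed form $Q_-$ associated with the self-adjoint operator $\mathcal{L}_-$ on its form domain $\mathcal{H}^1_-$, which is justified by Lemmas \ref{lem-M-minus} and \ref{lem-compact-minus}.
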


\begin{proof}
	The expansion (\ref{expansion-Lambda}) can be rewritten equivalently as 
\begin{equation*}
\Lambda(\varphi + u + iv) - \Lambda(\varphi) = Q_-(u)
+ Q_-(v) + \| \eta \|_{L^2}^2.
\end{equation*}	
Hence it suffices to show coercivity of the quadratic form $Q_-(v)$ under the two constraints:
\begin{equation}
\label{coercivity-H}
Q_-(v) \geq C_0 \| v \|^2_{\mathcal{H}^1_-}, \quad 
	(\varphi, v)_{\mathcal{H}} = 0, \quad 
(\varphi',v)_{\mathcal{H}} = 0.
\end{equation}
Since the spectrum of $\mathcal{L}_-$ in $\mathcal{H}$ is purely discrete and 
$v$ satisfies the constraints  $(\varphi,v)_{\mathcal{H}} = 0$ and $(\varphi',v)_{\mathcal{H}} = 0$, it follows from the two items 
in the end of the proof of Theorem \ref{theorem-lin} that there exists $C_0 > 0$ such that 
\begin{equation}
\label{coercivity-L2}
Q_-(v)  \geq C_0 \| v \|^2_{\mathcal{H}}, \quad 
(\varphi,v)_{\mathcal{H}} = 0, \quad  (\varphi',v)_{\mathcal{H}} = 0.
\end{equation}
The coercivity bound (\ref{coercivity-H}) in $\mathcal{H}^1_-$ 
follows from the bound (\ref{coercivity-L2}) and the standard G{\aa}rding's inequality by adjusting the constant $C_0 > 0$.
\end{proof}

The four constraints (\ref{constraints}) are not generally 
preserved in the time evolution of the NLS model (\ref{nls-idd}). 
In order to ensure their preservation, we need to introduce four parameters in the family of solutions near the black soliton with the profile $\varphi$. 
Two parameters are given by translations along the symmetries (\ref{symmetry}). 
One parameter is the wave speed $c$ in the family of dark solitons $U_c$ given by (\ref{soliton-parameters}). One more parameter is the scaling parameter $\omega$ in the scaling transformation (\ref{scaling-transform}). 

The existence of $U_c$ for every $c \in \R$ is given by Theorem \ref{theorem-dark}. In the limit $c \to 0$, the bound (\ref{U-dark-expansion}) can be written as
\begin{equation}
\label{dark-soliton-exp}
U_c = \varphi - 2 i c v_{\varphi} + \mathcal{O}_{\mathcal{H}^2_-}(c^2) \quad \mbox{\rm as} \;\; c \to 0\,,
\end{equation}
where $\mathcal{O}_{\mathcal{H}^2_-}(c^2)$ is small in the $\mathcal{H}^2_-$ norm and $v_{\varphi}$ is defined in (\ref{eigen-gener-exact}).

To incorporate $c$ and $\omega$, we consider an extended Lyapunov functional in the form 
\begin{equation}
\label{action}
\Lambda_{c,\omega}(\psi) := E(\psi) + \omega^2 M(\psi) + c \omega P(\psi). 
\end{equation}
Since $E'(\psi) = -\psi''$, $M'(\psi) = -2 (1-|\psi|^2) \psi$, and 
$P'(\psi) = 2i (1-|\psi|^2 ) \psi'$ (see Lemma \ref{lem-expansion} below), 
the Euler--Lagrange equation for $\Lambda_{c,\omega}$ is given by  
\begin{align}
\label{EL}
U'' + 2 \omega^2 (1-|U|^2) U - 2i c \omega (1-|U|^2) U' = 0.
\end{align}
If $U_c$ solves the differential equation (\ref{nls-trav}), 
then $U_{c,\omega}(x) := U_{c}(\omega x)$ solves the Euler--Lagrange equation (\ref{EL}) and hence $U_{c,\omega}$ is a critical point 
of the Lyapunov functional $\Lambda_{c,\omega}$ in (\ref{action}).

\begin{remark}
	\label{remark-norms}
Since $U_{c,\omega}$ extends $\varphi$ for $(c,\omega)$ near $(0,1)$, we can define a modified Hilbert space $\mathcal{H}_{c,\omega}$ with the modified inner product 
	\begin{equation*}
	(f,g)_{\mathcal{H}_{c,\omega}} := \int_{\R} (1-|U_{c,\omega}|^2) \bar{f} g dx, \quad f,g \in\mathcal{H}_{c,\omega} 
	\end{equation*}
	and the induced norm $\| \cdot \|_{\mathcal{H}_{c,\omega}}$. 
	Due to the proximity result (\ref{proximity}) in Remark \ref{remark-proximity} 
	and the closeness of $(c,\omega)$ to $(0,1)$, the norm $\| \cdot \|_{\mathcal{H}_{c,\omega}}$ is equivalent to the norm $\| \cdot \|_{\mathcal{H}}$.
\end{remark}

The following lemma gives the decomposition of a point in a local neighborhood 
of the black soliton $\varphi$ with four modulation parameters defined 
near $U_{c,\omega}$.

\begin{lemma}
	\label{lem-parameters}
There exists $\epsilon_0 > 0$ and $C_0 > 0$ such that 
for every $\psi \in \Sigma \cap \mathcal{H}$ satisfying  
	\begin{equation}
\label{eps-initial}
\epsilon := \inf_{\theta, \zeta  \in\R} \| \psi - e^{i \theta} \varphi(\cdot + \zeta) \|_{\mathcal{H}^1_-} \leq \epsilon_0,
	\end{equation}
there exists unique $\theta, \zeta, c, \omega \in \mathbb{R}$ such that 
	\begin{equation}
	\label{decomposition}
\psi = e^{i \theta} \left[ U_{c,\omega}(\cdot + \zeta) + u(\cdot+ \zeta) + i v(\cdot + \zeta) \right], 
	\end{equation}
	and
	\begin{equation}
\label{eps-final}
|c| + |\omega - 1| + \| \psi - e^{i \theta} U_{c,\omega}(\cdot + \zeta) \|_{\mathcal{H}^1_-}  \leq C_0 \epsilon, 
\end{equation}		
where $u,v \in \mathcal{H}^1_-$ satisfy the four orthogonality conditions 
	\begin{equation}
\label{constraints-with-c}
(U_{c,\omega}, u)_{\mathcal{H}_{c,\omega}} = 0, \quad
(U_{c,\omega}', u)_{\mathcal{H}_{c,\omega}}  = 0, \quad 
(U_{c,\omega}, v)_{\mathcal{H}_{c,\omega}}  = 0, \quad 
(U_{c,\omega}', v)_{\mathcal{H}_{c,\omega}}  = 0,
\end{equation}
whereas $U_{c,\omega} := U_{c}(\omega \cdot)$ is the dark soliton of Theorem \ref{theorem-dark}.
\end{lemma}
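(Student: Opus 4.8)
The plan is to establish Lemma \ref{lem-parameters} by the standard modulation argument, solving the four orthogonality conditions (\ref{constraints-with-c}) for the four parameters $(\theta,\zeta,c,\omega)$ by the implicit function theorem around the reference point $(\theta,\zeta,c,\omega)=(0,0,0,1)$, $\psi=\varphi$. First I would reduce to the case that $\psi$ is $\mathcal{H}^1_-$-close to $\varphi$ itself: given $\psi$ with $\epsilon$ as in (\ref{eps-initial}) small, choose a near-minimizer $(\theta_*,\zeta_*)$ of the infimum and replace $\psi$ by $e^{-i\theta_*}\psi(\cdot-\zeta_*)$, which lies within $\mathcal{O}(\epsilon)$ of $\varphi$ in $\mathcal{H}^1_-$; the parameters $\theta,\zeta$ of the conclusion are then recovered by composing with $(\theta_*,\zeta_*)$, and only $c$, $\omega-1$ and the residual need be $\mathcal{O}(\epsilon)$. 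Here one uses that the minimizing $\zeta_*$ stays in a fixed bounded set (because $\varphi(\cdot+\zeta)\to\pm1$ in $\mathcal{H}^1_-$ as $\zeta\to\pm\infty$, so a small infimum cannot be attained in the large-$|\zeta|$ regime) and that on bounded $\zeta$-sets translation is bi-Lipschitz on $\mathcal{H}$, since ${\rm sech}^2(\cdot-\zeta_*)$ and ${\rm sech}^2(\cdot)$ are comparable with constants depending only on $|\zeta_*|$.

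With $w(y):=e^{-i\theta}\psi(y-\zeta)-U_{c,\omega}(y)$ I would define $F:\R^{4}\times\mathcal{B}\to\R^{4}$ on a small $\mathcal{H}^1_-$-ball $\mathcal{B}$ around $\varphi$ by
\begin{align*}
F(\theta,\zeta,c,\omega;\psi):=\big(&{\rm Re}\,(U_{c,\omega},w)_{\mathcal{H}_{c,\omega}},\ {\rm Im}\,(U_{c,\omega},w)_{\mathcal{H}_{c,\omega}},\\
&{\rm Re}\,(U_{c,\omega}',w)_{\mathcal{H}_{c,\omega}},\ {\rm Im}\,(U_{c,\omega}',w)_{\mathcal{H}_{c,\omega}}\big),
\end{align*}
so that $F=0$ is equivalent to the orthogonality conditions (\ref{constraints-with-c}) with $u={\rm Re}\,w$, $v={\rm Im}\,w$, and reduces at $c=0$ exactly to (\ref{constraints}). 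Three regularity points make this work: (i) $F$ is jointly $C^1$ in $(\theta,\zeta,c,\omega)$, \emph{including across} $c=0$, because Theorem \ref{lem-conv-to-0} (in the form (\ref{dark-soliton-exp})) gives $c\mapsto U_c$ differentiable at $0$ in $\mathcal{H}^2_-$ with $\partial_c U_c|_{c=0}=-2iv_\varphi$; (ii) the $\zeta$-derivative is admissible for $\psi$ merely in $\Sigma\cap\mathcal{H}$, since after integration by parts $\partial_\zeta F$ pairs $e^{-i\theta}\psi(\cdot-\zeta)$ against $\partial_x\!\big[(1-|U_{c,\omega}|^2)\overline{U}_{c,\omega}\big]$ and $\partial_x\!\big[(1-|U_{c,\omega}|^2)\overline{U_{c,\omega}'}\big]$, which are smooth and exponentially decaying; (iii) by Remark \ref{remark-norms} the weighted pairings $(\cdot,\cdot)_{\mathcal{H}_{c,\omega}}$ are continuous with respect to the fixed norm $\|\cdot\|_{\mathcal{H}}$, uniformly for $(c,\omega)$ near $(0,1)$.

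The core step is to check that the Jacobian $J:=D_{(\theta,\zeta,c,\omega)}F(0,0,0,1;\varphi)$ is invertible. Since $w=0$ at the reference point, every term in $\partial_pF$ carrying a factor $w$ vanishes, so the columns of $J$ are obtained by pairing
$$\partial_\theta w=-i\varphi,\quad\partial_\zeta w=-\varphi',\quad\partial_c w=2iv_\varphi,\quad\partial_\omega w=-x\varphi'=4u_\varphi$$
(using $U_{c,\omega}(x)=U_c(\omega x)$, (\ref{dark-soliton-exp}), and $u_\varphi=-\tfrac14 x\,{\rm sech}^2 x$ from (\ref{eigen-gener-exact})) against $\varphi$ and $\varphi'$ in $\mathcal{H}$ and taking real and imaginary parts. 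The parities ($\varphi,\varphi',u_\varphi$ odd, $v_\varphi$ even, $1-\varphi^2$ even) annihilate all but one entry in each column, and the four surviving entries are, up to sign, $\|\varphi\|_{\mathcal{H}}^2=\tfrac23$, $\|\varphi'\|_{\mathcal{H}}^2=\tfrac{16}{15}$, $2(v_\varphi,\varphi')_{\mathcal{H}}=-\tfrac45$ and $4(u_\varphi,\varphi)_{\mathcal{H}}=-\tfrac13$; the last two are nonzero precisely by the nondegeneracy relations $(v_\varphi,\varphi')_{\mathcal{H}}=-\tfrac25$ and $(u_\varphi,\varphi)_{\mathcal{H}}=-\tfrac1{12}$ established in the proof of Lemma \ref{lem-quadruple}. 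Thus $J$ is, up to a permutation of rows, diagonal with nonzero diagonal, hence $\det J\neq0$.

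Since $F(0,0,0,1;\varphi)=0$ and $J$ is invertible, the implicit function theorem provides a neighbourhood of $\varphi$ in $\mathcal{H}^1_-$ on which there is a unique $C^1$ map $\psi\mapsto(\theta(\psi),\zeta(\psi),c(\psi),\omega(\psi))$, with values near $(0,0,0,1)$, solving $F=0$; its local Lipschitz dependence together with $(\theta(\varphi),\zeta(\varphi),c(\varphi),\omega(\varphi))=(0,0,0,1)$ yields $|c|+|\omega-1|+\|w\|_{\mathcal{H}^1_-}\le C\|\psi-\varphi\|_{\mathcal{H}^1_-}$, which after transporting back through the moving frame is (\ref{eps-final}), while uniqueness of $(\theta,\zeta,c,\omega)$ is the local uniqueness in the implicit function theorem. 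The main obstacle is not a single hard estimate but careful bookkeeping: the two genuinely substantive ingredients — differentiability of $c\mapsto U_c$ at $c=0$ and the nonvanishing of $(v_\varphi,\varphi')_{\mathcal{H}}$ and $(u_\varphi,\varphi)_{\mathcal{H}}$ — are already supplied by Theorem \ref{lem-conv-to-0} and Lemma \ref{lem-quadruple}, so what remains is to justify the joint $C^1$-regularity of $F$ in the weak $\mathcal{H}^1_-$ topology and the routine reduction to a bounded moving frame.
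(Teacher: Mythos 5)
Your proposal is correct and follows essentially the same route as the paper: reduce to a frame near $\varphi$ via the (attained) infimum in (\ref{eps-initial}), encode the four orthogonality conditions (\ref{constraints-with-c}) as a $C^1$ map $F$ of $(\theta,\zeta,c,\omega)$, and invert its Jacobian at the reference point by the implicit function theorem, with the same four nonzero entries $(\varphi,\varphi)_{\mathcal{H}}=\tfrac23$, $(\varphi',\varphi')_{\mathcal{H}}=\tfrac{16}{15}$, $2(\varphi',v_{\varphi})_{\mathcal{H}}=-\tfrac45$, $4(\varphi,u_{\varphi})_{\mathcal{H}}=-\tfrac13$ surviving by parity. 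The only cosmetic difference is that you evaluate the Jacobian exactly at $\psi=\varphi$ and then pass to a neighbourhood, whereas the paper evaluates it at the given $\psi$ up to an $\mathcal{O}(\epsilon)$ correction; both are equivalent.
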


\begin{proof}
	Let us define the vector function $\vec{F}(c,\omega,\theta,\zeta;\psi) : \mathbb{R}^4 \times \Sigma \cap \mathcal{H} \mapsto \mathbb{C}^4$ representing the constraints (\ref{constraints-with-c}):
	$$
\vec{F}(c,\omega,\theta,\zeta;\psi) := \left[ \begin{array}{l}
(U_{c,\omega}, {\rm Re} (e^{-i\theta} \psi(\cdot - \zeta) - U_{c,\omega}))_{\mathcal{H}_{c,\omega}}  \\
(U_{c,\omega}', {\rm Re} (e^{-i\theta} \psi(\cdot - \zeta) - U_{c,\omega}))_{\mathcal{H}_{c,\omega}}   \\ 
(U_{c,\omega}, {\rm Im} (e^{-i\theta} \psi(\cdot - \zeta) - U_{c,\omega}))_{\mathcal{H}_{c,\omega}}   \\
(U_{c,\omega}', {\rm Im} (e^{-i\theta} \psi(\cdot - \zeta) - U_{c,\omega}))_{\mathcal{H}_{c,\omega}}  
\end{array}	\right].
$$
For a given $\psi \in \Sigma \cap \mathcal{H}$, $\theta$ in the infimum  (\ref{eps-initial}) is defined on the compact interval $[0,2\pi]$ due to periodicity of $e^{i\theta}$ and there exists $C_{\infty} > 0$ such that 
$$
\lim_{\zeta \to \infty} \| \psi - e^{i \theta} \varphi(\cdot + \zeta) \|_{\mathcal{H}^1_-} \geq C_{\infty}.
$$
Therefore, the infimum in (\ref{eps-initial}) is attained if  
$\epsilon_0 \in (0,C_{\infty})$ is properly chosen. Let $(\theta_0,\zeta_0)$ be arguments of the infimum in (\ref{eps-initial}). Since $\varphi, \varphi' \in \mathcal{H}$, the Cauchy--Schwarz inequality implies that there exists $C > 0$ such that 
$$
| \vec{F}(0,1,\theta_0,\zeta_0;\psi)| \leq C \epsilon, 
$$
where $|\cdot|$ 
denotes the standard Euclidean norm for vectors in $\mathbb{C}^4$. 
We can write 
$$
\vec{F}(0,1,\theta_0,\zeta_0;\psi) = \mathcal{O}(\epsilon)
$$ 
to indicate the remainder term for $\psi = e^{i \theta_0} \varphi(\cdot + \zeta_0) + \mathcal{O}_{\mathcal{H}^1_-}(\epsilon)$.

Recall that the mapping $(c,\omega) \mapsto U_{c,\omega}$ is $C^1$ near $(c,\omega) = (0,1)$. Hence the function $\vec{F}$ is $C^1$ with respect to its arguments and we can compute the Jacobian of $\vec{F}$ with respect to $(c,\omega,\theta,\zeta)$ at $(0,1,\theta_0,\zeta_0)$ and for fixed $\psi \in \Sigma \cap \mathcal{H}$ satisfying (\ref{eps-initial}). By using (\ref{dark-soliton-exp}) and $U_{c,\omega} := U_{c}(\omega \cdot)$, 
we compute 
\begin{align*}
(U_{c,\omega},(e^{-i\theta} \psi(\cdot - \zeta) - U_{c,\omega}))_{\mathcal{H}_{c,\omega}} &=  (\varphi(\omega \cdot),(e^{-i\theta} \psi(\cdot - \zeta) - \varphi(\omega \cdot)))_{\mathcal{H}_{c,\omega}} \\
& \quad + 2ic (v_{\varphi}(\omega \cdot),(e^{-i\theta} \psi(\cdot - \zeta) - \varphi(\omega \cdot)))_{\mathcal{H}_{c,\omega}} \\
& \quad + 2ic (\varphi(\omega \cdot),v_{\varphi}(\omega \cdot)))_{\mathcal{H}_{c,\omega}} + \mathcal{O}(c^2)
\end{align*}
and similarly for $(U_{c,\omega}',(e^{-i\theta} \psi(\cdot - \zeta) - U_{c,\omega}))_{\mathcal{H}_{c,\omega}}$. The Jacobian evaluated at $(c,\omega,\theta,\zeta) = (0,1,\theta_0,\zeta_0)$ is computed 
by using the proximity bound (\ref{eps-initial}) and is given 
by the following matrix
$$
-\left[ \begin{array}{cccc} 0 & (\varphi, x \varphi' )_{\mathcal{H}}
& 0 & 0  \\ 0 & 0 & 0 & (\varphi', \varphi' )_{\mathcal{H}} \\ 
0 & 0 & (\varphi, \varphi )_{\mathcal{H}} & 0   \\
-2 (\varphi', v_{\varphi} )_{\mathcal{H}} & 0 & 0 & 0 
\end{array}	\right] + \mathcal{O}(\epsilon)
$$
where we have used that $\varphi$ is odd and $v_{\varphi}$ is even. 
We recall from the proof of Lemma \ref{lem-quadruple} that 
$ (\varphi', v_{\varphi} )_{\mathcal{H}} \neq 0$ and $(\varphi, x \varphi' )_{\mathcal{H}} \neq 0$, where $u_{\varphi}(x) = -\frac{1}{4} x \varphi'(x)$. 
By choosing $\epsilon_0$ small enough in (\ref{eps-initial}), 
the Jacobian for $\vec{F}$ is invertible. By the local inverse mapping theorem,  for any $\psi \in \Sigma \cap \mathcal{H}$ satisfying (\ref{eps-initial}) 
there exists a unique solution $(c,\omega,\theta,\zeta) \in \R^4$ of $\vec{F}(c,\omega,\theta,\zeta;\psi) = 0$ satisfying 
$$
|c| + |\omega - 1| + |\theta - \theta_0| + |\zeta - \zeta_0| \leq C \epsilon,
$$
for some $\epsilon$-independent $C > 0$. Thus, the decomposition 
(\ref{decomposition}) is justified. The last bound in (\ref{eps-final}) follows 
by the triangle inequality from the $C^1$ property of $U_{c,\omega}$ in $x$ and $(c,\omega)$.
\end{proof}

We will use the following expansion of the mass and momentum functionals.

\begin{lemma}
	\label{lem-expansion}
For every $c, \omega \in \mathbb{R}$ and $\psi \in \Sigma \cap \mathcal{H}$ satisfying 
(\ref{eps-initial}), (\ref{decomposition}), (\ref{eps-final}), and (\ref{constraints-with-c}) with $\epsilon \in (0,\epsilon_0)$, where $\epsilon_0 > 0$ is defined in Lemma \ref{lem-parameters}, we have the expansions 
\begin{equation}
\label{expansion-M}
M(U_{c,\omega} + u + iv) = M(U_{c,\omega}) - 2 \| u  \|^2_{\mathcal{H}_{c,\omega}} - 2 \| v \|^2_{\mathcal{H}_{c,\omega}} + \| \eta \|^2_{L^2}
\end{equation}
and 
\begin{equation}
\label{expansion-P}
P(U_{c,\omega} + u + iv) = P(U_{c,\omega}) + \hat{P}(u,v), 
\end{equation}
where 
$$
\eta := | U_{c,\omega} + u + iv|^2 - |U_{c,\omega}|^2 = 2 u {\rm Re}(U_{c,\omega}) + 2 v {\rm Im}(U_{c,\omega}) + u^2 + v^2
$$ 
and there is $C > 0$ such that 
\begin{equation}
\label{bound-on-R}
|\hat{P}(u,v)| \leq C \left( \| u' \|^2_{L^2} + \|v'\|^2_{L^2} + \| u \|^2_{\mathcal{H}} + \| v \|^2_{\mathcal{H}} + \| \eta \|^2_{L^2} \right).
\end{equation}
\end{lemma}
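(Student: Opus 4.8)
The plan is to obtain (\ref{expansion-M}) from an exact algebraic identity and (\ref{expansion-P})--(\ref{bound-on-R}) from a first-order Taylor expansion of $P$ whose linear term is annihilated by the orthogonality conditions (\ref{constraints-with-c}). Throughout write $W := U_{c,\omega}$ and $w := u + iv$. Since $W \in \mathcal{F} \cap C^\infty(\R)$ decays exponentially (Theorem \ref{theorem-dark}), $W$ and $W'$ are bounded and $1-|W|^2, W' \in L^2(\R)$, so $M(W)$ and $P(W)$ are finite; moreover $\psi \in \Sigma$ forces $\eta = (1-|W|^2) - (1-|\psi|^2) \in L^2(\R)$.

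The mass expansion is exact because $M$ is quartic in $\psi$. Writing $|\psi|^2 = |W|^2 + \eta$ with $\eta = 2\,\mathrm{Re}(\overline W w) + |w|^2 = 2u\,\mathrm{Re}(W) + 2v\,\mathrm{Im}(W) + u^2 + v^2$ and substituting $1-|\psi|^2 = (1-|W|^2) - \eta$ into $M(\psi) = \int_\R(1-|\psi|^2)^2\,dx$ yields
\[
M(\psi) = M(W) - 2\int_\R(1-|W|^2)\,\eta\,dx + \|\eta\|_{L^2}^2 .
\]
In the cross term the quadratic part $u^2+v^2$ of $\eta$ contributes $\int_\R(1-|W|^2)(u^2+v^2)\,dx = \|u\|_{\mathcal{H}_{c,\omega}}^2 + \|v\|_{\mathcal{H}_{c,\omega}}^2$, while the linear part $2\,\mathrm{Re}(\overline W w)$ contributes $2\,\mathrm{Re}\,(W,w)_{\mathcal{H}_{c,\omega}} = 2\,\mathrm{Re}\big[(W,u)_{\mathcal{H}_{c,\omega}} + i(W,v)_{\mathcal{H}_{c,\omega}}\big]$, which vanishes by the first and third conditions in (\ref{constraints-with-c}). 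This is (\ref{expansion-M}).

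For the momentum I would use the representation (\ref{momentum-new}), which is defined without the non-vanishing hypothesis on $\psi$, and first show that $P$ is Fréchet differentiable near $W$ on the affine set $W + (\mathcal{H}^1_-\times\mathcal{H}^1_-)$ with $P'(W) = 2i(1-|W|^2)W'$ and with a second-order remainder $\hat R(u,v)$ obeying the bound on the right-hand side of (\ref{bound-on-R}); this is done by inserting $\psi = W+w$ into (\ref{momentum-new}), writing $2-|\psi|^2 = (2-|W|^2) - \eta$ and $\bar\psi\psi_x - \bar\psi_x\psi = j_W + j_c + j_q$ with $j_c$ linear and $j_q$ quadratic in $w$, integrating by parts to move derivatives off $w$, and cancelling the boundary contributions of the density against the corresponding expansion of the boundary term $\arg\psi\big|_{-\infty}^{+\infty}$. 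It then remains only to observe that the linear term vanishes: since
\[
\langle P'(W),w\rangle = \mathrm{Re}\!\int_\R\overline{2i(1-|W|^2)W'}\;w\,dx = 2\,\mathrm{Im}\,(W',w)_{\mathcal{H}_{c,\omega}} \quad\text{and}\quad (W',w)_{\mathcal{H}_{c,\omega}} = (W',u)_{\mathcal{H}_{c,\omega}} + i(W',v)_{\mathcal{H}_{c,\omega}} = 0
\]
by the second and fourth conditions in (\ref{constraints-with-c}), we get $\hat P(u,v) := P(W+w) - P(W) = \hat R(u,v)$, which is (\ref{expansion-P})--(\ref{bound-on-R}).

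The main obstacle will be the second-order remainder estimate for $P$, and in particular the treatment of the boundary term, since $\mathcal{H}^1_-$ contains functions that do not decay (and need not even be bounded) at $\pm\infty$: a naive term-by-term bound fails because a higher-order piece such as $\int_\R |w|^2(u\,v_x - v\,u_x)\,dx$ need not be absolutely convergent under $u,v\in\mathcal{H}^1_-$ alone. The way around this is to regroup the order-$\ge 2$ contributions so that each summand carries one of the $L^2$ factors $1-|W|^2$, $\eta$, or $w_x = u_x + i v_x$ paired with a bounded or an $L^2$ factor, and to integrate by parts against the exponentially decaying weights $(1-|W|^2)'$ and $W'$ rather than differentiating $w$; combined with the matching of the surviving boundary contributions against $\arg\psi\big|_{-\infty}^{+\infty}$ and the smallness $|c| + |\omega-1| + \|u\|_{\mathcal{H}^1_-} + \|v\|_{\mathcal{H}^1_-}\le C_0\epsilon$ from Lemma \ref{lem-parameters}, this produces the bound (\ref{bound-on-R}); checking that everything is uniform for $(c,\omega)$ near $(0,1)$, using (\ref{proximity}) so that $\|\cdot\|_{\mathcal{H}_{c,\omega}}\asymp\|\cdot\|_{\mathcal{H}}$, completes the argument.
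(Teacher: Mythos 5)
Your proof of the mass expansion (\ref{expansion-M}) is correct and is exactly the paper's argument: expand $(1-|W|^2-\eta)^2$, kill the linear part of the cross term with the first and third conditions in (\ref{constraints-with-c}), and keep the quadratic part as $-2\|u\|^2_{\mathcal{H}_{c,\omega}}-2\|v\|^2_{\mathcal{H}_{c,\omega}}$. Your identification of the linear term of $P$ as $2\,{\rm Im}(W',w)_{\mathcal{H}_{c,\omega}}$ and its vanishing via the second and fourth conditions also matches the paper.

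For the remainder bound (\ref{bound-on-R}), however, your plan has two genuine gaps. First, you repeatedly rely on pairing an $L^2$ factor with ``a bounded factor'', but you never establish that $u$ and $v$ are bounded: membership in $\mathcal{H}^1_-$ gives no $L^{\infty}$ control. The paper's Appendix \ref{app-b} devotes a separate result (Lemma \ref{lem-supremum}) to proving $\|u\|_{L^{\infty}}+\|v\|_{L^{\infty}}\leq C$ and $\|\eta\|_{L^{\infty}}\leq C\epsilon$, using the pointwise identity $(u+{\rm Re}\,W)^2+(v+{\rm Im}\,W)^2=|W|^2+\eta$ together with $\|\eta\|^2_{L^{\infty}}\leq\|\eta\|_{L^2}\|\eta'\|_{L^2}$; without this, terms such as $\int_{\R}\eta\,(uv'-u'v)\,dx$ and the expansion of $1/|\psi|^2$ cannot be estimated. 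Second, using (\ref{momentum-new}) globally leaves in the quadratic part the unweighted current $\int_{\R}(uv'-u'v)\,dx$ (coming from $2-|\psi|^2\to 1$), which is not absolutely convergent, and your proposed cure --- cancelling it against the second-order part of $\arg\psi\,|_{-\infty}^{+\infty}$ --- cannot be implemented, because $u,v\in\mathcal{H}^1_-$ need not have limits at $\pm\infty$, so the boundary term at infinity admits no Taylor expansion in $(u,v)$. The paper's mechanism, which your sketch is missing, is to split $\R$ at $\pm R$: on the outer region it uses the representation (\ref{momentum}) with the \emph{decaying} weight $(1-|\psi|^2)^2/|\psi|^2$ (legitimate there because $|\psi|$ is bounded away from zero once $\|\eta\|_{L^{\infty}}$ is small), and it reserves (\ref{momentum-new}) for the compact interval $[-R,R]$, where $\arg\psi(\pm R)$ and its quadratic remainder $\theta_2(\pm R)$ make sense and are bounded by $C_R\|u+iv\|^2_{H^1([-R,R])}$; all boundary matching then occurs at the finite points $\pm R$, never at infinity. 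Some device of this kind is unavoidable, since $\psi$ may vanish near the soliton center (so (\ref{momentum}) cannot be used globally) while the weight in (\ref{momentum-new}) does not decay (so it cannot be used globally either).
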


\begin{proof}
	Since $\eta = |U_{c,\omega} + u + iv|^2 - |U_{c,\omega}|^2$, we have $\eta \in L^2(\R)$ if $\psi, U_{c,\omega} \in \Sigma$. The expansion (\ref{expansion-M}) follows from direct computations: 
	\begin{align*}
	M(U_{c,\omega} + u + iv) &= M(U_{c,\omega}) - 2 \int_{\R} (1 - |U_{c,\omega}|^2) \eta dx + \| \eta \|^2_{L^2} \\
	&= M(U_{c,\omega}) - 2 \int_{\R} (1 - |U_{c,\omega}|^2) (u^2+v^2) dx + \| \eta \|^2_{L^2},
	\end{align*}
where the second line is obtained due to the first and third orthogonality conditions in (\ref{constraints-with-c}). 
The expansion (\ref{expansion-P}) is proven in Appendix \ref{app-b}, where the second and fourth orthogonality conditions in (\ref{constraints-with-c}) 
are used to remove the linear term of the expansion.
\end{proof}

The following lemma generalizes the coercivity result of Lemma \ref{lem-coercivity} by using the four-parameter decomposition of 
Lemma \ref{lem-parameters}.

\begin{lemma}
	\label{lem-coercivity-c}
For every $c, \omega \in \mathbb{R}$ and $\psi \in \Sigma \cap \mathcal{H}$ satisfying 
(\ref{eps-initial}), (\ref{decomposition}), (\ref{eps-final}), and (\ref{constraints-with-c}) with $\epsilon \in (0,\epsilon_0)$, where $\epsilon_0 > 0$ is defined in Lemma \ref{lem-parameters}, there is $C > 0$ such that
\begin{equation}
	\label{coercivity-with-c}
\hat{\Lambda}_{c,\omega}(u,v) \geq C \left( \| u \|_{\mathcal{H}^1_-}^2 + 
	\| v \|_{\mathcal{H}^1_-}^2 + \| \eta \|_{L^2}^2 - c^2 - (\omega-1)^2 \right),
	\end{equation}	
where 
\begin{align*}
\hat{\Lambda}_{c,\omega}(u,v) & := E(U_{c,\omega} + u + iv) - E(\varphi) + \omega^2 \left[ M(U_{c,\omega} + u + iv) - M(\varphi) \right] \\
& \qquad + c \omega \left[ P(U_{c,\omega} + u + iv) + \pi \right]
\end{align*}
and
$\eta := |U_{c,\omega} + u + iv|^2 - |U_{c,\omega}|^2$.
\end{lemma}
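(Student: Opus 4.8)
\emph{Proof strategy.} The plan is to expand $\hat\Lambda_{c,\omega}(u,v)$ about the critical point $U_{c,\omega}$ of the action functional $\Lambda_{c,\omega}$ in (\ref{action}) and to peel off a harmless scalar term coming from the modulation of $(c,\omega)$ away from $(0,1)$. Write $\psi := U_{c,\omega}+u+iv$ and $\tilde u + i\tilde v := (U_{c,\omega}-\varphi) + (u+iv)$, so that $\varphi + \tilde u + i\tilde v = \psi \in \Sigma$. Splitting $\omega^2 M(\psi) - \omega^2 M(\varphi) = [M(\psi)-M(\varphi)] + (\omega^2-1)[M(\psi)-M(\varphi)]$ in the definition of $\hat\Lambda_{c,\omega}$ gives the identity
\begin{align*}
\hat\Lambda_{c,\omega}(u,v) &= \big[ \Lambda(\varphi+\tilde u+i\tilde v) - \Lambda(\varphi) \big] + (\omega^2-1)\big[ M(\psi) - M(\varphi) \big] + c\omega\big[ P(\psi) + \pi \big],
\end{align*}
with $\Lambda = E+M$ the unperturbed Lyapunov functional (\ref{Lyapunov}). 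The first step is to record exact expansions of the three brackets. By the algebraic rewriting used in the proof of Lemma \ref{lem-coercivity}, $\Lambda(\varphi+\tilde u+i\tilde v) - \Lambda(\varphi) = Q_-(\tilde u) + Q_-(\tilde v) + \|\tilde\eta\|_{L^2}^2$ with $\tilde\eta := |\psi|^2 - \varphi^2$. For the last two brackets, Lemma \ref{lem-expansion} gives $M(\psi) - M(U_{c,\omega}) = -2\|u\|_{\mathcal{H}_{c,\omega}}^2 - 2\|v\|_{\mathcal{H}_{c,\omega}}^2 + \|\eta\|_{L^2}^2$ and $P(\psi) - P(U_{c,\omega}) = \hat P(u,v)$ with the bound (\ref{bound-on-R}), while $U_{c,\omega} = U_c(\omega\,\cdot)$ gives $M(U_{c,\omega}) = \omega^{-1} M(U_c)$ and $P(U_{c,\omega}) = P(U_c)$, so Lemma \ref{lem-momentum} yields $M(U_{c,\omega}) - M(\varphi) = \mathcal{O}(|\omega-1|+c^2)$ and $P(U_{c,\omega}) + \pi = \tfrac{16}{5}c + \mathcal{O}(c^3)$. (An equivalent bookkeeping writes $\hat\Lambda_{c,\omega}(u,v) = g(c,\omega) + E(\psi)-E(U_{c,\omega}) + \omega^2[M(\psi)-M(U_{c,\omega})] + c\omega[P(\psi)-P(U_{c,\omega})]$ with $g(c,\omega):=\Lambda_{c,\omega}(U_{c,\omega})-\Lambda_{c,\omega}(\varphi)$; here $E(\psi)-E(U_{c,\omega}) = \|u'\|_{L^2}^2 + \|v'\|_{L^2}^2$, the linear cross term being eliminated by integrating by parts, using the Euler--Lagrange equation (\ref{EL}) and the orthogonality conditions (\ref{constraints-with-c}), and $g(c,\omega) = -\tfrac43(\omega-1)^2 + \mathcal{O}(c^2)$ follows from the same expansions together with $E(U_c) = \tfrac43 + \mathcal{O}(c^2)$, which holds because in $U_c = \varphi - 2icv_\varphi + \mathcal{O}_{\mathcal{H}^2_-}(c^2)$ the cross term $2\,\mathrm{Re}\langle \varphi', -2icv_\varphi'\rangle_{L^2}$ vanishes.)

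The second step estimates the two scalar correction terms. From the expansions above,
\begin{align*}
(\omega^2-1)\big[ M(\psi) - M(\varphi) \big] = (\omega^2-1)\big( -2\|u\|_{\mathcal{H}_{c,\omega}}^2 - 2\|v\|_{\mathcal{H}_{c,\omega}}^2 + \|\eta\|_{L^2}^2 \big) + \mathcal{O}\big( (\omega-1)^2 + c^2 \big),
\end{align*}
and $c\omega[P(\psi)+\pi] = c\omega\,\hat P(u,v) + \mathcal{O}(c^2)$, where $|c\omega\,\hat P(u,v)| \le C|c|\big( \|u'\|_{L^2}^2 + \|v'\|_{L^2}^2 + \|u\|_{\mathcal{H}}^2 + \|v\|_{\mathcal{H}}^2 + \|\eta\|_{L^2}^2 \big)$ by (\ref{bound-on-R}). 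Hence the scalar corrections contribute at worst $-C(c^2 + (\omega-1)^2)$ plus an $\mathcal{O}(|c|+|\omega-1|)$ multiple of $\|u\|_{\mathcal{H}^1_-}^2 + \|v\|_{\mathcal{H}^1_-}^2 + \|\eta\|_{L^2}^2$, which will be absorbed into the coercive lower bound once $\epsilon_0$ is small enough.

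The heart of the argument is the coercivity of $Q_-(\tilde u) + Q_-(\tilde v) + \|\tilde\eta\|_{L^2}^2$. At $(c,\omega)=(0,1)$ one has $\tilde u = u$, $\tilde v = v$, $\tilde\eta = \eta$, and the conditions (\ref{constraints-with-c}) coincide with (\ref{constraints}), so Lemma \ref{lem-coercivity} applies with a fixed constant. For $(c,\omega)$ near $(0,1)$, Theorem \ref{theorem-dark} and Theorem \ref{lem-conv-to-0} (and the exponential decay they provide) give $\|\tilde u - u\|_{\mathcal{H}^1_-} + \|\tilde v - v\|_{\mathcal{H}^1_-} = \mathcal{O}(|c|+|\omega-1|)$, since $\tilde u - u = \mathrm{Re}(U_{c,\omega}-\varphi)$ and $\tilde v - v = \mathrm{Im}(U_{c,\omega})$, and $\|\tilde\eta - \eta\|_{L^2} = \||U_{c,\omega}|^2 - \varphi^2\|_{L^2} = \mathcal{O}(|c|+|\omega-1|)$. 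Moreover, using (\ref{constraints-with-c}), the closeness of $1-|U_{c,\omega}|^2$ to $1-\varphi^2$, the norm equivalence of Remark \ref{remark-norms} and the proximity bound (\ref{proximity}), the four functionals of (\ref{constraints}) evaluated on $(\tilde u,\tilde v)$ have size $\mathcal{O}(|c|+|\omega-1|)(1 + \|u\|_{\mathcal{H}} + \|v\|_{\mathcal{H}})$. Projecting $(\tilde u,\tilde v)$ onto the subspace cut out by (\ref{constraints}) thus changes it by $\mathcal{O}(|c|+|\omega-1|)$ plus an $\mathcal{O}(|c|+|\omega-1|)$ multiple of $\|(\tilde u,\tilde v)\|_{\mathcal{H}^1_-}$, and applying Lemma \ref{lem-coercivity} to the projected pair and absorbing cross terms by Young's inequality gives
\begin{align*}
Q_-(\tilde u) + Q_-(\tilde v) + \|\tilde\eta\|_{L^2}^2 \ \ge\ C\big( \|u\|_{\mathcal{H}^1_-}^2 + \|v\|_{\mathcal{H}^1_-}^2 + \|\eta\|_{L^2}^2 \big) - C\big( c^2 + (\omega-1)^2 \big)
\end{align*}
for $|c|+|\omega-1|$ small. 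Combining with the second step and shrinking $\epsilon_0$ so that the remaining $\mathcal{O}(|c|+|\omega-1|)$ coefficients drop below $C/2$ yields (\ref{coercivity-with-c}).

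The main obstacle is precisely this transfer of coercivity: the quadratic form and the constraint surface both depend on $(c,\omega)$, the perturbations $u,v$ lie only in $\mathcal{H}^1_-$ and need not belong to $L^2(\R)$, and the natural weight $1-|U_{c,\omega}|^2$ carries a different exponential rate than $1-\varphi^2$ when $\omega\neq1$, so that a naive perturbation off $\varphi$ is not available. The device of replacing $(u,v)$ by $(\tilde u,\tilde v) = (U_{c,\omega}-\varphi) + (u,v)$ and working with the $\omega$-independent functional $\Lambda$ is what makes the reference coercivity of Lemma \ref{lem-coercivity} usable, but it demands careful tracking so that the constraint mismatch and the $c$- and $(\omega-1)$-weighted error terms all fit inside the $-c^2-(\omega-1)^2$ slack permitted on the right-hand side, rather than producing an unacceptable $\mathcal{O}(|c|)$ or $\mathcal{O}(|\omega-1|)$ defect. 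The remaining ingredients — the $M$- and $P$-expansions, the integration by parts against (\ref{EL}), and the $L^\infty$-control of $\mathcal{H}^1_-$ functions used to estimate the cubic remainders hidden in $Q_- + \|\tilde\eta\|_{L^2}^2$ and in $\hat P$ — are routine.
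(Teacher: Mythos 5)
Your argument is correct in substance, but it reaches (\ref{coercivity-with-c}) by a different decomposition than the paper's. The paper expands $\hat\Lambda_{c,\omega}$ directly around $U_{c,\omega}$: since $U_{c,\omega}$ is a critical point of $\Lambda_{c,\omega}$ and the orthogonality conditions (\ref{constraints-with-c}) remove the linear terms of $M$ and $P$ (Lemma \ref{lem-expansion}), one gets exactly $\hat\Lambda_{c,\omega}(u,v)=\Delta(c,\omega)+Q_{c,\omega}(u)+Q_{c,\omega}(v)+\|\eta\|_{L^2}^2+(\omega^2-1)(\cdots)+c\omega\hat P(u,v)$ with $Q_{c,\omega}(u):=\|u'\|_{L^2}^2-2\|u\|_{\mathcal{H}_{c,\omega}}^2$, where $\Delta(c,\omega):=\hat\Lambda_{c,\omega}(0,0)\ge -A\left(c^2+(\omega-1)^2\right)$ because $(0,1)$ is a critical point of $\Delta$ with $\Delta(0,1)=0$ (computed from $\partial_c\Delta=\omega[P(U_{c,\omega})+\pi]$ and $\partial_\omega\Delta=2\omega[M(U_{c,\omega})-M(\varphi)]+c[P(U_{c,\omega})+\pi]$ together with (\ref{expansion-M-c})--(\ref{expansion-P-c})); the coercivity input is then needed for the $(c,\omega)$-dependent form $Q_{c,\omega}$ under the \emph{exactly satisfied} constraints (\ref{constraints-with-c}), obtained as a perturbation of (\ref{coercivity-H}). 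This is essentially the ``equivalent bookkeeping'' you sketch in parentheses. Your main route instead keeps the fixed functional $\Lambda$ and the fixed profile $\varphi$, absorbs $U_{c,\omega}-\varphi$ into the perturbation $(\tilde u,\tilde v)$, and invokes Lemma \ref{lem-coercivity} verbatim; the price is that the constraints (\ref{constraints}) hold only up to $\mathcal{O}(|c|+|\omega-1|)$, so you need the projection/mismatch argument, which does work because $\varphi$ and $\varphi'$ are $\mathcal{H}$-orthogonal, the correction terms are controlled in $\mathcal{H}^1_-$, and all defects land inside the permitted $-C\left(c^2+(\omega-1)^2\right)$ slack after Young's inequality and shrinking $\epsilon_0$. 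In short: the paper avoids any constraint mismatch at the cost of perturbing the quadratic form and its constraint surface; you avoid perturbing the spectral input at the cost of the mismatch bookkeeping. Your computation $g(c,\omega)=-\tfrac{4}{3}(\omega-1)^2+\mathcal{O}(c^2)$, using $E(U_{c,\omega})=\omega E(U_c)$, $M(U_{c,\omega})=\omega^{-1}M(U_c)$ and the vanishing of the $\mathcal{O}(c)$ term in $E(U_c)$, is consistent with, and slightly more explicit than, the paper's critical-point argument for $\Delta$.
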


\begin{proof}
Since $U_{c,\omega}$ is a critical point for $\Lambda_{\omega,c}$ in $\Sigma$, 
we obtain with the help of the expansions (\ref{expansion-M}) and (\ref{expansion-P}) in Lemma \ref{lem-expansion} that 
\begin{align*}
\hat{\Lambda}_{c,\omega}(u,v) = \Delta(c,\omega) + \| u' \|^2_{L^2} 
+ \| v'\|^2_{L^2} + \omega^2 \left( - 2 \| u  \|^2_{\mathcal{H}_{c,\omega}} - 2 \| v \|^2_{\mathcal{H}_{c,\omega}} + \| \eta \|^2_{L^2} \right) + c \omega \hat{P}(u,v),
\end{align*}
where
\begin{align*}
\Delta(c,\omega) & := \hat{\Lambda}_{c,\omega}(0,0) \\
& = E(U_{c,\omega}) - E(\varphi) + \omega^2 \left[ M(U_{c,\omega}) - M(\varphi) \right] + c \omega \left[ P(U_{c,\omega}) + \pi \right].
\end{align*}
Since 
\begin{align*}
\frac{\partial \Delta}{\partial c} = \omega \left[ P(U_{c,\omega}) + \pi \right], \qquad 
\frac{\partial \Delta}{\partial \omega} = 2 \omega \left[ M(U_{c,\omega}) - M(\varphi)  \right] + c \left[ P(U_{c,\omega}) + \pi \right], 
\end{align*}
it follows from (\ref{expansion-M-c}) and (\ref{expansion-P-c}) 
that $(c,\omega) = (0,1)$ is a critical point of $\Delta(c,\omega)$. 
Since $\Delta(0,1) = 0$, there is $A > 0$ such that 
$$
\Delta(c,\omega) \geq - A \left[ c^2 + (\omega - 1)^2 \right],
$$
for every $(c,\omega)$ near $(0,1)$. Due to the bound (\ref{bound-on-R}) and Young's inequality, there is $A > 0$ such that  
$$
c  \omega \hat{P}(u,v) \geq - A \left[ c^2 + \left( \| u \|^2_{\mathcal{H}^1_-} 
+ \| v \|^2_{\mathcal{H}^1_-} + \| \eta \|^2_{L^2} \right)^2 \right]
$$
for every $(c,\omega)$ near $(0,1)$. A similar lower bound is obtained for 
$$
(\omega^2 - 1)  (- 2 \| u  \|^2_{\mathcal{H}_{c,\omega}} - 2 \| v \|^2_{\mathcal{H}_{c,\omega}} + \| \eta \|^2_{L^2}) 
\geq - A \left[ (\omega - 1)^2 + c^4 + \left( \| u \|^2_{\mathcal{H}} 
+ \| v \|^2_{\mathcal{H}} + \| \eta \|^2_{L^2} \right)^2 \right],
$$
due to the proximity of norms in Remark \ref{remark-proximity}.
The remaining terms in $\hat{\Lambda}_{c,\omega}(u,v)$ are 
$$
\| u' \|^2_{L^2} 
+ \| v'\|^2_{L^2} - 2 \| u  \|^2_{\mathcal{H}_{c,\omega}} - 2 \| v \|^2_{\mathcal{H}_{c,\omega}} + \| \eta \|^2_{L^2} = Q_{c,\omega}(u) + Q_{c,\omega}(v) + \| \eta \|^2_{L^2}, 
$$
where $Q_{c,\omega}(u) := \| u' \|^2_{L^2} - 2 \| u \|^2_{\mathcal{H}_{c,\omega}}$. The coercivity $Q_{c,\omega}(u)$ under the two constraints is obtained similarly to the proof of Lemma \ref{lem-coercivity}:
\begin{equation}
Q_{c,\omega}(u) \geq C \left( \| u' \|^2_{L^2} + \| u \|^2_{\mathcal{H}_{c,\omega}} \right), \quad 
(U_{c,\omega},u)_{\mathcal{H}_{c,\omega}} = 0, \quad 
(U_{c,\omega}',u)_{\mathcal{H}_{c,\omega}} = 0.
\end{equation}
Combining all lower bounds and using the proximity of norms in 
Remark \ref{remark-norms}  yields 
\begin{align*}
\hat{\Lambda}_{c,\omega}(u,v) & \geq 
C \left( \| u \|_{\mathcal{H}^1_-}^2 + 
\| v \|_{\mathcal{H}^1_-}^2 + \| \eta \|_{L^2}^2 \right) \\
& - A \left[ c^2 + (\omega-1)^2 + \left( \| u \|^2_{\mathcal{H}^1_-} 
+ \| v \|^2_{\mathcal{H}^1_-} + \| \eta \|^2_{L^2} \right)^2 \right].
\end{align*}
This justifies the bound (\ref{coercivity-with-c}) due to smallness 
of $\| u \|^2_{\mathcal{H}^1_-} 
+ \| v \|^2_{\mathcal{H}^1_-} + \| \eta \|^2_{L^2}$ by adjusting the constant $C > 0$.
\end{proof}

We are now in position to complete the proof of Theorem \ref{theorem-nonlinear}.

Let $\psi_0 \in \Sigma \cap \mathcal{H}$ satisfy
\begin{equation}
\label{delta}
\mathcal{D}_{\Sigma \cap \mathcal{H}}(\psi_0,\varphi) < \delta
\end{equation} 
for some small $\delta \in (0,\epsilon_0)$, where $\epsilon_0$ is defined in Lemma \ref{lem-parameters}. 
Since we are assuming that the initial-value problem for the NLS model (\ref{nls-idd}) 
is locally well-posed in $\Sigma \cap \mathcal{H}$, there exists a 
unique solution $\psi \in C^0([-\tau_0,\tau_0],\Sigma \cap \mathcal{H})$ 
of the NLS model (\ref{nls-idd}) for some small $\tau_0> 0$ such that 
$\psi(0,\cdot) = \psi_0$. Therefore, at least for small $\tau_0 > 0$, 
the bound (\ref{eps-initial}) holds true so that 
the orthogonal decomposition (\ref{decomposition}) and (\ref{constraints-with-c}) can be used for $t \in [-\tau_0,\tau_0]$.
 
Assuming conservation of energy $E(\psi)$, mass $M(\psi)$, and momentum $P(\psi)$, there is $C > 0$ such that 
\begin{align}
\label{bound-delta}
\left\{ \begin{array}{l} 
|E(\psi) - E(\varphi)| = |E(\psi_0) - E(\varphi)| \leq C \delta, \\
|M(\psi) - M(\varphi)| = |M(\psi_0) - M(\varphi)| \leq C \delta, \\
|P(\psi) + \pi| = |P(\psi_0) + \pi| \leq C \delta,
\end{array}
\right.
\end{align}
where the upper bounds are due to (\ref{delta}) and the expansion of 
$E(\psi_0)$, $M(\psi_0)$, and $P(\psi_0)$ near $\varphi$. Compared to Lemma \ref{lem-expansion}, 
no orthogonality conditions are imposed on the perturbation term $\psi_0 - \varphi$ so that the linear terms of the expansion of the $\mathcal{O}(\delta)$ order are generally nonzero.

By using the orthogonal decomposition (\ref{decomposition}) and (\ref{constraints-with-c}) for $t \in [-\tau_0,\tau_0]$ with small $\tau_0> 0$, 
it follows from the symmetry (\ref{symmetry}) that 
\begin{align}
\label{expansion-delta}
\begin{array}{l} M(\psi_0) - M(\varphi) = M(U_{c,\omega} + u + iv) - M(\varphi), \\
P(\psi_0) + \pi = P(U_{c,\omega} + u + iv) + \pi, \end{array}
\end{align}
where $(c,\omega)$ depend on time $t \in [-\tau_0,\tau_0]$. Using now expansions (\ref{expansion-M}) and (\ref{expansion-P}), we claim that there is $C > 0$ 
such that 
\begin{align}
\label{c-omega}
|c| + |\omega - 1| \leq C (\delta + \| u \|^2_{\mathcal{H}^1_-} + 
 \| v \|^2_{\mathcal{H}^1_-} + \| \eta \|^2_{L^2})	
\end{align}
This follows from the implicit function theorem if the Jacobian of the transformation 
\begin{equation}
\label{mapping}
(c,\omega) \mapsto (M(U_{c,\omega}) - M(\varphi), P(U_{c,\omega}) + \pi)
\end{equation}
is invertible at $(c,\omega) = (0,1)$. Due to the scaling transformation  
with $U_{c,\omega} := U_c(\omega \cdot)$, we have 
$$
M(U_{c,\omega}) = \omega^{-1} \hat{M}(c), \qquad 
P(U_{c,\omega}) = \hat{P}(c),
$$
where $\hat{M}(c) := M(U_{c})$ and $\hat{P}(c) := P(U_{c})$, so that the determinant of the Jacobian is 
$$
\left| \begin{array}{cc} 
\omega^{-1} \hat{M}'(c) & - \omega^{-2} \hat{M}(c) \\
\hat{P}'(c) & 0
\end{array} \right| = \omega^{-2} \hat{M}(c\omega) \hat{P}'(c\omega).
$$
By using expansions (\ref{expansion-M-c}) and (\ref{expansion-P-c}) in Lemma \ref{lem-momentum}, the determinant at $(c,\omega) = (0,1)$ 
is equal to $\frac{64}{15} \neq 0$ so that 
the transformation is invertible. Hence, expansions (\ref{expansion-M}) and (\ref{expansion-P}) with the bounds (\ref{bound-on-R}) and (\ref{bound-delta}) yield (\ref{c-omega}) from (\ref{expansion-delta}).

Finally, we substitute (\ref{c-omega}) into (\ref{coercivity-with-c})  
and use conservation of $E(\psi)$, $M(\psi)$, and $P(\psi)$ with the bound (\ref{bound-delta}). This yields the bound 
\begin{equation}
\label{u-v}
\| u \|_{\mathcal{H}^1_-}^2 + 
\| v \|_{\mathcal{H}^1_-}^2 + \| \eta \|_{L^2}^2 \leq C \delta,
\end{equation}
which shows that the perturbations of the orthogonal decomposition (\ref{decomposition}) and (\ref{constraints-with-c}) for $t \in [-\tau_0,\tau_0]$ with small $\tau_0> 0$ are controlled uniformly in time. 
Moreover, using the triangle inequality, the bounds (\ref{c-omega}) and (\ref{u-v}), and the expansion (\ref{dark-soliton-exp}), we obtain
\begin{align}
\mathcal{D}_{\Sigma \cap \mathcal{H}}\left(\psi,e^{i\theta}  \varphi(\cdot + \zeta)\right)
&\leq \mathcal{D}_{\Sigma \cap \mathcal{H}}\left(\psi,e^{i\theta} U_{c,\omega}(\cdot + \zeta)\right) + \mathcal{D}_{\Sigma \cap \mathcal{H}}\left(U_{c,\omega},\varphi\right)
\nonumber \\
&\leq C \left( \| u \|_{\mathcal{H}^1_-} + \| v \|_{\mathcal{H}^1_-} 
+ \| \eta \|_{L^2} + |c| + |\omega - 1| \right) 
\nonumber \\
&\leq C \delta^{1/2}.
\label{bound-eps}
\end{align}
Thus for every $\epsilon \in (0,\epsilon_0)$ there is $\delta < {\rm min}(\epsilon^2/C^2,\epsilon_0)$ such that 
the bound (\ref{eps-initial}) is satisfied for every $t \in [-\tau_0,\tau_0]$ 
so that the orthogonal decomposition (\ref{decomposition}) and (\ref{constraints-with-c}) can be extended beyond the times $t = \pm \tau_0$ 
with the same estimate (\ref{u-v}) and the same bound (\ref{bound-eps}). 
Hence, the local solution $\psi \in C^0([-\tau_0,\tau_0],\Sigma \cap \mathcal{H})$ near $e^{i \theta} \varphi(\cdot + \zeta)$ 
is extended globally in time with the bound (\ref{bound-eps}) for every $t \in \R$.

The proof of Theorem \ref{theorem-nonlinear} is complete.

\section{Persistence and stability of black solitons in potentials}
\label{sec-potential}

Here we consider a small and decaying potential in the framework of the perturbed NLS model (\ref{nls-idd-potential}) and prove persistence and stability of the black soliton. This yields the proof of Theorem \ref{theorem-potential}. The following two lemmas give the persistence and stability results separately.

The standing wave solution $\psi(t,x) = e^{-2it} \phi_{\varepsilon}(x)$ of the perturbed NLS model (\ref{nls-idd-potential}) are found from the second-order differential equation 
\begin{equation}
\label{ode-perturbed}
\phi_{\varepsilon}'' + 2 (1-|\phi_{\varepsilon}|^2) \phi_{\varepsilon} = 
\varepsilon V(x) \phi_{\varepsilon}.
\end{equation}
Since $V(x) : \R \mapsto \R$, we consider real solutions $\phi_{\varepsilon}(x) : \R \mapsto \R$ which converge to $\varphi$ pointwise in $x$ as $|\varepsilon| \to 0$, where $\varphi(x) := \tanh(x)$. The following lemma uses the Lyapunov--Schmidt reduction method to get the existence result.

\begin{lemma}
	\label{lem-potential-existence}
		Assume that $V \in W^{2,\infty}(\R) \cap L^2(\R)$ and that $s \in \mathbb{R}$ is a simple root of $\mathcal{V}'$, where $\mathcal{V}$ is given by (\ref{potential-effective}). There exists $\varepsilon_0 > 0$ such that for every $\varepsilon \in (-\varepsilon_0,\varepsilon_0)$, there exists a real solution of equation (\ref{ode-perturbed}) in the form $\phi_{\varepsilon} = \varphi_s + \tilde{\varphi}_{\varepsilon}$,
	where $\varphi_s(x) := \varphi(x-s)$ and $\tilde{\varphi}_{\varepsilon} \in H^2(\R)$ satisfies $\| \tilde{\varphi}_{\varepsilon} \|_{H^2} \leq C |\varepsilon|$ for some $\varepsilon$-independent positive constant $C$. 
\end{lemma}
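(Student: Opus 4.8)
The plan is a standard Lyapunov--Schmidt reduction around the translated black soliton $\varphi_s := \varphi(\cdot - s)$. Substituting $\phi_{\varepsilon} = \varphi_s + \tilde{\varphi}_{\varepsilon}$ into (\ref{ode-perturbed}) and using $\varphi_s'' + 2(1-\varphi_s^2)\varphi_s = 0$, one sees that $\tilde{\varphi}_{\varepsilon}$ must solve
\begin{equation*}
\mathcal{A}_s \tilde{\varphi}_{\varepsilon} = -\varepsilon V (\varphi_s + \tilde{\varphi}_{\varepsilon}) - 6 \varphi_s \tilde{\varphi}_{\varepsilon}^2 - 2 \tilde{\varphi}_{\varepsilon}^3 =: N_{\varepsilon}(s, \tilde{\varphi}_{\varepsilon}),
\end{equation*}
where $\mathcal{A}_s := -\partial_x^2 + 6 \varphi_s^2 - 2$ is the $s$-translate of the operator $L_+$ in (\ref{L-plus}). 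On $L^2(\R)$ with domain $H^2(\R)$, $\mathcal{A}_s$ is self-adjoint with essential spectrum $[4,\infty)$; since $\mathcal{A}_s \varphi_s' = 0$ with $\varphi_s' = {\rm sech}^2(\cdot - s) > 0$, Sturm's theorem identifies $0$ as the simple lowest eigenvalue, so $\mathcal{A}_s \geq 0$, $\ker \mathcal{A}_s = {\rm span}(\varphi_s')$, and the restriction of $\mathcal{A}_s$ to the $L^2$-orthogonal complement $\{ \varphi_s' \}^{\perp}$ is boundedly invertible onto $L^2(\R) \cap \{ \varphi_s' \}^{\perp}$, with image in $H^2(\R)$. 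The only obstruction to solvability is the kernel direction $\varphi_s'$, which is the infinitesimal translation mode; accordingly I promote $s$ to a free parameter $\sigma$ ranging near the given root and look for $\phi_{\varepsilon} = \varphi(\cdot - \sigma) + w$ with $w \in H^2(\R)$ subject to $\langle \varphi_{\sigma}', w \rangle_{L^2} = 0$.

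Writing $P_{\sigma}$ for the $L^2$-orthogonal projection onto $\{ \varphi_{\sigma}' \}^{\perp}$, the equation splits into the range equation $P_{\sigma}[\mathcal{A}_{\sigma} w - N_{\varepsilon}(\sigma, w)] = 0$ and the scalar bifurcation equation $\langle \varphi_{\sigma}', N_{\varepsilon}(\sigma, w) \rangle_{L^2} = 0$, the term $\langle \varphi_{\sigma}', \mathcal{A}_{\sigma} w \rangle_{L^2}$ dropping out since $\varphi_{\sigma}' \in \ker \mathcal{A}_{\sigma}$. First I would solve the range equation: the map $G(w, \varepsilon, \sigma) := w - \big( \mathcal{A}_{\sigma}|_{\{\varphi_{\sigma}'\}^{\perp}} \big)^{-1} P_{\sigma} N_{\varepsilon}(\sigma, w)$ satisfies $G(0,0,\sigma) = 0$ and $D_w G(0,0,\sigma) = I$; here $w \mapsto N_{\varepsilon}(\sigma,w)$ maps $H^2(\R)$ smoothly into $L^2(\R)$ (using $H^2(\R) \hookrightarrow L^{\infty}(\R)$ for the quadratic and cubic terms), and crucially $V(\varphi_{\sigma} + w) \in L^2(\R)$ because $V \in L^2(\R)$ and $\varphi_{\sigma}$ is bounded. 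The implicit function theorem then yields, for $|\varepsilon|$ small and $\sigma$ near $s$, a unique small solution $w = w_{\varepsilon}(\sigma) \in H^2(\R) \cap \{\varphi_{\sigma}'\}^{\perp}$, of class $C^1$ in $\sigma$, with $\| w_{\varepsilon}(\sigma) \|_{H^2} \leq C |\varepsilon|$.

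It remains to solve the bifurcation equation. Inserting $w = w_{\varepsilon}(\sigma)$ and dividing by $\varepsilon$, it reads $h(\sigma, \varepsilon) := -\langle \varphi_{\sigma}', V \varphi_{\sigma} \rangle_{L^2} + \mathcal{O}(\varepsilon) = 0$, where the remainder is $C^1$ in $\sigma$ thanks to $\| w_{\varepsilon}(\sigma) \|_{H^2} = \mathcal{O}(\varepsilon)$ and $V \in W^{2,\infty}(\R)$. Using $\varphi_{\sigma}' \varphi_{\sigma} = \frac{1}{2} \partial_x(\varphi_{\sigma}^2)$, integration by parts, and the definition (\ref{potential-effective}) one computes $\langle \varphi_{\sigma}', V \varphi_{\sigma} \rangle_{L^2} = \frac{1}{2} \mathcal{V}'(\sigma)$, so $h(\sigma, 0) = -\frac{1}{2} \mathcal{V}'(\sigma)$ and $\partial_{\sigma} h(s, 0) = -\frac{1}{2} \mathcal{V}''(s) \neq 0$ because $s$ is a simple root of $\mathcal{V}'$. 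A further application of the implicit function theorem gives $\sigma = \sigma(\varepsilon) = s + \mathcal{O}(\varepsilon)$ with $h(\sigma(\varepsilon), \varepsilon) = 0$, and $\phi_{\varepsilon} := \varphi(\cdot - \sigma(\varepsilon)) + w_{\varepsilon}(\sigma(\varepsilon))$ solves (\ref{ode-perturbed}). Finally, $\tilde{\varphi}_{\varepsilon} := \phi_{\varepsilon} - \varphi_s = [\varphi(\cdot - \sigma(\varepsilon)) - \varphi(\cdot - s)] + w_{\varepsilon}(\sigma(\varepsilon))$ satisfies $\| \tilde{\varphi}_{\varepsilon} \|_{H^2} \leq C|\varepsilon|$, since $\varphi' = {\rm sech}^2 \in H^2(\R)$ and $\sigma(\varepsilon) - s = \mathcal{O}(\varepsilon)$. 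The point requiring attention is that the background $\varphi_s$ does not decay, so the forcing $\varepsilon V \varphi_s$ lies in $L^2(\R)$ only by virtue of the hypothesis $V \in L^2(\R)$; beyond that the argument reduces to the Fredholm structure of $\mathcal{A}_s$ and the identity $\langle \varphi_s', V \varphi_s \rangle_{L^2} = \frac{1}{2} \mathcal{V}'(s)$.
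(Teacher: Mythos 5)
Your proposal is correct and follows essentially the same route as the paper: a Lyapunov--Schmidt reduction about the translated kink with the translation parameter freed to absorb the kernel direction $\varphi_s'$ of $L_+$, the range equation solved by the implicit function theorem using $V\in L^2(\R)$ to keep the forcing in $L^2(\R)$, and the bifurcation equation reduced via the identity $\langle \varphi_\sigma', V\varphi_\sigma\rangle_{L^2}=\tfrac12\mathcal{V}'(\sigma)$ to the nondegeneracy condition $\mathcal{V}''(s)\neq 0$. The only cosmetic difference is that you parametrize by $\sigma$ near $s$ where the paper writes $s+a$ with small $a$.
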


\begin{proof}
We use the decomposition 
		\begin{equation}
	\label{decomp-vaphi}
\phi_{\varepsilon} = \varphi_{s+a} + \psi,
	\end{equation}	
where $s \in \R$ is a simple root of $\mathcal{V}'(s)$, $a \in \R$ is a parameter to be determined, and $\psi \in H^2(\R)$ is a correction term to be determined. The decomposition (\ref{decomp-vaphi}) allows us to rewrite (\ref{ode-perturbed}) in the equivalent form 
	\begin{equation}
	\label{ode-persistence}
L_+ \psi + \varepsilon V \left[ \varphi_{s+a} + \psi \right] + N(\psi) = 0,
	\end{equation}
where $L_+ := -\partial_x^2 + 6 \varphi^2_{s+a} - 2$ is the translated version of the linearized operator $L_+$ in (\ref{L-plus}) and $N(\psi) := 6 \varphi_{s+a} \psi^2 + 2 \psi^3$ is the nonlinear part of the cubic nonlinearity. 
	
The linearized operator $L_+$ can be considered in $L^2(\R)$, where $0$ is a simple isolated eigenvalue of $L_+$ at the bottom of $\sigma(L_+)$ with ${\rm Null}(L_+) = {\rm span}(\varphi'_{s+a})$. The Lyapunov--Schmidt reduction method relies on the orthogonal decomposition by using the orthogonal projection
$$
\Pi_a \psi := \psi - \frac{\langle \varphi'_{s+a}, \psi \rangle}{\| \varphi'_{s+a} \|^2_{L^2}} \varphi'_{s+a},
$$
where $\langle \cdot, \cdot \rangle$ is the inner product in $L^2(\R)$. 
Assuming $\Pi_a \psi = \psi$ (that is, $\langle \varphi'_{s+a}, \psi \rangle = 0$) for uniqueness of definitions of $a \in \R$ and $\psi \in H^2(\R)$ allows us to split (\ref{ode-persistence}) into two parts:
	\begin{align}
	\label{implicit-F}
	F(\varepsilon,a,\psi) & := L_+ \psi + \Pi_a \left( \varepsilon V\left[ \varphi_{s+a} + \psi \right] + Z(\psi) \right) = 0,	\\
	\label{implicit-f}
	f(\varepsilon,a,\psi) & := \langle \varphi'_{s+a}, \varepsilon V\left[ \varphi_{s+a} + \psi \right] + Z(\psi) \rangle = 0.
	\end{align}
Since 
\begin{itemize}
	\item $F(\varepsilon, a,\psi) : \R \times \R \times H^2(\R)|_{\{\varphi'_{s+a}\}^{\perp}} \to L^2(\R) |_{\{\varphi'_{s+a}\}^{\perp}}$ is $C^{\infty}$ for every $V \in L^2(\R)$,\\
	\item $F(0,0,0) = 0$, \\
	\item $D_{\psi} F(0,0,0) = L_+ : H^2(\R)|_{\{\varphi'_{s+a}\}^{\perp}}  \mapsto L^2(\R)|_{\{\varphi'_{s+a}\}^{\perp}} $ is an invertible operator with a bounded inverse from $L^2(\R)|_{\{\varphi'_{s+a}\}^{\perp}}$ to $H^2(\R)|_{\{\varphi'_{s+a}\}^{\perp}}$, 
\end{itemize}
the implicit function theorem gives the existence of a unique $C^{\infty}$ mapping $(\varepsilon,a) \mapsto \psi_{\varepsilon,a} \in H^2(\R) |_{\{\varphi'_{s+a}\}^{\perp}}$ which solves 
(\ref{implicit-F}) for every small $(\varepsilon,a) \in \R \times \R$ with the estimate 
\begin{equation}
	\label{estimate-1}
\| \psi_{\varepsilon,a} \|_{H^2} \leq C |\varepsilon| (1 + |a|),
\end{equation}
for some $(\varepsilon,a)$-independent constant $C > 0$.

The first term of $f(\varepsilon,a,\psi)$ can be simplified after integration by parts:
\begin{align*}
\int_{\R} V(x) \varphi(x-s-a) \varphi'(x - s - a) dx 
&= \frac{1}{2} \int_{\R} V'(x) \left[ 1 - \varphi^2(x-s-a) \right] dx \\
&= \frac{1}{2} \int_{\R} V'(x) {\rm sech}^2(x-s-a) dx \\
&= \frac{1}{2} \mathcal{V}'(s+a).
\end{align*}
Since $V \in W^{2,\infty}(\R)$, $\mathcal{V}$ is a $C^2$ function of its argument. If $s \in \R$ is a simple root of $\mathcal{V}'$, then 
$$
\lim_{a \to 0} \frac{\mathcal{V}'(s+a) - \mathcal{V}''(s) a}{a} = 0.
$$
Substituting the $C^{\infty}$ mapping 
$(\varepsilon,a) \mapsto \psi_{\varepsilon,a} \in H^2(\R) |_{\{\varphi'_{s+a}\}^{\perp}}$ 
from equation (\ref{implicit-F}) into (\ref{implicit-f}) 
gives the implicit equation 
\begin{equation}
\label{implicit-f-hat}
\hat{f}(\varepsilon,a) := \varepsilon^{-1} f(\varepsilon,a,\psi_{\varepsilon,a}) = 0,
\end{equation}
where $\hat{f}(\varepsilon,a) : \R\times \R\to \R$ is $C^1$ in its arguments, $\hat{f}(0,0) = 0$, and 
$$
\partial_a \hat{f}(0,0) = \frac{1}{2} \mathcal{V}''(s) \neq 0.
$$ 
The implicit function theorem gives the existence of a unique $C^1$ mapping $\varepsilon \mapsto a_{\varepsilon} \in \R$ which solves (\ref{implicit-f-hat}) for every small $\varepsilon \in \R$ with the estimate 
\begin{equation}
\label{estimate-2}
|a_{\varepsilon}| \leq C |\varepsilon|,
\end{equation}
for some $\varepsilon$-independent constant $C > 0$. Combining the two estimates (\ref{estimate-1}) and (\ref{estimate-2}) yields $\phi_{\varepsilon} = \varphi_s + \tilde{\varphi}_{\varepsilon}$ with $\| \tilde{\varphi}_{\varepsilon} \|_{H^2} \leq C |\varepsilon|$ after the triangle inequality.
\end{proof}

\begin{remark}
We cannot generally state that $\phi_{\varepsilon} \in \mathcal{F}$, where $\mathcal{F}$ is given by (\ref{function-set}).
\end{remark}

Using the black soliton $\phi_{\varepsilon}$ from Lemma \ref{lem-potential-existence}, 
we can define the spectral stability problem in the form 
\begin{equation}
\label{lin-stab-pot}
\begin{array}{c} L_-(\varepsilon) v = \lambda (1-\phi_{\varepsilon}^2) u, \\ -L_+(\varepsilon)  u = \lambda 
(1-\phi^2_{\varepsilon}) v,\end{array}
\end{equation}
where
\begin{align*}
L_+(\varepsilon) &= -\partial_x^2 + 6 \phi_{\varepsilon}^2 - 2 + \varepsilon V, \\
\label{L-minus-eps}
L_-(\varepsilon) &= -\partial_x^2 + 2 \phi_{\varepsilon}^2 - 2 + \varepsilon V.
\end{align*}
The Hilbert space (\ref{Hilbert-space}) is replaced by 
\begin{equation*}
\mathcal{H}_{\varepsilon} := \left\{ f \in L^2_{\rm loc}(\R) : \quad 
\sqrt{1 - \phi_{\varepsilon}^2} f \in L^2(\R) \right\}.
\end{equation*}
The following lemma ensures that under the additional condition $V \in L^1(\R)$, $L_{\pm}(\varepsilon)$ have the same properties 
as $L_{\pm}(0)$ except for one eigenvalue which bifurcates from the zero eigenvalue of $L_+(0)$ as $\varepsilon \to 0$. Consequently, the location of this eigenvalue gives a definite result on the spectral stability or instability within the linear stability problem (\ref{lin-stab-pot}). 

\begin{lemma}
	\label{lem-potential-stability}
Assume that $V \in W^{2,\infty}(\R) \cap L^1(\R)$ and that $s \in \mathbb{R}$ is a simple root of $\mathcal{V}'(s)$, where $\mathcal{V}(s)$ is given by (\ref{potential-effective}). There exists $\varepsilon_0 > 0$ such that for every $\varepsilon \in (0,\varepsilon_0)$
	\begin{itemize}
		\item If $\mathcal{V}''(s) > 0$, then the spectrum of the linear stability problem (\ref{lin-stab-pot}) in $\mathcal{H}_{\varepsilon}$ 
		consists of pairs of isolated eigenvalues (\ref{spectrum-stab}) 
		and a double zero eigenvalue. 
		\item If $\mathcal{V}''(s) < 0$, then the spectrum of the linear stability problem (\ref{lin-stab-pot}) in $\mathcal{H}_{\varepsilon}$ 
		consists of pairs of isolated eigenvalues (\ref{spectrum-stab}), 
		a double zero eigenvalue, and a pair of simple real eigenvalues $\{ \pm \lambda_0 \}$ with $\lambda_0 > 0$. 
	\end{itemize}
\end{lemma}

\begin{proof}
The profile $\phi_{\varepsilon}$ is obtained from the second-order differential equation (\ref{ode-perturbed}) with the boundary conditions $\phi_{\varepsilon}(x) \to \pm 1$ as $x \to \pm \infty$.
By Levinson's theorem (Theorem 8.1 on p.92 in \cite{CL}), 
if $V \in L^1(\R)$, then $\phi_{\varepsilon} \to \pm 1$ exponentially fast with the same exponential rate $2$ as $\varphi$. Consequently, the weighted norm in $\mathcal{H}_{\varepsilon}$ is equivalent to the one in $\mathcal{H}$ 
and the proof of Theorem \ref{th-discrete} extends to the stability problem (\ref{lin-stab-pot}). 
	
Since $L_-(\varepsilon) \phi_{\varepsilon} = 0$, the zero eigenvalue 
in $\mathcal{H}_{\varepsilon}$ persists in $\varepsilon \in (-\varepsilon_0,\varepsilon_0)$. Since $\varphi$ has a simple zero at $x = 0$, the decomposition (\ref{decomp-vaphi}) with small $\| \tilde{\varphi}_{\varepsilon} \|_{H^2}$ implies that $\phi_{\varepsilon}$ has only one zero on $\R$. Consequently, by Sturm's oscillation theorem, $L_-(\varepsilon)$ has only one simple negative eigenvalue for every $\varepsilon \in (-\varepsilon_0,\varepsilon_0)$. 

On the other hand, $L_+(\varepsilon)$ may not have the zero eigenvalue 
for $\varepsilon \neq 0$ because the translational symmetry is broken by the potential $V$. To study the sign of this small eigenvalue of $L_+(\varepsilon)$, we expand 
\begin{equation}
\label{expansion-1}
L_+(\varepsilon) = L_+(0) + \varepsilon V + 12 \varphi_{s} \tilde{\varphi}_{\varepsilon} + 6 \tilde{\varphi}_{\varepsilon}^2,
\end{equation}
where $L_+(0) := -\partial_x^2 + 6 \varphi^2_{s} - 2$. Since $\| \tilde{\varphi}_{\varepsilon} \|_{H^2} = \mathcal{O}(\varepsilon)$ as $\varepsilon \to 0$, we can write 
\begin{equation}
\label{expansion-2}
\tilde{\varphi}_{\varepsilon} = \varepsilon \tilde{\varphi}^{(1)} + \mathcal{O}_{H^2}(\varepsilon^2),
\end{equation}
where $\tilde{\varphi}^{(1)}$ is uniquely determined by 
Lemma \ref{lem-potential-existence}. 
Perturbation theory for isolated eigenvalues of a self-adjoint operator gives the small eigenvalue of $L_+(\varepsilon)$ in the form 
\begin{equation}
\label{mu-expansion}
\mu(\varepsilon) = \frac{1}{\| \varphi'_{s} \|^2_{L^2}} \left[ 
\varepsilon \langle \varphi'_{s}, (V + 12 \varphi_{s} \tilde{\varphi}^{(1)})  \varphi'_{s} \rangle + \mathcal{O}(\varepsilon^2) \right].
\end{equation}
Derivative of equation (\ref{ode-perturbed}) in $x$ yields 
$$
L_+(\varepsilon) \phi_{\varepsilon}' = -\varepsilon V' \phi_{\varepsilon},
$$
from which we derive at the $\mathcal{O}(\varepsilon)$ order:
$$
L_+(0) (\tilde{\varphi}^{(1)})' + (V + 12 \varphi_{s} \tilde{\varphi}^{(1)}) \varphi'_{s} = -V' \varphi_{s}.
$$
Therefore, we obtain by integration by parts that 
\begin{align*}
\langle \varphi'_{s}, (V + 12 \varphi_{s} \tilde{\varphi}^{(1)})  \varphi'_{s} \rangle &= -\int_{\R} V'(x) \varphi(x-s) \varphi'(x - s) dx \\
&= -\frac{1}{2} \int_{\R} V''(x) \left[ 1 - \varphi^2(x-s) \right] dx \\
&= -\frac{1}{2} \int_{\R} V''(x) {\rm sech}^2(x-s) dx \\
&= -\frac{1}{2} \mathcal{V}''(s),
\end{align*}
where we have used the condition $V \in W^{2,\infty}(\R)$. 
Thus, it follows from (\ref{mu-expansion}) that 
$$
\mu(\varepsilon) = -\frac{1}{2 \| \varphi'_{s} \|^2_{L^2}} \left[ 
\varepsilon \mathcal{V}''(s) + \mathcal{O}(\varepsilon^2) \right]. 
$$
Since $\mathcal{V}''(s) \neq 0$ by assumption, we conclude 
that for $\varepsilon \in (0,\varepsilon_0)$ with sufficiently small 
$\varepsilon_0 > 0$, $L_+(\varepsilon)$ admits no negative eigenvalues if 
$\mathcal{V}''(s) < 0$ and a simple negative eigenvalue if $\mathcal{V}''(s) > 0$, since all other eigenvalues of $L_+$ in $\mathcal{H}$ are strictly positive.

It remains to compute splitting of the double zero eigenvalue 
for $\varepsilon \neq 0$ in the spectral stability problem (\ref{lin-stab-pot}) due to the broken translational symmetry. The double zero eigenvalue 
due to the rotational symmetry persists in $\varepsilon$, whereas 
all eigenvalues on $i \R$ also persist in $\varepsilon$ since they are associated with positive $Q_+(u)$ and $Q_-(v)$ \cite{CP10}. 
To compute the splitting of the double zero eigenvalue, we use the method of Puiseux expansions \cite{Walters} and expand the eigenvector $(u,v) \in \mathcal{H}_{\varepsilon} \times \mathcal{H}_{\varepsilon}$ and the eigenvalue $\lambda$ of the stability problem (\ref{lin-stab-pot}) for $\varepsilon \in (0,\varepsilon_0)$:
\begin{align*}
\left\{ \begin{array}{l}
u = \varphi'_{s} + \varepsilon u_1 + \mathcal{O}(\varepsilon^2), \\
v = \varepsilon^{1/2} v_1 + \mathcal{O}(\varepsilon^{3/2}), \\
\lambda = \varepsilon^{1/2} \lambda_1 + \mathcal{O}(\varepsilon^{3/2}).
\end{array} \right.
\end{align*}
As $\varepsilon = 0$, the Puiseux expansions recover the double isolated eigenvalue $\lambda = 0$ of the unperturbed stability problem (\ref{lin-stab}) with the eigenvector $(u,v) = (\varphi'_{s},0)^T$ and the generalized eigenvector $(u,v) = (0, v_{\varphi}(\cdot - s))^T$, where $v_{\varphi}$ is given by (\ref{eigen-gener-exact}). By using (\ref{decomp-vaphi}), (\ref{expansion-1}), and (\ref{expansion-2}), we obtain from (\ref{lin-stab-pot}) that 
$v_1 = \lambda_1 v_{\varphi}(\cdot - s)$ and $u_1 \in \mathcal{H}_{\varepsilon}$ is found from the linear inhomogeneous equation 
$$
L_+ u_1 + (V + 12 \varphi_{s} \tilde{\varphi}^{(1)}) \varphi'_{s} = -\lambda_1^2 (1 - \varphi_{s}^2) v_{\varphi}(\cdot - s).
$$
A solution $u_1 \in \mathcal{H}_{\varepsilon}$ exists if and only if the Fredholm condition is satisfied:
$$
\lambda_1^2 (\varphi',v_{\varphi})_{\mathcal{H}} + 
\langle \varphi'_{s}, (V + 12 \varphi_{s} \tilde{\varphi}^{(1)}) \varphi'_{s} \rangle = 0,
$$
which yields 
$$
\lambda_1^2 = \frac{1}{2 (\varphi',v_{\varphi})_{\mathcal{H}}} \mathcal{V}''(s).
$$
Since it follows from the proof of Lemma \ref{lem-quadruple} that $(\varphi',v_{\varphi})_{\mathcal{H}} < 0$,  we conclude 
that for $\varepsilon \in (0,\varepsilon_0)$ with sufficiently small 
$\varepsilon_0 > 0$, the spectral stability problem (\ref{lin-stab-pot}) admits a pair of real eigenvalues if $\mathcal{V}''(s) < 0$ and a pair of purely imaginary eigenvalues if $\mathcal{V}''(s) > 0$. In both cases, the pair of eigenvalues is given by given by the Puiseux expansion $\lambda = \varepsilon^{1/2} \lambda_1 + \mathcal{O}(\varepsilon^{3/2})$ with either $\lambda_1^2 > 0$ or $\lambda_1^2 < 0$ respectively. This complete the proof of lemma in view of the other eigenvalues of the spectral eigenvalue problem (\ref{lin-stab-pot}) on $i \R$.
\end{proof}

\begin{remark}
An alternative proof of instability in Lemma \ref{lem-potential-stability} can be developed based on the Rayleigh quotient 
\begin{equation}
\label{Rayleigh-2}
-\lambda_0^2 = \inf_{v \in \mathcal{H}_{\varepsilon} \backslash \{0\}} 
\frac{Q_-(v) + \varepsilon \int_{\R} V v^2 dx}{(\mathcal{L}_+(\varepsilon)^{-1} v, v)_{\mathcal{H}_{\varepsilon}}},
\end{equation}
where $Q_-(v)$ is exactly the same as in (\ref{quad-minus}) and $\mathcal{L}_+(\varepsilon) := (1-\phi_{\varepsilon}^2)^{-1} L_+(\varepsilon)$ is invertible on $\mathcal{H}_{\varepsilon}$ for every $\varepsilon \in (0,\varepsilon_0)$. No constraint on $v \in \mathcal{H}_{\varepsilon}$ is used in (\ref{Rayleigh-2}) compared to (\ref{Rayleigh}). If $\mathcal{V}''(s) < 0$, then $\mathcal{L}_+(\varepsilon)$ is strictly positive in $\mathcal{H}_{\varepsilon}$ so that the denominator in (\ref{Rayleigh-2}) is positive. However, the numerator in (\ref{Rayleigh-2}) attains negative values since $\mathcal{L}_-(\varepsilon) := (1-\phi_{\varepsilon}^2)^{-1} L_-(\varepsilon)$ has a simple negative eigenvalue in $\mathcal{H}_{\varepsilon}$ and $v \in \mathcal{H}_{\varepsilon}$ does not satisfy any orthogonality condition. Therefore, the smallest eigenvalue $-\lambda_0^2$ is negative so that there exists a pair of real eigenvalues $\{ \lambda_0,-\lambda_0\}$.
\end{remark}

\begin{remark}
	The method based on the Rayleigh quotient (\ref{Rayleigh-2}) is inconclusive if $\mathcal{V}''(s) > 0$ because both the numerator and the denominator attain negative values for some $v \in \mathcal{H}_{\varepsilon}$.
\end{remark}

\section{Conclusion}
\label{sec-conclusion}

This work can be summarized as follows. We have considered a new NLS model with intensity-dependent dispersion (\ref{nls-idd}) as an alternative of the cubic defocusing NLS equation (\ref{nls-cubic}). We have shown that the same black soliton has better stability properties in the NLS model (\ref{nls-idd}) in several aspects. The NLS model provides a natural Hilbert $L^2$ space with exponential weights, where the linearized stability problem admits only isolated eigenvalues of finite multiplicity and no finite accumulation points. The energetic stability argument is obtained with four rather than three orthogonality conditions, where the additional constraint is due to the new scaling symmetry (\ref{scaling-transform}). The black soliton is continued to the family of traveling dark solitons for every speed compared to the finite speed cutoff in the cubic NLS equation (\ref{nls-cubic}). 
Finally, the black soliton remains orbitally stable in the presence of a small decaying potential if it is pinned to the minimum point of the effective potential compared to either oscillatory or monotone instability of the black soliton of the cubic NLS equation with a small decaying potential. \\

We end this paper with a list of further open problems. \\

{\bf 1.} We have not attempted to address local well-posedness 
of the new NLS model (\ref{nls-idd}) in $\Sigma \cap \mathcal{H}$, 
where $\Sigma$ is the energy space (\ref{energy-space}) and $\mathcal{H}$ is the exponentially weighted $L^2(\R)$ space (\ref{Hilbert-space}). 
We conjecture that the initial-value problem is locally well-posed
in the set of functions $\mathcal{F}$ in (\ref{function-set}) for which $|u(t,x)| \leq 1$ for $(t,x) \in \R\times \R$, but this problem is a subject on its own. 
Recent results on local well-posedness of such NLS models 
with zero boundary conditions can be found in \cite{Marzuola}. \\

{\bf 2.} Another related problem is the asymptotic stability of 
black solitons. By using ideas of \cite{GS}, the asymptotic stability 
analysis can be developed provided that the local well-posedness problem 
for the NLS model (\ref{nls-idd}) is solved first. \\

{\bf 3.} Orbital stability of the continuous family of dark solitons 
with any speed $c \in \R$ can be considered by using the 
exponentially weighted Hilbert space $\mathcal{H}_{c,\omega = 1}$ in Remark \ref{remark-norms}. According to the standard orbital 
stability criterion \cite{Chiron,Lin}, the dark soliton with profile $U_c$ 
is orbitally stable if the mapping $c \mapsto P(U_c)$ is monotonically increasing. It follows from the expansion (\ref{expansion-P-c}) that the stability is satisfied for small $c \in (-c_0,c_0)$ with some $c_0 > 0$. 
However, since $U_c$ is only available implicitly, the precise stability 
criterion can only be checked numerically for larger values of $c \in \R$ 
or verified with computer-assisted proofs \cite{M-book}. \\

{\bf 4.} It would be interesting to justify the new NLS model (\ref{nls-idd}) 
as a reduction of the Maxwell equations in the framework of either temporal or spatial nonlinear optics. Justification of the extended NLS models of the type 
$$
i (1 - \partial_x^2) \psi_t + \psi_{xx} +  2 (1-|\psi|^2) \psi = 0
$$
was considered in \cite[Section 4.1.4]{Lannes} in the framework of the NLS models with a full dispersion relation. However, the relevant analysis did not incorporate the intensity-dependent dispersion. Justification of the NLS model (\ref{nls-idd}) is open for further studies. \\

{\bf 5.} One can think of the regularization of the NLS model (\ref{nls-idd}) 
with bounded intensity-dependent dispersion, similar to the one used in 
\cite{RKP} for the NLS model (\ref{nls-idd-bright}). Local well-posedness of the regularized model in the space of functions with nonzero boundary conditions at infinity can be shown by using the general analysis of \cite{Gallo}. It is expected that the stability properties 
of the black solitons analyzed in our work will persist in the time evolution of the regularized version of the NLS model (\ref{nls-idd}). 

\vspace{0.25cm}

{\bf Acknowledgement.} The work of D. E. Pelinovsky is partially supported by AvHumboldt Foundation as Humboldt Reseach Award. 
The work of M. Plum is supported by Deutsche Forschungsgemeinschaft (German Research Foundation) - Project-ID 258734477 - SFB 1173.

\appendix
\section{Integration by parts in $\mathcal{H}$}
\label{app-a}

We start with the following technical estimates. 

\begin{lemma}
	\label{lem-A}
	For every $w \in L^2(\R)$, it is true that 
\begin{eqnarray}
\label{tech-1}
&& \left\| \cosh(\cdot) \int_{\cdot}^{\infty} {\rm sech}(t) w(t) dt \right\|_{L^2(0,\infty)} \leq 2 \| w \|_{L^2(0,\infty)}, \\
\label{tech-2}
&& \left\| \cosh(\cdot) \int_{-\infty}^{\cdot} {\rm sech}(t) w(t) dt \right\|_{L^2(-\infty,0)} \leq 2 \| w \|_{L^2(-\infty,0)}, 
\end{eqnarray}
and	
\begin{eqnarray}
\label{tech-3}
\left\| {\rm sech}(\cdot) \int_{0}^{\cdot} \cosh(t) w(t) dt \right\|_{L^2(\R)} \leq 2 \| w \|_{L^2(\R)}.
\end{eqnarray}
\end{lemma}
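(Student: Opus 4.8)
The plan is to prove all three bounds by one and the same mechanism: estimate the hyperbolic weights pointwise by exponentials so that each integral operator is dominated, pointwise and in absolute value, by a convolution against the kernel $s \mapsto 2 e^{-s} \mathbf{1}_{\{s>0\}} \in L^1(\R)$, and then apply Minkowski's integral inequality in $L^2$ to transfer the estimate, losing only the factor $\int_0^\infty 2 e^{-s}\, ds = 2$. The only elementary facts needed are $\cosh(r) \le e^{r}$ for $r \ge 0$ and ${\rm sech}(r) \le 2 e^{-r}$ for $r \ge 0$.

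For (\ref{tech-1}), if $0 < x < t$ then $\cosh(x)\,{\rm sech}(t) \le 2 e^{-(t-x)}$, so for every $x > 0$,
\[
\cosh(x)\left| \int_x^\infty {\rm sech}(t)\, w(t)\, dt \right| \le 2 \int_x^\infty e^{-(t-x)} |w(t)|\, dt = 2 \int_0^\infty e^{-s} |w(x+s)|\, ds .
\]
Taking the $L^2(0,\infty)$ norm in $x$ and using Minkowski's integral inequality,
\[
\left\| 2 \int_0^\infty e^{-s} |w(\cdot + s)|\, ds \right\|_{L^2(0,\infty)} \le 2 \int_0^\infty e^{-s}\, \| w(\cdot + s) \|_{L^2(0,\infty)}\, ds \le 2 \| w \|_{L^2(0,\infty)} ,
\]
because $\| w(\cdot + s) \|_{L^2(0,\infty)} \le \| w \|_{L^2(0,\infty)}$ for $s \ge 0$. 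Estimate (\ref{tech-2}) then follows from (\ref{tech-1}) by the reflection $x \mapsto -x$, which maps $(-\infty,0)$ onto $(0,\infty)$ and leaves $\cosh$ and ${\rm sech}$ unchanged.

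For (\ref{tech-3}) I would split $\R = (0,\infty) \cup (-\infty,0)$. On $(0,\infty)$, for $0 < t < x$ the same weight estimate gives ${\rm sech}(x)\cosh(t) \le 2 e^{-(x-t)}$, hence
\[
{\rm sech}(x)\left| \int_0^x \cosh(t)\, w(t)\, dt \right| \le 2 \int_0^x e^{-(x-t)} |w(t)|\, dt \le 2 \int_0^\infty e^{-s} |w(x-s)|\, \mathbf{1}_{\{x>s\}}\, ds ,
\]
and Minkowski's integral inequality on $L^2(0,\infty)$ again bounds the left-hand side by $2 \| w \|_{L^2(0,\infty)}$, using $\| w(\cdot - s)\, \mathbf{1}_{\{\cdot > s\}} \|_{L^2(0,\infty)} = \| w \|_{L^2(0,\infty)}$. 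The identical argument on $(-\infty,0)$ gives the bound $2 \| w \|_{L^2(-\infty,0)}$, and adding squares,
\[
\left\| {\rm sech}(\cdot) \int_0^\cdot \cosh(t)\, w(t)\, dt \right\|_{L^2(\R)}^2 \le 4 \| w \|_{L^2(0,\infty)}^2 + 4 \| w \|_{L^2(-\infty,0)}^2 = 4 \| w \|_{L^2(\R)}^2 ,
\]
which is (\ref{tech-3}).

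I do not expect a real obstacle. The one place that requires a little thought is the choice of pointwise weight bound: the crude inequality $\cosh(x)\,{\rm sech}(t) \le 1$ (valid for $0 \le x \le t$) is useless, since the associated unweighted operator $w \mapsto \int_{\cdot}^{\infty} w$ is unbounded on $L^2$, and a Schur test with a power test function only yields a constant slightly worse than $2$; one genuinely needs the exponential gain $\le 2 e^{-|x-t|}$, which both closes the $L^2$ estimate and produces exactly the constant $2$ asserted. A minor bookkeeping point in (\ref{tech-3}) is to restrict the $s$-integration by the indicator $\mathbf{1}_{\{x > s\}}$ so that the shifted function $w(\cdot - s)\,\mathbf{1}_{\{\cdot > s\}}$ remains supported in $(0,\infty)$ and its $L^2$ norm equals $\| w \|_{L^2(0,\infty)}$ rather than exceeding it.
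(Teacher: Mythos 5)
Your proof is correct, and it takes a genuinely different route from the paper's. You dominate each weighted integral pointwise by a convolution with the kernel $s \mapsto 2e^{-s}\mathbf{1}_{\{s>0\}}$, using $\cosh(x)\,{\rm sech}(t) \le 2e^{-(t-x)}$ for $0 \le x \le t$, and then close the $L^2$ estimate with Minkowski's integral inequality (equivalently, Young's convolution inequality), the constant $2$ appearing exactly as $\int_0^\infty 2e^{-s}\,ds$. The paper instead integrates by parts against the explicit antiderivative of $\cosh^2$ (respectively ${\rm sech}^2$), estimates the resulting boundary term and cross term by the Cauchy--Schwarz inequality, and absorbs half of the left-hand side via $2ab \le \tfrac12 a^2 + 2b^2$ before passing to the limit $y \to \infty$; this also yields the constant $2$. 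Your argument is shorter, isolates the essential mechanism (the exponential off-diagonal decay of the kernel, which you correctly note is indispensable since the unweighted operator is unbounded on $L^2$), and transfers verbatim to $L^p$; the paper's argument is more elementary in that it uses only one-dimensional calculus and Cauchy--Schwarz, at the cost of an absorption step and an explicit limiting argument. Your handling of the support indicator in (\ref{tech-3}), ensuring $\| w(\cdot - s)\mathbf{1}_{\{\cdot > s\}}\|_{L^2(0,\infty)} = \|w\|_{L^2(0,\infty)}$, and the reflection reducing (\ref{tech-2}) to (\ref{tech-1}) are both sound.
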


\begin{proof}
	In order to prove (\ref{tech-1}), we integrate by parts and obtain for each $y > 0$:
	\begin{align*}
&	\int_0^y \cosh^2(x) \left| \int_{x}^{\infty} {\rm sech}(t) w(t) dt \right|^2 dx \\
\qquad &= \left[ \frac{1}{2} (x + \cosh(x) \sinh(x)) \left| \int_{x}^{\infty} {\rm sech}(t) w(t) dt \right|^2 \right] \biggr|_{x = 0}^{x = y} \\
\qquad 	&+\int_0^y (x {\rm sech}(x) + \sinh(x)) {\rm Re} \left( \bar{w}(x) \int_{x}^{\infty} {\rm sech}(t) w(t) dt \right) dx 
	\end{align*}
The first and second terms are estimated separately with Cauchy--Schwarz inequality as 
\begin{align*}
& \frac{1}{2} (y + \cosh(y) \sinh(y)) \left| \int_{y}^{\infty} {\rm sech}(t) w(t) dt \right|^2 \\
\qquad & \leq \frac{1}{2} (y + \cosh(y) \sinh(y)) (1-\tanh(y))  \int_{y}^{\infty} |w(t)|^2 dt  \\
\qquad & \leq \int_{y}^{\infty} |w(t)|^2 dt
\end{align*}
and 
\begin{align*}
& \int_0^y (x {\rm sech}(x) + \sinh(x)) |\bar{w}(x)| \left| \int_{x}^{\infty} {\rm sech}(t) w(t) dt \right| dx  \\
\qquad & \leq 2 \int_0^y \cosh(x) |\bar{w}(x)| \left| \int_{x}^{\infty} {\rm sech}(t) w(t) dt \right| dx  \\
\qquad & \leq 2 \left( \int_0^y \cosh^2(x) \left| \int_{x}^{\infty} {\rm sech}(t) w(t) dt \right|^2 dx \right)^{1/2} \left( \int_0^y |w(x)|^2 dx \right)^{1/2} \\
\qquad & \leq \frac{1}{2} \int_0^y \cosh^2(x) \left| \int_{x}^{\infty} {\rm sech}(t) w(t) dt \right|^2 dx + 2 \int_0^y |w(x)|^2 dx,  
\end{align*}
where we have used $2 a b \leq \frac{1}{2} a^2 + 2 b^2$ in the last line. 
Combining the estimates, we obtain 
\begin{align*}
\int_0^y \cosh^2(x) \left| \int_{x}^{\infty} {\rm sech}(t) w(t) dt \right|^2 dx \leq 2 \int_y^{\infty} |w(x)|^2 dx + 4 \int_0^y |w(x)|^2 dx,
\end{align*}
which yields the bound (\ref{tech-1}) in the limit $y \to \infty$. A similar bound (\ref{tech-2}) follows analogously for $y < 0$. 	

In order to prove (\ref{tech-3}), we again integrate by parts and obtain for each $y > 0$:
\begin{align*}
&	\int_0^y {\rm sech}^2(x) \left| \int_{0}^{x} \cosh(t) w(t) dt \right|^2 dx \\
\qquad &= \left[ (\tanh(x) - 1) \left| \int_{0}^{x} \cosh(t) w(t) dt \right|^2 \right] \biggr|_{x = 0}^{x = y} \\
\qquad 	& + 2 \int_0^y (\cosh(x) - \sinh(x)) {\rm Re} \left( \bar{w}(x) \int_{0}^{x} \cosh(t) w(t) dt \right) dx 
\end{align*}
The first term is negative while the second term is again estimated with Cauchy--Schwarz inequality as 
\begin{align*}
& 2 \int_0^y {\rm sech}(x)|w(x)| \left| \int_{0}^{x} \cosh(t) w(t) dt \right| dx   \\
\qquad & \leq 2 \left( \int_0^y {\rm sech}^2(x) \left| \int_{0}^{x} \cosh(t) w(t) dt \right|^2 dx \right)^{1/2} \left( \int_0^y |w(x)|^2 dx \right)^{1/2}.
\end{align*}
In the limit $y \to \infty$, this gives the bound 
\begin{align*}
\int_0^{\infty} {\rm sech}^2(x) \left| \int_{0}^{x} \cosh(t) w(t) dt \right|^2 dx 
\leq 4 \int_0^{\infty} |w(t)|^2 dt.
\end{align*}
A similar is obtained analogously for $y < 0$, which yields together the bound (\ref{tech-3}).
\end{proof}

Using the estimates in Lemma \ref{lem-A}, we prove the integration by parts formula for both function spaces. Here $\langle \cdot, \cdot \rangle$ denotes the standard inner product in $L^2(\R)$.

\begin{lemma}
	\label{lem-B} For every $f \in \mathcal{H}^2_-$ and $g \in \mathcal{H}^1_-$, it is true that $\langle f', g' \rangle = -\langle f'', g \rangle$.
\end{lemma}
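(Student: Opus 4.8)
\emph{Proof proposal.} The plan is to integrate by parts on the bounded interval $[-R,R]$ and let $R\to\infty$; the entire difficulty lies in showing that the boundary terms $\overline{f'(\pm R)}\,g(\pm R)$ vanish in the limit. First I would record that both sides of the claimed identity are finite: $\langle f',g'\rangle$ is finite since $f',g'\in L^2(\R)$, and $\langle f'',g\rangle$ is finite because $\cosh(\cdot)f''\in L^2(\R)$ while $\mathrm{sech}(\cdot)g\in L^2(\R)$ (the latter being exactly the membership $g\in\mathcal{H}$), so that $\overline{f''}\,g=\big(\overline{\cosh(\cdot)f''}\big)\big(\mathrm{sech}(\cdot)g\big)\in L^1(\R)$ by the Cauchy--Schwarz inequality. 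The regularity built into $\mathcal{H}^2_-$ and $\mathcal{H}^1_-$, namely $f'\in L^2(\R)$, $f''\in L^2_{\rm loc}(\R)$, and $g'\in L^2(\R)$, makes $f'$ and $g$ (locally) absolutely continuous, so integration by parts on any finite interval is legitimate.

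The key point is the decay of $f'$ at infinity, which comes precisely from the weight $\cosh(\cdot)$ in the definition of $\mathcal{H}^2_-$. Since $\mathrm{sech}(\cdot)\in L^2(\R)$ and $\cosh(\cdot)f''\in L^2(\R)$, the product $f''=\mathrm{sech}(\cdot)\cdot\cosh(\cdot)f''$ lies in $L^1(\R)$; hence $f'$ has finite limits as $x\to\pm\infty$, and those limits must be $0$ because $f'\in L^2(\R)$. Consequently $f'(x)=-\int_x^{\infty}f''(t)\,dt$ for $x>0$, and Cauchy--Schwarz together with $1-\tanh x\le 2e^{-2x}$ gives
\[
|f'(x)|\le\Big(\int_x^{\infty}\mathrm{sech}^2(t)\,dt\Big)^{1/2}\|\cosh(\cdot)f''\|_{L^2}=\big(1-\tanh x\big)^{1/2}\|\cosh(\cdot)f''\|_{L^2}\le \sqrt{2}\,e^{-x}\,\|\cosh(\cdot)f''\|_{L^2},
\]
and symmetrically $|f'(x)|\le\sqrt{2}\,e^{x}\,\|\cosh(\cdot)f''\|_{L^2}$ for $x<0$. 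On the other hand, writing $g(x)=g(0)+\int_0^x g'(t)\,dt$ and using $g'\in L^2(\R)$ yields the crude growth bound $|g(x)|\le |g(0)|+|x|^{1/2}\|g'\|_{L^2}$. Therefore $\overline{f'(x)}\,g(x)\to 0$ as $x\to\pm\infty$, the exponential decay of $f'$ easily dominating the at most square-root growth that the weighted space $\mathcal{H}$ permits for $g$.

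With this boundary behaviour established, the proof concludes by integrating by parts on $[-R,R]$,
\[
\int_{-R}^{R}\overline{f'}\,g'\,dx=\big[\,\overline{f'}\,g\,\big]_{-R}^{R}-\int_{-R}^{R}\overline{f''}\,g\,dx,
\]
and passing to the limit $R\to\infty$: the boundary term tends to $0$ by the preceding step, the left-hand side tends to $\langle f',g'\rangle$ since $\overline{f'}\,g'\in L^1(\R)$, and the right-hand side tends to $-\langle f'',g\rangle$ since $\overline{f''}\,g\in L^1(\R)$. This gives $\langle f',g'\rangle=-\langle f'',g\rangle$, as claimed. The only genuine obstacle is the control of the boundary terms; it is resolved entirely by the observation that the factor $\cosh(\cdot)$ multiplying $f''$ in the definition of $\mathcal{H}^2_-$ forces $f'$ to decay exponentially, which is far more than enough to annihilate the polynomial growth allowed for $g\in\mathcal{H}^1_-\subset\mathcal{H}$.
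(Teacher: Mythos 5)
Your proof is correct, and it takes a noticeably different route from the paper's. The paper first establishes the stronger statement that $\cosh(\cdot) f' \in L^2(\R)$, using the weighted Hardy-type inequality (\ref{tech-1})--(\ref{tech-2}) of Lemma \ref{lem-A} applied to $f'(x) = -\int_x^{\infty} {\rm sech}(t)\,[\cosh(t) f''(t)]\,dt$; it then upgrades this to $\cosh(\cdot) f' \in H^1(\R)$ and pairs the resulting decay $\cosh(x) f'(x) \to 0$ with $\mathrm{sech}(x) g(x) \to 0$ (from ${\rm sech}(\cdot) g \in H^1(\R)$) to kill the boundary term as a product of two vanishing factors. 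You instead extract a pointwise exponential bound $|f'(x)| \le \sqrt{2}\, e^{-|x|} \|\cosh(\cdot) f''\|_{L^2}$ directly from Cauchy--Schwarz and beat it against the crude $|x|^{1/2}$ growth bound for $g$ coming from $g' \in L^2(\R)$. Your argument is more elementary in that it bypasses Lemma \ref{lem-A} entirely (the paper needs that lemma anyway for the compactness result in Lemma \ref{lem-compact-minus}, so invoking it costs nothing there); on the other hand, the paper's route yields the reusable intermediate fact $\cosh(\cdot) f' \in H^1(\R)$, which is of independent use in the weighted-space framework. Two minor cosmetic points: since $\cosh(\cdot) \ge 1$, your hypothesis gives $f'' \in L^2(\R)$ globally (not merely locally), so $f' \in H^1(\R)$ and the vanishing of $f'$ at infinity follows at once without the detour through $f'' \in L^1(\R)$; and the square-root growth of $g$ is a consequence of $g' \in L^2(\R)$ alone rather than of the weighted membership $g \in \mathcal{H}$.
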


\begin{proof}
	First, we check that if $f \in \mathcal{H}^2_-$, then $\cosh(\cdot) f' \in L^2(\R)$ and so $\sinh(\cdot) f' \in L^2(\R)$. Indeed, if $f \in \mathcal{H}^2_-$, then $f' \in L^2(\R)$ and $\cosh(\cdot) f'' \in L^2(\R)$. It is then clear that $f' \in H^1(\R)$ so that $f' \in C(\R)$ and $f'(x) \to 0$ as $|x| \to \infty$ by Sobolev's embedding. Therefore, we write 
	$$
	f'(x) = -\int_{x}^{\infty} f''(t) dt = - \int_x^{\infty} {\rm sech}(t) \left[ \cosh(t) f''(t)\right] dt, 
	$$ 
	so that $\cosh(\cdot) f' \in L^2(0,\infty)$ by the bound (\ref{tech-1}) with $w := \cosh(\cdot) f'' \in L^2(\R)$. Similarly, we obtain 
$\cosh(\cdot) f' \in L^2(-\infty,0)$ by the bound (\ref{tech-2}).
	
	Next, since 
\begin{equation}
\label{der-cosh}
	\frac{d}{dx} \left[ \cosh(x) f'(x) \right] = \cosh(x) f''(x) + \sinh(x) f'(x),
\end{equation}
we infer that if $f \in \mathcal{H}^2_-$, then $\cosh(\cdot) f' \in H^1(\R)$, which implies that $\cosh(x) f'(x) \to 0$ as $|x| \to \infty$. Similarly, if $g \in \mathcal{H}^1_-$, we have ${\rm sech}(\cdot) g \in H^1(\R)$ since ${\rm sech}(\cdot) g \in L^2(\R)$,
${\rm sech}(\cdot) g' \in L^2(\R)$, and 
\begin{equation}
\label{der-sech}
\frac{d}{dx} \left[ {\rm sech}(x) g(x) \right] = {\rm sech}(x) g'(x) - {\rm sech}(x) \tanh(x) g(x).
\end{equation}
This implies that ${\rm sech}(x) g(x) \to 0$ as $|x| \to \infty$  so that
$$
f'(x) g(x) = \left[ \cosh(x) f'(x) \right] \left[ {\rm sech}(x) g(x) \right] \to 0 \quad \mbox{\rm as} \;\; |x| \to \infty
$$
and integration by parts yields $\langle f', g' \rangle = -\langle f'', g \rangle$.
\end{proof}

\begin{lemma}
	\label{lem-C} For every $f \in \mathcal{H}^2_+$ and $g \in \mathcal{H}^1_+$, it is true that $\langle f', g' \rangle = -\langle f'', g \rangle$.
\end{lemma}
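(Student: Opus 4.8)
The plan is to mirror the proof of Lemma \ref{lem-B}, but the situation here is in fact simpler because the operator domain $\mathcal{H}^2_+$ embeds directly into $H^2(\R)$ with no exponential-weight bookkeeping. First I would observe that if $f \in \mathcal{H}^2_+$, then by the equivalent description (\ref{operator-domain-plus}) we have $f \in H^1(\R)$ and $\cosh(\cdot)(-f'' + 4f) \in L^2(\R)$; since $\cosh(\cdot) \geq 1$ pointwise, this already forces $-f'' + 4f \in L^2(\R)$, hence $f'' \in L^2(\R)$ and $f \in H^2(\R)$. In particular $f' \in H^1(\R)$.

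Next I would invoke the one-dimensional Sobolev embedding $H^1(\R) \hookrightarrow C_0(\R)$: since $f' \in H^1(\R)$ we get $f'(x) \to 0$ as $|x| \to \infty$, and since $g \in \mathcal{H}^1_+ = H^1(\R) \hookrightarrow L^\infty(\R)$, the function $g$ is bounded. Hence the boundary term satisfies $f'(x)g(x) \to 0$ as $|x| \to \infty$. Moreover $f''g \in L^1(\R)$ and $f'g' \in L^1(\R)$ by Cauchy--Schwarz, using $f'', g \in L^2(\R)$ and $f', g' \in L^2(\R)$. Integration by parts on $[-R,R]$ gives $\langle f', g' \rangle_{L^2(-R,R)} = [f'g]_{-R}^{R} - \langle f'', g \rangle_{L^2(-R,R)}$, and letting $R \to \infty$ together with the boundary decay yields $\langle f', g' \rangle = -\langle f'', g \rangle$. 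Equivalently, and this is the version I would actually write, the product $h := f'g$ lies in $W^{1,1}(\R)$ with $h' = f''g + f'g' \in L^1(\R)$, so $h \in W^{1,1}(\R) \subset C_0(\R)$ has vanishing limits at $\pm\infty$, and integrating $h'$ over $\R$ gives $0 = \int_\R (f''g + f'g')\,dx$, which is the claim.

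The only step needing any care is the inclusion $\mathcal{H}^2_+ \subset H^2(\R)$, i.e. stripping off the $\cosh$ weight to conclude $f'' \in L^2(\R)$; but since $\cosh \geq 1$ this is immediate, and I do not anticipate any genuine obstacle. The lemma is essentially a bookkeeping statement that the weak second derivative of an $\mathcal{H}^2_+$ function is really in $L^2$ and that no boundary term survives the integration by parts.
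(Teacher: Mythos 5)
Your proposal is correct and follows essentially the same route as the paper: the paper's proof of Lemma \ref{lem-C} likewise strips the $\cosh$ weight using $\cosh(\cdot)\geq 1$ to conclude $-f''+4f\in L^2(\R)$, hence $f\in H^2(\R)$, so that $f(x),f'(x)\to 0$ as $|x|\to\infty$ and the boundary term in the integration by parts vanishes. Your write-up is merely a more detailed version of the same argument.
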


\begin{proof}
Here we recall that $\mathcal{H}^1_+ \equiv H^1(\R)$ and that if $f \in \mathcal{H}^2_+$, then $f \in H^1(\R)$ and $\cosh(\cdot) (-f''+ 4f) \in L^2(\R)$. This implies that $(-f'' + 4f) \in L^2(\R)$ and hence $f \in H^2(\R)$ 
so that $f(x), f'(x) \to 0$ as $|x| \to \infty$ and the integration by parts holds.
\end{proof}

\section{Proof of the expansion (\ref{expansion-P}) with the bound (\ref{bound-on-R})}
\label{app-b}

We follow the ideas in the proof of Propositions 4 and 5 in \cite{GS} and the proof of Lemma 4.2 in \cite{GPII} but we give a self-contained presentation. 

It is trivial to see that if $u \in \mathcal{H}^1_-$, then 
\begin{equation}
\label{Sob-emb}
\| \sqrt{1-\varphi^2} u \|_{L^{\infty}} \leq \| u \|_{\mathcal{H}^1_-}.
\end{equation}
Indeed, since $\|  \sqrt{1-\varphi^2} u' \|_{L^2} \leq \| u' \|_{L^2}$, 
the product rule (\ref{der-sech}) implies that 
$$
\| \sqrt{1-\varphi^2} u\|_{H^1} \leq \|u \|_{\mathcal{H}^1_-}
$$ 
which yields (\ref{Sob-emb}) by Sobolev's embedding of $H^1(\R)$ into $L^{\infty}(\R)$. On the other hand, it is rather nontrivial that the decomposition
\begin{equation}
\label{dec-app}
\psi = e^{i \theta} \left[ U_{c,\omega}(\cdot + \zeta) + u(\cdot + \zeta) + i v(\cdot + \zeta) \right],
\end{equation}
with 
\begin{equation}
\label{eps-app}
|c| + |\omega - 1| + \| u \|_{\mathcal{H}^1_-} + \| v \|_{\mathcal{H}^1_-} + \| \eta \|_{L^2} \leq C_0 \epsilon, 
\end{equation}
where 
\begin{equation}
\label{eta}
\eta := |\psi|^2 - |U_{c,\omega}|^2 = 2 u {\rm Re}(U_{c,\omega}) + 2 v {\rm Im}(U_{c,\omega}) + u^2 + v^2, 
\end{equation}
controls the $L^{\infty}$ norm of the perturbation. Nevertheless, this result is given in the following lemma used for the proof of the bound (\ref{bound-on-R}). In what follows, 
the constant $C > 0$ may change from one line to another line.

\begin{lemma}
	\label{lem-supremum}
	For every $c, \omega \in \mathbb{R}$ and $\psi \in \Sigma \cap \mathcal{H}$ satisfying 
	(\ref{dec-app}) and (\ref{eps-app}), there exists $C > 0$ such that 
	\begin{equation}
	\label{Linfty}
	\| u \|_{L^{\infty}} + \| v \|_{L^{\infty}} \leq C, \quad \| \eta \|_{L^{\infty}} \leq C \epsilon.
	\end{equation}
\end{lemma}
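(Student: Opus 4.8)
The plan is to reduce to the normalized frame and then treat the two assertions in (\ref{Linfty}) separately: first the uniform (but not small) bound on $u$ and $v$, then the $O(\epsilon)$ bound on $\eta$. Since $\|\cdot\|_{L^\infty}$ and $\|\cdot\|_{L^2}$ are invariant under the translation $x\mapsto x-\zeta$ and the rotation by $e^{-i\theta}$, and the energy space $\Sigma$ is invariant under the symmetries (\ref{symmetry}), I would first take $\theta=\zeta=0$ in (\ref{dec-app}), so that $\psi=U_{c,\omega}+u+iv$ with $u,v\in\mathcal{H}^1_-$ obeying (\ref{eps-app}) and with $\eta=|\psi|^2-|U_{c,\omega}|^2$ as in (\ref{eta}). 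Throughout I would use the weighted Sobolev bound (\ref{Sob-emb}), which gives $\|{\rm sech}(\cdot)u\|_{L^\infty}+\|{\rm sech}(\cdot)v\|_{L^\infty}\le C\epsilon$, together with three properties of $U_{c,\omega}$ for $(c,\omega)$ near $(0,1)$: $\|U_{c,\omega}\|_{L^\infty}<1$ (since $U_{c,\omega}\in\mathcal{F}$), $\|U_{c,\omega}'\|_{L^2}\le C$, and $\|\cosh(\cdot)U_{c,\omega}'\|_{L^2}\le C$. The last bound is the only slightly delicate input; it follows from the uniform-in-$c$ exponential decay of $U_c'$ at rate $2$ established in Section~\ref{sec-existence} (cf.\ Remark~\ref{remark-rate} and the bounds (\ref{eta-c-infinity})), which makes $\cosh(\cdot)U_{c,\omega}'$ integrable in $L^2$ once $\omega$ is close to $1$.

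For the first inequality in (\ref{Linfty}) I would note that $\psi\in\Sigma$ with uniformly bounded energy-space norm: indeed $\psi'=U_{c,\omega}'+u'+iv'$ and $1-|\psi|^2=(1-|U_{c,\omega}|^2)-\eta$, and all of $\|U_{c,\omega}'\|_{L^2}$, $\|1-|U_{c,\omega}|^2\|_{L^2}$, $\|u'\|_{L^2}+\|v'\|_{L^2}$, $\|\eta\|_{L^2}$ are bounded by the properties of $U_{c,\omega}$ from Section~\ref{sec-existence} and by (\ref{eps-app}). Hence $\|\psi'\|_{L^2}+\|1-|\psi|^2\|_{L^2}\le C$, and the classical embedding of the energy space into $L^\infty(\R)$ (see, e.g., \cite{Gallo,Gerard}; it also has a direct proof, since if $|\psi(x_0)|$ were large then $|\psi|$ would stay comparably large on an interval whose length is bounded below in terms of $\|\psi'\|_{L^2}^{-2}$, forcing $\|1-|\psi|^2\|_{L^2}$ to be large) yields $\|\psi\|_{L^\infty}\le C$. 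Since $\|u\|_{L^\infty}=\|{\rm Re}(\psi-U_{c,\omega})\|_{L^\infty}\le\|\psi\|_{L^\infty}+\|U_{c,\omega}\|_{L^\infty}$ and likewise for $v$, this gives $\|u\|_{L^\infty}+\|v\|_{L^\infty}\le C$.

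For the bound on $\eta$ I would upgrade its $L^2$-smallness to $L^\infty$-smallness through the one-dimensional Gagliardo--Nirenberg inequality $\|\eta\|_{L^\infty}^2\le 2\|\eta\|_{L^2}\|\eta'\|_{L^2}$; since $\|\eta\|_{L^2}\le C\epsilon$ by (\ref{eps-app}), it then suffices to prove $\|\eta'\|_{L^2}\le C\epsilon$. Differentiating (\ref{eta}) produces six terms. The terms $2u'{\rm Re}(U_{c,\omega})$ and $2v'{\rm Im}(U_{c,\omega})$ have $L^2$-norm $\le C\epsilon$ because $\|u'\|_{L^2}+\|v'\|_{L^2}\le C\epsilon$ and $\|U_{c,\omega}\|_{L^\infty}<1$. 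The terms $2u\,{\rm Re}(U_{c,\omega}')$ and $2v\,{\rm Im}(U_{c,\omega}')$ have $L^2$-norm $\le C\epsilon$ after writing, e.g., $u\,{\rm Re}(U_{c,\omega}')=\bigl({\rm sech}(\cdot)u\bigr)\bigl(\cosh(\cdot){\rm Re}(U_{c,\omega}')\bigr)$ and using $\|{\rm sech}(\cdot)u\|_{L^\infty}\le C\epsilon$ with $\|\cosh(\cdot)U_{c,\omega}'\|_{L^2}\le C$. Finally, the terms $2uu'$ and $2vv'$ have $L^2$-norm $\le C\epsilon$ by the $L^\infty$-bounds from the previous step together with $\|u'\|_{L^2}+\|v'\|_{L^2}\le C\epsilon$. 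Collecting these estimates shows $\eta\in H^1(\R)$ with $\|\eta'\|_{L^2}\le C\epsilon$, and hence $\|\eta\|_{L^\infty}\le C\epsilon$, completing the proof.

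The main obstacle is obtaining the sharp rate $\epsilon$ rather than $\epsilon^{1/2}$ for $\|\eta\|_{L^\infty}$: this forces one to establish $\|\eta'\|_{L^2}\le C\epsilon$ (and not merely $\le C$), which in turn requires handling the cross terms $u\,U_{c,\omega}'$ and $v\,U_{c,\omega}'$ precisely, pairing the exponentially weighted embedding (\ref{Sob-emb}) with the weighted $L^2$-bound on $U_{c,\omega}'$. A secondary point is the apparent circularity in the estimate of $\eta'$: the terms $2uu'$ and $2vv'$ need an $L^\infty$ bound on $u,v$, which is \emph{not} available from the weighted norms (those only control ${\rm sech}(\cdot)u$ and ${\rm sech}(\cdot)v$) but comes from membership of $\psi$ in the energy space $\Sigma$. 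This is why the energy-space embedding into $L^\infty$ must be invoked first, and why the hypothesis $\psi\in\Sigma\cap\mathcal{H}$, rather than $\psi\in\mathcal{H}$ alone, is essential here.
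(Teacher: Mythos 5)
Your proof is correct, but it follows a genuinely different route from the paper's. The paper never invokes the embedding of the energy space $\Sigma$ into $L^{\infty}(\R)$: instead it reads off the pointwise modulus identity $(u+{\rm Re}(U_{c,\omega}))^2+(v+{\rm Im}(U_{c,\omega}))^2=|U_{c,\omega}|^2+\eta$ to get $\| u \|_{L^{\infty}}+\| v \|_{L^{\infty}}\leq C(1+\| \eta \|_{L^{\infty}})$ for free, feeds this back into the Gagliardo--Nirenberg estimate for $\eta$ to obtain the self-improving inequality $\| \eta \|^2_{L^{\infty}}\leq C(1+\| \eta \|_{L^{\infty}})\epsilon^2$, and closes the loop by smallness of $\epsilon$; the cross terms $U_{c,\omega}'(u+iv)$ are absorbed directly into $\| u\|_{\mathcal{H}}+\| v\|_{\mathcal{H}}$ using the pointwise decay $|U_{c,\omega}'|\lesssim {\rm sech}^2$. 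Your argument replaces the bootstrap by two external inputs: the Zhidkov-type embedding $\|\psi\|_{L^\infty}\leq C$ for bounded energy, and the weighted bound $\|\cosh(\cdot)U_{c,\omega}'\|_{L^2}\leq C$ (which does hold --- it follows from $\|U_c-\varphi\|_{\mathcal{H}^2_-}\leq C|c|$ in Theorem \ref{lem-conv-to-0} together with the estimates (\ref{tech-1})--(\ref{tech-2}) as in the proof of Lemma \ref{lem-B}, rather than from Remark \ref{remark-rate} alone, which controls $\rho_c$ and not $U_c'$ directly). What the paper's route buys is self-containedness --- no appeal to the energy-space $L^\infty$ embedding and no weighted bound on $U_{c,\omega}'$ --- at the price of the slightly less transparent circular-looking estimate; your route buys a linear, term-by-term verification of $\|\eta'\|_{L^2}\leq C\epsilon$ at the price of the two extra lemmas. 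One small caveat: your closing claim that the energy-space embedding ``must be invoked first'' is an artifact of your ordering, not a necessity --- the paper obtains the same $L^\infty$ control on $u,v$ without it; the hypothesis $\psi\in\Sigma$ enters in both arguments only through $\eta\in L^2(\R)$ and $u',v'\in L^2(\R)$.
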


\begin{proof}
	Since 
	$$
	(u + {\rm Re}(U_{c,\omega}))^2 + (v + {\rm Im}(U_{c,\omega}))^2 = |U_{c,\omega}|^2 + \eta, 
	$$
there exists $C > 0$ such that 
\begin{equation}
\label{inter-bound}
\| u \|_{L^{\infty}} + \| v \|_{L^{\infty}} \leq C (1 + \| \eta \|_{L^{\infty}})
\end{equation}
Using the bound 
$$
\| \eta \|^2_{L^{\infty}} \leq \| \eta \|_{L^2} \| \eta' \|_{L^2},
$$
the product rule for $\eta$ in (\ref{eta}), and the triangle inequality yields
\begin{align*}
\| \eta \|^2_{L^{\infty}} & \leq 2 \| \eta \|_{L^2} \left( 
\| {\rm Re}(\bar{U}_{c,\omega} (u' + i v')) \|_{L^2} + \|  {\rm Re}(\bar{U}'_{c,\omega} (u + i v)) \|_{L^2} 
+ \| u u' \|_{L^2} + \| v v' \|_{L^2} \right) \\
&\leq 
C \| \eta \|_{L^2} \left( 
\| u \|_{\mathcal{H}^1_-} + \| v \|_{\mathcal{H}^1_-} 
+ \| u \|_{L^{\infty}} \| u' \|_{L^2} + \| v \|_{L^{\infty}} \| v' \|_{L^2} \right) \\
&\leq 
C \left( 1 + \| u \|_{L^{\infty}} + \| v \|_{L^{\infty}} \right) \left( 
\| u \|^2_{\mathcal{H}^1_-} + \| v \|^2_{\mathcal{H}^1_-} + \| \eta \|^2_{L^2}\right),
\end{align*}
where we have used the properties of $U_{c,\omega} := U_c(\omega \cdot)$ from 
Theorem \ref{lem-conv-to-0} and the proximity of the norm in $\mathcal{H}_{c,\omega}$ and $\mathcal{H}$ due to Remark \ref{remark-proximity} for $(c,\omega)$ near $(0,1)$. Using the bounds (\ref{eps-app}) and (\ref{inter-bound}) yields 
\begin{align*}
\| \eta \|^2_{L^{\infty}} \leq C (1 + \| \eta \|_{L^{\infty}}) \epsilon^2
\end{align*}
Since $\epsilon > 0$ is small, this is equivalent to $\| \eta \|_{L^{\infty}} \leq C \epsilon$, which yields (\ref{Linfty}) due to (\ref{inter-bound}).
\end{proof}

We are now ready to give the proof of the expansion (\ref{expansion-P}) with the bound (\ref{bound-on-R}). Since $P(\psi)$ is invariant under the two symmetries (\ref{symmetry}), we translate $\psi \to e^{-i \theta} \psi(\cdot - \zeta)$. 
After translation, we fix $R > 0$ and split the integral in $P(\psi)$ on $(-\infty,-R] \cup [-R,R] \cup [R,\infty)$. The outer and inner integrals are treated differently based on the two representations 
(\ref{momentum}) and (\ref{momentum-new}).

\subsection{Outer integrals on $(-\infty,-R] \cup [R,\infty)$}

We use the following expression for this part of $P(\psi)$ denoted by $P_R(\psi)$:
\begin{equation*}
P_R(\psi) := \frac{1}{2i} \int_{(-\infty,-R] \cup [R,\infty)} \frac{(1-|\psi|^2)^2}{|\psi|^2} (\bar{\psi} \psi_x - \bar{\psi}_x \psi) dx.
\end{equation*}
The integrand is nonsingular because $|\psi|^2 \geq |U_{c,\omega}|^2 - |\eta|$ is bounded away from zero on $(-\infty,-R] \cup [R,\infty)$ which follows from the boundedness of $|U_{c,\omega}|^2$ away from zero and smallness of $\| \eta \|_{L^{\infty}}$  according to the bound (\ref{Linfty}). We expand
\begin{align*}
\bar{\psi} \psi_x - \bar{\psi}_x \psi = \bar{U}_{c,\omega} U'_{c,\omega} - \bar{U}'_{c,\omega} U_{c,\omega} + \nu + 2i(uv' - u'v), 
\end{align*}
where
\begin{align*}
\nu := 2 i {\rm Im}(U_{c,\omega}') u - 2i {\rm Re}(U_{c,\omega}') v 
- 2i {\rm Im}(U_{c,\omega}) u' + 2 i {\rm Re}(U_{c,\omega}) v',
\end{align*}
so that 
\begin{align*}
P_R(U_{c,\omega} + u + i v) &= P_R(U_{c,\omega}) + P_1 + P_2,
\end{align*}
where
\begin{align*}
P_1 := \frac{1}{2i} \int \left[ \frac{(1-|U_{c,\omega}|^2)^2}{|U_{c,\omega}|^2} \nu - \frac{(1-|U_{c,\omega}|^4)}{|U_{c,\omega}|^4} 
(\bar{U}_{c,\omega} U'_{c,\omega} - \bar{U}'_{c,\omega} U_{c,\omega}) 
\eta \right] dx
\end{align*}
and
\begin{align*}
P_2 &:= \int \frac{(1-|U_{c,\omega}|^2-\eta)^2}{|U_{c,\omega}|^2+\eta}  (uv'-u'v) dx \\
& \quad  + \frac{1}{2i} \int \left[ \frac{(1-|U_{c,\omega}|^2 - \eta)^2}{|U_{c,\omega}|^2+ \eta} - \frac{(1-|U_{c,\omega}|^2)^2}{|U_{c,\omega}|^2} \right] \nu dx, \\
& \quad  + \frac{1}{2i}\int \left[ \frac{(1-|U_{c,\omega}|^2 - \eta)^2}{|U_{c,\omega}|^2+ \eta} - \frac{(1-|U_{c,\omega}|^2)^2}{|U_{c,\omega}|^2} + \frac{(1-|U_{c,\omega}|^4)}{|U_{c,\omega}|^4}  \eta \right]
(\bar{U}_{c,\omega} U'_{c,\omega} - \bar{U}'_{c,\omega} U_{c,\omega}) 
dx, 
\end{align*}
and the integration is performed over $(-\infty,-R] \cup [R,\infty)$. 
Substituting $\nu$ into $P_1$ and integrating $u'$ and $v'$ by parts yield 
\begin{align*}
P_1 &=  \frac{(1-|U_{c,\omega}|^2)^2}{|U_{c,\omega}|^2} 
\left( -{\rm Im}(U_{c,\omega}) u + {\rm Re}(U_{c,\omega}) v \right) 
\left( \biggr|^{x \to +\infty}_{x = R} + \biggr|^{x=-R}_{x \to -\infty} \right) \\
& \quad + 4 \int (1-|U_{c,\omega}|^2) (-{\rm Im}(U'_{c,\omega}) u + {\rm Re}(U'_{c,\omega}) v ) dx \\
& \quad + \frac{i}{2} \int \frac{(1-|U_{c,\omega}|^4)}{|U_{c,\omega}|^4} 
(\bar{U}_{c,\omega} U'_{c,\omega} - \bar{U}'_{c,\omega} U_{c,\omega}) 
(u^2 + v^2) dx,
\end{align*}
where the second term in $P_1$ is obtained as follows:
\begin{align*}
& \qquad 2 \int \frac{(1-|U_{c,\omega}|^2)^2}{|U_{c,\omega}|^2} \left[ 
{\rm Im}(U_{c,\omega}') u - {\rm Re}(U_{c,\omega}') v \right] dx \\
& \quad - 2 \int \frac{(1-|U_{c,\omega}|^4)}{|U_{c,\omega}|^4} \left[ 
{\rm Im}(U_{c,\omega}) u - {\rm Re}(U_{c,\omega}) v \right] 
\left[ {\rm Re}(U_{c,\omega}){\rm Re}(U'_{c,\omega}) + 
{\rm Im}(U_{c,\omega}) {\rm Im}(U'_{c,\omega}) \right]dx \\
& \quad + 2 \int \frac{(1-|U_{c,\omega}|^4)}{|U_{c,\omega}|^4} 
\left[ {\rm Re}(U_{c,\omega}) u + {\rm Im}(U_{c,\omega}) v \right] 
\left[ {\rm Im}(U_{c,\omega}){\rm Re}(U'_{c,\omega}) - 
{\rm Re}(U_{c,\omega}) {\rm Im}(U'_{c,\omega}) \right] dx \\
& = 4 \int (1-|U_{c,\omega}|^2) (-{\rm Im}(U'_{c,\omega}) u + {\rm Re}(U'_{c,\omega}) v ) dx
\end{align*}

The first term in $P_1$ is zero in the limits $x \to \pm \infty$ 
due the boundnedness of $u$ and $v$ in (\ref{Linfty}). The limits $x = \pm R$ are combined with the inner intergrals on $[-R,R]$. The second term in $P_1$ combined with a similar term on $[-R,R]$ is zero due to the second and fourth orthogonality conditions (\ref{constraints-with-c}). The third term in $P_1$ 
is bounded by $\| u \|^2_{\mathcal{H}} + \| v \|^2_{\mathcal{H}}$ due to proximity of norms in $\mathcal{H}_{c,\omega}$ and $\mathcal{H}$ given by 
(\ref{proximity}) for $(c,\omega)$ near $(0,1)$ according to (\ref{eps-app}).

The terms in $P_2$ are analyzed by using the facts that $|U_{c,\omega}|^2$ is bounded away from zero on $(-\infty,-R] \cup [R,\infty)$, $\| \eta \|_{L^{\infty}}$ is small, and the perturbation $u + iv$ is bounded according to (\ref{Linfty}). The first term in $P_2$ is estimated by using Cauchy--Schwarz inequality:
\begin{align*}
& \int \frac{(1-|U_{c,\omega}|^2-\eta)^2}{|U_{c,\omega}|^2+\eta}  (uv'-u'v) dx 
\leq C \int (1-|U_{c,\omega}|^2-\eta)^2  (|u| |v'| + |u'| |v|) dx \\
& \quad \leq C \left( 
(1 + \| \eta \|_{L^{\infty}}) \| u + i v \|_{\mathcal{H}_{c,\omega}} \| u' + i v' \|_{L^2} + \| \eta \|_{L^{\infty}} \| u + iv \|_{L^{\infty}} \| u' + i v'\|_{L^2} \| \eta \|_{L^2} \right) \\
& \quad \leq C \left( \| u' + i v' \|^2_{L^2} + \| u + i v \|^2_{\mathcal{H}} + \| \eta \|^2_{L^2} \right),
\end{align*}
where bounds (\ref{Linfty}) 
and the proximity between the norms in $\mathcal{H}_{c,\omega}$ and $\mathcal{H}$ (Remark \ref{remark-norms}) have been used.
The second and third terms in $P_2$ are estimated similarly:
\begin{align*}
& \int \frac{\eta^2 |U_{c,\omega}|^2 - \eta (1-|U_{c,\omega}|^4)}{|U_{c,\omega}|^2 (|U_{c,\omega}|^2+ \eta)} 
\left( {\rm Im}(U_{c,\omega}') u - {\rm Re}(U_{c,\omega}') v 
- {\rm Im}(U_{c,\omega}) u' + {\rm Re}(U_{c,\omega}) v'\right) dx \\
& \quad \leq C \left( \| u' + i v' \|^2_{L^2} + \| u + i v \|^2_{\mathcal{H}} + \| \eta \|^2_{L^2} \right)
\end{align*}
and
\begin{align*}
\frac{1}{2i}\int \frac{\eta^2}{|U_{c,\omega}|^4 (|U_{c,\omega}|^2+ \eta)}  
(\bar{U}_{c,\omega} U'_{c,\omega} - \bar{U}'_{c,\omega} U_{c,\omega}) dx 
 \leq C \| \eta \|^2_{L^2}.
\end{align*}

\subsection{Inner integrals on $[-R,R]$}

We use the following expression for this part of $P(\psi)$ denoted by $\hat{P}_R(\psi)$:
\begin{equation*}
\hat{P}_R(\psi) := \frac{i}{2} \int_{[-R,R]} (2-|\psi|^2) (\bar{\psi} \psi_x - \bar{\psi}_x \psi) dx + \theta(R) - \theta(-R),
\end{equation*}
where $\theta(x) = \arg(\psi(x))$. Expanding $\psi = U_{c,\omega} + u + iv$ 
and $\theta = \arg(U_{c,\omega}) + \theta_1 + \theta_2$ with 
\begin{align*}
\theta_1 &= \frac{{\rm Re}(U_{c,\omega}) v - {\rm Im}(U_{c,\omega}) u}{|U_{c,\omega}|^2}, \\
\theta_2 &= \arg(U_{c,\omega}+u+iv) - \arg(U_{c,\omega}) - \frac{{\rm Re}(U_{c,\omega}) v - {\rm Im}(U_{c,\omega}) u}{|U_{c,\omega}|^2}
\end{align*}
yields the expansion
\begin{align*}
\hat{P}_R(U_{c,\omega} + u + i v) &= \hat{P}_R(U_{c,\omega}) + \hat{P}_1 + \hat{P}_2,
\end{align*}
where
\begin{align*}
\hat{P}_1 := \frac{i}{2} \int \left[ (2-|U_{c,\omega}|^2) \nu - 
(\bar{U}_{c,\omega} U'_{c,\omega} - \bar{U}'_{c,\omega} U_{c,\omega}) 
\eta \right] dx + \theta_1(R) - \theta_1(-R)
\end{align*}
and
\begin{align*}
\hat{P}_2 := -\int (2-|U_{c,\omega}|^2)  (uv'-u'v) dx + \frac{1}{2i} \int \eta \nu dx + \theta_2(R) - \theta_2(-R), 
\end{align*}
and the integration is performed over $[-R,R]$. Substituting $\nu$ into $\hat{P}_1$ and integrating $u'$ and $v'$ by parts yield 
after simplifications:
\begin{align*}
\hat{P}_1 &=  (2-|U_{c,\omega}|^2)
\left( {\rm Im}(U_{c,\omega}) u - {\rm Re}(U_{c,\omega}) v \right) 
\biggr|^{x = R}_{x = -R}  \\
& \quad + 4 \int (1-|U_{c,\omega}|^2) (-{\rm Im}(U'_{c,\omega}) u + {\rm Re}(U'_{c,\omega}) v ) dx \\
& \quad + \frac{1}{2i} \int  (\bar{U}_{c,\omega} U'_{c,\omega} - \bar{U}'_{c,\omega} U_{c,\omega}) 
(u^2 + v^2) dx + \theta_1(R) - \theta_1(-R).
\end{align*}
The quadratic term in $\hat{P}_1$ is bounded by $\| u \|^2_{\mathcal{H}} + \| v \|^2_{\mathcal{H}}$ because there is $C_R > 0$ such that 
$$
\int_{-R}^R  \left| \bar{U}_{c,\omega} U'_{c,\omega} - \bar{U}'_{c,\omega} U_{c,\omega} \right|
(u^2 + v^2) dx  \leq C_R \int_{\R} (1-\varphi^2) \left| \bar{U}_{c,\omega} U'_{c,\omega} - \bar{U}'_{c,\omega} U_{c,\omega} \right|
(u^2 + v^2) dx,
$$
and $U_{c,\omega}, U_{c,\omega}' \in L^{\infty}(\R)$. 
The linear integral term in $\hat{P}_1$ is combined together with 
a similar term in $P_1$ yields 
$$
4 \int_{\R} (1-|U_{c,\omega}|^2) (-{\rm Im}(U'_{c,\omega}) u + {\rm Re}(U'_{c,\omega}) v ) dx,
$$
which vanish due to the second and fourth orthogonality conditions (\ref{constraints-with-c}). Finally, the terms at $x = \pm R$ 
are combined together in $P_1$ and $\hat{P}_1$ to yield
\begin{align*}
\frac{{\rm Im}(U_{c,\omega}) u - {\rm Re}(U_{c,\omega}) v}{|U_{c,\omega}|^2}
\biggr|^{x = R}_{x = -R} + \theta_1(R) - \theta_1(-R) = 0.
\end{align*}
All integral terms in $\hat{P}_2$ are bounded by 
$\| u' + i v' \|^2_{L^2} + \| u + i v \|^2_{\mathcal{H}} + \| \eta \|^2_{L^2}$ because integration on $[-R,R]$ is bounded by integration on $\R$ 
with the weight $(1-\varphi^2)$. Similarly, by using the Sobolev embedding 
of $H^1([-R,R])$ to $L^{\infty}([-R,R])$ and the Taylor expansion with $|U_{c,\omega}(\pm R)|^2$ being bounded away from zero, we obtain 
$$
|\theta_2(\pm R)| \leq C_R (|u(\pm R)|^2 + |v(\pm R)|^2) 
\leq C_R \| u + i v \|^2_{H^1([-R,R])},
$$
which is bounded by 
$\| u' + i v' \|^2_{L^2} + \| u + i v \|^2_{\mathcal{H}}$ since $[-R,R]$ is compact and $1 - \varphi^2$ is bounded away from zero on $[-R,R]$.

\end{document}